\theoremstyle{plain}
\newtheorem{thm}{Theorem}[section]
\newtheorem{cor}[thm]{Corollary}
\newtheorem{lem}[thm]{Lemma}
\newtheorem{prop}[thm]{Proposition}
\newtheorem{notation}[thm]{Notation}
\theoremstyle{definition}
\newtheorem{defn}[thm]{Definition}
\newtheorem{remark}[thm]{Remark}
\def\@rst #1 #2other{#1}
\newcommand{\dsb}{\begin{adjustwidth}{2.5em}{0pt}
\begin{footnotesize}}
\newcommand{\dse}{\end{footnotesize}
\end{adjustwidth}}
\newcommand{\ssb}{\begin{adjustwidth}{2.5em}{0pt}}
\newcommand{\sse}{\end{adjustwidth}}
\newcommand{\aryb}{\begin{eqnarray*}}
\newcommand{\arye}{\end{eqnarray*}}
\def\alb#1\ale{\begin{align*}#1\end{align*}}
\newcommand{\eqb}{\begin{equation}}
\newcommand{\eqe}{\end{equation}}
\newcommand{\eqbn}{\begin{equation*}}
\newcommand{\eqen}{\end{equation*}}
\newcommand{\BB}{\mathbbm}
\newcommand{\ol}{\overline}
\newcommand{\ul}{\underline}
\newcommand{\op}{\operatorname}
\newcommand{\frk}{\mathfrak}
\newcommand{\eqD}{\overset{d}{=}}
\newcommand{\ep}{\epsilon}
\newcommand{\rta}{\rightarrow}
\newcommand{\wt}{\widetilde}
\newcommand{\wh}{\widehat} 
\newcommand{\mcl}{\mathcal}
\newcommand*\tc[1]{\tikz[baseline=(char.base)]{\node[shape=circle,draw,inner sep=1pt] (char) {#1};}}
\newcommand*\tb[1]{\tikz[baseline=(char.base)]{\node[shape=rectangle,draw,inner sep=2.5pt] (char) {#1};}}
\newcommand*\patchAmsMathEnvironmentForLineno[1]{  \expandafter\let\csname old#1\expandafter\endcsname\csname #1\endcsname
  \expandafter\let\csname oldend#1\expandafter\endcsname\csname end#1\endcsname
  \renewenvironment{#1}     {\linenomath\csname old#1\endcsname}     {\csname oldend#1\endcsname\endlinenomath}}\newcommand*\patchBothAmsMathEnvironmentsForLineno[1]{  \patchAmsMathEnvironmentForLineno{#1}  \patchAmsMathEnvironmentForLineno{#1*}}\AtBeginDocument{\patchBothAmsMathEnvironmentsForLineno{equation}\patchBothAmsMathEnvironmentsForLineno{align}\patchBothAmsMathEnvironmentsForLineno{flalign}\patchBothAmsMathEnvironmentsForLineno{alignat}\patchBothAmsMathEnvironmentsForLineno{gather}\patchBothAmsMathEnvironmentsForLineno{multline}}
\title[Scaling limits for the FK model III]{Scaling limits for the critical Fortuin-Kastelyn model on a random planar map III: finite volume case}
\author{Ewain Gwynne}
\author{Xin Sun}
\subjclass[2010]{Primary 60F17, 60G50; Secondary 82B27}
\keywords{Fortuin-Kasteleyn model, random planar maps, hamburger-cheeseburger bijection, random walks in cones, scaling limits, Liouville quantum gravity, conformal loop ensembles}
\address{Department of Mathematics\\
  Massachusetts Institute of Technology\\
  Cambridge, MA 02139}
\email{ewain@mit.edu \\  xinsun89@math.mit.edu}
\begin{document}

\begin{abstract}
We prove scaling limit results for the finite-volume version of the inventory accumulation model of Sheffield (2011), which encodes a random planar map decorated by a collection of loops sampled from the critical Fortuin-Kasteleyn (FK) model. In particular, we prove that the random walk associated with the finite-volume version of this model converges in the scaling limit to a correlated Brownian motion $\dot Z$ conditioned to stay in the first quadrant for two units of time and satisfy $\dot Z(2)  = 0$. We also show that the times which describe complementary connected components of FK loops in the discrete model converge to the $\pi/2$-cone times of $\dot Z$. Combined with recent results of Duplantier, Miller, and Sheffield, our results imply that many interesting functionals of the FK loops on a finite-volume FK planar map (e.g.\ their boundary lengths and areas) converge in the scaling limit to the corresponding ``quantum" functionals of the CLE$_\kappa$ loops on a $4/\sqrt\kappa$-Liouville quantum gravity sphere for $\kappa \in (4,8)$. Our results are finite-volume analogues of the scaling limit theorems for the infinite-volume version of the inventory accumulation model proven by Sheffield (2011) and Gwynne, Mao, and Sun (2015). 
\end{abstract}

\maketitle

\tableofcontents

\section{Introduction}
\label{sec-intro}

Let $\Theta = \{\tc H , \tc C , \tb H , \tb C , \tb F\}$. The set of finite words consisting of elements of $\Theta$, modulo the relations
\eqb \label{eqn-theta-relations}
\tc C \tb C = \tc H \tb H = \tc C \tb F = \tc H \tb F = \emptyset ,\qquad \tc C \tb H = \tb H \tc C ,\qquad \tc H \tb C = \tb C \tc H 
\eqe 
forms a semigroup, which was first introduced by Sheffield in~\cite{shef-burger}. Given a word $x$ consisting of elements of $\Theta$, we write $\mcl R(x)$ for its reduction modulo the relations~\eqref{eqn-theta-relations}, with all burgers to the right of all orders. Following~\cite{shef-burger}, we think of elements of $\Theta$ as representing a hamburger, a cheeseburger, a hamburger order, a cheeseburger order, and a flexible order (i.e.\ a request for the ``freshest available" burger of either type), respectively. A word $x$ consisting of elements of $\Theta$ (read from left to right) represents a sequence of burgers being produced and orders being placed. Whenever an order is placed, it is fulfilled by the first burger of the appropriate type to the left of this order which has not yet been consumed. The reduced word $\mcl R(x)$ represents the set of unfulfilled orders (which were placed at a time when no suitable burgers were available) and the set of unconsumed burgers. 
 
The reason for our interest in the semigroup $\Theta$ is that there is a bijection, described in \cite[Section 4.1]{shef-burger}, between words $\dot X$ consisting of $2n$ elements of $\Theta$, with the property that $\mcl R(\dot X) = \emptyset$; and triples $(M , e_0, S)$ consisting of a planar map $M$ with $ n$ edges, an oriented root edge $e_0$ of $M$, and a distinguished subset of the set of edges of $M$. This bijection generalizes a bijection due to Mullin~\cite{mullin-maps} (see also~\cite{bernardi-maps} for a more explicit description) and is essentially equivalent to the construction of~\cite[Section 4]{bernardi-sandpile} for a fixed choice of planar map $M$. The edge set $S$ gives rise to a collection of loops $\mcl L$ on $M$ (described by sequences of edges in a certain quadrangulation $Q = Q(M)$ associated with $M$) which form interfaces between edges in $S$ and edges of the dual map $M^*$ which do not cross edges of $S$.

For $p\in [0,1]$, define a probability measure on $\Theta$ by
\eqb \label{eqn-theta-probs}
\BB P\left(\tc{H}\right) = \BB P\left(     \tc{C} \right) = \frac14 ,\quad  \BB P\left( \tb{H} \right) = \BB P\left( \tb{C} \right) = \frac{1-p}{4} ,\quad \BB P\left(\tb{F} \right) =  \frac{p}{2} .
\eqe 
Let $X = \dots X_{-1}  X_0  X_1 \dots$ be a bi-infinite word whose symbols are iid samples from the probability measure~\eqref{eqn-theta-probs}. If $p\in (0,1/2)$ and we sample a word $\dot X$ according to the conditional law of $X_1\dots X_{2n}$ given $\{\mcl R(X_1\dots X_{2n})=\emptyset\}$, then (as explained in~\cite[Section 4.2]{shef-burger}) the law of the triple $(M,e_0, \mcl L)$ is given by the uniform measure on such triples weighted by $q^{\#\mcl L/2}$, where $q = 4p^2/(1-p)^2 \in (0,4)$. This implies that the conditional law of $\mcl L$ given $M$ is that of the \emph{critical Fortuin-Kasteleyn (FK) cluster model with parameter $q $} on $M$~\cite{fk-cluster}, which is closely related to the $q$-state Potts model for integer values of $q$ (see \cite{kager-nienhuis-guide, grimmett-fk} and the references therein for more on the FK model). We call a pair $(M,\mcl L)$ sampled according to this probability measure a \emph{(critical) FK planar map of size $ n$} and the triple $(M,e_0 , \mcl L)$ a \emph{rooted (critical) FK planar map of size $ n$}. 

As alluded to in~\cite{shef-burger}, there is also an infinite-volume version of the above bijection, which relates infinite-volume FK planar maps and bi-infinite words $X$ with elements sampled independently according to the probabilities~\eqref{eqn-theta-probs}. See~\cite{chen-fk,blr-exponents} for more details. 

The law of of the FK planar map $(M , \mcl L)$ is conjectured to converge in the scaling limit at $n\rta\infty$ to the law of a \emph{conformal loop ensemble} ($\op{CLE}_\kappa$)~\cite{shef-cle,shef-werner-cle,ig1,ig2,ig3,ig4} on top of an independent \emph{$\gamma$-Liouville quantum gravity surface} \cite{shef-kpz,shef-zipper,wedges}, where $\kappa \in (4,8)$ and $\gamma \in (\sqrt 2 , 2)$ satisfy
\eqb \label{eqn-p-kappa}
q = \frac{4p^2}{(1-p)^2}= 2 + 2\cos(8\pi/\kappa) \qquad \gamma = \frac{4}{\sqrt\kappa} .
\eqe 
In the special case $p=1/3$, the marginal law of the planar map $M$ (without the collection of loops $\mcl L$) is uniform on all such maps. Uniform random planar maps and their scaling limits have been studied extensively. In particular, a uniformly chosen random quadrangulation with $2n$ edges converges in law in the Gromov-Hausdorff topology to a continuum random metric space called the \emph{Brownian map}~\cite{legall-uniqueness,miermont-brownian-map}. See~\cite{miermont-survey,legall-sphere-survey} and the references therein for more details. In~\cite{qle,sphere-constructions,tbm-characterization,lqg-tbm1,lqg-tbm2,lqg-tbm3}, Miller and Sheffield construct a metric on LQG for $\gamma = \sqrt{8/3}$ under which it is isometric to the Brownian map. 
 
In \cite[Theorem 2.5]{shef-burger}, it is shown that a certain non-Markovian random walk associated with the bi-infinite word $X$ sampled according to the probabilities~\eqref{eqn-theta-probs} converges in the scaling limit to a pair of correlated two-sided Brownian motions $Z = (U,V)$, started from $Z(0) = 0$ and satisfying
 \eqb \label{eqn-bm-cov}
\op{Var}(U(t) ) = \frac{1-p}{2} |t| \quad \op{Var}(V(t)) = \frac{1-p}{2} |t| \quad \op{Cov}(U(t) , V(t) ) = \frac{p}{2} |t| ,\qquad \forall t\in\BB R. 
\eqe
On the other hand, it was recently shown by Duplantier, Miller, and Sheffield~\cite{wedges} that a whole-plane $\op{CLE}_\kappa$~\cite{werner-sphere-cle,mww-nesting} on a certain infinite-volume Liouville quantum gravity (LQG) surface called a \emph{$\gamma$-quantum cone} can be encoded by a correlated two-dimensional Brownian motion $Z = (U,V)$ via a continuum analogue of the bijection of \cite[Section 4.1]{shef-burger}. This encoding is called the \emph{peanosphere construction} and can be interpreted as a mating of two correlated continuum random trees~\cite{aldous-crt1,aldous-crt2,aldous-crt3}. Thus \cite[Theorem 2.5]{shef-burger} can be interpreted as the statement that the contour functions for infinite-volume FK planar maps converge in the scaling limit to the contour function of a $\op{CLE}_\kappa$ on a $\gamma$-quantum cone. In other words, one has convergence of infinite-volume FK planar maps toward a CLE$_\kappa$-decorated $\gamma$-quantum cone the so-called \emph{peanosphhere topology}.  
In~\cite[Theorem 1.9]{gms-burger-cone}, this convergence statement is strengthened by proving that the times corresponding to complementary connected components FK loops in Sheffield's bijection converge in the scaling limit to the $\pi/2$-cone times of the correlated Brownian motion $Z$, which encode complementary connected components of $\op{CLE}_\kappa$ loops in the construction of \cite{wedges}. 

In \cite[Theorem 1.1]{sphere-constructions}, the authors prove that one can encode a $\op{CLE}_\kappa$ on a finite-volume LQG surface called a \emph{quantum sphere} via a constant multiple of a correlated Brownian motion $\dot Z$ with variances and covariances as in~\eqref{eqn-bm-cov} conditioned to stay in the first quadrant for two units of time and satisfy $\dot Z(2) = 0$ (this conditioning is made precise in \cite[Section 3]{sphere-constructions}; see also Section~\ref{sec-bm-cond} of the present paper). 
Hence, it is natural to expect that the random walk associated with a word $\dot X = X_1\dots X_{2n}$ sampled from the inventory accumulation model conditioned on the event that $\mcl R(\dot X) = \emptyset$ (which encodes a finite-volume FK planar map) converges in the scaling limit to a correlated Brownian motion $\dot Z$ conditioned to stay in first quadrant for two units of time and satisfy $\dot Z(2) = 0$. This statement implies the convergence of finite-volume FK planar maps toward CLE$_\kappa$ on an independent LQG sphere in the peanosphere topology.

In this paper we will prove the above scaling limit statement, thereby extending \cite[Theorem 2.5]{shef-burger} to the finite volume case. We will also obtain an exact analogue of \cite[Theorem 1.9]{gms-burger-cone}, which will imply, among other things, that the boundary lengths and areas of complementary connected components of FK loops on a random planar map on the sphere converge in the scaling limit to the quantum lengths and areas of complementary connected components of $\op{CLE}_\kappa$ loops on a $\gamma$-quantum sphere. This latter result answers \cite[Question 13.3]{wedges} in the finite-volume case. Furthermore, the results of this paper will be used by the first author and J. Miller in~\cite{gwynne-miller-cle} to prove a stronger scaling limit result for FK planar maps (i.e.\ the convergence of the law of the entire topological structure of the FK loops on an FK planar map to that of CLE$_\kappa$ on an independent $\gamma$-quantum sphere). The scaling limit result of~\cite{gwynne-miller-cle}, in turn, will be used in the subsequent work~\cite{gwynne-miller-inversion} to prove that the law of whole-plane $\op{CLE}_\kappa$ is invariant under inversion for $\kappa \in (4,8)$ (see~\cite{werner-sphere-cle} for a proof in the case when $\kappa \in (8/3,4]$). 

The vast majority of the arguments in the present paper use only basic properties of the hamburger-cheeseburger model and the results of~\cite{shef-burger,gms-burger-cone,gms-burger-local}, and can be read without any knowledge of CLE$_\kappa$ or Liouville quantum gravity. 
However, unlike the proofs of the estimates and scaling limit results for the hamburger-cheeseburger model proven in prior works~\cite{shef-burger,gms-burger-cone,gms-burger-local,blr-exponents}, our proofs (in particular the arguments in Section~\ref{sec-big-loop}) will make some use properties of FK planar maps, the bijection of \cite{shef-burger}, and the relationship between these objects and $\op{CLE}_{\kappa}$ on an independent Liouville quantum gravity cone established in~\cite{wedges,gms-burger-cone,gwynne-miller-cle}. We also use the scaling limit results of~\cite{shef-burger,gms-burger-cone}, the estimates of~\cite{gms-burger-local}, and at one point the infinite-volume version of the scaling limit results of~\cite{gwynne-miller-cle}. 

\begin{remark} \label{remark-implication}
The paper~\cite{gwynne-miller-cle} (currently in preparation) introduces a topological structure called a \emph{lamination} associated with a collection of non-crossing loops, an area measure, and a boundary length measure, which (under certain hypotheses) determines the collection of loops and the two measures modulo an ambient homeomorphism of $\BB C$. It is then proven that the lamination of the FK loops on an FK planar map converges in law to the lamination of a CLE$_\kappa$ on an independent Liouville quantum gravity surface with parameter $\gamma$, where $\kappa$, $\gamma$, and $p$ are related as in~\eqref{eqn-p-kappa}. This is accomplished for both infinite-volume and finite-volume FK planar maps (which correspond to the case where the quantum surface is a $\gamma$-quantum cone or a quantum sphere, respectively; see~\cite{wedges} for definitions). In both cases, the scaling limit result is proven by explicitly describing the lamination of an FK planar map (resp. a CLE$_\kappa$ on an LQG surface) in terms of the word in Sheffield's bijection (resp. the Brownian motion in the peanosphere construction of~\cite{wedges,sphere-constructions}); then applying~\cite[Theorem 2.5]{shef-burger} and~\cite[Theorem 1.9]{gms-burger-cone} (in the infinite-volume case) or Theorems~\ref{thm-main} and~\ref{thm-cone-limit-finite} of the present paper (in the finite-volume case), plus some additional estimates (some of which use the results of~\cite{gms-burger-local}). The only significant difference between the proofs in the two cases is which of these pairs of theorems is applied. 
In particular, the infinite-volume version of the result of~\cite{gwynne-miller-cle} is proven independently of the present paper. This point is crucial, as at one step of the proof of the main results of the present paper (in particular the proof of Lemma~\ref{prop-big-F-constant}), we need to apply the infinite-volume version of the main result of~\cite{gwynne-miller-cle} in order to estimate the probability of a certain event defined in terms of an infinite-volume FK planar map. We note, however, that the proof of inversion invariance of whole-plane $\op{CLE}_{\kappa}$ in~\cite{gwynne-miller-inversion} requires the finite-volume version of~\cite{gwynne-miller-cle}. In summary, we have the following implication relations between various results (here an arrow from one result to another means that the proof of the second result uses the first result). 
\begin{center}
\includegraphics{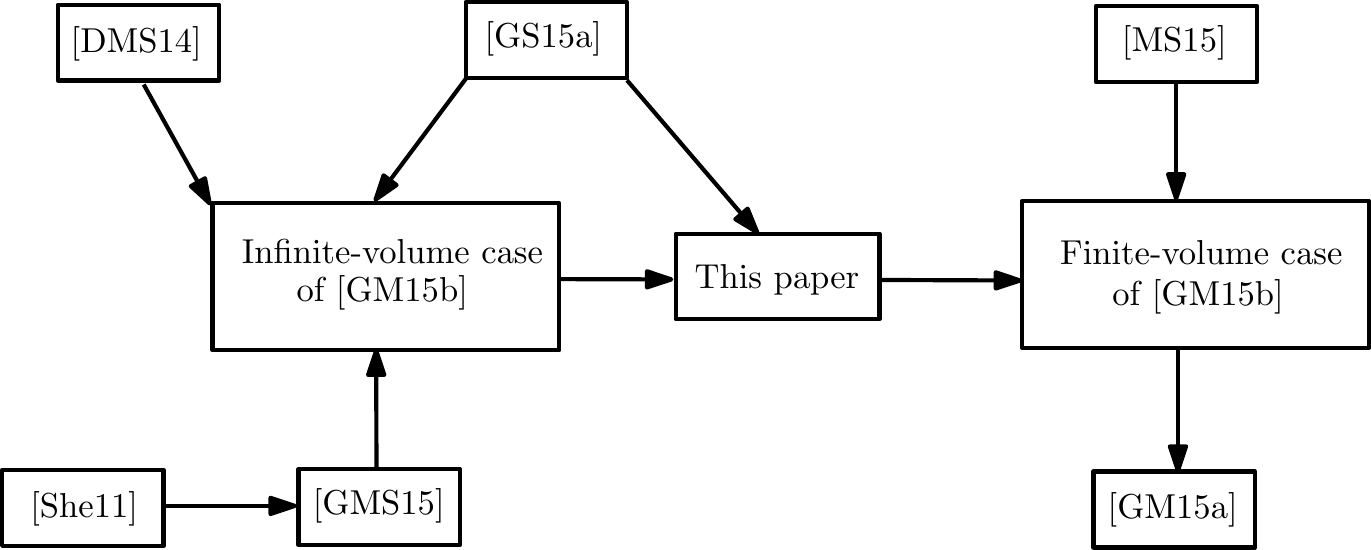} 
\end{center}
\end{remark}

\bigskip

\noindent{\bf Acknowledgments}
We thank Nina Holden, Cheng Mao, Jason Miller, and Scott Sheffield for helpful discussions. We thank the Isaac Newton Institute for its hospitality during part of our work on this project. The first author was supported by the U.S. Department of Defense via an NDSEG fellowship. The second author was partially supported by NSF grant DMS-1209044. 
 
\subsection{Notation and preliminaries}
\label{sec-burger-prelim}
In this section we will introduce some notation which will remain fixed throughout the paper. This notation is in agreement with that used in~\cite{gms-burger-cone,gms-burger-local}. 

\subsubsection{Basic notation} 

Here we record some basic notations which we will use throughout this paper. 

\begin{defn}
Let $X$ be a random variable taking values in a countable state space $\Omega$. A \emph{realization} of $X$ is an element $x\in\Omega$ such that $\BB P(X=x) > 0$. 
\end{defn}

\begin{notation} \label{def-discrete-intervals}
For $a < b \in \BB R$, we define the discrete intervals $[a,b]_{\BB Z} := [a, b]\cap \BB Z$ and $(a,b)_{\BB Z} := (a,b)\cap \BB Z$. 
\end{notation}

\begin{notation}\label{def-asymp}
If $a$ and $b$ are two quantities, we write $a\preceq b$ (resp. $a \succeq b$) if there is a constant $C$ (independent of the parameters of interest) such that $a \leq C b$ (resp. $a \geq C b$). We write $a \asymp b$ if $a\preceq b$ and $a \succeq b$. 
\end{notation}

\begin{notation} \label{def-o-notation}
If $a$ and $b$ are two quantities which depend on a parameter $x$, we write $a = o_x(b)$ (resp. $a = O_x(b)$) if $a/b \rta 0$ (resp. $a/b$ remains bounded) as $x \rta 0$ (or as $x\rta\infty$, depending on context). We write $a = o_x^\infty(b)$ if $a = o_x(b^s)$ for each $s > 0$ (if $b$ is tending to $0$) or for each $s < 0$ (if $b$ is tending to $\infty$). The regime we are considering will be clear from the context.
\end{notation}

Unless otherwise stated, all implicit constants in $\asymp, \preceq$, and $\succeq$ and $O_x(\cdot)$ and $o_x(\cdot)$ errors involved in the proof of a result are required to satisfy the same dependencies as described in the statement of said result.

\subsubsection{Inventory accumulation model}

Let $p\in (0,1/2)$. We will always treat $p$ as fixed and do not make dependence on $p$ explicit.

Let $\Theta = \{\tc H , \tc C , \tb H , \tb C , \tb F\}$ be the generating set for the semigroup described above. Given a word $x$ consisting of elements of $\Theta$, we denote by $\mathcal R(x)$ the word reduced modulo the relations~\eqref{eqn-theta-relations}, with all burgers to the right of all orders, as above. We also write $|x|$ for the number of symbols in $x$ (regardless of whether or not $x$ is reduced). 
 
Let $X = \dots X_{-1} X_0 X_1 \dots$ be a bi-infinite word with each symbol sampled independently according to the probabilities~\eqref{eqn-theta-probs}. For $a \leq b\in \BB R$, let
\eqb \label{eqn-X(a,b)}
X(a,b) := \mcl R\left(X_{\lfloor a\rfloor} \dots X_{\lfloor b\rfloor} \right) .
\eqe 
We adopt the convention that $X(a,b) = \emptyset$ if $b < a$.  
 
By \cite[Proposition 2.2]{shef-burger}, it is a.s.\ the case that the ``infinite reduced word" $X(-\infty,\infty)$ is empty, i.e.\ each symbol $X_i$ in the word $X$ has a unique match which cancels it out in the reduced word.

\begin{notation}\label{def-match-function}
For $i \in \BB Z$ we write $\phi(i)$ for the index of the match of $X_i$. 
\end{notation}

\begin{notation} \label{def-theta-count}
For $\theta\in \Theta$ and a word $x$ consisting of elements of $\Theta$, we write $\mcl N_{\theta}(x)$ for the number of $\theta$-symbols in $x$. We also let
\eqb \label{eqn-theta-count-whole}
d(x)  := \mcl N_{\tc H}(x) - \mcl N_{\tb H}(x) ,\quad 
d^*(x)  := \mcl N_{\tc C}(x) - \mcl N_{\tb C}(x),\quad 
D(x)  := \left(d(x) , d^*(x)\right) 
\eqe 
and
\eqb \label{eqn-theta-count-reduced}
\frk h(x) := \mcl N_{\tb H}\left(\mcl R(x)\right) ,\quad \frk c(x) := \mcl N_{\tb C}\left(\mcl R(x)\right) ,\quad \frk o(x) := \mcl N_{\tb H}\left(\mcl R(x)\right) + \mcl N_{\tb F}\left(\mcl R(x)\right) + 1.
\eqe
\end{notation}

We note that the definitions of $\frk h(x)$ and $\frk c(x)$ in~\eqref{eqn-theta-count-reduced} differ from the definitions in~\cite[Definition 3.1]{gms-burger-local} (the definitions in this paper concern orders, whereas the definitions in~\cite{gms-burger-local} concern burgers). 
  
For $i\in\BB Z$, we define $Y_i = X_i$ if $X_i \in \{\tc{H} , \tc{C} , \tb{H} ,  \tb{C}\}$; $Y_i = \tb{H}$ if $X_i = \tb F$ and $X_{\phi(i)} = \tc{H}$; and $Y_i = \tb{C}$ if $X_i = \tb F$ and $X_{\phi(i)} = \tc{C}$. For $a\leq b \in \BB R$, define $Y(a,b)$ as in~\eqref{eqn-X(a,b)} with $Y$ in place of $X$.  
 
For $n \geq0$, define $ d(n) =   d(Y(1,n))$ and for $n<0$, define $  d(n) = -  d(Y( n+1 , 0))$. Define $d^*(n)$ similarly. Extend each of these functions from $\BB Z$ to $\BB R$ by linear interpolation. 
Let 
\eqb \label{eqn-discrete-path}
  D(t) := (d(t) , d^*(t)) .
\eqe  
For $n\in\BB N$ and $t\in \BB R$, let 
\eqb \label{eqn-Z^n-def}
U^n(t) := n^{-1/2} d (n t) ,\quad V^n(t) := n^{-1/2} d^*(n  t) , \quad Z^n(t) := (U^n(t) , V^n(t) ) .
\eqe 
It is easy to see that the condition that $X(1,2n) =\emptyset$ is equivalent to the condition that $Z^n([0,2]) \subset [0,\infty)^2$ and $Z^n(2) = 0$. 

Let $Z= (U,V)$ be a two-sided two-dimensional Brownian motion with $Z(0) = 0$ and variances and covariances as in~\eqref{eqn-bm-cov}.
It is shown in \cite[Theorem 2.5]{shef-burger} that as $n\rta \infty$, the random paths $Z^n$ defined in~\eqref{eqn-Z^n-def} converge in law in the topology of uniform convergence on compacts to the random path $Z$ of~\eqref{eqn-bm-cov}.

There are several stopping times for the word $X$, read backward, which we will use throughout this paper. Namely, let
\eqb \label{eqn-J-def}
J := \inf\left\{j \in \BB N \,:\, \text{$X(-j,-1)$ contains a burger}\right\} ,
\eqe
so that $\{J > n\}$ is the event that $X(-n,-1)$ contains no burgers. For $m\in\BB N$, let
\eqb \label{eqn-J^H-def}
J_m^H := \inf\left\{j \in \BB N \,:\, \mcl N_{\tc H}\left(X(-j,-1)\right) = m\right\} ,\quad L_m^H := d^*\left(X(-J_m^H,-1)\right) 
\eqe
be, respectively, the $m$th time a hamburger is added to the stack when we read $X$ backward and the number of cheeseburgers minus the number of cheeseburger orders in $X(-J_m^H,-1)$. Define $J_m^C$ and $L_m^C$ similarly with the roles of hamburgers and cheeseburgers interchanged.
 
Let 
\eqb \label{eqn-cone-exponent}
 \mu := \frac{\pi}{2\left( \pi - \arctan \frac{\sqrt{1-2p} }{p} \right) }  =  \frac{\kappa}{8} \in (1/2,1) ,
\eqe 
with $p$ and $\kappa$ related as in~\eqref{eqn-p-kappa}. The number $\mu$ arises frequently in the study of the hamburger-cheeseburger model with parameter $p$. For example, it is shown in \cite{gms-burger-cone} that the probability that the correlated Brownian motion $Z$ stays in the $\delta^{1/2}$-neighborhood of the  first quadrant for 1 unit of time is proportional to $\delta^\mu$ (Lemma 2.2; see also \cite{shimura-cone}); the probability that $X(1,n)$ contains no orders is regularly varying with exponent $\mu$ (Proposition 5.1); and $X(1,n)$ typically contains at most $n^{-(1-\mu) + o_n(1)}$ flexible orders (Corollary 5.2). It is also shown in \cite[Theorem 1.10]{gms-burger-local} that the probability that $X(1,2n) = \emptyset$ is proportional to $n^{-1-2\mu + o_n(1)}$. See~\cite{blr-exponents} for some additional calculations relating to the exponent $\mu$.

\subsection{Statements of main results}
\label{sec-main-results}

The first main result of this paper is the following finite-volume analogue of \cite{shef-burger}.  

\begin{thm} \label{thm-main}
Let $\dot Z = (\dot U , \dot V)$ be a correlated two-dimensional Brownian motion as in~\eqref{eqn-bm-cov} started from 0 conditioned to stay in the first quadrant during the time interval $[0,2]$ and satisfy $\dot Z(2) = 0$. Also define the paths $Z^n$ for $n\in\BB N$ as in~\eqref{eqn-Z^n-def}. As $n\rta\infty$, the conditional law of $Z^n|_{[0,2]}$ given $\{X(1,2n) =\emptyset\}$ converges to the law of $\dot Z$ (with respect to the topology of uniform convergence). 
\end{thm}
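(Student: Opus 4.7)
The main challenge is that the conditioning event $B_n := \{X(1,2n) = \emptyset\}$, which equals $\{Z^n([0,2]) \subset [0,\infty)^2,\, Z^n(2) = 0\}$, has probability of order $n^{-1-2\mu + o_n(1)}$ by \cite[Theorem 1.10]{gms-burger-local}, so Theorem~\ref{thm-main} does not follow directly from the unconditional invariance principle $Z^n \Rightarrow Z$ of \cite[Theorem 2.5]{shef-burger}. My plan is to prove convergence of finite-dimensional distributions on $(0,2)$ first, then establish tightness at the endpoints, which together give convergence in the uniform topology on $[0,2]$.

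For the bulk, I fix $\delta \in (0,1)$ and use the i.i.d.\ structure to decompose $X$ into three independent blocks supported on the index ranges $[1,\lfloor n\delta\rfloor]$, $(\lfloor n\delta\rfloor, \lfloor n(2-\delta)\rfloor]$, and $(\lfloor n(2-\delta)\rfloor, 2n]$. Conditional on the reduced words at the two cut times (equivalently, on the values $(Z^n(\delta), Z^n(2-\delta))$ together with the order of the residual burgers) and on $B_n$, the middle piece has the law of the unconditional walk pinned between these endpoints and constrained to stay in the first quadrant. A direct application of \cite[Theorem 2.5]{shef-burger}, combined with a local central limit theorem of the sort developed for this model in \cite{gms-burger-cone, gms-burger-local}, identifies the scaling limit as a correlated Brownian bridge between the rescaled endpoints conditioned to remain in the first quadrant. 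It then remains to identify the limiting joint law of $(n^{-1/2} Z^n(\delta), n^{-1/2} Z^n(2-\delta))$ given $B_n$. Writing
\begin{equation*}
\BB P\bigl(B_n,\, Z^n(\delta) = z_-,\, Z^n(2-\delta) = z_+\bigr) = \BB P(\text{initial block}) \cdot \BB P(\text{middle block}) \cdot \BB P(\text{final block})
\end{equation*}
and evaluating each factor via the cone-exponent asymptotics of \cite{gms-burger-cone, gms-burger-local}, together with the estimate of $\BB P(B_n)$ from \cite[Theorem 1.10]{gms-burger-local}, identifies the limiting density with the one prescribed by the construction of $\dot Z$ in \cite[Section 3]{sphere-constructions}. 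Finite-dimensional convergence on $[\delta, 2-\delta]$ follows, and sending $\delta \to 0$ gives FDD convergence on $(0,2)$.

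Tightness near the left endpoint amounts to the statement
\begin{equation*}
\lim_{\delta \to 0} \limsup_{n \to \infty} \BB P\Bigl( \sup_{s \in [0, \delta]} |Z^n(s)| > \eta \Bigm| B_n \Bigr) = 0,\qquad \forall \eta > 0,
\end{equation*}
together with the symmetric statement for $s \in [2-\delta, 2]$. Both are obtained by expressing the conditional probability as a Radon-Nikodym ratio whose denominator is controlled by \cite[Theorem 1.10]{gms-burger-local} and whose numerator is bounded via the unconditional cone-exit estimates from \cite{gms-burger-local} combined with the endpoint density bounds from Step 1; the estimate at $t=2$ is treated by an analogous argument based on the backward stopping times $J_m^H, J_m^C$ defined in~\eqref{eqn-J^H-def}. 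The principal technical difficulty is producing a local central limit theorem for $Z^n$ conditioned to stay in the first quadrant that is sharp enough near the boundary of the quadrant: the conditional probability concentrates near the cone boundary (which is precisely the reason the exponent $\mu$ appears), so a generic local CLT is insufficient, and one must prove a version with boundary-adapted prefactors that are uniform in $n$. The harmonic-function machinery developed in \cite{gms-burger-cone, gms-burger-local} should provide the right starting point; once this ingredient is in place, matching the resulting density with the limit object constructed in \cite[Section 3]{sphere-constructions} is essentially bookkeeping.
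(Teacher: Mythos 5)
There is a genuine gap, and it sits exactly where the paper says the naive approach breaks down. Your argument rests on the claim that, after conditioning on the first and third blocks (or on their reduced words), the event $B_n=\{X(1,2n)=\emptyset\}$ restricted to the middle block is ``the unconditional walk pinned between the endpoints and constrained to stay in the first quadrant,'' and that $\BB P(B_n\mid\cdot)$ can then be evaluated by a boundary-adapted local CLT as a function of $(Z^n(\delta),Z^n(2-\delta))$. But because of the flexible orders $\tb F$, the event $B_n$ is \emph{not} determined by the symbol counts (or even the reduced words) of the blocks: whether a given $\tb F$ in one block is matched to a hamburger or a cheeseburger depends on the precise ordering of symbols in the other blocks, and the path $Z^n$ itself is defined through $Y$, i.e.\ through these cross-block matches. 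The number of $\tb F$-symbols in a reduced word is typically of order $n^{1-\mu+o_n(1)}$, which is $o(n^{1/2})$ but still large enough to affect whether $X(1,2n)=\emptyset$; the authors state explicitly (Section~\ref{sec-outline}) that they are unable to control this combinatorics sharply enough, that inductive arguments lose $n^{o_n(1)}$ factors, and that they expect a direct local CLT of the kind you invoke is not obtainable. So the step you defer to ``the harmonic-function machinery ... should provide the right starting point'' is precisely the step the paper is organized to avoid, and both your endpoint-density identification and your tightness Radon--Nikodym bounds depend on it.

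The paper's actual route circumvents the local CLT at the origin entirely. It uses invariance of the rooted FK map under uniform re-rooting (Section~\ref{sec-big-loop}) to find, with high conditional probability, a huge $\tb F$-interval $[\phi(\iota_0^n),\iota_0^n]$ covering all but $O(n^{\xi})$ of the time interval, reducing the problem to conditioning on the walk exiting the quadrant at a \emph{mesoscopic} point $l\asymp n^{\xi/2}$ where one can arrange that all flexible orders near the end are matched in a controlled way (the time $\wt\pi_n$ of Section~\ref{sec-pi-reg}); the resulting estimates (Section~\ref{sec-end-local}) give absolute continuity only up to bounded constants, not asymptotics, so the limit is identified not by matching densities with \cite[Section 3]{sphere-constructions} but by a second application of the same scheme at scale $n^{1/2}$ (the macroscopic cone interval of Section~\ref{sec-macroscopic-cone}) combined with the characterization of $\dot Z$ via its conditioned-bridge Markov property (Lemma~\ref{prop-bm-excursion}). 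Your high-level skeleton --- tightness plus identifying the middle as a conditioned Brownian bridge given the endpoints --- does coincide with the endgame in Section~\ref{sec-conclusion}, but the inputs you propose to feed it are not available.
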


The correlated two-dimensional Brownian motion $\dot Z$ conditioned to stay in the first quadrant during the time interval $[0,2]$ and satisfy $\dot Z(2) = 0$ can be viewed as the two-dimensional analogue of a Brownian excursion. The process $\dot Z$ is defined and rigorously constructed in \cite[Section 3]{sphere-constructions}. See Section~\ref{sec-bm-cond} for a review of the definition of this process as well as an alternative (more explicit) construction. We remark that the one-dimensional distributions of correlated Brownian motion conditioned to stay in the first quadrant (as well as those of a more general class of conditioned Brownian motions) are computed in~\cite[Theorem 6]{dw-cones}. 

As noted above, \cite[Theorem 1.1]{sphere-constructions} implies that our Theorem~\ref{thm-main} can be interpreted as the statement that the contour function of a finite-volume FK planar map converges in the scaling limit to the analogous function for a $\op{CLE}_\kappa$ on a $\gamma$-quantum sphere. 
We will prove Theorem~\ref{thm-main} only in the case where $p \in (0,1/2)$ (i.e.\ $\kappa \in (4,8)$), which corresponds to a limiting Brownian motion with strictly positive correlation. The version of Theorem~\ref{thm-main} for $p=0$ (i.e.\ $\kappa = 8$), in which case $D(\cdot)$ is a two-dimensional simple random walk and the limiting object is a pair of independent Brownian excursions, follows from basic scaling limit results for simple random walks. See~\cite[Theorem 4]{dw-limit} for a much more general statement.
 
We remark that there are several other results in the literature concerning scaling limits of random walks conditioned to stay in a cone (and possibly on their endpoint); see, e.g.~\cite{shimura-cone-walk,garbit-cone-walk,dw-cones,dw-limit}. Theorem~\ref{thm-main} extends~\cite[Theorem 4]{dw-limit} to a certain non-Markovian random walk. 

We will also obtain a finite-volume analogue of the main result of \cite{gms-burger-cone}. To state this result, we need to recall some definitions from \cite{gms-burger-cone}.

\begin{defn}\label{def-cone-time}
A time $t$ is called a \emph{(weak) $\pi/2$-cone time} for a function $Z = (U,V) : \BB R \rta \BB R^2$ if there exists $t'< t$ such that $U_s \geq U_{t }$ and $V_s \geq V_{t }$ for $s\in [t'   , t ]$. Equivalently, $Z([t'   , t ])$ is contained in the ``cone" $Z_{t } + \{z\in \BB C : \op{arg} z \in [0,\pi/2]\}$. We write $  v_Z(t)$ for the infimum of the times $t'$ for which this condition is satisfied, i.e. $  v_Z(t)$ is the entrance time of the cone. The \emph{$\pi/2$-cone interval} corresponding to the time $t$ is $[v_Z(t) , t]$. We say that $t$ is a \emph{left (resp. right) $\pi/2$-cone time} if $V_t = V_{v_Z(t)}$ (resp. $U_t = U_{v_Z(t)}$). Two $\pi/2$-cone times for $Z$ are said to be in the \emph{same direction} if they are both left or both right $\pi/2$-cone times, and in the \emph{opposite direction} otherwise. For a $\pi/2$-cone time $t$, we write $ u_Z(t)$ for the supremum of the times $t^* < t$ such that
\eqbn
\inf_{s\in [t^* , t]} U_s  < U_t  \quad \op{and} \quad \inf_{s\in [t^* , t]} V_s  < V_t  .
\eqen
That is, $ u_Z(t)$ is the last time before $t$ that $Z$ crosses the boundary line of the cone which it does not cross at time $ v_Z(t)$.  
\end{defn} 

If $i \in \BB Z$ is such that $X_i = \tb F$, then $i/n$ and $(i-1)/n$ are (weak) $\pi/2$-cone time for $Z^n$ with $v_{Z^n}((i-1)/n) = \phi(i)/n$.

\begin{defn}\label{def-maximal}
A $\pi/2$-cone time for $Z$ is called a \emph{maximal $\pi/2$-cone time} in an (open or closed) interval $I\subset \BB R$ if $[ v_Z(t) , t] \subset I$ and there is no $\pi/2$-cone time $t'$ for $Z$ such that $[v_Z(t') , t']\subset I$ and $[ v_Z(t) , t] \subset (v_Z(t') , t')$. An integer $i\in \BB Z$ is called a \emph{maximal flexible order time} in an interval $I\subset \BB R$ if $X_i = \tb F$, $\{\phi(i) , \dots , i\} \subset I$, and there is no $i' \in \BB Z$ with $\phi(i') = \tb F$, $\{\phi(i) , \dots , i\} \subset \{\phi(i')+1 , \dots , i'-1\}$, and $\{\phi(i') , \dots, i'\}\subset I$. 
\end{defn}

We can now state our second main result, which is an exact analogue of \cite[Theorem 1.9]{gms-burger-cone} in the setting where we condition on $\{X(1,2n)=\emptyset\}$. 
 
\begin{thm} \label{thm-cone-limit-finite}
Let $\dot Z = (\dot U , \dot V)$ be a correlated two-dimensional Brownian motion as in~\eqref{eqn-bm-cov} conditioned to stay in the first quadrant during the time interval $[0,2]$ and satisfy $\dot Z(2) = 0$. Let $\mcl T$ be the set of $\pi/2$-cone times for $\dot Z$. For $n\in\BB N$, let $\dot X^n = \dot X_1^n \dots \dot X_{2n}^n$ be sampled according to the law of the word $X_1\dots X_{2n}$ conditioned on the event $\{X(1,2n) = \emptyset\}$ and let $\dot Z^n :[0,2]\rta\BB R^2$ be the path~\eqref{eqn-Z^n-def} corresponding to $\dot X^n$. Let $\mcl I_n$ be the set of $i\in [1,2n]_{\BB Z}$ such that $\dot X_i^n = \tb F$ and for $n\in\BB N$ let $\mcl T_n = \{n^{-1} (i-1) \,:\, i\in \mcl I_n\}$. Fix a countable dense set $\mcl Q \subset \BB R$. There is a coupling of the sequence $(\dot X^n)$ with the path $\dot Z$ such that the following holds a.s.
\begin{enumerate}
\item $\dot Z^n \rta \dot Z$ uniformly. \label{item-cone-limit-Z}
\item $\mcl T$ is precisely the set of limits of convergent sequences $(t_{n_j}) \in \mcl T_{n_j}$ satisfying $\liminf_{j\rta\infty} (t_{n_j} - v_{\dot Z^{n_j}}(t_{n_j})) > 0$ as $(n_j)$ ranges over all strictly increasing sequences of positive integers. \label{item-cone-limit-equals}
\item For each sequence of times $t_{n_j} \in \mcl T_{n_j}$ as in condition~\ref{item-cone-limit-equals}, we have $\lim_{j \rta\infty} v_{\dot Z^{n_j}}(t_{n_j}) = v_{\dot Z}(t)$, $\lim_{j\rta\infty} u_{\dot Z^{n_j}}(t_{n_j}) = u_{\dot Z}(t)$, and the direction of the $\pi/2$-cone time $t_{n_j}$ is the same as the direction of $t$ for sufficiently large $j$. \label{item-cone-limit-times}
\item Suppose given an open interval $I \subset [0,2]$ with endpoints in $\mcl Q$ and $a \in I \cap \mcl Q$. Let $t$ be the maximal (Definition~\ref{def-maximal}) $\pi/2$-cone time for $\dot Z$ in $I$ with $a\in [ v_{\dot Z}(t),t]$. For $n\in\BB N$, let $i_n$ be the maximal flexible order time (with respect to $\dot X^n$) $i$ in $n I$ with $a n \in [\phi(i)   , i ]$ (if such an $i$ exists) and $i_n = \lfloor a n \rfloor$ otherwise; and let $t_n = n^{-1} i_n$. Then a.s.\ $t_n\rta t$. \label{item-cone-limit-maximal}
\item For $a \in (0,2)$ and $a\in (0,2)$, let $\tau^{a,r}$ be the smallest $\pi/2$-cone time $t$ for $\dot Z$ such that $t\geq a$ and $t - v_{\dot Z}(t) \geq r$. For $n\in\BB N$, let $\iota_n^{a,r}$ be the smallest $i\in\BB N$ such that $X^n_i = \tb F$, $i \geq a n$, and $i - \phi(i) \geq r n - 1$; and let $\tau_n^{a,r} = n^{-1} \iota_n^{a,r}$. We have $\tau^{a,r}_n \rta \tau^{a,r}$ for each $(a,r) \in \mcl Q\times (\mcl Q\cap (0,\infty))$. \label{item-cone-limit-stopping}
\end{enumerate}
\end{thm}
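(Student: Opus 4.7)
The plan is to reduce Theorem~\ref{thm-cone-limit-finite} to its infinite-volume counterpart~\cite[Theorem 1.9]{gms-burger-cone} via a local absolute continuity argument, using Theorem~\ref{thm-main} as the global scaling input. First, Skorokhod's representation theorem applied to Theorem~\ref{thm-main} produces a coupling of $(\dot X^n)$ with $\dot Z$ under which $\dot Z^n \to \dot Z$ uniformly on $[0,2]$ almost surely, yielding item~(\ref{item-cone-limit-Z}).

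For items~(\ref{item-cone-limit-equals})--(\ref{item-cone-limit-stopping}), the main observation is that the $\pi/2$-cone time/flexible order correspondence is a \emph{local} feature of the path and the word: whether $t$ is a cone time of $\dot Z$ depends only on $\dot Z|_{[v_{\dot Z}(t), t]}$, and whether $\dot X^n_i = \tb F$ with prescribed match $\phi(i)$ depends only on $\dot X^n|_{[\phi(i), i]}$. So for any fixed $\epsilon \in \mcl Q \cap (0,1)$, it suffices to verify the desired assertions for cone times (resp.\ flexible orders) whose cone interval (resp.\ match interval) is contained in $[\epsilon, 2-\epsilon]$ (resp.\ $[\epsilon n, (2-\epsilon) n]$), and then take a countable intersection over $\epsilon$ in a dense sequence. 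The key ingredient is to establish that on this fixed subinterval, the conditional law of $(\dot X^n, \dot Z^n)$ given $\{X(1,2n)=\emptyset\}$ is absolutely continuous with respect to the unconditioned law of $(X, Z^n)$ with a Radon-Nikodym derivative that is tight in $n$. Granted this, the a.s.\ statements of~\cite[Theorem 1.9]{gms-burger-cone} transfer verbatim to $[\epsilon, 2-\epsilon]$, which gives items~(\ref{item-cone-limit-equals})--(\ref{item-cone-limit-stopping}) up to boundary effects.

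To dispose of the boundary, on the continuum side I would show that a.s.\ $\dot Z$ has no $\pi/2$-cone time $t$ with $t - v_{\dot Z}(t)$ bounded below whose cone interval touches $\{0,2\}$; this follows from the explicit construction of $\dot Z$ in~\cite[Section 3]{sphere-constructions} together with standard correlated-Brownian cone estimates as in~\cite{gms-burger-cone,shimura-cone}. On the discrete side I would show that with probability $1 - o_\epsilon(1)$ uniformly in $n$, no flexible order $i \in [1,\lfloor\epsilon n\rfloor]\cup[\lceil(2-\epsilon)n\rceil, 2n]$ in $\dot X^n$ has $i - \phi(i) \geq r n$ for the relevant thresholds $r$; this follows from the same density comparison together with the order-count estimates of~\cite{gms-burger-local}.

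The main obstacle is establishing the absolute continuity. The hamburger-cheeseburger matching is non-Markovian, so flexible orders in the middle window $[\lfloor\epsilon n\rfloor+1, \lceil(2-\epsilon)n\rceil-1]$ can match symbols in the endpoint windows, and the conditioning $\{X(1,2n)=\emptyset\}$ does not factor across the decomposition. The plan is to condition on the full reduced words $X(1, \lfloor\epsilon n\rfloor)$ and $X(\lceil(2-\epsilon)n\rceil, 2n)$, which determines how many unmatched orders and burgers from the middle must be supplied by the endpoint segments, and then use the polynomial asymptotic $\BB P(X(1,2n)=\emptyset) = n^{-1-2\mu+o_n(1)}$ from~\cite[Theorem 1.10]{gms-burger-local} together with tail bounds on $\frk o, \frk h, \frk c$ from~\cite{gms-burger-cone,blr-exponents} to show that, with probability tending to $1$, the two endpoint reduced words lie in a typical range where the conditional density of the middle segment is uniformly bounded. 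Once this density bound is in hand, the remainder is routine bookkeeping from the infinite-volume argument.
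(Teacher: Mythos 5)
Your item~(\ref{item-cone-limit-Z}) and the overall shape (get a coupling via Theorem~\ref{thm-main} and Skorokhod, then import the infinite-volume argument of \cite[Theorem 1.9]{gms-burger-cone}) match the paper, but the engine you propose for items~(\ref{item-cone-limit-equals})--(\ref{item-cone-limit-stopping}) has a genuine gap. Your central claim is that the conditional law of the middle window $X_{\lfloor \epsilon n\rfloor+1}\dots X_{\lceil (2-\epsilon)n\rceil -1}$ given $\{X(1,2n)=\emptyset\}$ is absolutely continuous with respect to the \emph{unconditioned} law with tight Radon--Nikodym derivative, to be proved by conditioning on the two endpoint reduced words and invoking $\BB P(X(1,2n)=\emptyset)=n^{-1-2\mu+o_n(1)}$ plus tail bounds. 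That derivative equals $\BB P(X(1,2n)=\emptyset \mid M=m)/\BB P(X(1,2n)=\emptyset)$, and controlling the numerator for typical $m$ is a sharp two-sided local limit theorem for a non-Markovian walk; because of the $\tb F$-symbols, $\BB P(X(1,2n)=\emptyset\mid M=m)$ depends on the detailed arrangement of the flexible orders in $m$ and in the endpoint segments, not just on the counts in the reduced words. This is exactly the obstruction the paper identifies in Section~\ref{sec-outline} and declares intractable by direct combinatorics; the paper gets around it only via the re-rooting/big-loop reduction (Proposition~\ref{prop-big-loop}), the construction of $\wt\pi_n$ (Proposition~\ref{prop-wt-pi-exists}), and absolute continuity relative to the law conditioned on $\mcl B_n^\delta(C)$ (Propositions~\ref{prop-E^l-abs-cont} and~\ref{prop-end-box}) --- not relative to the unconditioned law, and even then only up to multiplicative constants. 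So the step you flag as "the main obstacle" is not a technicality to be absorbed; it is the content of Sections~\ref{sec-big-loop}--\ref{sec-end-local}, and your sketch does not engage with it.

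Two further problems. First, a tight Radon--Nikodym derivative transfers "with probability $1-o_n(1)$" statements, not the a.s.\ statements of \cite[Theorem 1.9]{gms-burger-cone}, which are properties of a specific coupling; to build the finite-volume coupling you still need joint convergence in law of $\dot Z^n$ with the stopping times $\tau_n^{a,r}$, and the crucial input there is a quantitative separation estimate (the paper's Lemmas~\ref{prop-cone-detect-path}--\ref{prop-cone-conv-finite}: $\inf_{t\in[a,\tau_n^{a,r}-\alpha]}|\ol Z_r^n(t)|$ is bounded below with high conditional probability) ruling out that $\tau_n^{a,r}$ converges to a \emph{later} cone time. Your proposal omits this entirely. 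Second, truncating to cone intervals inside $[\epsilon,2-\epsilon]$ discards precisely the macroscopic cone times whose intervals nearly exhaust $[0,2]$ (e.g.\ $t=2$ itself is a $\pi/2$-cone time with $v_{\dot Z}(2)=0$, and $\tau^{a,r}$ with $r$ close to $2$ must be handled in item~(\ref{item-cone-limit-stopping})); these are not vanishing boundary effects but the very objects that Sections~\ref{sec-big-loop} and~\ref{sec-macroscopic-cone} exist to control, so your claimed continuum boundary lemma is false as stated and the truncation loses part of items~(\ref{item-cone-limit-equals}) and~(\ref{item-cone-limit-stopping}).
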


Theorem~\ref{thm-cone-limit-finite} will turn out to be a straightforward consequence of our proof of Theorem~\ref{thm-main} combined with \cite[Theorem 1.9]{gms-burger-cone}. 

The $\pi/2$-cone times of the correlated Brownian motion $\dot Z$ encode the $\op{CLE}_\kappa$ loops on a $\gamma$-quantum sphere; whereas the times $i$ for which $X_i = \tb F$ encode the FK loops on the FK planar map $(M,\mcl L)$ (see \cite{wedges,gms-burger-cone,gwynne-miller-cle} for more details regarding this correspondence). Therefore, Theorems~\ref{thm-main} and~\ref{thm-cone-limit-finite} together imply that many interesting functionals of the FK loops (e.g.\ the boundary lengths and areas of their complementary connected components and the adjacency graph on the set of loops) converge in the scaling limit to the corresponding functionals of the $\op{CLE}_\kappa$ loops on a $\gamma$-quantum sphere. This answers \cite[Question 13.3]{wedges} in the finite volume case.

\subsection{Outline}
\label{sec-outline}

In this subsection we will give an overview of the proofs of our main results and some motivation for our method. We will also describe the content of the remainder of this article. 

\begin{remark}
Sections~\ref{sec-bm-cond} through~\ref{sec-conclusion} can be read largely independently of each other. Only a few main results from each section are needed in subsequent sections, and these main results are stated near the beginning of each section. 
\end{remark} 

In Section~\ref{sec-bm-cond}, we will describe how to make sense of Brownian motion conditioned on various zero-probability events. In particular, we will give a new construction of the limiting object in Theorem~\ref{thm-main} (which was originally constructed in~\cite[Section 3]{sphere-constructions}) which gives an explicit formula for the law of the conditioned Brownian path at each time.

In the remainder of the paper, we turn our attention to the proofs of Theorems~\ref{thm-main} and~\ref{thm-cone-limit-finite}. 
See Figure~\ref{fig-whole-path} for an illustration of our argument.

\begin{figure}[ht!]
 \begin{center}
\includegraphics{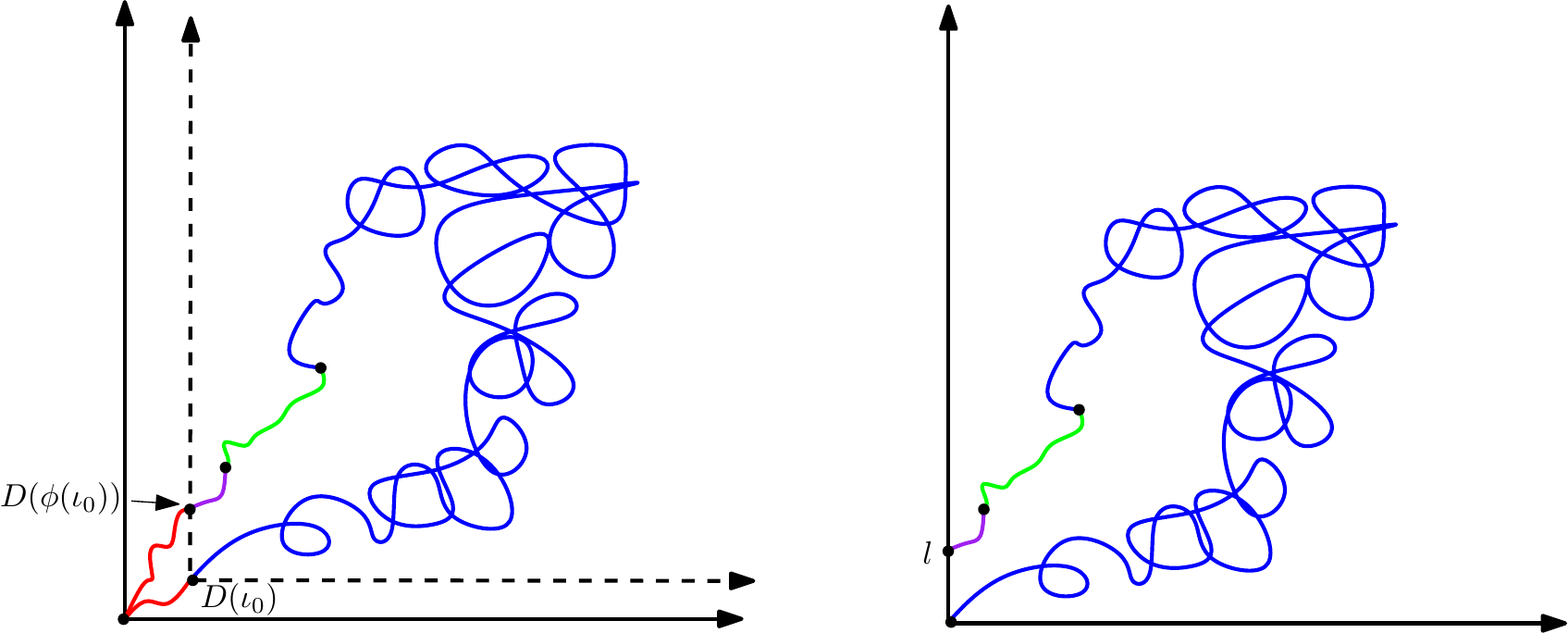} 
\caption{An illustration of the proof of Theorems~\ref{thm-main} and~\ref{thm-cone-limit-finite}. The left figure shows that path $D$ of~\eqref{eqn-discrete-path} on the event $\{X(1,2n) = \emptyset\}$. The first step, accomplished in Section~\ref{sec-big-loop}, is to prove the existence of a large interval $[\phi(\iota_0) ,\iota_0]_{\BB Z}$ with $\iota_0 - \phi(\iota_0) \approx 2n- n^\xi$ for $\xi$ slightly smaller than 1. The image of this interval under the path $D$ is the left curve minus the two red segments. We then condition on the two red segments and consider the problem of showing convergence conditioned on the event that $X(1,2n)$ contains $l \asymp n^{\xi/2}$ cheeseburger orders and no other symbols (right figure). We analyze this situation by first growing the curve backward up to time $-(2n-\delta n)$ for fixed $\delta>0$ (blue segment). Conditional on a realization of the blue curve, we continue until we get to a certain time $j$ at distance of order $n^\xi$ from $2n$, with the property that each coordinate of $D(X(-j,-1)$ lies at distance of order $n^{\xi/2}$ from the origin (green segment). Estimates for this segment of the curve are proven in Sections~\ref{sec-pi-reg} and~\ref{sec-end-local}. Finally, we grow the purple segment of the curve until the path exits the first quadrant at $l$. We analyze this part of the curve using the results of \cite{gms-burger-local} together with the regularity conditions on the event of Section~\ref{sec-big-loop}.}\label{fig-whole-path}
\end{center}
\end{figure}

The basic idea of the proofs of Theorems~\ref{thm-main} and~\ref{thm-cone-limit-finite} is as follows. Let $\delta >0$ be small but independent of $n$. Suppose we read the word $X$ backward and we are given a realization $x$ of $X_{-m} \dots X_{-1}$ for some $m  \in [2n-\delta n , 2n]_{\BB Z}$ such that the reduced word $\mcl R(x)$ contains no burgers. Let $\frk h (x) = \mcl N_{\tb H}\left(\mcl R(x)\right)$ and $\frk c(x) = \mcl N_{\tb C}\left(\mcl R(x)\right)$ be the number of hamburger orders and cheeseburger orders, respectively, in the reduced word corresponding to this realization, as in Definition~\ref{def-theta-count}. Ideally, we would like to establish the following (although we will actually prove something similar but more complicated).
\begin{itemize} 
\item $ \BB P\left(X(-2n,-1) =\emptyset \,|\, X_{-m} \dots X_{-1} =  x\right) $ is bounded above by a constant times $n^{-1-\mu}$ times a slowly varying function of $n$, no matter how pathological the realization $x$ is. 
\item For a typical realization $x$, $ \BB P\left(X(-2n,-1) =\emptyset \,|\, X_{-m} \dots X_{-1} =  x\right) $ is bounded below by a constant times $n^{-1-\mu}$ times this same slowly varying function of $n$.
\item Conditional on $\{X(-2n,-1) =\emptyset\}$ and $X_{-m} \dots X_{-1} =  x$, it is unlikely that $\sup_{i\in [m,2n]_{\BB Z}} |X(-j,-m-1)|$ is larger than $n^{1/2}$ times some quantity which tends to 0 as $\delta \rta 0$. 
\item $\BB P\left(X(-2n,-1) =\emptyset \,|\, X_{-m} \dots X_{-1} =  x\right)$ depends ``continuously" on $(\frk h(x) , \frk c(x))$, in the sense that
\[ 
\frac{\BB P\left(X(-2n,-1) =\emptyset \,|\, X_{-m} \dots X_{-1} =  x\right)}{\BB P\left(X(-2n,-1) =\emptyset \,|\, X_{-m} \dots X_{-1} =  x' \right)}
\]
 is close to 1 when $| \frk h(x) - \frk h(x')|$ and $|\frk c(x) - \frk c(x')|$ are bounded above by a small multiple of $n^{1/2}$. 
\end{itemize}
Using Bayes rule and the above statements, one obtains that the conditional law of $X_{-m}\dots X_{-1}$ given $\{X(-2n,-1) = \emptyset\}$ is close to its conditional law given only that $X(-m,-1)$ contains no burgers; and that the path $Z^n(-\cdot)$ unlikely to move very much between times $n^{-1} m$ and $2$ when we condition on $\{X(-2n,-1) =\emptyset\}$ and $X_{-m} \dots X_{-1} =  x$. This allows us to compare the limit of the conditional law of $Z^n$ given $\{X(-2n,-1) =\emptyset\}$ to the limit of the conditional law of $Z^n$ given that $ X(-2n,-1)  $
contains no orders. This latter scaling limit is obtained in \cite[Theorem 4.1]{gms-burger-cone}. See~\cite{gms-burger-local} for a similar argument when we condition the tip of the path $Z^n$ to be in the interior of the first quadrant, rather than at the origin; and~\cite{cc-pos-bridge,dw-limit} for similar arguments in the case of random walks with iid increments. 

In order to use the above approach, one requires ``local estimates", i.e.\ estimates for the probability that the word $X(-2n,-m-1)$ contains a \emph{particular} number of symbols of a given type, under various conditionings. Several such estimates are proven in \cite{gms-burger-local}, and we will expand on these estimates in this paper. 

However, local estimates are not sufficient for our purposes, for the following reason. The event $\{X(-2n,-1) = \emptyset\}$ is not determined solely by the number of symbols of each type in $X(-2n,-m-1)$ and $X(-m,-1)$. Rather, due to the presence of $\tb F$-symbols, this event depends in a complicated way on the precise ordering of the symbols in these two words. We know by \cite[Corollary 5.2]{gms-burger-cone} that with overwhelming probability, the number of $\tb F$-symbols in $X(-m,-1)$ is of order at most $n^{1-\mu + o_n(1)} = o_n(n^{1/2})$. Nevertheless, fluctuations of order $n^{1-\mu + o_n(1)}$ due to these flexible orders can still effect whether $X(-2n,-1) =\emptyset$. Even inductive arguments as in~\cite[Section 5]{gms-burger-local} produce estimates which are off by a  factor of $n^{o_n(1)}$, which is not sufficiently precise for our purposes. 

We are unable to address the combinatorics of how the words $X(-2n,-m-1)$ and $X(-m,-1)$ match up in the presence of $\tb F$-symbols (indeed, we expect that it is not possible to do so in a sufficiently precise way for our purposes). Instead, we use various methods to avoid dealing with the matches of flexible orders directly. 

In Section~\ref{sec-big-loop}, we prove a proposition which reduces the problem of showing convergence conditioned on $\{X(-2n,-1)= \emptyset\}$ to the problem of showing convergence conditioned on the event that $X(-2n,-1)$ contains a specific number (say $l$) of cheeseburger orders, with $l \asymp n^{\xi/2}$ for $\xi$ a little smaller than 1, and no other symbols; plus some further regularity conditions. This problem turns out to be more tractable than the original problem, since we can arrange that with high probability there is a $k\in\BB N$ with $k \asymp n^\xi$ with the property that all of the flexible orders in $X(-2n+k,-1)$ are matched to hamburgers in $X(-2n,-2n+k-1)$ (so we only need to estimate about the total number of symbols of each type in $X(-2n,-2n+k-1)$, not their ordering). 

In order to reduce our problem in this manner, we will exploit a symmetry of finite-volume FK planar maps which does not have a straightforward description in terms of the hamburger-cheeseburger model. Namely, we will use that the law of a finite-volume rooted FK planar map is invariant under uniform re-rooting. For this argument we read the word $X$ forward, rather than backward. The scaling limit results of \cite{gms-burger-cone,gwynne-miller-cle} allow us to compare the local behavior of the FK loops on a finite-volume FK planar maps to the local behavior of $\op{CLE}_\kappa$ loops on a $\gamma$-quantum cone when $n$ is large. Using this, we will argue that if $n$ is large and we condition on $\{X(1,2n) = \emptyset\}$, then with high probability the following is true. Let $(M,e_0,\mcl L)$ be the rooted FK planar map associated with $X_1,\dots,X_{2n}$. Also let $(Q,\BB e_0)$ be the associated quadrangulation as in~\cite[Section 4.1]{shef-burger}. Then times $i\in [1,2n]_{\BB Z}$ correspond to edges $\lambda(i)$ of $Q$ which are contained in a complementary connected component of some loop with area $\asymp n^\xi$ and boundary length $\asymp n^{\xi/2}$. By the explicit description of the loops of $\mcl L$ in terms of the $\tb F$-symbols in the word $X_{1} \dots X_{2n}$, if $i \in [1,2n]_{\BB Z}$ is such a time and we re-choose the root edge of $e_0$ of the FK planar map in such a way that $\lambda(i)$ becomes the root edge of $Q$, then there is a time $\iota_0^n \in [2n-n^\xi , 2n]_{\BB Z}$ such that $X_{ \iota_0^n} = \tb F$, $X_{\phi(\iota_0^n)} = \tc H$, $\phi(\iota_0^n) \in [0,n^\xi]_{\BB Z}$, and $|X(\phi(\iota_0^n) , \iota_0^n)| \asymp n^{\xi/2}$.
By invariance of the rooted FK planar map $(M,e_0, \mcl L)$ under uniform rerooting, this shows that such a time $\iota_0^n$ exists with high probability when we condition on $\{X(1,2n)=\emptyset\}$. The restriction of $Z^n$ to $[n^{-1}\phi(\iota_0^n) , n^{-1} \iota_0^n] $ is likely to be close to $Z^n$ itself, and the conditional law of $X_{ \phi(\iota_0^n)} \dots X_{\iota_0^n}$ given $\phi(\iota_0^n)$, $\iota_0^n$, and $|X(\phi(\iota_0^n) , \iota_0^n)|$ is the same as its conditional law given that $X(\phi(\iota_0^n ) , \iota_0^n)$ contains exactly $|X(\phi(\iota_0^n) , \iota_0^n)|$ cheeseburger orders and no other symbols. This accomplishes the desired reduction. 

In Section~\ref{sec-pi-reg}, we will establish the existence of a time $\wt\pi_n$ with the following properties. On the event of Section~\ref{sec-big-loop}, each cheeseburger in $X(-J,-\wt\pi_n-1)$ is matched to a cheeseburger order in $X(-\wt\pi_n,-1)$ (with $J$ as in~\eqref{eqn-J-def}); and, although $\wt\pi_n$ is not a stopping time for the word $X$, read backward, the conditional law of $X_{-J}\dots X_{-\wt\pi_n-1}$ is ``close" to the conditional law we would get if $\wt\pi_n$ were a stopping time. The time $\wt\pi_n$ corresponds to the last point of the green segment and the first point of the purple segment in Figure~\ref{fig-whole-path}. The aforementioned properties of $\wt\pi_n$ will enable us to apply the estimates of \cite{gms-burger-local} to the purple segment in Figure~\ref{fig-whole-path}.

In Section~\ref{sec-end-local}, we will use the results of Section~\ref{sec-pi-reg} together with the estimates of \cite{gms-burger-local} to estimate the conditional probability that the path $D$ exits the first quadrant at the marked point $l$ in Figure~\ref{fig-whole-path} given a realization of the blue part of the path. These estimates will be used together with Bayes' rule to compare the conditional law of the blue part of the path in Figure~\ref{fig-whole-path} given the event depicted on the right side of the figure to its conditional law given only that it stays in the first quadrant until time $m$; this latter conditional law can in turn be estimated using \cite[Theorem 4.1]{gms-burger-cone}. Roughly speaking, we obtain that the former law is absolutely continuous with respect to the latter, with Radon-Nikodym derivative bounded independently of $n$ and of $l\asymp n^{\xi/2}$; see Propositions~\ref{prop-E^l-abs-cont} and~\ref{prop-end-box} for precise statements. 

In Section~\ref{sec-conclusion}, we will combine the results of Sections~\ref{sec-bm-cond},~\ref{sec-big-loop}, and~\ref{sec-end-local} to prove Theorems~\ref{thm-main} and~\ref{thm-cone-limit-finite}. The results of Sections~\ref{sec-end-local} and~\ref{sec-big-loop} almost immediately imply tightness of the conditional laws of $Z^n|_{[0,2]}$ given $\{X(1,2n) = \emptyset\}$. Hence to prove Theorem~\ref{thm-main} we just need to show that any subsequently limiting law is that of the path $\dot Z$ in Theorem~\ref{thm-main}. 

The estimates of Section~\ref{sec-end-local} are not quite enough for this purpose, as these estimates are off by constant factors, so can at best tell us that a subsequential limiting law is mutually absolutely continuous with respect to the law of $\dot Z$ when restricted to $[0,t]$ for $t<2$. To get around this difficulty, we use an argument which is similar to the argument of the earlier sections of the paper, but with a small multiple of $n^{1/2}$ in place of $n^{\xi/2}$. Namely, we use the results of Sections~\ref{sec-big-loop} and~\ref{sec-end-local} to show that with high conditional probability given $\{X(1,2n)=\emptyset\}$, there is a ``macroscopic $\tb F$-interval" in $[1,2n]_{\BB Z}$ satisfying certain conditions, i.e., an $i\in [1,2n]_{\BB Z}$ such that $X_{ i} = \tb F$, $X_{\phi(i)} = \tc H$, $2n-i$ and $\phi(i)$ are small (but $n$-independent) multiples of $n^{1/2}$; $|X(\phi(i) , i)|$ is a small (but $n$-independent) multiple of $n^{1/2}$; and several regularity conditions are satisfied. We then condition on $X_1\dots X_{\phi(i)-1}$, $X_i\dots X_{2n}$, and $|X(\phi(i) , i)|$ for such an $i$ and show that, roughly speaking, the conditional law of the restriction of $Z^n$ to $[\phi(i)/n, i/n]$ is close to what we expect it to be in the limit. This will yield Theorem~\ref{thm-main}. Our second main result Theorem~\ref{thm-cone-limit-finite} is extracted in Section~\ref{sec-cone-proof} from the same estimates used to prove Theorem~\ref{thm-main} together with the results of~\cite{gms-burger-cone}.

\section{Conditioned Brownian motion}
\label{sec-bm-cond}

\subsection{Definitions and characterization lemmas}
\label{sec-bm-def}

In this section we will consider a correlated two-dimensional Brownian motion $Z = (U,V)$ as in~\eqref{eqn-bm-cov} conditioned on various zero-probability events. 

We start by considering the law of $Z$ conditioned to stay in the first quadrant, which is also studied in~\cite{shimura-cone,gms-burger-cone}.
In \cite[Theorem 2]{shimura-cone}, Shimura proves, for each $\theta \in (0,2\pi)$, the existence of a probability measure on the space of continuous functions $[0,1] \rta \BB R^2$ which is interpreted as the law of a standard two-dimensional Brownian motion (started from 0) conditioned to stay in the cone $\{z\in\BB C \,:\, 0 \leq \op{arg} z  \leq \theta\}$ for one unit of time. By applying an appropriate linear transformation to a path with this law, we obtain a random path $\wh Z$ which we interpret as the correlated two-dimensional Brownian motion $Z$ in~\eqref{eqn-bm-cov} conditioned to stay in the first quadrant for one unit of time. By scaling, one obtains the law of $Z$ conditioned to stay in the first quadrant for a general $t >0$ units of time.
 The following lemma, which is \cite[Lemma 2.1]{gms-burger-cone}, uniquely characterizes the law of $\wh Z$.

\begin{lem}[\cite{gms-burger-cone}] \label{prop-bm-meander}
Let $\wh Z = (\wh U , \wh V)  : [0,1]\rta\BB R^2$ be a random path distributed according to the conditional law of $Z $ (started from 0) given that it stays in the first quadrant for one unit of time. Then $\wh Z$ is a.s.\ continuous and satisfies the following conditions.
\begin{enumerate}
\item For each $t \in (0,1]$, a.s.\ $\wh U(t)>0$ and $\wh V(t)>0$. \label{item-bm-meander-pos}
\item For each $t \in (0,1)$, the regular conditional law of $\wh Z|_{[ t , 1]}$ given $\wh Z|_{[0,t]}$ is that of a Brownian motion with covariances as in~\eqref{eqn-bm-cov}, starting from $\wh Z(t)$, parametrized by $[t,1]$, and conditioned on the (a.s.\ positive probability) event that it stays in the first quadrant.  \label{item-bm-meander-markov}
\end{enumerate}
If $\wt Z    : [0,1]\rta\BB R^2$ is another random a.s.\ continuous path satisfying the above two conditions, then $\wt Z\eqD \wh Z$.
\end{lem}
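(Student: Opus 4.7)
The plan is to first verify that Shimura's construction of $\wh Z$ satisfies conditions (1) and (2), and then prove uniqueness by matching finite-dimensional distributions.

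\textbf{Existence.} For condition (2), I would appeal to the ordinary Markov property of the unconditioned correlated Brownian motion $Z$. For fixed $t \in (0,1)$, the conditioning event ``$Z$ stays in the first quadrant on $[0,1]$'' factors as $A_t \cap B_t$, where $A_t \in \sigma(Z|_{[0,t]})$ and $B_t \in \sigma(Z|_{[t,1]})$. Conditioning on $A_t$ alters only the law of the past, and the Markov property combined with the (a.s.\ positive) conditional probability of $B_t$ given $Z(t)$ yields the stated conditional law on $[t,1]$. For condition (1), I would use that Shimura's $\epsilon\to 0$ limit is an $h$-transform with respect to the harmonic function $h_u(z) := \BB P_z(Z \text{ stays in the quadrant for } u \text{ units})$, which pushes the process away from the boundary; the fact that $\wh Z(t)$ lies in the open quadrant at each fixed positive $t$ then follows from standard transience of $h$-transformed Brownian motion away from its absorbing boundary, or directly by bounding the mass that $Z$ conditioned to remain in the $\epsilon$-shifted quadrant puts near the axes at time $t$ and passing to the limit.

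\textbf{Uniqueness.} Given $\wt Z$ satisfying (1) and (2), it suffices to match all finite-dimensional marginals. Applied at the earliest sample time $t_1 > 0$, condition (2) says that the conditional law of $(\wt Z(t_2),\ldots,\wt Z(t_k))$ given $\wt Z(t_1)$ is that of the restriction to these times of a Brownian motion started from $\wt Z(t_1)$, conditioned on the positive-probability event of staying in the quadrant over $[t_1,1]$, and the same conditional law holds for $\wh Z$. Hence it is enough to show that the one-dimensional marginal of $\wt Z(t)$ equals that of $\wh Z(t)$ for every $t \in (0,1]$.

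\textbf{Reducing to an entrance law.} To pin down the one-dimensional marginal, I would apply condition (2) at a small time $s \in (0,t)$: the conditional law of $\wt Z(t)$ given $\wt Z(s)=w$ is a kernel $K_{s,t}(w,\cdot)$ determined entirely by the conditioned-BM transition mechanism, and hence is the same for $\wt Z$ and for $\wh Z$. Letting $s\to 0$ and using continuity of $\wt Z$ together with condition (1) applied along a countable dense set of times (which forces $\wt Z(0)=0$), one has $\wt Z(s)\to 0$ almost surely, so the one-dimensional law of $\wt Z(t)$ is the limit in distribution of $K_{s,t}(\wt Z(s),\cdot)$. Showing that this limit equals the law of $\wh Z(t)$ is the analytic heart of the argument: it amounts to identifying the common entrance law from the corner of the quadrant of the $h$-transformed Brownian motion. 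The hardest step will be the uniform control of $K_{s,t}(w,\cdot)$ as $(s,w)\to(0,0)$ along the random trajectory $\wt Z$; I would obtain this by combining scale invariance of the quadrant at the origin (i.e.\ Brownian scaling applied to $(Z_s,Z_{s+\cdot})$ at rate $s^{-1/2}$) with the existence and uniqueness of Shimura's weak limit, so that both $\wt Z$ and $\wh Z$ are forced to have the same entrance law at $0$.
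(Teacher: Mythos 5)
Note first that the present paper does not actually prove this lemma: it is imported verbatim from \cite[Lemma 2.1]{gms-burger-cone} (hence the citation in the statement), so there is no in-paper argument to compare yours against. Your overall strategy --- verify (1) and (2) for Shimura's limit, use (2) to reduce uniqueness to the one-dimensional marginals, and identify that marginal as the unique entrance law of the conditioned process at the corner of the quadrant --- is the natural and, I believe, intended route. But the uniqueness half as written has two genuine problems.

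First, condition (1) applied along a countable dense set of times does \emph{not} force $\wt Z(0)=0$; by continuity it only places $\wt Z(0)$ in the closed quadrant. The hypothesis $\wt Z(0)=0$ has to be added, not derived: without it the uniqueness claim is false, since a correlated Brownian motion started from $(1,1)$ and conditioned on the (positive-probability) event of staying in the first quadrant on $[0,1]$ satisfies both (1) and (2) but is not $\wh Z$. Second, the step you yourself call ``the analytic heart'' is exactly where the proof lives, and your plan does not yet close it. You need $K_{s,t}(w,\cdot)$ to converge to the law of $\wh Z(t)$ as $(s,w)\rta(0,0)$ with $w$ ranging over the \emph{random} points $\wt Z(s)$, about whose angular position relative to the corner conditions (1)--(2) tell you nothing. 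Shimura's Theorem 2 is invoked elsewhere in this paper only uniformly over $w\in\left[C^{-1}\delta^{1/2},C\delta^{1/2}\right]^2$, i.e.\ with $\op{arg}(Aw)$ bounded away from the boundary rays, and the conditioning degenerates as $w$ approaches a boundary ray (started on a ray off the vertex, the event of remaining in the quadrant again has probability zero). So you must either supply an entrance-law convergence valid for arbitrary approach to the vertex within the open cone, or show a priori that $\wt Z(s)$ does not concentrate near the boundary rays at scale $|\wt Z(s)|$ as $s\rta 0$; neither follows from (1), (2), and Brownian scaling alone. (The existence half has a milder version of the same issue: the conditioning event for $\wh Z$ has probability zero, so the factorization $A_t\cap B_t$ must be performed for the approximating positive-probability conditionings and passed to the limit using weak continuity of $z\mapsto\BB P^z(\cdot\mid G_{1-t})$ on the open quadrant; that part is routine.)
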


We will also need to consider the law of the correlated Brownian motion $ Z$ of~\eqref{eqn-bm-cov} (started from 0) conditioned to stay in the first quadrant for one unit of time and return to the origin at time 0. This law is described in \cite[Theorem 3.1]{sphere-constructions}, and (by that theorem) is uniquely characterized as follows.

\begin{lem} \label{prop-bm-excursion}
Let $\dot Z = (\dot U , \dot V)  : [0,1]\rta\BB R^2$ be sampled from the conditional law of $Z|_{[0,1]}$ (started from 0) given that it stays in the first quadrant and returns to 0 at time 1. Then $\dot Z$ is a.s.\ continuous and satisfies the following conditions.
\begin{enumerate}
\item For each $t \in [0,1]$, a.s.\ $\dot U(t) \not=0$ and $\dot V(t) \not=0$. \label{item-bm-excursion-pos}
\item For each $0<s<t<1$, the regular conditional law of $\dot Z|_{[s, t]}$ given $\dot Z|_{[0,s]}$ and $\dot Z|_{[t,1]}$ is that of a Brownian bridge with covariances as in~\eqref{eqn-bm-cov} from $\dot Z(s)$ to $\dot Z(t)$, parametrized by $[s,t]$, and conditioned on the (a.s.\ positive probability) event that it stays in the first quadrant. \label{item-bm-excursion-markov}
\end{enumerate}
If $\ddot Z   : [0,1]\rta\BB R^2$ is another random continuous path satisfying the above two conditions, then $\ddot Z\eqD \dot Z$.
\end{lem}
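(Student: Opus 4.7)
The plan is to exhibit a construction of $\dot Z$, verify properties~(1) and~(2) for this construction, and then argue uniqueness by showing that any other process satisfying~(1) and~(2) must have the same law.

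For existence, I would construct $\dot Z$ (as in \cite[Theorem 3.1]{sphere-constructions}, or equivalently via the explicit construction to be given in the next subsection of this paper) as the weak limit of the conditional law of $Z|_{[0,1]}$ given $\{Z([0,1]) \subset [0,\infty)^2\} \cap \{|Z(1)| \leq \epsilon\}$ as $\epsilon \downarrow 0$. The existence of this weak limit will follow from the asymptotic cone-staying estimates for correlated Brownian motion (e.g.\ \cite{shimura-cone}, \cite[Lemma 2.2]{gms-burger-cone}) combined with Doob $h$-transform theory; almost-sure continuity is then inherited from $Z$. Property~(1) will hold because the Doob $h$-transformed transition kernel has a density with respect to Lebesgue measure supported in the open first quadrant, so for each fixed $t\in(0,1)$ the coordinates of $\dot Z(t)$ are a.s.\ strictly positive. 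Property~(2) follows from the Markov property of $Z$: conditioning on the global event $\{Z$ stays in $[0,\infty)^2,\, Z(1)=0\}$ and then inspecting the conditional law on $[s,t]$ given the path on $[0,s]\cup [t,1]$ leaves only the Markovian endpoint data $(\dot Z(s),\dot Z(t))$ together with the constraint that the restriction to $[s,t]$ stays in the first quadrant, which is by definition a Brownian bridge conditioned to stay positive.

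For uniqueness, let $\ddot Z$ be another a.s.-continuous process satisfying (1) and (2); I would like to show $\ddot Z \eqD \dot Z$. By~(2), for any $0<s<t<1$ the regular conditional law of $\ddot Z|_{[s,t]}$ given $(\ddot Z(s),\ddot Z(t))$ coincides with that of $\dot Z|_{[s,t]}$ given $(\dot Z(s),\dot Z(t))$, so it suffices to show that the joint law of $(\ddot Z(s),\ddot Z(t))$ agrees with that of $(\dot Z(s),\dot Z(t))$ for every such pair. My plan is to fix such an $(s,t)$ and a small $\delta \in (0, s\wedge(1-t))$, and apply~(2) with $(s,t)$ replaced by $(\delta,1-\delta)$ to obtain, for every Borel $A$, the representation
\begin{equation*}
\BB P\bigl[(\ddot Z(s),\ddot Z(t))\in A\bigr] = \BB E\Bigl[ \mathbf Q^{[\delta,1-\delta],+}_{\ddot Z(\delta),\,\ddot Z(1-\delta)}\!\bigl[(W(s),W(t))\in A\bigr] \Bigr] ,
\end{equation*}
where $\mathbf Q^{[\delta,1-\delta],+}_{x,y}$ denotes the law of a correlated Brownian bridge from $x$ to $y$ on $[\delta,1-\delta]$ conditioned to stay in the first quadrant and $W$ is the bridge process. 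I would then let $\delta\downarrow 0$: by a.s.-continuity and the boundary conditions $\ddot Z(0)=\ddot Z(1)=0$ (which are forced by the conditioning together with continuity), $\ddot Z(\delta)$ and $\ddot Z(1-\delta)$ tend to $0$ a.s. A stability statement for the positive-bridge law as a function of its endpoints should give that the inner probability converges to $\BB P[(\dot Z(s),\dot Z(t))\in A]$, and taking the limit yields $(\ddot Z(s),\ddot Z(t))\eqD (\dot Z(s),\dot Z(t))$, whence $\ddot Z\eqD\dot Z$.

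The hard step will be the stability of the positive-bridge law $\mathbf Q^{[\delta,1-\delta],+}_{x,y}$ as $(x,y)\to(0,0)$ along the a.s.-realization of $(\ddot Z(\delta),\ddot Z(1-\delta))$, in particular as the endpoints approach the corner of the quadrant along the boundary. Making this precise requires uniform (in the starting point) quantitative estimates on the probability that a correlated Brownian bridge stays in the first quadrant --- essentially the same kind of cone-exit control used to construct $\dot Z$ in the first place, available from \cite{shimura-cone}, \cite[Section~2]{gms-burger-cone}, and \cite{dw-cones}, together with a scaling and monotone-coupling argument to reduce the endpoint limit to the interior case.
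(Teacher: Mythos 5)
The paper does not actually prove this lemma: it is quoted verbatim from \cite[Theorem 3.1]{sphere-constructions} (``This law is described in [that theorem], and (by that theorem) is uniquely characterized as follows''), and the present paper only ever \emph{uses} the uniqueness clause (in the proof of Proposition~\ref{prop-bm-limit}). So there is no in-paper argument to compare against; your proposal is an attempt at a proof the authors chose to import. Your overall architecture for uniqueness — reduce to two-point marginals, decompose via condition~(2) on $[\delta,1-\delta]$, and let $\delta\downarrow 0$ — is the natural one, and is essentially the only way such a characterization can be proved. But as written it has two genuine gaps.

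First, the boundary values $\ddot Z(0)=\ddot Z(1)=0$ are \emph{not} ``forced by the conditioning together with continuity'': $\ddot Z$ is an arbitrary continuous process satisfying (1) and (2), and, for example, a correlated Brownian bridge from $(1,1)$ to $(2,2)$ conditioned to stay in the first quadrant also satisfies both conditions. The endpoint values must be taken as part of the hypotheses (which is how the lemma is applied to the subsequential limit in the proof of Proposition~\ref{prop-bm-limit}, where $\ddot Z(1)=0$ is supplied separately by Lemma~\ref{prop-bm-cond-reg}). Second, and more seriously, the ``hard step'' you defer — convergence of $\mathbf Q^{[\delta,1-\delta],+}_{x,y}$ to a single limit as $(x,y,\delta)\to(0,0,0)$ — is the entire mathematical content of the uniqueness claim, and it must hold along the \emph{random} sequence $(\ddot Z(\delta),\ddot Z(1-\delta))$, about whose approach to the corner you know nothing beyond condition~(1) (both coordinates nonzero at each fixed time). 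The references you cite give convergence either from the vertex itself (\cite[Theorem 2]{shimura-cone}) or from starting points that, after rescaling by $\delta^{1/2}$, lie in a fixed compact subset of the open cone (this is exactly the regime used in Lemma~\ref{prop-bm-rn-deriv} via the Corollary to Lemma 4 of \cite{shimura-cone}); they do not cover endpoints that hug the boundary, e.g.\ $x=(\delta^{1/2},\delta^{50})$. Closing this requires a boundary-Harnack or coupling argument showing the conditioned process forgets its entry direction, which is not a routine consequence of the cited estimates. Until that uniform corner-stability statement is proved (or properly cited), the uniqueness part of your argument is incomplete.
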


Lemma~\ref{prop-bm-excursion} is stated for the time interval $[0,1]$, but by Brownian scaling the analogous statement holds for $[0,T]$ for any fixed $T>0$. In particular, for $T=2$ we get the limiting law in Theorem~\ref{thm-main}. We note that Brownian motion conditioned to stay in a cone and return to the origin at time 1 (as well as more general conditioned Brownian motions) is also studied in~\cite{dw-cones,dw-limit} as a limit of conditioned random walks with iid increments.

\subsection{A more explicit construction of a correlated Brownian loop in the first quadrant}
\label{sec-bm-construction}

In this subsection we will give an alternative construction of correlated Brownian motion $\dot Z$ conditioned to stay in the first quadrant for one unit of time and satisfy $\dot Z(1) = 0$. Our construction gives a stronger convergence statement and more information about the limiting law than the construction in \cite[Section 3]{sphere-constructions}. 

To state our result, we need the following notation.  Let $\wh Z  = (\wh U ,\wh V )$ be a random path with the law of $Z$ started from 0 and conditioned to stay in the first quadrant until time $1$, as in Section~\ref{sec-bm-def}. 
For $t > 1$, let $\wh f_t$ be the density with respect to Lebesgue measure of the law of $\wh Z^t(t)$, where $\wh Z^t$ is a correlated Brownian motion as in~\eqref{eqn-bm-cov} started from 0 and conditioned to stay in the first quadrant until time $t$. By \cite[Equation 3.2]{shimura-cone} and Brownian scaling,
\eqb \label{eqn-f-density-def}
\wh f_t(z) = \frac{\op{det} A }{ 2^\mu \Gamma(\mu)  t^{1 + \mu}  } |Az|^{2\mu} e^{-|Az|^2/2t} \sin\left(2\mu \op{arg} (Az)\right)
\eqe 
where
\eqb \label{eqn-bm-matrix}
A := \sqrt{\frac{2(1-p)}{1-2p }} \left(  \begin{array}{cc}
1 &  - \frac{p}{1-p}   \\ 
0  &  \frac{\sqrt{1-2p}}{1-p}
\end{array}        \right) ,
\eqe
so that $A \wh Z$ is a standard two-dimensional Brownian (i.e.\ zero covariance and unit means) motion conditioned to stay in a cone. 
Note that our $\mu$ is equal to $1/2$ times the exponent $\mu$ of \cite{shimura-cone}. 

The main result of this section is the following.

\begin{prop} \label{prop-bm-limit}
Suppose we are in the setting described just above. Fix $C>1$ and for $\delta>0$, let
\eqb \label{eqn-bm-B-def}
\mcl B^\delta(C) := \left\{\wh Z(1) \in \left[C^{-1}\delta^{1/2} , C\delta^{1/2}\right]  \right\} .
\eqe  
\begin{enumerate}
\item As $\delta \rta 0$, the conditional laws of $\wh Z$ given $\mcl B^\delta(C)$ converges weakly (with respect to the uniform topology) to the law of a correlated Brownian motion $\dot Z$ with covariance as in~\eqref{eqn-bm-cov} conditioned to stay in the first quadrant until time 1 and satisfy $\dot Z(1) = 0$. \label{item-bm-limit-weak}
\item For each fixed $t\in (0,1)$, the conditional law of $\wh Z|_{[0,t]}$ given $\mcl B^\delta(C)$ converges in the strong topology to the law of $\dot Z|_{[0,1]}$ as $\delta\rta 0$, in the sense that for each Borel measurable subset $\mcl U$ of the set of paths $[0,t]\rta \BB R^2$, 
\eqb \label{eqn-bm-limit-strong} 
\BB P\left(\wh Z|_{[0,t]} \in \mcl U \,|\, \mcl B^\delta(C) \right) \rta \BB P\left(\dot Z|_{[0,t]} \in \mcl U \right) .
\eqe 
\label{item-bm-limit-strong}
\item For each $t \in (0,1)$, the laws of $\dot Z|_{[0,t]}$ and $\wh Z|_{[0,t]}$ are mutually absolutely continuous, and the Radon-Nikodym derivative of the law of $\dot Z|_{[0,t]}$ with respect to the law of $\wh Z|_{[0,t]}$ is given by $g_t(\dot Z(t))$, where
\eqb \label{eqn-g-density-def}
g_t(z) :=  \frac{c_t  \wh f_{1-t}(z) }{ \BB P^z\left(Z([0,1-t]) \subset (0,\infty)^2\right)  }
\eqe  
with $c_t$ a normalizing constant, $\wh f_{1-t}(z)$ as in~\eqref{eqn-f-density-def}, and $\BB P^z$ the law of a correlated Brownian motion as in~\eqref{eqn-bm-cov} started from $z$. 
\label{item-bm-limit-density}
\end{enumerate}
\end{prop}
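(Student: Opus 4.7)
The plan is to fix $t\in(0,1)$ and analyze the Radon--Nikodym derivative of the conditional law of $\wh Z|_{[0,t]}$ given $\mcl B^\delta(C)$ with respect to the law of $\wh Z|_{[0,t]}$, then pass to the limit $\delta\rta 0$. Let $Q=(0,\infty)^2$, write $p^Q_s$ for the transition density of the correlated Brownian motion $Z$ of~\eqref{eqn-bm-cov} killed on exiting $Q$, and set $\phi(z):=\BB P^z(Z([0,1-t])\subset Q)$. Combining the Markov property of $\wh Z$ (Lemma~\ref{prop-bm-meander}\ref{item-bm-meander-markov}) with Bayes' rule expresses the required Radon--Nikodym derivative, at a path with endpoint $\omega(t)=z$, as
\[
\frac{1}{\BB P(\mcl B^\delta(C))\,\phi(z)}\int_{\mcl B^\delta(C)}p^Q_{1-t}(z,y)\,dy.
\]
Near the corner $y=0$, both $p^Q_{1-t}(z,\cdot)$ and $\wh f_1(\cdot)$ (which gives $\BB P(\mcl B^\delta(C))=\int_{\mcl B^\delta(C)}\wh f_1(y)\,dy$) are asymptotic to $|y|^{2\mu}\sin(2\mu\op{arg}(Ay))$ times a smooth function of the first argument; crucially, the symmetry $p^Q_{1-t}(z,y)=p^Q_{1-t}(y,z)$ of the killed kernel together with the definition~\eqref{eqn-f-density-def} identifies that smooth function in the numerator as a positive constant multiple of $\wh f_{1-t}(z)$. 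Letting $\delta\rta 0$ therefore gives pointwise convergence to $c_t\,\wh f_{1-t}(z)/\phi(z)=g_t(z)$.

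Upgrading this pointwise convergence to $L^1$ convergence of the densities via dominated convergence (using the explicit Gaussian form~\eqref{eqn-f-density-def} of $\wh f_{1-t}$ together with standard cone-exit bounds on $\phi$ from~\cite{shimura-cone,gms-burger-cone}) yields strong convergence of the conditional laws of $\wh Z|_{[0,t]}$, giving part~\ref{item-bm-limit-strong} and the density formula of part~\ref{item-bm-limit-density}, modulo identifying the limit with $\dot Z$. For the identification I would define a candidate law $\dot{\BB P}$ directly by prescribing its marginal on $[0,t]$ for each $t<1$ via the density $g_t$; the Chapman--Kolmogorov relation for $p^Q$ makes this family consistent under restriction. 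To match $\dot{\BB P}$ with the law of the process characterized in Lemma~\ref{prop-bm-excursion}, verify its two items: positivity (item~\ref{item-bm-excursion-pos}) is inherited from Lemma~\ref{prop-bm-meander}\ref{item-bm-meander-pos} since $g_t$ is supported on $Q$, while the Brownian-bridge property (item~\ref{item-bm-excursion-markov}) follows by combining the Markov property of $\wh Z$ with the $h$-transform form of $g_t$, which recasts the conditioning as the standard representation of a correlated Brownian bridge in $Q$.

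The main obstacle is part~\ref{item-bm-limit-weak}: proving tightness of the conditional laws of $\wh Z|_{[0,1]}$ on $C([0,1],\BB R^2)$ uniformly in $\delta$ and showing that the endpoint in any subsequential limit equals $0$. I would argue by a time-reversal argument: using the invariance of correlated Brownian motion under $s\mapsto 1-s$, the conditional law of $(\wh Z(1-s))_{s\in[0,1]}$ given $\mcl B^\delta(C)$ is (up to a translation of order $\delta^{1/2}$) that of a correlated Brownian motion started from a point of distance $O(\delta^{1/2})$ from $0$ and conditioned to stay in $Q$ for one unit of time with endpoint near $0$. Brownian scaling together with the cone-exit estimates of~\cite{shimura-cone,gms-burger-cone} then controls $\sup_{s\in[1-\varepsilon,1]}|\wh Z(s)|$ by $O(\varepsilon^{1/2})$ with probability close to $1$ uniformly in $\delta$, which yields both tightness on $[0,1]$ and the fact that any subsequential limit satisfies $\omega(1)=0$. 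Combining with the strong convergence on $[0,1-\varepsilon]$ from part~\ref{item-bm-limit-strong} and the uniqueness in Lemma~\ref{prop-bm-excursion} completes the proof.
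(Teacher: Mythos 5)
Your proposal is correct and follows essentially the same route as the paper's proof. The symmetry $p^Q_{1-t}(z,y)=p^Q_{1-t}(y,z)$ that you invoke is precisely the ``reversibility of Brownian bridge plus symmetry of the Gaussian kernel'' that the paper uses in Lemma~\ref{prop-bm-rn-deriv}, the corner asymptotics you cite are the Corollary to Lemma 4 of Shimura applied there, your end-time estimate via time-reversal is the content of Lemma~\ref{prop-bm-cond-reg}, and the identification via the bridge property and Lemma~\ref{prop-bm-excursion} matches the paper's closing argument. The only differences are cosmetic: the paper proceeds by Arz\'ela--Ascoli tightness and identification of a subsequential limit rather than constructing a candidate law from the $g_t$'s, and it uses Scheff\'e (implicitly, by normalizing with $\psi\equiv 1$) rather than an explicit dominated-convergence bound; also, after time reversal the endpoint is exactly $0$, not merely near $0$.
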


For the proof of Proposition~\ref{prop-bm-limit}, we use the following notation. 
\begin{itemize}
\item For $z\in \BB R^2$, let $\BB P^z$ be as in the statement of Proposition~\ref{prop-bm-limit} and let $f_t^z$ be the density of $Z(t)$ under $\BB P^z$ (with respect to Lebesgue measure). Let $\BB E^z$ be the corresponding expectation.
\item Let $G_t $ be the event that $Z$ stays in the first quadrant until time $t$. 
\item For $z,w\in \BB R^2$ and $t\in\BB R$, let $\BB P_t^{z,w}$ be the law of a correlated two-dimensional Brownian bridge from $z$ to $w$ in time $t$, with covariance as in~\eqref{eqn-bm-cov}.
\item For $t \in [0,1]$ let $\wh f_t^1$ be the density of the unconditional law of $\wh Z(t)$ with respect to Lebesgue measure. 
\end{itemize}

\begin{lem} \label{prop-bm-rn-deriv}
Fix $C>1$ and $t\in (0,1)$. For $\delta>0$, let $\mcl B^\delta(C)$ be defined as in~\eqref{eqn-bm-B-def}. For each bounded measurable function $\psi : (0,\infty)^2 \rta (0,\infty)^2$, we have 
\eqbn
\lim_{\delta \rta 0} \BB E\left(\psi(\wh Z(t) ) \,|\, \mcl B^\delta(C)\right) = \frac{   \int_{(0,\infty)^2} \psi(z) g_t(z) \wh f_t^1(z) \, dz   }{   \int_{(0,\infty)^2} g_t(z) \wh f_t^1(z)  \, dz   } ,
\eqen
with $g_t$ as in~\eqref{eqn-g-density-def} and $\wh f_t^1$ defined just above. 
\end{lem}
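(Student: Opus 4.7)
The plan is to use the Markov decomposition of $\wh Z$ at time $t$ (Lemma~\ref{prop-bm-meander}) together with reversibility of Brownian motion to identify the conditional density explicitly.

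Let $q_s(x,y)$ denote the transition density of the correlated Brownian motion $Z$ of~\eqref{eqn-bm-cov} killed on exit from $(0,\infty)^2$, so that $q_s(x,y)\,dy = \BB P^x(Z(s) \in dy,\,G_s)$. By Lemma~\ref{prop-bm-meander}, the conditional law of $\wh Z(1)$ given $\wh Z(t) = z$ is that of $Z_{1-t}$ under $\BB P^z(\cdot \mid G_{1-t})$, so the joint density of $(\wh Z(t), \wh Z(1))$ at $(z,w) \in (0,\infty)^2 \times (0,\infty)^2$ equals
\[
\rho_t(z,w) \;=\; \wh f_t^1(z) \cdot \frac{q_{1-t}(z,w)}{\BB P^z(G_{1-t})}.
\]
Writing $B^\delta(C) \subset (0,\infty)^2$ for the set appearing in~\eqref{eqn-bm-B-def}, both $\BB P(\mcl B^\delta(C))$ and $\BB E[\psi(\wh Z(t))\mathbf{1}_{\mcl B^\delta(C)}]$ can then be written as integrals of $\rho_t$ (possibly weighted by $\psi(z)$) over $z \in (0,\infty)^2$ and $w \in B^\delta(C)$.

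The heart of the argument is the entrance-law asymptotic
\[
\frac{q_{1-t}(w,z)}{\BB P^w(G_{1-t})} \;\longrightarrow\; \wh f_{1-t}(z) \qquad \text{as } w \rta 0,
\]
uniformly for $z$ in compact subsets of $(0,\infty)^2$. Informally, conditioning a correlated Brownian motion started very near the apex of the quadrant to survive until time $1-t$ recovers Shimura's entrance law~\eqref{eqn-f-density-def}. After applying the change of coordinates $z \mapsto Az$ of~\eqref{eqn-bm-matrix} to convert the problem to standard planar Brownian motion in a circular cone, both sides admit closed-form expressions via the eigenfunction expansion of the Dirichlet heat kernel on a cone, from which the convergence follows by a direct computation; alternatively, the boundary Harnack principle applied to the two positive harmonic profiles $w \mapsto \BB P^w(G_{1-t})$ and $w \mapsto |Aw|^{2\mu}\sin(2\mu\op{arg}(Aw))$ yields the same result. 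Combined with reversibility $q_{1-t}(z,w) = q_{1-t}(w,z)$, this gives
\[
\int_{B^\delta(C)} q_{1-t}(z,w)\,dw \;=\; (1+o_\delta(1))\,\wh f_{1-t}(z) \cdot M^\delta, \qquad M^\delta := \int_{B^\delta(C)} \BB P^w(G_{1-t})\,dw,
\]
with $M^\delta$ independent of $z$. Substituting into the conditional expectation, $M^\delta$ cancels between the numerator and denominator and one is left with
\[
\lim_{\delta \rta 0} \BB E\bigl[\psi(\wh Z(t)) \,\big|\, \mcl B^\delta(C)\bigr] \;=\; \frac{\int_{(0,\infty)^2} \psi(z)\, \wh f_t^1(z)\,\wh f_{1-t}(z)/\BB P^z(G_{1-t})\,dz}{\int_{(0,\infty)^2} \wh f_t^1(z)\,\wh f_{1-t}(z)/\BB P^z(G_{1-t})\,dz},
\]
which is the asserted formula once $c_t$ in~\eqref{eqn-g-density-def} is fixed so that the denominator equals $c_t^{-1}$.

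The main obstacle is establishing the displayed entrance-law asymptotic with sufficient uniformity in $z$, together with a dominated-convergence bound needed to push $\lim_{\delta \rta 0}$ through the $z$-integral. For the latter, near $z=0$ both $\wh f_{1-t}(z)$ and $\BB P^z(G_{1-t})$ vanish like the harmonic profile $|Az|^{2\mu}\sin(2\mu\op{arg}(Az))$, so their ratio stays bounded and $g_t(z)$ extends continuously to the origin; near $z = \infty$, the Gaussian factor in $\wh f_{1-t}$ gives rapid decay. With $\psi$ bounded, this produces $\delta$-uniform integrable majorants and completes the argument.
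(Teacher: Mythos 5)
Your approach mirrors the paper's proof: condition on $\wh Z|_{[0,t]}$ via the Markov property, time-reverse the killed transition kernel over $[t,1]$ (the paper phrases this as reversibility of the Brownian bridge together with symmetry of the Gaussian density), and then invoke Shimura's entrance-law asymptotics as the starting point tends to the apex. The one presentational difference is that the paper applies the Shimura asymptotic in integrated form against the bounded test function $\psi(z)\wh f_t^1(z)X_t(z)^{-1}$ (citing Shimura's Corollary to Lemma~4 directly), which bypasses the need to first establish the uniform pointwise density convergence and the dominated-convergence majorant that you flag as the remaining obstacle.
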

\begin{proof} 
For the proof we use the notation introduced just above. 
For each $t\in (0,1)$, the conditional law of $\wh Z|_{[t,1]}$ given $\wh Z|_{[0,t]}$ is $\BB P^{\wh Z(t)}\left(\cdot \,|\, G_{1-t} \right)$. Therefore, 
\begin{align} \label{eqn-Z-flip1}
\BB E \left(  \psi(\wh Z(t))  \BB 1_{ \mcl B^\delta(C) }  \,|\, \wh Z|_{[0,t]} \right)  
&= \frac{  \BB P^{\wh Z(t)}\left(  Z(1-t) \in \left[C^{-1}\delta^{1/2} , C\delta^{1/2}\right]^2  ,\,      G_{1-t} \right)  \psi(\wh Z(t))   }{ \BB P^{\wh Z(t)}\left(  G_{1-t} \right)  } \notag \\ 
&=   \frac{    \BB E^{\wh Z(t)}\left( \BB P^{\wh Z(t)}\left(G_{1-t} \,|\, Z(1-t) \right) \BB 1_{\left(  Z(1-t) \in \left[C^{-1}\delta^{1/2} , C\delta^{1/2}\right]^2 \right)} \right)  \psi(Z(t))   }{X_t(\wh Z(t))}
\end{align}
where
\eqb \label{eqn-cond-constant}
X_t (z) : =  \BB P^z\left(  G_{1-t} \right)  .
\eqe  
For each $z \in \BB R^2$, the regular conditional law of $Z|_{[0,1-t]}$ given $Z(1-t)$ under $\BB P^z$ is $\BB P^{z,Z(1-t)}_{1-t}$. Furthermore, the density of the law of $Z(1-t)$ under $\BB P^z$ with respect to Lebesgue measure is given by $f_{1-t}^z $. Therefore, taking expectations in~\eqref{eqn-Z-flip1} yields
\begin{align} \label{eqn-Z-flip2}
  \int_{(0,\infty)^2} \int_{\left[C^{-1}\delta^{1/2} , C\delta^{1/2}\right]^2} 
  \BB P_{1-t}^{z,w}\left(  G_{1-t}    \right) f_{1-t}^z\left(w \right)     \psi(z) \wh f_t^1(z) X_t(z)^{-1} 
   \, dw \, dz .
\end{align}
By reversibility of Brownian bridge, we have $\BB P_{1-t}^{z,w}\left(  G_{1-t}    \right)  = \BB P_{1-t}^{w,z}\left(  G_{1-t}    \right)$. Furthermore, the Gaussian density $f_{1-t}^z(w)$ is symmetric in $z$ and $w$. By combining these observations with Fubini's theorem, we see that~\eqref{eqn-Z-flip2} equals
\begin{align}  
& \int_{\left[C^{-1}\delta^{1/2} , C\delta^{1/2}\right]^2}  \int_{(0,\infty)^2} \BB P_{1-t}^{w,z}\left(  G_{1-t}    \right) f_{1-t}^w\left(z \right)     \psi(z) \wh f_t^1(z) X_t(z)^{-1}  \, dz  \, dw \notag \\
&\qquad =   \int_{\left[C^{-1}\delta^{1/2} , C\delta^{1/2}\right]^2}  \BB E^w \left(  \psi(Z(1-t)) \wh f_t^1(Z(1-t)) X_t(Z(1-t) )^{-1}  \BB 1_{G_{1-t}}         \right) \, dw . \notag
\end{align}
For $w\in (0,\infty)^2$, let $A$ be as in~\eqref{eqn-bm-matrix} and let
\eqbn
\Delta(w) := |Aw|^{2\mu} \sin\left(2\mu \op{arg} A w\right) .
\eqen
It is clear from \cite[Equation 3.2]{shimura-cone} (which gives an explicit formula for the density of $A\wh Z(t)$) that $\wh f_t^1(z) X_t(z)^{-1}$ is a bounded function of $z$. By~\cite[Corollary to Lemma 4]{shimura-cone} and the discussion immediately thereafter, for $w\in \left[C^{-1}\delta^{1/2} , C\delta^{1/2}\right]^2$ we have
\alb
\BB E^w \left(  \psi(Z(1-t)) \wh f_t^1(Z(1-t)) X_t(Z(1-t) )^{-1} \BB 1_{G_{1-t}}  \right) = a_0 \Delta(w)\left(  \int_{(0,\infty)^2} \psi(z) \wh f_t^1(z) X_t(z)^{-1}  \wh f_{1-t}(z) \, dz + o_\delta(1) \right)
\ale
where $a_0$ is a finite positive constant depending only on $p$ and the $o_\delta(1)$ depends only on $C$, $t_0$, and $\psi$ (and in particular is uniform for $w \in \left[C^{-1}\delta^{1/2} , C\delta^{1/2}\right]^2$). Therefore,
\begin{align} \label{eqn-bm-B^delta-int}
\BB E \left(  \psi(\wt Z(t))  \BB 1_{ \mcl B^\delta(C) }  \right) & = a_0 \int_{\left[C^{-1}\delta^{1/2} , C\delta^{1/2}\right]^2} \int_{(0,\infty)^2}   \Delta(w)   \psi(z) \wh f_t^1(z)  X_t(z)^{-1} \wh f_{1-t}(z)  \, dz   \, dw + o_\delta(\delta^{1+\mu })  \notag   \\
& = a_0  \left( \int_{(0,\infty)^2} \psi(z) \wh f_t^1(z) g_t(z) \, dz     \right) \left( \int_{\left[C^{-1}\delta^{1/2} , C\delta^{1/2}\right]^2}   \Delta(w)       \, dw  \right) + o_\delta(\delta^{1+\mu })  .
\end{align} 
In particular, by taking $\psi \equiv 1$ we get
\eqb \label{eqn-bm-B^delta-prob}
\BB P \left(   \mcl B^\delta(C)    \right) = a_0 \left( \int_{(0,\infty)^2}   \wh f_t^1(z) g_t(z)  \, dz     \right) \left( \int_{\left[C^{-1}\delta^{1/2} , C\delta^{1/2}\right]^2}   \Delta(w)       \, dw  \right)  + o_\delta(\delta^{1+\mu }) .
\eqe 
By combining~\eqref{eqn-bm-B^delta-int} and~\eqref{eqn-bm-B^delta-prob}, we obtain the statement of the lemma. 
\end{proof}

\begin{lem} \label{prop-bm-cond-reg}
Fix $C>1$ and $\ep  > 0$. There exists $t_\ep \in (0,1)$ and $\delta_\ep > 0$, depending only on $C$ and $\ep$, such that for $\delta \in (0,\delta_\ep]$ we have
\eqbn
\BB P\left( \sup_{t\in [t_\ep,1]} |\wh Z(t)| \leq \ep \,|\, \mcl B^\delta(C)\right) \geq 1-\ep .
\eqen
\end{lem}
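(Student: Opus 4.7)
The plan is to prove the lemma in two steps: first control the position of $\wh Z(t_\ep)$ via its one-dimensional marginal using Lemma~\ref{prop-bm-rn-deriv}, then control the oscillation of $\wh Z$ on $[t_\ep,1]$ given a small starting point, using the Markov property from Lemma~\ref{prop-bm-meander} item~\ref{item-bm-meander-markov}.

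For Step 1, I will apply Lemma~\ref{prop-bm-rn-deriv} with $\psi$ a bounded function majorizing $\BB 1_{\{|z|>\ep/3\}}$. The identity $\wh f_t^1(z) = \wh f_t(z)\,\BB P^z(G_{1-t})\,\BB P(G_t)/\BB P(G_1)$, a direct consequence of Lemma~\ref{prop-bm-meander} item~\ref{item-bm-meander-markov} together with the strong Markov property of $Z$ at time $t$, shows that $g_{t_\ep}(z)\wh f_{t_\ep}^1(z)$ is proportional to $\wh f_{t_\ep}(z)\wh f_{1-t_\ep}(z)$. Substituting~\eqref{eqn-f-density-def} yields
$$g_{t_\ep}(z)\wh f_{t_\ep}^1(z) \;\propto\; \frac{|Az|^{4\mu}\sin^2(2\mu \op{arg} Az)}{(t_\ep(1-t_\ep))^{1+\mu}}\exp\!\left(-\frac{|Az|^2}{2t_\ep(1-t_\ep)}\right),$$
a density that concentrates on the scale $\sqrt{1-t_\ep}$ near the origin as $t_\ep\to 1$. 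The integral of this density over $\{|z|>\ep/3\}$, normalized by its total mass, therefore tends to $0$ as $t_\ep\to 1$ by a Gaussian tail bound. By Lemma~\ref{prop-bm-rn-deriv}, this gives $\BB P(|\wh Z(t_\ep)|>\ep/3 \mid \mcl B^\delta(C)) \leq \ep/2$ for $t_\ep$ sufficiently close to $1$ and $\delta$ sufficiently small, depending only on $\ep$ and $C$.

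For Step 2, Lemma~\ref{prop-bm-meander} item~\ref{item-bm-meander-markov} shows that conditionally on $\wh Z(t_\ep)=z$ and $\mcl B^\delta(C)$, the restriction $\wh Z|_{[t_\ep,1]}$ has the law of a correlated BM from $z$ conditioned both to stay in the first quadrant and to terminate in $B_\delta := [C^{-1}\delta^{1/2},C\delta^{1/2}]^2$. For $|z|\leq\ep/3$ and $\delta$ small enough that $C\sqrt{2\delta}\leq\ep/3$, the corresponding unconditioned Brownian bridge from $z$ to a point in $B_\delta$ has both endpoints of norm at most $\ep/3$, so by standard Gaussian bridge estimates its supremum exceeds $\ep$ with probability at most $C'e^{-c\ep^2/s}$ where $s := 1-t_\ep$. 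To pass from the unconditioned bridge to the cone-conditioned one, I form the ratio
$$\frac{\BB P^z(\sup_{[0,s]}|Z|>\ep,\, Z(s)\in B_\delta,\, G_s)}{\BB P^z(Z(s)\in B_\delta,\, G_s)}$$
and estimate numerator and denominator separately using the density~\eqref{eqn-f-density-def} together with the asymptotic of~\cite[Corollary to Lemma 4]{shimura-cone} already invoked in the proof of Lemma~\ref{prop-bm-rn-deriv}. The cone-probability factors in numerator and denominator scale identically in $\delta$ and cancel, leaving the exponential factor from the sup event multiplied by polynomial prefactors in $s$ and $\ep$; choosing $s$ small enough makes the whole ratio at most $\ep/2$ uniformly in small $\delta$. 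Combining Steps 1 and 2 via the tower property gives the claimed bound of $\ep$.

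The main obstacle is Step 2: obtaining the oscillation bound uniformly in $\delta$. The subtle point is that the cone conditioning and the endpoint pinning $\mcl B^\delta(C)$ are both degenerate as $\delta\to 0$, and these two degeneracies must cancel in the conditional probability. This cancellation is achieved by the ratio asymptotic described above, essentially mirroring the computation in Lemma~\ref{prop-bm-rn-deriv}. A minor technical point is to also restrict to $z$ whose coordinates are bounded below by a small constant (which holds with high probability by a variant of Step 1 applied to smooth bump functions supported near the axes), in order to avoid pathological boundary behavior in the cone-conditioning factor; once this is done, the associated Radon-Nikodym derivative between the cone-conditioned bridge and the unconditioned bridge is uniformly bounded.
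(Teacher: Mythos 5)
Your decomposition---control the marginal of $\wh Z(t_\ep)$ via Lemma~\ref{prop-bm-rn-deriv}, then control the oscillation of the tail segment conditionally on a nice position at time $t_\ep$---is a reasonable plan and structurally different from the paper's, which never conditions on $\wh Z(t_\ep)$ but instead time-reverses the joint probability $\BB P\bigl(\sup_{[t,1]}|\wh Z|>\ep,\,\mcl B^\delta(C)\bigr)$ directly (displays~\eqref{eqn-Z-reg1}--\eqref{eqn-Z-reg2}). Your Step~1 is fine: the identity $g_t\,\wh f_t^1\propto\wh f_t\,\wh f_{1-t}$ does follow from the Markov property, and the resulting density concentrates at scale $\sqrt{1-t_\ep}$, so the marginal is controlled after fixing the order of the limits (first $t_\ep$, then $\delta_\ep$ using the $o_\delta(1)$ of Lemma~\ref{prop-bm-rn-deriv}, which depends on $\psi$ and hence on $\ep$).

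Step~2 has a real gap. Disintegrating over the endpoint $w\in[C^{-1}\delta^{1/2},C\delta^{1/2}]^2$, your ratio is a weighted average of $\BB P_s^{z,w}\bigl(\sup_{[0,s]}|Z|>\ep\,\big|\,\text{stays in the cone}\bigr)$ over $w$; the lemma hinges on this being small uniformly as $w$ shrinks to the degenerate corner. Your justification---``the cone-probability factors in numerator and denominator scale identically in $\delta$ and cancel,'' followed by ``the Radon--Nikodym derivative between the cone-conditioned bridge and the unconditioned bridge is uniformly bounded''---does not establish this. The latter claim is false in the stated generality: as $\delta\to0$ the pinned endpoint approaches the corner, the cone conditioning degenerates, and the density on path space blows up (precisely the $\Delta(w)^{-1}\sim\delta^{-\mu}$ factor), so there is no $\delta$-uniform bound. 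What you actually need is a path-level convergence statement for the law of a bridge started near the corner conditioned to stay in the cone, uniformly in the starting point near the corner. This is \cite[Theorem~2]{shimura-cone}, which the paper's proof invokes at exactly this step, and which is strictly stronger than \cite[Corollary to Lemma~4]{shimura-cone} (the latter gives only a \emph{marginal density} asymptotic, which cannot control the path-dependent event $\{\sup_{[0,s]}|Z|>\ep\}$). Once you add that ingredient and time-reverse the bridge to put the near-corner point at time $0$, your Step~2 recovers the asserted cancellation---but at that point you have essentially reproduced the paper's argument. Without it, the proposal is incomplete.
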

\begin{proof}
For $\ep >0$ and $t\in (0,1)$, let 
\eqbn
F_{1-t}^\ep := \left\{\sup_{s\in [0,1-t ]} |Z(s)|  > \ep \right\} .
\eqen
By a similar argument to the one leading to~\eqref{eqn-Z-flip2} of Lemma~\ref{prop-bm-rn-deriv}, we have 
\begin{align} \label{eqn-Z-reg1}
\BB P \left(    \sup_{t\in [t,1]} |\wh Z(t)| > \ep ,\,       \mcl B^\delta(C)    \right) &=    \int_{\left[C^{-1}\delta^{1/2} , C\delta^{1/2}\right]^2}  \BB E^w \left(     \wh f_t^1(Z(1-t)) X_t(z)^{-1} \BB 1_{ F_{1-t}^\ep \cap  G_{1-t}  }         \right) \, dw \notag \\
 &=    \int_{\left[C^{-1}\delta^{1/2} , C\delta^{1/2}\right]^2}  \BB E^w \left(    \wh f_t^1(Z(1-t)) X_t(z)^{-1} \BB 1_{ F_{1-t}^\ep} \,|\,  G_{1-t}   \right) \BB P^w\left(G_{1-t}\right)     \, dw .
\end{align}
with $X_t(z)$ as in~\eqref{eqn-cond-constant}. Furthermore, by~\eqref{eqn-Z-flip2} with $\psi \equiv 1$, 
\begin{align} \label{eqn-Z-reg2}
\BB P \left(     \mcl B^\delta(C)    \right) &=      \int_{\left[C^{-1}\delta^{1/2} , C\delta^{1/2}\right]^2}  \BB E^w \left(    \wh f_t^1(Z(1-t)) \,|\,  G_{1-t}         \right) \BB P^w\left(G_{1-t} \right)     \, dw .
\end{align}
By \cite[Theorem 2]{shimura-cone}, for each fixed $t>0$
\alb
&\BB E^w \left(    \wh f_t^1(Z(1-t)) X_t(z)^{-1} \BB 1_{ F_{1-t}^\ep} \,|\,  G_{1-t}   \right) \rta \BB E\left(\wh f_t^1(\wh Z(1-t)) X_t(z)^{-1}\BB 1_{F_{1-t}^\ep}\right)  \\
&\BB E^w \left(    \wh f_t^1(Z(1-t)) X_t(z)^{-1}  \,|\,  G_{1-t}   \right) \rta \BB E\left(\wh f_t^1(\wh Z(1-t))X_t(z)^{-1} \right)  
\ale
as $\delta\rta 0$, uniformly over all $w\in \left[C^{-1}\delta^{1/2} , C\delta^{1/2}\right]^2$. By continuity of $\wh Z$, it follows that for each $\ep > 0$ and $\alpha>0$, there exists $t_\ep \in (0,1)$ and $\delta_\ep  >0$, depending only on $C$, $\ep$, and $\alpha$ such that for $\delta \in (0,\delta_\ep]$ and $w \in  \left[C^{-1}\delta^{1/2} , C\delta^{1/2}\right]^2$, we have
\eqb \label{eqn-Z-reg-limit}
\BB E^w \left(    \wh f_{t_\ep}^1(Z(1-t_\ep)) X_{t_\ep}(z)^{-1} \BB 1_{ F_{1-t_\ep}^\ep} \,|\,  G_{1-t_\ep}   \right) \leq \alpha \BB E^w \left(    \wh f_t^1(Z(1-t_\ep))   \,|\,  G_{1-t_\ep}   \right) .
\eqe 
By \cite[Equation 4.2]{shimura-cone} and scale invariance, for $w\in \left[C^{-1}\delta^{1/2} , C\delta^{1/2}\right]^2$ we have
\eqbn
\BB P^w\left(G_{1-t_\ep}\right)  \asymp (1-t_\ep)^{-\mu} \delta^\mu,
\eqen
with the implicit constant depending only on $C$. It therefore follows from~\eqref{eqn-Z-reg1},~\eqref{eqn-Z-reg2}, and~\eqref{eqn-Z-reg-limit} that for $\delta \in (0,\delta_\ep]$ we have
\eqbn
\BB P \left(  \sup_{t\in [t_\ep,1]} |\wh Z(t)| > \ep \,|\,    \mcl B^\delta(C)    \right) \preceq \alpha 
\eqen
with the implicit constant depending only on $C$. We now conclude by choosing $\alpha$ smaller than $\ep$ divided by this implicit constant.
\end{proof}

\begin{proof}[Proof of Proposition~\ref{prop-bm-limit}]
Let $\mcl U$ be as in the statement of the proposition. For $z\in (0,\infty)^2$, the regular conditional law of $\wh Z|_{[0,t]}$ given $\{\wh Z(t) =z\}$ and the event $\mcl B^\delta(C)$ is that of a correlated Brownian motion as in~\eqref{eqn-bm-cov} conditioned to stay in the first quadrant for $t$ units of time and end up at $z$ (see \cite{gms-burger-local} for a definition of this object). By~\cite[Lemma 1.11]{gms-burger-local} and Lemma~\ref{prop-bm-excursion}, the same is true of the conditional law of $\dot Z$ given $\{\dot Z(t) = z\}$. Therefore, if $\delta < 1-t$ then
\eqbn
 \BB P\left(\wh Z|_{[0,t]} \in \mcl U \,|\, \wh Z(t) = z ,\, \mcl B^\delta(C) \right) = \BB P\left(\dot Z|_{[0,t]} \in \mcl U \,|\, \dot Z(t) = z\right) .
\eqen
Call this quantity $\psi(z)$. 
By Lemma~\ref{prop-bm-rn-deriv} applied with this choice of $\psi$, it follows that 
\begin{align} \label{eqn-prob-in-U-conv}
 \lim_{\delta \rta 0} \BB P\left( \wh Z|_{[0,t]} \in \mcl U    \,|\, \mcl B^\delta(C)\right) 
=\lim_{\delta \rta 0} \BB E\left(    \psi(\wh Z(t))    \,|\, \mcl B^\delta(C)           \right)   
 =  \frac{   \int_{(0,\infty)^2} \psi(z) g_t(z) \wh f_t^1(z) \, dz   }{   \int_{(0,\infty)^2} g_t(z) \wh f_t^1(z)  \, dz   }  .
\end{align} 
By~\eqref{eqn-prob-in-U-conv}, Lemma~\ref{prop-bm-cond-reg}, and the Arz\'ela-Ascoli theorem, the family of conditional laws of $\wh Z$ given $\mcl B^\delta(C)$ for $\delta\in (0,1)$ is tight. Let $\ddot Z$ be discributed according to any weak subsequential limit of these laws. By~\eqref{eqn-prob-in-U-conv}, for each $t\in (0,1)$, the law of $\ddot Z|_{[0,1]}$ is obtained by re-weighting the unconditional law of $\wh Z|_{[0,t]}$ by $g_t(\ddot Z(t))$. Since this reweighting depends only on $\wh Z(t)$, it follows that for each such $t$ and a.e.\ $z\in (0,\infty)^2$, the regular conditional law of $\ddot Z|_{[0,t]}$ given $\{\ddot Z(t) = z\}$ is the same as the regular conditional law of $\wh Z|_{[0,t]}$ given $\{\wh Z(t)=z\}$. By condition~\ref{item-bm-meander-markov} of Lemma~\ref{prop-bm-meander}, for each $s\in (0,t)$, the regular conditional law of $\wh Z|_{[s,t]}$ given $\wh Z(s)$ and $\wh Z(t)$ is that of a Brownian bridge from $\wh Z(s)$ to $\wh Z(t)$ conditioned to stay in the first quadrant. Therefore the same is true for $\ddot Z$. In other words, $\ddot Z$ satisfies condition~\ref{item-bm-excursion-markov} of Lemma~\ref{prop-bm-excursion}. It is clear from~\eqref{eqn-prob-in-U-conv} that condition~\ref{item-bm-excursion-pos} of the same lemma is also satisfied. Thus $\ddot Z \eqD \dot Z$.
This proves the first assertion of the proposition. The other two assertions are clear from~\eqref{eqn-prob-in-U-conv}. 
\end{proof}

\begin{remark} \label{remark-bm-limit-Z^delta}
For $\delta>0$, let $\wh Z^\delta$ be a correlated Brownian motion as in~\eqref{eqn-bm-cov} conditioned to stay in the first quadrant for $1-\delta$ units of time, so that $\wh Z^\delta|_{[1-\delta,1]}$ evolves as a Brownian motion as in~\eqref{eqn-bm-cov} with no conditioning. Also let 
\eqb \label{eqn-bm-wt-B-def}
\wt{\mcl B}^\delta(C) := \left\{\wh Z^\delta(1-\delta) \in \left[C^{-1}\delta^{1/2} , C\delta^{1/2}\right]  \right\} .
\eqe 
Essentially the same argument used to prove Proposition~\ref{prop-bm-limit} shows that the statement of the proposition remains true if we substitute $\wh Z^\delta$ for $\wh Z$ and $\wt{\mcl B}^\delta(C)$ for $\mcl B^\delta(C)$. This fact will be used in Section~\ref{sec-macroscopic-cone} below. 
\end{remark}

\section{Existence of a large loop}
\label{sec-big-loop}

\subsection{Statement and relationships between events}
\label{sec-big-loop-setup}

In this section we will prove a proposition which will eventually allow us to reduce the proof of Theorem~\ref{thm-main} to a convergence statement for the path $Z^n$ conditioned on the event that $X(1,n)$ contains a particular number $l \asymp n^{\xi/2}$ of hamburger orders and no other symbols (plus some regularity conditions. 

Throughout this section we will use the following notation. Fix $\xi  \in (0,1)$. For $n\in\BB N$ and $\ep_0   > 0$, let $\iota_0^n = \iota_0^n(\ep_0)$ be the largest $i \in [1,2n]_{\BB Z}$ for which the following holds.
\begin{enumerate}
\item $X_{i}   = \tb F$ and $X_{\phi(i)} = \tc H$.\label{item-big-loop-match}
\item $2n-  n^\xi \leq   i - \phi(i) \leq 2n-   \ep_0 n^\xi $.\label{item-big-loop-upper} 
\item $\ep_0 n^{\xi/2 } \leq |X(\phi(i) , i)| \leq \ep_0^{-1}  n^{\xi/2 }$.  \label{item-big-loop-length}
\item The smallest $i \in [\iota_0^n+1, 2n]_{\BB Z}$ with $X_i = \tb F$ and $\phi(i)  < \phi(\iota_0^n)$ (equivalently $\phi(i) < \iota_0^n$) satisfies $X_{\phi(i)} = \tc C$.     \label{item-big-loop-loop}
\end{enumerate}
If no such $i$ exists, we set $\iota_0^n = 0$. 

Also let $\iota_1^n = \iota_1^n(\ep_0 )$ be the largest $i\in [\phi(\iota_0^n) , \iota_0^n]_{\BB Z}$ for which $X_{i}   = \tb F$, $X_{\phi(i)} = \tc C$, and $i - \phi(i) \geq  \frac12 \left( \iota_0^n - \phi( \iota_0^n) \right)$. We set $\iota_1^n = 0$ if no such $i$ exists. 

\begin{remark}
If $\iota_0^n >  0$, $\iota_1^n < \iota_0^n$, and $\{X(1,2n) = \emptyset\}$, then under the bijection of \cite[Section 4.1]{shef-burger}, $X_{\phi(\iota_0^n)} \dots X_{\iota_0^n}$ corresponds to the largest complementary connected component of a certain loop in the FK planar map corresponding to $X_1\dots X_{2n}$; and  $X_{\phi(\iota_1^n)} \dots X_{\iota_1^n }$ corresponds to a complementary connected component of one of the outermost loops contained in this complementary connected component. See also~\cite{gwynne-miller-cle}. 
\end{remark}

The main result of this section is the following proposition.

\begin{prop} \label{prop-big-loop}
For $\ep_0 , \ep_1> 0$, let $\mcl G_n   = \mcl G_n (\ep_0 , \ep_1  )$ be the event that $\iota_0^n= \iota_0^n(\ep_0) >0$, $\iota_1^n = \iota_1^n(\ep_0) >0$, $X(1,2n)= \emptyset$, and the following additional conditions hold. 
\begin{enumerate}  
\item $\iota_1^n - \phi(\iota_1^n) \geq \iota_0^n - \phi(\iota_0^n) - \ep_1^{-1}\left(\iota_0^n - \phi(\iota_0^n) \right)^{\xi }  $. \label{item-subloop-area}
\item $|X(\phi(\iota_1^n) , \iota_1^n)| \geq \ep_1 \left(\iota_0^n - \phi(\iota_0^n) \right)^{\xi/2}$. \label{item-subloop-length}
\end{enumerate}
For each $q \in (0,1)$, there exists $\ep_0 , \ep_1 > 0$ and $n_* \in \BB N$ such that for $n\geq n_*$, 
\eqbn
\BB P\left(\mcl G_n  \,|\, X(1,2n) = \emptyset\right) \geq 1-q .
\eqen
\end{prop}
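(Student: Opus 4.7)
The strategy hinges on the uniform rerooting invariance of the law of a rooted FK planar map of size $n$, combined with the infinite-volume scaling limit of~\cite{gwynne-miller-cle}. First I would translate the conditions defining $\mcl G_n$ into geometric statements about the FK planar map $(M, e_0, \mcl L)$ associated with $X_1\dots X_{2n}$ via Sheffield's bijection. Conditions~\ref{item-big-loop-match}--\ref{item-big-loop-length} in the definition of $\iota_0^n$ assert that $e_0$ lies in a complementary connected component of some loop of $\mcl L$ whose "area" (number of interior edges) is in $[\ep_0 n^\xi, n^\xi]$ and whose boundary length is in $[\ep_0 n^{\xi/2}, \ep_0^{-1} n^{\xi/2}]$; condition~\ref{item-big-loop-loop} together with $\iota_1^n > 0$ and~\ref{item-subloop-area}--\ref{item-subloop-length} say moreover that inside this component there is a "child" loop whose own complementary component exhausts all but a $\ep_1^{-1}(\iota_0^n - \phi(\iota_0^n))^\xi$-fraction of the area, with boundary length at least $\ep_1(\iota_0^n - \phi(\iota_0^n))^{\xi/2}$. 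Let $A_n(i)$ denote the event that the same geometric conditions hold with the edge of $Q$ corresponding to $i \in [1,2n]_{\BB Z}$ in place of $e_0$; this is a measurable function of the unrooted pair $(M, \mcl L)$ and the index $i$.

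By the uniform rerooting invariance of the rooted FK planar map of size $n$,
\begin{equation*}
\BB P\!\left(\mcl G_n \,\big|\, X(1,2n)=\emptyset\right) \;=\; \BB E\!\left[ \frac{1}{2n} \#\{i\in [1,2n]_{\BB Z} : A_n(i)\} \,\bigg|\, X(1,2n)=\emptyset \right] ,
\end{equation*}
so it suffices to exhibit $\ep_0, \ep_1 > 0$ such that, with conditional probability $\geq 1-q/2$, a $(1-q/2)$-fraction of indices $i$ satisfy $A_n(i)$. To achieve this I would first prove the corresponding statement in the infinite-volume model: letting $A_n^\infty$ denote the analogous event for the infinite-volume FK planar map attached to the bi-infinite word $X$, one has $\BB P(A_n^\infty(i)) \geq 1 - q/4$ for an appropriate choice of $\ep_0, \ep_1$, uniformly in $i \in \BB Z$ and in $n$. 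This is a consequence of the scaling-limit theorem of~\cite{gwynne-miller-cle}: on the $\op{CLE}_\kappa$-decorated $\gamma$-quantum cone, each point is almost surely surrounded by a nested sequence of loops of positive quantum area, and for any such loop, with probability close to $1$ (by tuning $\ep_1$), its largest child loop's complementary component covers all but a small fraction of the parent's area with boundary length of the right order. Translation invariance of the infinite-volume model and rescaling by a factor of $n^\xi$ then yield the claim.

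The main obstacle is transferring this infinite-volume statement to the finite-volume setting, since conditioning on the degenerate event $\{X(1,2n)=\emptyset\}$ could in principle heavily distort local behavior of the word. The event $A_n(i)$ is determined by an $O(n^\xi)$-neighborhood of edge $i$ in $(M,\mcl L)$, so for $i$ at distance $\gg n^\xi$ from $0$ and $2n$ (which accounts for all but a $o_n(1)$-fraction of indices), one expects that $\BB P(A_n(i)\,|\,X(1,2n)=\emptyset)$ differs from $\BB P(A_n^\infty(i))$ by $o_n(1)$. Carrying this out rigorously is the crux, and requires the local absolute-continuity estimates of~\cite{gms-burger-cone,gms-burger-local} to compare the behavior of $X$ conditioned on $\{X(1,2n)=\emptyset\}$ to that of the unconditioned bi-infinite $X$ on scales $\ll n$; this is precisely the point at which Lemma~\ref{prop-big-F-constant} (flagged in Remark~\ref{remark-implication} as the point of contact with~\cite{gwynne-miller-cle}) would be invoked. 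Once this comparison is in hand, applying Markov's inequality to the random variable $\tfrac{1}{2n}\#\{i : \neg A_n(i)\}$ and plugging into the averaging identity above yields $\BB P(\mcl G_n \,|\, X(1,2n)=\emptyset) \geq 1 - q$, provided we choose $\ep_0$ small first, then $\ep_1$ small, then $n_*$ large.
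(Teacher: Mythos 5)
Your high-level strategy matches the paper's: translate the conditions defining $\mcl G_n$ into geometric statements about loops of $\mcl L$, exploit uniform rerooting invariance (Lemma~\ref{prop-uniform-root}) to reduce to bounding the fraction of indices $i$ satisfying a "local" event $A_i^n$, and estimate the probability of $A_i^n$ by appealing to the infinite-volume scaling limit of~\cite{gwynne-miller-cle} (this is exactly what Lemma~\ref{prop-big-F-constant} does). You also correctly flag the obstacle: one cannot naively drop the conditioning on $\{X(1,2n)=\emptyset\}$.

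However, your proposed resolution of that obstacle is not the one the paper uses, and it is likely to fail. You suggest showing that for $i$ in the bulk, $\BB P(A_n(i)\,|\,X(1,2n)=\emptyset)$ differs from $\BB P(A_n^\infty(i))$ by $o_n(1)$ via local absolute-continuity estimates from~\cite{gms-burger-cone,gms-burger-local}. But the authors explicitly note in the outline (Section~\ref{sec-outline}) that inductive/local estimates of the type available in~\cite[Section 5]{gms-burger-local} "produce estimates which are off by a factor of $n^{o_n(1)}$, which is not sufficiently precise for our purposes"; moreover, the sharper local estimates of Section~\ref{sec-end-local} are built downstream of Proposition~\ref{prop-big-loop} itself, so invoking them here would be circular.

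The move the paper actually makes, and the one missing from your write-up, is the superpolynomial-vs-polynomial decay trick. The events $A_i^n$ are deliberately defined in Lemma~\ref{prop-big-F-exponential} to be localized to disjoint intervals $I_n^k$ of length $n^\xi$, hence independent across $k$. Lemma~\ref{prop-big-F-constant} gives that $\#(S^n\cap I_n^k) \geq (1-\alpha)n^\xi$ with probability $\geq 1-\alpha$ for each $k$; Hoeffding then gives $\BB P(\#S^n < (2-q)n) = o_n^\infty(n)$ for the \emph{unconditioned} law. Since $\BB P(X(1,2n)=\emptyset) \geq n^{-1-2\mu + o_n(1)}$ by~\cite[Theorem~1.10]{gms-burger-local}, a trivial union bound transfers the superpolynomially-small failure probability to the conditional law without any per-index comparison at all. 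This is both simpler and strictly more robust than what you sketch, and it is the piece of the argument that makes the reduction work. You should replace the paragraph about local absolute continuity with the independence/Hoeffding argument and the polynomial lower bound on $\BB P(X(1,2n)=\emptyset)$.

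One smaller point: you attribute to Lemma~\ref{prop-big-F-constant} the role of transferring from unconditioned to conditioned. In fact that lemma is the \emph{unconditioned} infinite-volume statement (it proves $\BB P(\#\wt S^n \geq (1-q)n) \geq 1-q$); the transfer happens only in Lemma~\ref{prop-big-F-exponential} and the first paragraph of the proof of Proposition~\ref{prop-big-loop}.
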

 
The reason for introducing the times $\iota_0^n$ and $\iota_1^n$ is that conditioning on $\phi(\iota_0^n) , \iota_0^n$, and $|X(\phi(\iota_0^n) , \iota_0^n)|$ will allow us to reduce the problem of studying the conditional law of $X_1\dots X_{2n}$ given $\{X(1,2n) = \emptyset\}$ to the more tractable problem of studying its conditional law given the following event. 

\begin{defn} \label{def-end-event}
Let $J$ be the smallest $j\in\BB N$ for which $X(-j,-1)$ contains a burger. For $n\in\BB N$ and $l\in\BB N$, let $\wh{\mcl E}_n^l  $ be the event that $J=n$, $X_{-J} = \tc H$, and $X(-n,-1)$ contains exactly one hamburger, $l$ cheeseburger orders, and no other symbols. For $\ep_1 > 0$, let $\mcl E_n^l  = \mcl E_n^l(\ep_1)$ be the event that $\wh{\mcl E}_n^l$ occurs and there exists $j\in [1,J]_{\BB Z}$ such that the following conditions hold.
\begin{enumerate}
\item $j \geq n - \ep_1^{-1} n^{\xi }$. 
\item There are at least $\ep_1 n^{\xi/2}$ hamburger orders to the left of the leftmost flexible order in $X(-j,-1)$.  
\item Each cheeseburger in $X(-J , -j-1)$ is matched to a cheeseburger order in $X(-j,-1)$. 
\end{enumerate}
Let $\pi_n = \pi_n(\ep) $ be the smallest $j \in [1,J]_{\BB Z}$ satisfying the above conditions (if it exists) and otherwise let $\pi_n := J $. 
\end{defn} 

\begin{remark} \label{remark-pi-not-stopping}
The time $\pi_n$ of Definition~\ref{def-end-event} is \emph{not} a stopping time for the word $X$, read backward. Rather, for any realization $x$ of $X_{-\pi_n} \dots X_{-1}$ for which $\pi_n < J$, the conditional law of $X_{-J} \dots X_{-|x|}$ given $\{X_{-\pi_n} \dots X_{-1} = x\}$ is the same as its conditional given $\{X_{-|x|} \dots X_{-1} = x\}$ and the additional event that each cheeseburger in $X(-J,-|x|-1)$ is matched to a cheeseburger order in $X(-|x|,-1)$. 
\end{remark}

\begin{lem} \label{prop-G'-event}
Fix $\ep_0 , \ep_1 > 0$. Let $\iota_0^n = \iota_0^n(\ep_0)$ and $\iota_1^n = \iota_1^n(\ep_1)$ be defined as above and let $\mcl G_n = \mcl G_n(\ep_0,\ep_1)$ be the event of Proposition~\ref{prop-big-loop}. Also let $\mcl G_n' = \mcl G_n'(\ep_0 ,\ep_1)$ be the event that $\iota_0^n > 0$, $ X(1,2n)=\emptyset $, and there exists $i\in [\phi(\iota_0^n) , \iota_0^n]_{\BB Z}$ such that the following conditions hold:
\begin{enumerate}
\item $i \leq \phi(\iota_0^n) +  \ep_1^{-1} \left(\iota_0^n - \phi(\iota_0^n) \right)^{\xi }$. \label{item-almost-E^l-area}
\item There are at least $\ep_1 \left(\iota_0^n - \phi(\iota_0^n) \right)^{\xi/2}$ hamburger orders to the left of the leftmost flexible order in $X(i,\iota_0^n)$.  \label{item-almost-E^l-length}
\item Each cheeseburger in $X(\phi(\iota_0^n) , i-1 )$ is matched to a cheeseburger order in $X(i,\iota_0^n)$. \label{item-almost-E^l-maximal}
\end{enumerate}
Then $\mcl G_n  \subset \mcl G_n'$, so for each $q \in (0,1)$, there exists $\ep_0 , \ep_1 > 0$ and $n_* \in \BB N$ such that for $n\geq n_*$, 
\eqbn
\BB P\left(\mcl G_n' \,|\, X(1,2n) = \emptyset\right) \geq 1-q .
\eqen
\end{lem}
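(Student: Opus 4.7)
The inclusion $\mcl G_n \subset \mcl G_n'$ is a deterministic combinatorial statement, and once it is established the probability bound will follow at once from Proposition~\ref{prop-big-loop} (possibly after shrinking $\ep_0, \ep_1$ by a fixed constant factor to absorb additive errors of size $O(1)$). The plan is to focus on the inclusion itself.

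On the event $\mcl G_n$, I would set $L := \iota_0^n - \phi(\iota_0^n)$ and take the witness for $\mcl G_n'$ to be $i := \phi(\iota_1^n)$. Condition~(1) of $\mcl G_n'$ is then immediate: condition~(1) of $\mcl G_n$ rearranges to
\[
(\iota_0^n - \iota_1^n) + (\phi(\iota_1^n) - \phi(\iota_0^n)) \leq \ep_1^{-1} L^\xi ,
\]
and both summands are nonnegative because the pair $(\phi(\iota_1^n), \iota_1^n)$ is nested inside $[\phi(\iota_0^n), \iota_0^n]_{\BB Z}$, so $i - \phi(\iota_0^n) \leq \ep_1^{-1} L^\xi$. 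For condition~(3), given a cheeseburger at some position $k$ that survives in the reduced word $X(\phi(\iota_0^n), i-1)$, I plan to locate its match $\phi(k)$ by a short stack-dynamics argument in the hamburger--cheeseburger semigroup. The case $\phi(k) > \iota_0^n$ is impossible because $\tc H$ at $\phi(\iota_0^n)$ must be on top of the stack at time $\iota_0^n$, which is inconsistent with $\tc C$ at $k < \iota_0^n$ still being on the stack then; the case $\phi(k) \in (\phi(\iota_1^n), \iota_1^n)$ is impossible because $\tc C$ at $\phi(\iota_1^n)$ sits on the stack strictly above $\tc C$ at $k$ throughout that interval and is not consumed until $\iota_1^n$; the boundary cases $\phi(k) \in \{\phi(\iota_1^n), \iota_1^n, \iota_0^n\}$ are excluded by type considerations (the first is a burger, the second matches $\phi(\iota_1^n) \neq k$, and the third matches $\phi(\iota_0^n)$, which is not a $\tc C$); and finally $\phi(k) \in (\iota_1^n, \iota_0^n)$ with $X_{\phi(k)} = \tb F$ would produce a $\tc C$--$\tb F$ pair of span $\phi(k) - k \geq \iota_1^n - \phi(\iota_1^n) + 1 \geq L/2$ (for $n$ large) inside $[\phi(\iota_0^n), \iota_0^n]$ with right endpoint strictly above $\iota_1^n$, contradicting the maximality in the definition of $\iota_1^n$. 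The remaining possibility is $\phi(k) \in (\iota_1^n, \iota_0^n)$ with $X_{\phi(k)} = \tb C$, which is exactly condition~(3).

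The main obstacle will be condition~(2), which requires at least $\ep_1 L^{\xi/2}$ hamburger orders appearing strictly to the left of the leftmost flexible order in the reduced word $X(\phi(\iota_1^n), \iota_0^n)$. The same stack-dynamics argument shows that every surviving symbol in $X(\phi(\iota_1^n), \iota_1^n)$ is an order whose matching burger lies in $[\phi(\iota_0^n)+1, \phi(\iota_1^n)-1]$, so condition~(2) of $\mcl G_n$ already provides at least $\ep_1 L^{\xi/2}$ unmatched orders of this form. The remaining task is to show that the $\tb F$ fraction of these orders is of strictly smaller order than $L^{\xi/2}$: each such $\tb F$ corresponds to a flexible order nested strictly inside the $\iota_1^n$-subloop, and the total number of these is bounded by a lower-order power of $L$ in the spirit of \cite[Corollary~5.2]{gms-burger-cone}. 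Condition~(4) in the definition of $\iota_0^n$ plays the role of forcing the $\iota_0^n$-loop to be an \emph{H-loop} bordered externally by a C-region, so the interior-boundary orders of the $\iota_1^n$-C-subloop are overwhelmingly $\tb H$. Since the semigroup relations never allow two orders to commute past each other, these $\tb H$'s remain in the reduced word $X(\phi(\iota_1^n), \iota_0^n)$ in positions preserved from the source word, strictly to the left of any $\tb F$ coming from later in $(\iota_1^n, \iota_0^n)$ or from $\iota_0^n$ itself. After shrinking $\ep_1$ by a fixed constant to absorb the lower-order error, condition~(2) of $\mcl G_n'$ is verified, completing the inclusion $\mcl G_n \subset \mcl G_n'$ and hence the lemma.
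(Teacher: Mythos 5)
Your choice of witness $i=\phi(\iota_1^n)$ is the same as the paper's, and your verifications of conditions~(1) and~(3) of $\mcl G_n'$ are correct (condition~(3) is exactly the paper's one-line justification, spelled out: maximality of $\iota_1^n$ rules out a match of type $\tb F$ in $(\iota_1^n,\iota_0^n)$, the blocking $\tc C$ at $\phi(\iota_1^n)$ rules out matches inside $(\phi(\iota_1^n),\iota_1^n)$, and the absence of burgers in $X(\phi(\iota_0^n),\iota_0^n)$ rules out matches beyond $\iota_0^n$).

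The gap is in your treatment of condition~(2). You propose to bound the number of flexible orders among the surviving orders of $X(\phi(\iota_1^n),\iota_1^n)$ \emph{probabilistically}, ``in the spirit of \cite[Corollary~5.2]{gms-burger-cone},'' and then shrink $\ep_1$ to absorb the error. This cannot work as stated: the lemma asserts the deterministic inclusion $\mcl G_n\subset\mcl G_n'$ for the \emph{same} $\ep_0,\ep_1$, and a with-high-probability bound (which would moreover have to be carried through the conditioning on $\{X(1,2n)=\emptyset\}$ and on the location of $\iota_1^n$) only yields a weaker statement. The point you are missing is that the count is exactly zero, deterministically, by the same stack argument you already use for condition~(3): throughout the interval $(\phi(\iota_1^n),\iota_1^n)$ the cheeseburger $X_{\phi(\iota_1^n)}=\tc C$ remains on the stack, so any $\tb C$ (which consumes the topmost $\tc C$) or $\tb F$ (which consumes the topmost burger) occurring in that interval has its match at a position $\geq\phi(\iota_1^n)$, hence is matched \emph{inside} the interval. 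Consequently every order surviving in $X(\phi(\iota_1^n),\iota_1^n)$ is a $\tb H$ (and no burger survives, since an unconsumed burger at time $\iota_1^n$ would be fresher than $X_{\phi(\iota_1^n)}$), so condition~(2) of $\mcl G_n$ directly gives at least $\ep_1(\iota_0^n-\phi(\iota_0^n))^{\xi/2}$ hamburger orders in $X(\phi(\iota_1^n),\iota_1^n)$ with no loss in $\ep_1$. Since the relations~\eqref{eqn-theta-relations} never commute two orders past one another, these $\tb H$'s keep their positions in $X(\phi(\iota_1^n),\iota_0^n)$, and any flexible order in that reduced word must come from a position $>\iota_1^n$, hence lies to their right. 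Replacing your appeal to \cite[Corollary~5.2]{gms-burger-cone} by this observation closes the gap and recovers the paper's (much shorter) proof.
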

\begin{proof} 
Observe that if $\mcl G_n$ occurs, then the conditions for the time $i$ in the definition of $\mcl G_n'$ are satisfied with $i = \phi(\iota_1^n)$. Indeed, the conditions~\ref{item-subloop-area} and~\ref{item-subloop-length} in the definition of $\mcl G_n$ imply the conditions~\ref{item-almost-E^l-area} and~\ref{item-almost-E^l-length} in the definition of $\mcl G_n'$. The condition~\ref{item-almost-E^l-maximal} follows from maximality of $\iota_1^n$ and the fact that $X(\phi(\iota_0^n) , \iota_0^n)$ contains no cheeseburgers. 
\end{proof}

The reason for our interest in the event $\mcl G_n'$ of Lemma~\ref{prop-G'-event} is as follows.

\begin{lem} \label{prop-E^l-equiv}
Let $\ep_0,\ep_1> 0$. Let $\iota_0^n = \iota_0^n(\ep_0)$ be as in the beginning of this subsection and let $\mcl G_n' = \mcl G_n'(\ep_0,\ep_1)$ be the event of Lemma~\ref{prop-G'-event}. Suppose given realizations $x$ of $X_1\dots X_{\phi(\iota_0^n)-1}$, $x'$ of $X_{\iota_n} \dots X_{2n}$, and $l$ of $|X(\phi(\iota_0^n) , \iota_0^n)|$ for which 
\eqbn
\BB P\left(X_1\dots X_{\phi(\iota_0^n)-1} =x ,\, X_{\iota_n} \dots X_{2n} = x' ,\, |X(\phi(\iota_0^n) , \iota_0^n)| =l,\, \mcl G_n' \right) > 0  .  
\eqen
Let $m := 2n - |x'| - |x| - 1$. 
The conditional law of $X_{\phi(\iota_0^n) } \dots X_{\iota_0^n-1}$ given 
\[
\{X_1\dots X_{\phi(\iota_0^n)-1} =x ,\, X_{\iota_n} \dots X_{2n} = x' ,\, |X(\phi(\iota_0^n) , \iota_0^n)| =l,\, \mcl G_n' \} 
\]
is the same as the conditional law of the word $X_{-m} \dots X_{-1}$ given the event $\mcl E_{m }^l = \mcl E_{m}^l(\ep_1)$ of Definition~\ref{def-end-event}. 
\end{lem}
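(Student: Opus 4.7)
The plan is to condition on the ``outside'' data first, reducing the problem to a question about the conditional law of the middle word alone, and then to show that the resulting event on the middle word is exactly $\mcl E_m^l$ under the obvious identification of indices. Write $a := \phi(\iota_0^n) = |x|+1$ and $b := \iota_0^n = 2n - |x'|+1$, so these positions are determined by the lengths of $x$ and $x'$. By the iid structure of $X$, the conditional law of the middle word $M := X_a \dots X_{b-1}$ given $\{X_1 \dots X_{a-1} = x,\, X_b \dots X_{2n} = x'\}$ is iid with distribution~\eqref{eqn-theta-probs}. The task is therefore to decode the remaining conditioning --- $\mcl G_n'$ together with $X(1,2n)=\emptyset$ and $|X(a,b)| = l$ --- as an event on $M$ alone.

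The heart of the argument is a combinatorial identity for the Sheffield matching process. The conditioning implicitly forces $\phi(b) = a$ in the bi-infinite word together with $X_a = \tc H$ and $X_b = \tb F$. Since $\tb F$ at $b$ consumes the most recent unmatched burger on the combined burger stack, and its choice is $X_a$, a direct analysis of the reduction rules (using in particular that $\tb C$ commutes with $\tc H$ while $\tb H$ and $\tb F$ do not) yields the following characterization of $M$: (i) $X_a = \tc H$; (ii) every $\tb H$ and $\tb F$ order in $X_{a+1} \dots X_{b-1}$ is matched within $X_{a+1} \dots X_{b-1}$ (else such an order would prematurely consume $X_a$); (iii) every burger in $X_{a+1}\dots X_{b-1}$ is matched within $X_{a+1}\dots X_{b-1}$ (else it would sit atop $X_a$ on the burger stack at time $b^-$); and (iv) exactly $l$ of the $\tb C$ orders in $(a,b)$ are unmatched within $(a,b)$. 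Together (i)--(iv) say exactly that $\mcl R(M)$ consists of the single burger $X_a = \tc H$ together with $l$ cheeseburger orders, which under the identification $X_{a+k}\leftrightarrow X_{-m+k}$ is precisely the event $\wh{\mcl E}_m^l$; the fact that $J = m$ under the identification follows because for every proper suffix of $M$, property (iii) and an elementary observation show that all burgers in the suffix are matched by orders in the suffix.

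Next, the conditions on the indices $i > b$ --- that $\iota_0^n = b$ is the \emph{largest} valid index, that condition~\ref{item-big-loop-loop} of the definition of $\iota_0^n$ holds, and that $X(1,2n) = \emptyset$ --- depend on $M$ only through $l$. Indeed, once $\tb F$ at $b$ has consumed $X_a$, the global burger stack equals the burger stack from $x$ at time $a^-$ with its top $l$ cheeseburgers removed (because the $l$ unmatched $\tb C$ orders of $M$ each skip over $X_a$ and successively consume the most recent available cheeseburger in $x$, by the LIFO structure of the reduction). Consequently, the matches $\phi(i)$ for $i > b$ and all events describing them are deterministic functions of $x$, $x'$, and $l$, and hence impose no further restriction on $M$ beyond what is already encoded in $l$. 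Finally, the regularity condition of $\mcl G_n'$ --- existence of $i \in [a,b]_{\BB Z}$ satisfying~\ref{item-almost-E^l-area}--\ref{item-almost-E^l-maximal} --- translates under the identification $X_{a+k}\leftrightarrow X_{-m+k}$, via the index transformation $j = b - i$, to the existence-of-$j$ regularity condition defining $\mcl E_m^l$. Combining these ingredients, the conditional law of $M$ is exactly the iid law conditioned on $\mcl E_m^l$, which is the law of $X_{-m}\dots X_{-1}$ given $\mcl E_m^l$, as claimed.

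The main obstacle will be the careful bookkeeping in the middle paragraph above: one must verify properties (ii)--(iii) both when the reduction is performed inside the bi-infinite word and when it is performed on $M$ alone, and check that the ``top $l$ cheeseburgers of $x$'' description of the burger stack at time $b^+$ is independent of the specific realization of $M$. Some further minor care is needed to reconcile the endpoint conventions between the suffix $X(i,\iota_0^n)$ appearing in $\mcl G_n'$ (which contains the flexible order $X_b$) and the suffix $X(-j,-1)$ in $\mcl E_m^l$ (which does not), and between the length $m$ defined in the lemma and the natural length $b-a$ of the middle word.
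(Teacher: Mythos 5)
Your proposal is correct and follows essentially the same route as the paper's proof: both proceed by conditioning on the boundary data, observing that the conditions involving symbols outside the middle interval depend on the middle word only through its reduced form (equivalently, through $l$), and matching the $\mcl G_n'$ regularity condition to that of $\mcl E_m^l$. The paper packages this more compactly as an event decomposition $E_1\cap\dots\cap E_5$ with a measurability observation, whereas you unfold the same content via an explicit burger-stack analysis, but the underlying argument is the same.
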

\begin{proof}
Fix $x,x'$, and $l$ as in the statement of the lemma. Also let $n_1 := |x |+1$ and $n_2 := 2n-|x'|$ be the corresponding realizations of $\phi(\iota_0^n)$ and $\iota_0^n$, respectively. We first argue that the conditional law of $X_{\phi(\iota_0^n) } \dots X_{\iota_0^n-1}$ given 
\eqb \label{eqn-E^l-realization}
\{X_1\dots X_{\phi(\iota_0^n)-1} =x ,\, X_{\iota_n} \dots X_{2n} = x' ,\, |X(\phi(\iota_0^n) , \iota_0^n)| =l ,\, X(1,2n) =\emptyset\} 
\eqe 
is the same as the conditional law of $X_{-m} \dots X_{-1}$ given the event $\wh{\mcl E}_n^l$ of Definition~\ref{def-end-event}. 

To see this, we first observe that the event~\eqref{eqn-E^l-realization} is equal to the intersection of the following events:
\alb
E_1 &:= \left\{ \text{$X_{n_1} = \tc H$, $\phi(n_1 ) = n_2$, and $X(n_1,n_2-1)$ contains one $\tc H$, $l$ $\tb C$'s, and no other symbols}\right\}\\ 
E_2 &:= \left\{X_1\dots X_{n_1-1} =x ,\, X_{n_2} \dots X_{2n} = x'\right\} \\
E_3 &: = \left\{\text{There is no $i \in [n_2 + 1 , 2n]_{\BB Z}$ satisfying the defining conditions of $\iota_0^n$}\right\} \\
E_4 &:= \left\{ \text{The smallest $i \in [n_2+1, 2n]_{\BB Z}$ with $X_i = \tb F$ and $\phi(i) < n_2$ satisfies $X_{\phi(i)} = \tc C$.}   \right\} \\
E_5 &:= \left\{X(1,2n) =\emptyset \right\}.
\ale
The events $E_1$ and $E_2$ are independent. The event $E_3 \cap E_4 \cap E_5$ depends only on $X(n_1 ,n_2-1 )$, $X_{n_2+1}\dots X_{2n}$, and $X_{1}\dots X_{n_1-1}$, so by our choice of $x$, $x'$, and $l$, this event always occurs when $E_1\cap E_2$ occurs. It follows that the conditional law of the word $X_{n_1} \dots X_{n_2-1}$ given the event~\eqref{eqn-E^l-realization} is the same as its conditional law given $E_1$, which by translation invariance is the same as the conditional law of $X_{-m} \dots X_{-1}$ given $\wh{\mcl E}_m^l$. 

It is clear from the definitions of $\mcl G_n'$ and $\mcl E_{m}^l$ that if we further condition on $\mcl G_n'$, the the conditional law of $X_{n_1} \dots X_{n_2-1}$ under this conditioning is the same as the conditional law of $X_{-m} \dots X_{-1}$ given $ \mcl E_{m}^l$. 
\end{proof}

\subsection{Defining events in terms of the FK planar map} 
\label{sec-big-loop-events}
 
On the event $\{X(1,2n) =\emptyset\}$, let $(M^n , e_0^n, \mcl L^n)$ be the rooted FK planar map associated with $X_1\dots X_{2n}$ via Sheffield's bijection. Let $(Q^n ,\BB e_0^n)$ be the quadrangulation associated with $M^n$ as in \cite[Section 4.1]{shef-burger}, i.e.\ the set of vertices of $Q^n$ is the union of the set of vertices of $M^n$ and the set of faces of $M^n$; two vertices of $Q^n$ are joined by an edge if and only if these two vertices correspond to a face of $M^n$ and a vertex adjacent to this face; and $\BB e_0^n$ is the unique edge of $Q^n$ whose primal endpoint is adjacent to $e_0^n$ and which is the first edge counterclockwise from $e_0^n$ with this property. Note that $Q^n$ has $2n$ edges.

The idea of the proof of Proposition~\ref{prop-big-loop} is as follows. We will define events $A_i^n = A_i^n(\ep_0 , \ep_1)$ for $i\in [1, n]_{\BB Z}$ (which depend only on the ``local" behavior of the word near time $i$) and show that, roughly speaking, the following holds. 
\begin{enumerate}
\item If $\ep_0$ and $\ep_1$ are small, then with high probability, $A_i^n $ occurs for most $i\in [1,n]_{\BB Z}$, even if we condition on $\{X(1,2n) = \emptyset\}$. 
\item If $A_i^n $ occurs, $X(1,2n) = \emptyset$, and we choose a new root edge for the FK planar map in such a way that the $i$th edge hit by the exploration path in Sheffield's bijection becomes the root edge of $Q^n$, then $\mcl G_n $ occurs for the word corresponding to the re-rooted map. 
\end{enumerate}
The statement of the proposition will follow from this together with invariance of the law of $(M^n , e_0^n, \mcl L^n)$ under uniform re-rooting (see Lemma~\ref{prop-uniform-root} below). The operation of choosing a new root edge does not have a nice description in terms of the word $X_1\dots X_{2n}$ (indeed, we are not aware of a simple criterion to determine whether two words $x$ and $x'$ consisting of elements of $\Theta$ which have the same length and both reduce to the empty word correspond under Sheffield's bijection to the same FK planar map with two different choices of root edge). Hence to exploit re-rooting invariance of $(M^n , e_0^n, \mcl L^n)$ we need to study FK loops rather than burgers and orders. 

In the remainder of this subsection we will construct events $\wt A_i^n$ which will eventually be used to define the events $A_i^n$ after we zoom in on an interval of length $n^\xi$. We will define these events in terms of the infinite-volume rooted FK planar map associated with the bi-infinite word $X$, which we denote by $(M^\infty  , e_0^\infty , \mcl L^\infty)$. This FK planar map can be obtained from the word $X$ via essentially the same bijection described in \cite[Section 4.1]{shef-burger}. See~\cite{blr-exponents,chen-fk} for more details regarding the infinite-volume version of this bijection. 
  
Let $(Q^\infty ,\BB e_0^\infty)$ be the associated rooted quadrangulation (defined in the same manner as the rooted quadrangulation $(Q^n,\BB e_0^n)$ at the beginning of this subsection but with $(M^\infty, e_0^\infty)$ in place of $(M^n,e_0^n)$). Let $\lambda$ be the path which explores $(Q^\infty , \BB e_0^\infty)$, i.e.\ $\lambda(i)$ is the edge of $Q^\infty$ corresponding to the symbol $X_i \in \Theta$ under Sheffield's bijection. 

For $i \in \BB Z$ and $j\in\BB N$, let $\{\ell_{i,j} \}_{ j\in\BB N}$ be the ordered sequence of loops in $\mcl L^\infty$ which surround $\lambda(i)$, so that each $\ell_{i,j}$ is a cyclically ordered set of edges in $Q^\infty$ and $\ell_{i,j+1} $ disconnects $\ell_{i,j} $ from $\infty$ for each $j\in\BB N$.

We want to consider complementary connected components of the loops $\ell_{i,j}$, which we define as follows. For the definition we let $S$ and $S^*$ be the primal and dual FK edge sets for the map $M^\infty$.

\begin{defn} \label{def-fk-component}
Let $\ell \in \mcl L^\infty$ be an FK loop. Let $A$ and $A^*$ be the clusters of edges in $S$ and $S^*$ which are separated by $\ell$ (so that $A$ and $A^*$ are connected). A \emph{primal (resp. dual) complementary connected component} of $\ell$ is a set of edges $U\subset Q^\infty$ such that the following is true. There exists a simple cycle $C$ of $S$ (resp. $S^*$) which is contained in $A$ (resp. $A^*$) such that $U$ is the set of edges of $Q^\infty$ disconnected from $\ell$ by $C$; and there is no set $U'$ of edges of $Q^\infty$ satisfying the above property and properly containing $U$. In this case we write $C = \partial U$.  
\end{defn}

\begin{defn} \label{def-discrete-bdy}
Suppose $U$ is a complementary connected component of a loop in $\mcl L^\infty$. We write $\op{Area} U$ for the number of edges in $U$ and $\op{len} \partial U$ for the number of edges in $\partial U$. 
\end{defn}
 
For $i \in \BB Z$ and $j\in\BB N$, let $M_{i,j}^0 $ (resp. $  M_{i,j}^\infty $) be the set of edges in $Q^\infty$ which belong to the same complementary connected component of the loop $\ell_{i,j}$ as $\lambda(i)$ (resp. the set of edges in $Q^\infty$ which are disconnected from $\infty$ by $\ell_{i,j}$). 
 See Figure~\ref{fig-big-loop} for an illustration.

\begin{figure}[ht!]
 \begin{center}
\includegraphics{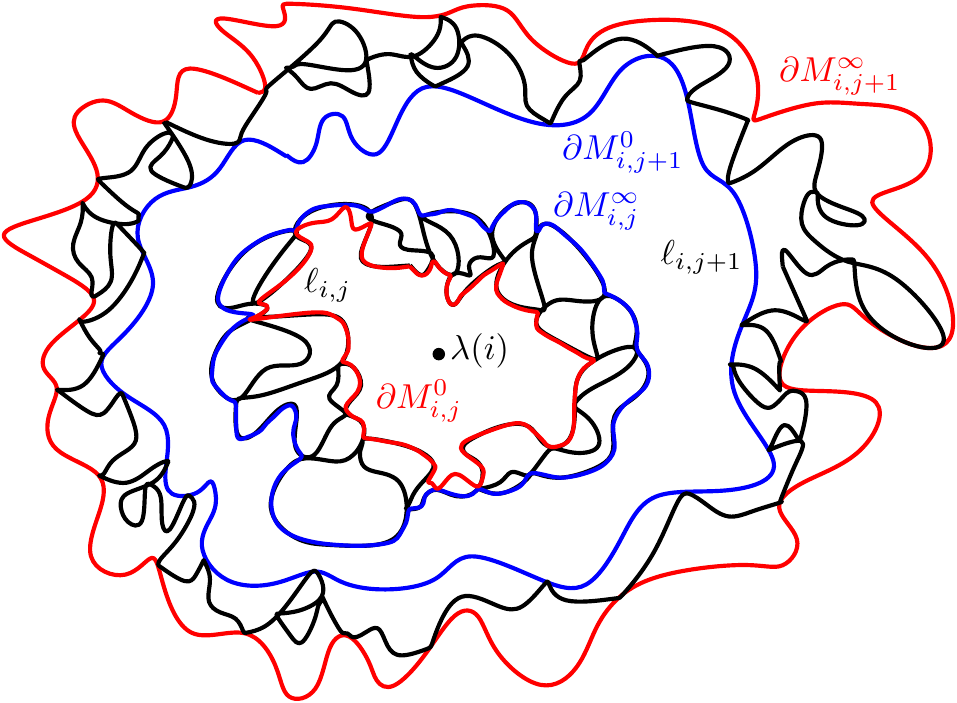} 
\caption{An illustration of two consecutive loops surrounding $\lambda(i)$ in the infinite-volume rooted FK planar map $(M^\infty , e_0^\infty , \mcl L^\infty)$ together with the components $M_{i,j}^0$ and $M_{i,j+1}^0$ containing $\lambda(i)$ and the complements $M_{i,j}^\infty$ and $M_{i,j+1}^\infty$ of the unbounded components. The boundaries of $M_{i,j+1}^0$ and $M_{i,j}^\infty$ (resp. $M_{i,j}^0$ and $M_{i,j+1}^\infty$) are either both formed by edges of $S$ or both formed by edges of $S^*$, so these pairs of boundaries are shown in the same color.
}\label{fig-big-loop}
\end{center}
\end{figure} 
 
Let $\wt\theta_{i,j}$ (resp. $\theta_{i,j}$) be the time at which $\lambda$ starts (resp. finishes) tracing $\ell_{i,j}$, so that $[\wt\theta_{i,j} , \theta_{i,j}-1]_{\BB Z}$ is the intersection of $\BB Z$ with the smallest interval which contains the set of $k\in \BB Z$ such that $\lambda(k) \in \ell_{i,j}$. It follows from the construction in~\cite[Section 4.1]{shef-burger} that $X_{\theta_{i,j}} = \tb F$ and $\phi(\theta_{i,j}) = \wt\theta_{i,j}$. 

Fix $\ep_0,\ep_1>0$. For $n\in\BB N$ and $i\in [1,n]_{\BB Z}$, let 
$  J_i^n(\ep_0)$ be the smallest $j \in\BB N$ for which $\ell_{i,j}$ is traced by $\lambda$ in the counterclockwise direction (equivalently $X_{\theta_{i,j}} =\tc H$), 
\alb
\op{Area} M_{i,j}^\infty \geq \ep_0 n ,\quad \op{and}\quad \ep_0 n^{1/2} \leq \op{len}\partial  M_{i,j}^\infty \leq \ep_0^{-1} n^{1/2}  .
\ale
Here we use the notation of Definition~\ref{def-discrete-bdy}. Set $J_i^n(\ep_0) = 0$ if no such $j$ exists. 
Write
\begin{align} \label{eqn-loops-at-J}
&\ul \theta_i^n := \theta_{i, J_i^n(\ep_0)} ,\quad \ul M_i^n := M_{i,J_i^n(\ep_0)}^\infty ,\quad \ul \ell_i^n := \ell_{i,J_i^n(\ep_0)} \notag \\
&\ol \theta_i^n := \theta_{i, J_i^n(\ep_0)+1} ,\quad \ol M_i^n := M_{i,J_i^n(\ep_0)+1}^\infty ,\quad \ol \ell_i^n := \ell_{i,J_i^n(\ep_0)+1}  ,
\end{align} 
where here we use the convention that $\theta_{i,0} = i$ and $M_{i,0}^\infty = \ell_{i,0} = \emptyset$. 
Then $\ul \ell_i^n$ is the innermost counterclockwise loop surrounding $\lambda(i)$ with area at least $\ep_0 n$ and outer boundary length between $\ep_0 n^{1/2}$ and $\ep_0^{-1} n^{1/2}$; and $\ol \ell_i^n$ is the loop immediately outside $\ul \ell_i^n$.  
 
Let $\wt A_i^n = \wt A_i^n(\ep_0, \ep_1 )$ be the event that the following is true.
\begin{enumerate} 
\item $J_i^n(\ep_0) > 0$.
\item $ \op{len}\left(\partial \ol M_i^n \right) \geq \ep_1  n^{1/2} $.  
\item $\op{Area} \ol M_i^n \leq \ep_1^{-1} \op{Area} \ul M_i^n$. 
\item $[\phi(\ol\theta_i^n) , \ol\theta_i^n ]_{\BB Z}\subset [1,n]_{\BB Z}$. \label{item-wt-A-contained-event}
\end{enumerate}

\subsection{Proof of Proposition~\ref{prop-big-loop}}
\label{sec-big-loop-proof}

In this subsection we will prove Proposition~\ref{prop-big-loop} conditional on a technical lemma which we state below, and prove in Section~\ref{sec-big-loop-word}. Throughout, we continue to use the notation introduced in the preceding two subsections.

\begin{lem} \label{prop-big-F-constant}
Define the events $\wt A_i^n = \wt A_i^n(\ep_0,\ep_1)$ as in Section~\ref{sec-big-loop-events}. For each $i\in[1,n]_{\BB Z}$, the event $\wt A_i^n$ is measurable with respect to $X_1\dots X_n$. Furthermore, for each $q\in (0,1)$, there exists $n_* \in\BB N$, $\ep_0 , \ep_1  > 0$, such that the following is true. 
For $n\in\BB N$, let $\wt S^n  = \wt S^n (\ep_0 , \ep_1  )$ be the set of $i\in [1,n]_{\BB Z}$ for which $\wt A_i^n $ occurs. For each $n\geq n_*$, 
\eqbn
\BB P\left(\# \wt S^n \geq (1-q) n \right) \geq 1-q .
\eqen
\end{lem}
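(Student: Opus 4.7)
The plan is, first, to observe that under condition~\ref{item-wt-A-contained-event} the time-interval $[\phi(\ol\theta_i^n),\ol\theta_i^n]_{\BB Z}$ lies in $[1,n]_{\BB Z}$ and captures the full nested loop structure enclosed by $\ol\ell_i^n$ (the loops $\ell_{i,j}$ for $j\leq J_i^n+1$, their complementary components, the areas $\op{Area} M_{i,j}^\infty$ and boundary lengths $\op{len}\partial M_{i,j}^\infty$, and the orientation of each loop), all of which are determined combinatorially by the subword $X_{\phi(\ol\theta_i^n)}\dots X_{\ol\theta_i^n}$ via the bijection of~\cite[Section 4.1]{shef-burger}. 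Checking the ``innermost'' condition defining $J_i^n$ only involves these inner loops. Hence $\wt A_i^n\in\sigma(X_1,\dots,X_n)$.

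For the density statement, let $\wt A_i^{n,\infty}$ denote the event obtained from $\wt A_i^n$ by dropping condition~\ref{item-wt-A-contained-event}, defined on the infinite-volume word $X$ and the infinite-volume FK planar map $(M^\infty,e_0^\infty,\mcl L^\infty)$. Translation invariance of $X$ gives $\BB P(\wt A_i^{n,\infty})=p_n$ independent of $i$. I would then invoke the infinite-volume version of the main result of~\cite{gwynne-miller-cle}, proved independently of the present paper as emphasized in Remark~\ref{remark-implication}: namely, $(M^\infty,e_0^\infty,\mcl L^\infty)$ with $n$-scaled quantum observables converges in law to $\op{CLE}_\kappa$ on a $\gamma$-quantum cone, jointly with convergence of quantum areas and quantum boundary lengths of complementary components of loops. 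Under this limit, $\wt A_0^{n,\infty}$ passes to the continuum event that the innermost counterclockwise $\op{CLE}_\kappa$ loop around the root whose complementary component has quantum area $\geq \ep_0$ and quantum boundary length in $[\ep_0,\ep_0^{-1}]$ exists, and the loop immediately outside has quantum boundary length $\geq \ep_1$ and quantum area at most $\ep_1^{-1}$ times that of the inner component. A.s.\ nestedness of $\op{CLE}_\kappa$ for $\kappa\in(4,8)$, together with a.s.\ positivity and finiteness of the quantum area and boundary length of each complementary component and the fact that consecutive nested loops have comparable quantum scale, implies that this continuum event has probability tending to $1$ as $\ep_0\rta 0$ and then $\ep_1\rta 0$. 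Hence $p_n\geq 1-\eta$ for any prescribed $\eta>0$, for suitable $\ep_0,\ep_1$ and $n\geq n_0$.

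Passing from $\wt A_i^{n,\infty}$ to $\wt A_i^n$ requires estimating the fraction of $i\in[1,n]_{\BB Z}$ whose outer loop $\ol\ell_i^n$ straddles $\{0\}$ or $\{n+1\}$. Conditions 1--3 combined with the scaling limit force $\ol\theta_i^n-\phi(\ol\theta_i^n)\leq C(\ep_0,\ep_1)\,n$ with high probability; by translation invariance and counting straddling loops per macroscopic scale, the expected number of bad $i\in[1,n]_{\BB Z}$ is at most $\eta'(\ep_0,\ep_1)\,n$ with $\eta'$ that can be made arbitrarily small by appropriate tuning. Choosing $\eta,\eta'<q^2/2$ and applying Markov's inequality to $\#\{i\in[1,n]_{\BB Z}:\wt A_i^n\text{ fails}\}$ gives $\BB P(\#\wt S^n\geq(1-q)n)\geq 1-q$ for $n\geq n_*$. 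The main obstacle is the careful invocation of the infinite-volume $\op{CLE}_\kappa$/quantum-cone scaling limit of~\cite{gwynne-miller-cle}: one must translate the discrete area/boundary observables into their continuum counterparts quantitatively enough that ``probability close to $1$'' in the continuum transfers back to the discrete uniformly in $n$, while simultaneously balancing $\ep_0$ against $\ep_1$ so that the probabilities of conditions 1--3 \emph{and} of the containment~\ref{item-wt-A-contained-event} can \emph{both} be made close to $1$. This is precisely the unique place in the paper where the infinite-volume FK scaling limit of~\cite{gwynne-miller-cle} is needed, as flagged in Remark~\ref{remark-implication}.
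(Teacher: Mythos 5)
Your proposal is correct in outline and invokes the same essential ingredient as the paper — the infinite-volume scaling limit of~\cite{gwynne-miller-cle} together with a.s.\ nestedness of $\op{CLE}_\kappa$ loops — to make the probability of a continuum analogue of $\wt A_i^n$ close to~$1$ for suitable $\ep_0,\ep_1$, and then to transfer this back to the discrete via translation invariance and a Markov-type deduction. The main organizational difference is how you treat the interval-containment condition~\ref{item-wt-A-contained-event}: you drop it first (giving a translation-invariant $p_n$) and add it back as a separate ``straddling'' correction, whereas the paper builds the containment directly into the continuum event $A_t$ (its third defining condition, that $\eta$ trace $\ol\ell^t$ entirely during $[0,1]$), so that for $t\in[\alpha,1-\alpha]$ the bound $\BB P(A_t)\geq 1-\alpha$ already absorbs the containment. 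The paper's packaging also skips a step your route would have to justify: you assert that conditions 1--3 combined with the scaling limit force $\ol\theta_i^n-\phi(\ol\theta_i^n)\leq C(\ep_0,\ep_1)\,n$ with high probability, but conditions 1--3 bound $\op{Area}\ul M_i^n$ only from \emph{below} (by $\ep_0 n$) and control only its boundary length, not its area, from above, so this does not follow formally from the stated conditions; closing it requires an additional continuum fact (that a CLE loop of bounded quantum boundary length on a quantum cone has a.s.\ finite quantum area, equivalently that the space-filling $\eta$ is a.s.\ continuous), and one must check that this is compatible with simultaneously making the loop-existence probability close to $1$. The paper avoids having to quantify any such $C(\ep_0,\ep_1)$ by using only the qualitative statement that $\eta$ a.s.\ traces arbitrarily small CLE loops in arbitrarily small time windows around any fixed time, combined with translation invariance and dominated convergence applied to $n^{-1}\BB E[\#\wt S^n]$. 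This is a fixable imprecision rather than a fatal flaw, but it is the point where your version would need the most additional work.
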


Lemma~\ref{prop-big-F-constant} will be proven in Section~\ref{sec-big-loop-word} below using the infinite-volume version of the results in~\cite{gwynne-miller-cle} (c.f. Remark~\ref{remark-implication}). In the remainder of this subsection, we assume Lemma~\ref{prop-big-F-constant} and use it to prove Proposition~\ref{prop-big-loop}. 
 
By \cite[Theorem~1.10]{gms-burger-local}, $\BB P\left(X(1,2n) = \emptyset\right)$ decays slower than some negative power of $n$. Hence to transfer the statement of Lemma~\ref{prop-big-F-constant} to a statement about conditional probabilities $\{X(1,2n) =\emptyset\}$, it suffices to produce an event whose probability decays faster than any power of $n$. We accomplish this by dividing $[1,2n]_{\BB Z}$ into many intervals of length $\asymp n^{1-\xi}$ for fixed $\xi \in (0,1)$ and independently applying Lemma~\ref{prop-big-F-constant} in each interval. 

\begin{lem} \label{prop-big-F-exponential}
 Fix $\xi \in (0,1)$, $\ep_0, \ep_1   >0$. For $n\in\BB N$ and $k\in \BB N$, let 
 \eqb \label{eqn-k-interval-def}
 I_n^k := \left[ (k-1) n^{\xi } +1 , k n^{\xi }\right]_{\BB Z} .
 \eqe 
 For $i \in [0,2n]_{\BB Z}$ let $k$ be chosen so that $i\in I_n^k$ and let $A_i^n=   A_i^n(\ep_0,\ep_1  )$ be defined in the same manner as the event $\wt A_i^n(\ep_0 ,\ep_1)$ as above but with $ n^\xi$ in place of $n$ and $I_n^k$ in place of $[1,n]_{\BB Z}$. Let $S^n = S^n(\ep_0,\ep_1 )$ be the set of $i\in [1,  2n-n^\xi  ]_{\BB Z}$ for which $A_i^n$ occurs. For each $q\in (0,1)$, there exists $n_* \in\BB N$ and $\ep_0 , \ep_1 > 0$ such that
 \eqbn
\BB P\left(\#   S^n  \geq (2- q) n \right) = 1- o_n^\infty(n) .
\eqen
\end{lem}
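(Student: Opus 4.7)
The plan is to exploit that the events $A_i^n$ for $i$ in distinct intervals $I_n^k$ depend on disjoint blocks of the iid word $X$, apply Lemma~\ref{prop-big-F-constant} at scale $n^\xi$ inside each block, and then use a Hoeffding-type large-deviation bound on the resulting sum of per-block counts to upgrade the constant-probability conclusion of Lemma~\ref{prop-big-F-constant} to the super-polynomial decay required here.

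First I would verify that for each $i\in I_n^k$, the event $A_i^n$ is measurable with respect to the block $\{X_j\,:\,j\in I_n^k\}$. This should follow immediately from the first assertion of Lemma~\ref{prop-big-F-constant} (applied with $n^\xi$ in place of $n$) combined with the shift-invariance of the iid law of $X$, so no new input is needed. Consequently, the per-block counts
\[
N_k \,:=\, \#\{i\in I_n^k \,:\, A_i^n\text{ occurs}\}, \qquad k\in \BB N,
\]
are independent across $k$, identically distributed with the same law as $\#\wt S^{n^\xi}(\ep_0,\ep_1)$, and each bounded above by $n^\xi$.

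Second, given the target $q\in(0,1)$, I would fix an auxiliary $q'\in(0,q/8)$ and apply Lemma~\ref{prop-big-F-constant} to choose $\ep_0,\ep_1>0$ and $n_*$ so that for all $n$ with $n^\xi \geq n_*$,
\[
\BB P\left(N_1 \geq (1-q')n^\xi\right) \geq 1-q' ,
\]
which gives $\BB E[N_1] \geq (1-q')^2 n^\xi \geq (1-2q')n^\xi$. Letting $K:=\lfloor (2n-n^\xi)/n^\xi\rfloor$, the intervals $I_n^1,\dots,I_n^K$ are contained in $[1,2n-n^\xi]_{\BB Z}$ and $K\asymp n^{1-\xi}$, so $\#S^n \geq \sum_{k=1}^K N_k$; by the choice $q'<q/8$, the mean $K\,\BB E[N_1]$ exceeds $(2-q)n$ by a positive quantity of order $n$ for all $n$ large.

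Finally, since the $N_k$ are iid and bounded by $n^\xi$, Hoeffding's inequality yields
\[
\BB P\left(\sum_{k=1}^K N_k < (2-q)n\right) \leq \exp\left(-c\,\frac{n^2}{K\, n^{2\xi}}\right) \leq \exp(-c'\, n^{1-\xi})
\]
for constants $c,c'>0$ depending only on $q$. Since $\xi<1$, this decay is faster than any power of $n$, i.e., $o_n^\infty(n)$, as required. The only potential obstacle is the measurability step in the first paragraph, but as noted this is essentially a direct shift of the corresponding assertion in Lemma~\ref{prop-big-F-constant}; the remaining arithmetic is routine.
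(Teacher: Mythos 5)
Your proof is correct and follows essentially the same route as the paper: both decompose $[1,2n-n^\xi]_{\BB Z}$ into disjoint blocks $I_n^k$, use the block measurability from Lemma~\ref{prop-big-F-constant} to get iid per-block counts, invoke Lemma~\ref{prop-big-F-constant} at scale $n^\xi$ to control their distribution, and conclude via a Hoeffding bound with deviation scale $n^{1-\xi}$. The only cosmetic difference is that you apply Hoeffding directly to the bounded sums $N_k\in[0,n^\xi]$, whereas the paper applies it to the Bernoulli indicators $\mathbf 1\{N_k\geq(1-\alpha)n^\xi\}$ and then converts the fraction of good blocks into a count; both give $\exp(-c\,n^{1-\xi}) = o_n^\infty(n)$.
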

\begin{proof}
By the first assertion of Lemma~\ref{prop-big-F-constant}, the random variables $\#(I_n^k \cap S^n)$ for $k\in\BB N$ are iid. By Lemma~\ref{prop-big-F-constant}, for any $\alpha>0$, we can find $  n_* \in\BB N$, and $\ep_0 , \ep_1   > 0$ such that for each $k\in\BB N$ and $n\geq\wt n_*$, 
\eqbn
\BB P\left(\#  ( S^n \cap I_n^k) \geq (1- \alpha)  n^\xi \right) \geq 1- \alpha .
\eqen
By Hoeffding's inequality for Bernoulli sums, except on an event of probability $o_n^\infty(n)$, the number of $k \in [1, 2n^{1-\xi}]_{\BB Z}$ for which $\#(I_n^k \cap S^n)   \geq (1- \alpha)  n^\xi$ is at least $1-2\alpha$. If this is the case, then $\# S^n \geq  2(1-\alpha)(1-2\alpha + o_n(1)) n$. We conclude by choosing $\alpha$ small enough that $ 2(1-\alpha)(1-2\alpha) >  2-q$ and $n$ sufficiently large. 
\end{proof}
 
Before we can prove Proposition~\ref{prop-big-loop}, we first need the following elementary observation about the bijection of \cite{shef-burger}.

\begin{lem} \label{prop-uniform-root}
Let $(M^n ,e_0^n ,\mcl L^n)$ be a rooted FK planar map with $n$ edges and let $(Q^n ,\BB e_0^n)$ be the rooted quadrangulation associated with $(M^n ,e_0^n ,\mcl L^n)$ as described in the beginning of Section~\ref{sec-big-loop-events}. The conditional law of $\BB e_0^n$ given $(M^n,\mcl L^n)$ is given by the uniform measure on the $2n$ edges of $Q^n$. 
\end{lem}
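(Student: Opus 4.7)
The plan is to reduce the statement about $\BB e_0^n$ to the analogous statement about the oriented root edge $e_0^n$ of $M^n$, where uniformity is immediate from the description of the law of $(M^n, e_0^n, \mcl L^n)$ recalled in Section~\ref{sec-intro}. So first, I would unpack that law. The triple $(M^n, e_0^n, \mcl L^n)$ is distributed as the uniform measure on rooted FK planar maps with $n$ edges, reweighted by $q^{\#\mcl L^n/2}$. Since this Radon--Nikodym factor depends only on $\mcl L^n$, the conditional law of $e_0^n$ given $(M^n, \mcl L^n)$ is uniform on the $2n$ oriented edges of $M^n$ (each of the $n$ edges of $M^n$ contributing two orientations).

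Next, I would exhibit a deterministic bijection $\Phi$, depending only on $M^n$, from the set of oriented edges of $M^n$ to the set of edges of $Q^n$, which sends $e_0^n$ to $\BB e_0^n$. The construction is dictated by the definition of $\BB e_0^n$ itself: at every primal vertex $v$ of $M^n$, the oriented edges emanating from $v$ and the faces of $M^n$ incident to $v$ alternate in cyclic counterclockwise order around $v$, and the edges of $Q^n$ with primal endpoint $v$ correspond to these incident faces. Define $\Phi(\vec e)$ to be the edge of $Q^n$ joining the tail of $\vec e$ to the face of $M^n$ immediately counterclockwise from $\vec e$ at that tail. By definition, $\Phi(e_0^n) = \BB e_0^n$. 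The map $\Phi$ is injective by the alternation just described, its domain has cardinality $2n$, and its codomain has cardinality $\sum_{f} \deg_{M^n}(f) = 2|E(M^n)| = 2n$; hence $\Phi$ is a bijection.

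To conclude, I would simply push forward: because $\Phi$ is a deterministic function of $M^n$ alone, the uniform conditional law of $e_0^n$ given $(M^n, \mcl L^n)$ pushes forward under $\Phi$ to the uniform law on the $2n$ edges of $Q^n$, and this pushforward is precisely the conditional law of $\BB e_0^n = \Phi(e_0^n)$ given $(M^n, \mcl L^n)$. I do not anticipate any real obstacle; the content of the lemma is the combinatorial fact that the rule $e_0 \mapsto \BB e_0$ built into Sheffield's bijection is a bijection between the $2n$ oriented edges of $M^n$ and the $2n$ edges of $Q^n$, after which uniformity transfers formally.
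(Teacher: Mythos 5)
Your proposal is correct and follows essentially the same route as the paper: both observe that the $q$-weighting in the law of $(M^n,e_0^n,\mcl L^n)$ depends only on $\mcl L^n$, so $e_0^n$ is conditionally uniform over the $2n$ oriented edges of $M^n$, and both then transfer this to $\BB e_0^n$ via the canonical corner bijection between oriented edges of $M^n$ and edges of $Q^n$. Your version merely spells out the bijection $\Phi$ (via the alternation of oriented edges and corners around each primal vertex) and the cardinality count a bit more explicitly than the paper does.
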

\begin{proof}
As explained in \cite[Section 4.2]{shef-burger}, the law of the triple $(M^n, e_0^n, \mcl L^n)$ conditioned on $\{X(1,2n) =\emptyset\}$ is equal to the uniform distribution on such triples for which $M^n$ has $n$ edges weighted by $q^{\#\mcl L}$, where $q = 4p^2/(2p-1)^2$. In particular, this weighting does not depend on $e_0^n$, so the conditional law given $(M^n,\mcl L^n)$ of the oriented root edge $e_0^n$ of $M^n$ is uniform over all $2n$ choices of oriented edges in $M^n$. Each choice of root edge $e_0^n$ for $M^n$ determines a unique root edge $\BB e_0^n$ for $Q^n$, namely the edge of $Q^n$ whose primal endpoint is the initial endpoint of $ \BB e_0^n$ and which is the next edge clockwise from $\BB e_0^n$ among all edges of $M^n$ and $Q^n$ that start at that endpoint; conversely, each edge of $Q^n$ arises from a unique edge of $M^n$ in this manner. The statement of the lemma follows.
\end{proof}

\begin{proof}[Proof of Proposition~\ref{prop-big-loop}]
On the event $\{X(1,2n)=\emptyset\}$, let $(M^n,e_0^n,\mcl L^n)$ be the rooted FK planar map associated with the word $X_1\dots X_{2n}$and let $(Q^n,\BB e_0^n)$ be the rooted quadrangulation constructed from $M$, as described at the beginning of Section~\ref{sec-big-loop-events}. Let $\lambda^n$ be the discrete exploration path which traces the edges of the quadrangulation $Q^n$; i.e.\, for $i\in [1,2n]_{\BB Z}$, $\lambda^n(i)$ is the $i$th edge of $Q^n$ visited by the exploration path (called $e_i$ in \cite{shef-burger}); and $\lambda^n(0) = \lambda^n(2n) := \BB e_0^n$ is the root edge of $Q^n$. 

Given $q\in (0,1)$, let $n_*\in\BB N$ and $\ep_0, \ep_1   >0$ be chosen so that the conclusion of Lemma~\ref{prop-big-F-exponential} is satisfied. For $i \in [1,2n-1 ]_{\BB Z}$ let $ A_i^n = A_i^n(\ep_0,\ep_1) $ and $S^n = S^n(\ep_0,\ep_1)$ be defined as in that lemma. By \cite[Theorem~1.10]{gms-burger-local}, $\BB P\left(X(1,2n) = \emptyset\right) \geq n^{-1-2\mu  + o_n(1)}$, with $\mu$ as in~\eqref{eqn-cone-exponent}. It therefore follows from Lemma~\ref{prop-big-F-exponential} that
\eqbn
\BB P\left(\#   S^n  \geq (2- q) n \,|\, X(1,2n) = \emptyset \right) = 1- o_n^\infty(n) .
\eqen

For $i \in [1,2n]_{\BB Z}$ such that $A_i^n$ occurs, let $I_n^k$ be the interval as in~\eqref{eqn-k-interval-def} which contains $i$. Let $\ul \ell_i^n$ and $\ol \ell_i^n$ be the loops in $\mcl L^n$ surrounding $\lambda^n(i)$ and contained in $\lambda^n(I_n^k)$ as in Section~\ref{sec-big-loop-events} but with $I_n^k$ in place of $[1,n]_{\BB Z}$ and the finite-volume bijection used in place of the infinite-volume one. Also define $\ul M_i^n$ and $\ol M_i^n$ as in that section. For $i\in [1, 2n-n^\xi   ]_{\BB Z}$, on the event $A_i^n$ we have the following.
\begin{enumerate} 
\item $\ul \ell_i^n$ is a counterclockwise loop, $\ol \ell_i^n$ is a clockwise loop which disconnects $\ul\ell_i^n$ from the root edge, and there is no loop in $\mcl L^n$ surrounding $i$ which is disconnects $\ul \ell_i^n$ from $\ol \ell_i^n$. \label{item-A-loops}
\item $\ep_0 n^\xi \leq \op{Area} \ul M_i^n \leq  n^\xi$.   \label{item-A-area}    
\item $\ep_0  n^{\xi/2} \leq \op{len}\left(\partial \ul M_i^n \right) \leq \ep_0^{-1}  n^{\xi/2}$.  \label{item-A-length} 
\item $\op{Area} \ol M_i^n \leq \ep_1^{-1} \op{Area} \ul M_i^n$. \label{item-A-subloop-area}
\item $ \op{len}\left(\partial \ol M_i^n \right) \geq \ep_1   n^{\xi/2}$.  \label{item-A-subloop-length} 
\item There is no counterclockwise loop disconnected from the root edge by by $\ol \ell_i^n$ such that the set of points disconnected from the root edge by this loop has area at least $\ep_0 n^\xi$ and boundary length between $\ep_0  n^{\xi/2}$ and $ \ep_0^{-1}  n^{\xi/2}$. \label{item-A-maximal}
\end{enumerate}
For $i\in [1, 2n-1]_{\BB Z}$, let $\wh A_i^n$ be the event that there exist loops $\ul \ell_i^n$ and $\ol\ell_i^n$ in $\mcl L^n$ surrounding $\lambda^n(i)$ such that the above five conditions hold.   
Let $\wh S^n$ be the set of $i\in [1,2n-1]_{\BB Z}$ such that $\wh A_i^n$ occurs. Then $S^n\subset \wh S^n$, so 
\eqb \label{eqn-loops-exist-set}
\BB P\left(\#  \wh S^n  \geq (2- q) n \,|\, X(1,2n) = \emptyset \right) = 1- o_n^\infty(n) .
\eqe 

If $\wh A_i^n$ occurs and we re-root the quadrangulation $Q^n$ so that $\lambda^n(i)$ becomes the root edge, then the complements of $\ul M_i^n$ and $\ol M_i^n$ become complementary connected components of loops which contain the root edge whose area is close to $2n$. Furthermore, the orientations of the loops $\ol\ell_i^n$ and $\ul\ell_i^n$ are flipped. Consequently, there exist loops $\ul \ell_0^n$ and $\ol \ell_0^n$ in $\mcl L^n$ such that the following is true.
\begin{enumerate} 
\item $\ul \ell_0^n$ is a clockwise loop, $\ol \ell_0^n$ is a counterlockwise loop disconnected from the root edge by $\ul\ell_0^n$, and there is no loop in $\mcl L^n$ surrounding $i$ which is contained between $\ul \ell_0^n$ and $\ol \ell_0^n$. \label{item-A0-loops}
\item Let $\ul M_0^n$ be the complementary connected component of $\ul \ell_0^n$ with the largest area. Then $\ul M_0^n$ lies on the opposite side of $\ul\ell_0^n$ from the root edge and $2n -n^\xi \leq \op{Area} \ul M_0^n \leq 2n- \ep_0 n^\xi$.   \label{item-A0-area}    
\item $\ep_0  n^{\xi/2} \leq \op{len}\left(\partial \ul M_0^n \right) \leq \ep_0^{-1}  n^{\xi/2}$.  \label{item-A0-length} 
\item Let $\ol M_0^n$ be the complementary connected component of $\ol \ell_0^n$ with the largest area. Then $\ul M_0^n$ lies on the opposite side of $\ol\ell_0^n$ from the root edge and $\op{Area} \ol M_0^n \geq 2n -  \ep_1^{-1} \left(2n -  \op{Area} \ul M_0^n \right)$. \label{item-A0-subloop-area}
\item $\op{len}\left(\partial \ol M_0^n \right) \geq \ep_1   n^{\xi/2}$.  \label{item-A0-subloop-length}   
\item There is no clockwise loop which surrounds $\ol \ell_0^n$ from the root edge such that the area of its largest complementary connected component is at most $2n - \ep_0 n^\xi$ and the boundary length of its largest complementary connected component is between $\ep_0  n^{\xi/2}$ and $ \ep_0^{-1}  n^{\xi/2}$. \label{item-A0-maximal}
\end{enumerate}
Let $\wh A_0^n$ be the event that there exist loops $\ul \ell_0^n$ and $\ol\ell_0^n$ such that the above 6 conditions hold. 

By Lemma~\ref{prop-uniform-root}, if we condition on the un-rooted loop-decorated quadrangulation $(M^n, \mcl L^n)$, the law of the root edge $\BB e_0^n$ of $Q^n$ is uniform among the edges of $Q^n$. Since the events $\wh A_i^n $ depend only on $(M^n , \mcl L^n)$ (not on the choice of root edge) we have 
\eqbn
\BB P\left(\wh A_0^n  \,|\,  (M^n , \mcl L^n) \right) = \frac{\# \wh S^n(\ep)}{2n } .
\eqen 
By averaging over all possible realizations of $(M^n , \mcl L^n)$, and using~\eqref{eqn-loops-exist-set}, we get 
\eqb \label{eqn-sample-root}
\BB P\left(\wh A_0^n \,|\, X(1,2n) = \emptyset   \right) \geq 1- q . 
\eqe 

We will finish the proof by showing that for large enough $n$, we have $\wh A_0^n \subset \mcl G_n(\ep_0 , \ep_1')$ for $\ep_1' > 0$ depending only on $\ep_1$. Indeed, suppose $\wh A_0^n$ occurs, and let $\ul\ell_0^n$ and $\ol\ell_0^n$ be the loops satisfying the conditions in the definition of $\wh A_0^n$. Let $\wt\iota_0^n$ (resp. $\wt\iota_1^n$) be largest $i\in [1,2n]_{\BB Z}$ such that $\lambda^n(i-1) \in  \ul M_0^n$ (resp. $\lambda^n(i-1) \in \ol M_0^n $). 
We claim that $\wt\iota_0^n = \iota_0^n$ and $\wt\iota_1^n = \iota_1^n$ on the event $\wh A_0^n$. 
Our proof of this claim relies on some basic facts about the times during which the path $\lambda^n$ is tracing the edges of a complementary connected component $U$ of a loop $\ell \in \mcl L^n$ which lies on the opposite side of $\ell$ as the root edge. These facts can be deduced from the construction of Sheffield's bijection~\cite[Section 4.1]{shef-burger}, and are explained in more detail in~\cite{gwynne-miller-cle}. The facts are as follows.
\begin{enumerate}
\item If $i_*$ is the largest $i \in [1,n]_{\BB Z}$ for which $\lambda^n(i_*-1) \in U$, then $ X_{i_*} = \tb F$ and $\phi(i_*)$ is the time at which $\lambda^n$ begins filling in $U$. We have $X_{\phi(i_*)} = \tc C$ (resp. $X_{\phi(i_*)} = \tc H$) if $\ell$ is traced by $\lambda^n$ in the counterclockwise (resp. clockwise) direction. Furthermore, we have $U = \lambda^n ([\phi(i_*)  , i_*-1]_{\BB Z})$. \label{item-component-match}
\item We have $\op{Area} U = i_* - \phi(i_*)$ and $\op{len}\left(\partial U \right) = |X(\phi(i_*) , i_*) | + 1$. \label{item-component-area}
\item If $i_*'$ is the smallest $i \geq i_* +1$ with $X_{i} = \tb F$ and $\phi(i)  \leq \phi(i_*) -1$, then $X_{\phi(i_*')} \not= X_{\phi(i_*)}$. Furthermore, $i_*$ (resp. $\phi(i_*')$) is the time at which $\lambda^n$ finishes (resp. begins) tracing the loop $\ell$. \label{item-component-next}
\item Conversely, if $i_* \in [1,2n]_{\BB Z}$ such that $X_{i_*} = \tb F$ and, with $i_*'$ as above, we have $X_{\phi(i_*')} \not= X_{\phi(i_*)}$, then $\lambda^n([\phi(i_*) , i_*-1]_{\BB Z})$ is a complementary connected component $U$ of a loop $\ell \in \mcl L^n$ which lies on the opposite side of $\ell$ as the root edge. \label{item-component-converse}
\end{enumerate}

Since $\wt\iota_0^n$ is the time immediately after $\lambda^n$ finishes filling in a complementary connected component of clockwise loop which lies on the opposite side of this loop from the root edge, fact~\ref{item-component-match} above implies that $X_{\wt\iota_0^n} = \tb F$, $X_{\phi(\wt\iota_0^n)} = \tc H$, and $\ul M_0^n = \lambda^n\left([\phi(\wt\iota_0^n) , \wt\iota_0^n]_{\BB Z}\right)$. Therefore, condition~\ref{item-big-loop-match} in the definition of $\iota_0^n$ holds with $i = \wt \iota_0^n$. Furthermore, fact~\ref{item-component-area} implies that
\alb
\op{Area} \ul M_0^n = \wt\iota_0^n - \phi(\wt\iota_0^n) \quad\op{and}\quad
\op{len}\left(\partial \ol M_0^n \right)  = |X(\phi(\wt\iota_0^n),\wt\iota_0^n)|  +1 .
\ale
It follows from conditions~\ref{item-A0-area} and~\ref{item-A0-length} in the definition of $\wh A_0^n$ that conditions~\ref{item-big-loop-upper} and~\ref{item-big-loop-length} in the definition of $\iota_0^n$ hold with $i =  \wt\iota_0^n$. 
By fact~\ref{item-component-next}, we also have that condition~\ref{item-big-loop-loop} in the definition of $\iota_0^n$ holds with $i = \wt \iota_0^n$. Therefore, $\wt\iota_0^n \leq \iota_0^n$ on $\wh A_0^n$. 

Conversely, conditions~\ref{item-big-loop-match} and~\ref{item-big-loop-loop} in the definition of $\iota_0^n$ together with fact~\ref{item-component-converse} imply that $\lambda^n([\phi(\iota_0^n) , \iota_0^n])$ is a connected component of a loop of $\mcl L^n$ which lies on the opposite side of this loop from the root edge. Furthermore, by conditions~\ref{item-big-loop-upper} and~\ref{item-big-loop-length} in the definition of $\iota_0^n$ together with fact~\ref{item-component-area}, this component has area between $2 n - n^\xi$ and $2n - \ep_0 n^\xi$ and boundary length between $\ep_0  n^{\xi/2}$ and $ \ep_0^{-1}  n^{\xi/2}$. This contradicts condition~\ref{item-A0-maximal} in the definition of $\wh A_0^n$. Thus $\iota_0^n = \wt\iota_0^n$ on $\wh A_0^n$. 

Now we consider $\wt \iota_1^n$. By fact~\ref{item-component-match} above, we have $X_{\wt\iota_1^n} = \tb F$, $X_{\phi(\wt\iota_1^n)} = \tc C$, and $\ol M_0^n = \lambda^n\left([\phi(\wt\iota_1^n) , \wt\iota_1^n]_{\BB Z}\right)$. Furthermore, by condition~\ref{item-A0-loops} in the definition of $\wh A_0^n$, there is no $i \in (\wt\iota_1^n , \iota_0^n)_{\BB Z}$ with $X_i = \tb F$, $X_{\phi(i)} = \tc C$, and $\phi(i) < \phi(\wt\iota_1^n)$ (by fact~\ref{item-component-converse}, the largest such $i$ would correspond to the exit time of $\lambda^n$ from a complementary connected component of a loop lying between $\ul \ell_0^n$ and $\ol\ell_0^n$).  

By conditions~\ref{item-A0-subloop-area} and~\ref{item-A0-subloop-length} in the definition of $\wh A_0^n$ together with fact~\ref{item-component-area}, we have
\alb
 |X(\phi(\wt\iota_1^n) , \wt\iota_1^n)|  &=  \op{len}\left(\partial \ol M_0^n \right)-1 \geq \ep_1 n^{\xi/2}-1 \geq \ep_1' \left( \iota_0^n - \phi( \iota_0^n) \right)^{\xi/2} \\
 \wt\iota_1^n - \phi(\wt\iota_1^n) & = \op{Area} \ol M_0^n \geq 2n -  \ep_1^{-1} \left(2n -  \op{Area} \ul M_0^n \right) \\
& \geq 2n - \ep_1^{-1} n^\xi  \geq  \iota_0^n - \phi( \iota_0^n)  -    (\ep_1')^{-1} \left( \iota_0^n - \phi( \iota_0^n) \right)^\xi
\ale
for $\ep_1'$ slightly smaller than $\ep_1/2$. The second inequality implies in particular that $\wt\iota_1^n - \phi(\wt\iota_1^n) \geq \frac12 \left( \iota_0^n - \phi( \iota_0^n) \right)$, so $\wt\iota_1^n = \iota_1^n$. The two inequalities together imply that $\mcl G_n(\ep_0,\ep_1')$ occurs.
\end{proof}

\subsection{Proof of Lemma~\ref{prop-big-F-constant}} 
\label{sec-big-loop-word}

In this subsection, we will prove Lemma~\ref{prop-big-F-constant} and thereby complete the proof of Proposition~\ref{prop-big-loop}. This section is the only place in the paper where the (infinite-volume version of the) main result of~\cite{gwynne-miller-cle} is needed. Recall from Remark~\ref{remark-implication} that the proof of the infinite volume version of the main result of~\cite{gwynne-miller-cle} requires only the results of~\cite{shef-burger,gms-burger-cone,gms-burger-local,wedges}, \emph{not} the results of the present paper.
 
\begin{proof}[Proof of Lemma~\ref{prop-big-F-constant}]
We first prove the measurability statement.
It follows from Sheffield's bijection (see also~\cite{gwynne-miller-cle}) that for $i ,j \in\BB N$, the time $\theta_{i,j}$ when $\lambda$ finishes tracing the loop $\ell_{i,j}$ can be described as follows: $\theta_{i,j}$ is the $j$th smallest $k > i$ such that $X_k = \tb F$, $\phi(k) < i$, and $X_{\phi(k)} \not= X_{\phi(\wt k)}$, for $\wt k$ the largest $k' \in [i,k)_{\BB Z}$ satisfying $X_{\wt k} = \tb F$ and $X_{\phi(\wt k)} < i$. From this, we infer that for each $k_1 , k_2 \in \BB N$ with $k_1 < i < k_2$, the event $\{\phi(\theta_{i,j}) =  k_1 , \, \theta_{i,j} = k_2\}$ is determined by $X_{ k_1} \dots X_{k_2}$. Furthermore, the part of the infinite-volume FK planar map traced by the path $\lambda$ during the time interval $[\phi(\ol \theta_i^n) , \ol \theta_i^n]_{\BB Z}$ is determined by the word $X_{\phi(\ol \theta_i^n)} \dots X_{ \ol\theta_i^n}$. It therefore follows from the definition of $\wt A_i^n$ that this event is determined by $X_1\dots X_n$ (here we note that $J_i^n(\ep_0)$ is the \emph{smallest} $j\in\BB N$ for which certain conditions hold). 
 
To prove the claimed estimate for $\# \wt S^n$, let $\kappa \in (4,8)$ and $\gamma \in (\sqrt 2 , 2)$ be determined by $p$ as in~\eqref{eqn-p-kappa}. Let $\Gamma$ be a whole-plane $\op{CLE}_{\kappa}$ (as defined in~\cite{mww-extremes,werner-sphere-cle}) and let $(\BB C , h , 0, \infty)$ be a $\gamma$-quantum cone independent from $\Gamma$ (as defined in~\cite[Section 4.3]{wedges}). 
We will first estimate the probability of a continuum analogue of the events $\wt A_i^n$ which is defined in terms of $\Gamma$ and $h$. We will then apply the infinite-volume version of the scaling limit results in~\cite{gwynne-miller-cle} to convert this into an estimate for $\# \wt S^n$ (c.f.\ Remark~\ref{remark-implication}). 

To this end, fix $\alpha>0$ to be chosen later, depending only on $q$. 
Let $\eta$ be a whole-plane space-filling $\op{SLE}_\kappa$ process from $\infty$ to $\infty$ which traces the loops in $\Gamma$ (as described in~\cite[Sections 1.2.3 and 4.3]{ig4} and~\cite[Footnote 9]{wedges}), parametrized by quantum mass with respect to $h$ relative to time 0 (so in particular $\eta(0) =0$). For $t\in (0,1)$ and $\ep_0 > 0$, let $\ul \ell^t = \ul\ell^t(\ep_0)$ be the innermost clockwise loop $\ell$ in $\Gamma$ surrounding $\eta(t)$ such that the set of points disconnected from $\infty$ by $\ell$ has quantum area at least $\ep_0$ and quantum boundary length between $\ep_0$ and $\ep_0^{-1}$ (both measured with respect to $h$). Let $\ol \ell^t$ be the next outermost loop in $\Gamma$ surrounding $\eta(t)$. Let $A_t = A_t(\ep_0,\ep_1)$ be the continuum analogue of the event $\wt A_{\lfloor t n \rfloor}^n$, i.e.\ the event that the following is true.
\begin{enumerate}
\item The boundary of the set of points disconnected from $\infty$ by $\ol\ell^t$ has quantum length at least $\ep_1 n^{1/2}$. 
\item The quantum area of the set of points disconnected from $\infty$ by $\ol\ell^t$ is at most $\ep_1^{-1}$ times the quantum area of the set of points disconnected from $\infty$ by $\ul\ell^t$.
\item $\eta$ traces all of $\ol\eta^t$ during the time interval $[0,1]$. 
\end{enumerate}
For each $t\in \BB R$, $\eta$ a.s.\ traces arbitrarily small $\op{CLE}_\kappa$ loops surrounding $\eta(t)$ whose outer boundaries have finite positive quantum length in arbitrarily small intervals of time surrounding $t$. Since $\eta(\cdot - s) \eqD \eta$ for each fixed $s\in\BB R$~\cite[Lemma 9.3]{wedges}, it follows that there exists and $\ep_0 , \ep_1   > 0$ (depending only on $\alpha$) such that for each $t\in [\alpha  , 1 -\alpha ] $ we have $\BB P(A_t) \geq 1-\alpha$. (We remark that the event $A_t$ can equivalently be described in terms of the $\pi/2$-cone times of a correlated Brownian motion as in~\eqref{eqn-bm-cov}, which gives the left and right quantum boundary lengths of $\eta$ at time $t$ with respect to $h$, in a manner which is directly analogous to the description of the event $\wt A_i^n$ in terms of the word $X$; see \cite[Theorem 1.13]{wedges} as well as~\cite{gwynne-miller-cle}.)
By the infinite-volume version of the main theorem of~\cite{gwynne-miller-cle}, we have
\eqbn
\lim_{n\rta\infty} \BB P\left(\wt A_{\lfloor t n \rfloor}^n \right) =  \BB P(A_t) \geq 1- \alpha , \quad \forall t\in [\alpha,1-\alpha] .
\eqen 
By dominated convergence, 
\eqbn
\lim_{n\rta\infty} n^{-1} \BB E\left( \#\wt S^n \right)  = \lim_{n\rta\infty} n^{-1} \sum_{i=1}^n \BB P\left(\wt A_i^n \right)   \geq 1 -4\alpha  .
\eqen
Hence we can find $n_* = n_*(\alpha,\ep_0,\ep_1) \in \BB N$ such that for $n\geq n_*$, it holds that $ \BB E\left( \#\wt S^n \right) \geq (1-4\alpha) n$. So, for
$n\geq n_*$ and $q\in(0,1)$ we have 
\eqbn
\BB E\left(  \#\wt S^n  \right) \leq (1-q)n \left(1- \BB P\left( \#\wt S^n   \geq (1-q) n \right) \right)   + n  \BB P\left( \#\wt S^n \geq (1 -q) n . \right) .
\eqen
By re-arranging this inequality, then choosing $\alpha$ sufficiently small depending on $q$ ($\alpha \leq q^2/4$ will suffice), we obtain the statement of the lemma.
\end{proof}

\section{Existence of a time with enough cheeseburger orders}
\label{sec-pi-reg}

The goal of this section is to address the following technical difficulty. Let $\pi_n =\pi_n(\ep)$ be the time of Definition~\ref{def-end-event}. The time $\pi_n$ is not a stopping time for the word $X$, read backward. However, we do have the following. Let $x$ be a realization of $X_{-\pi_n}\dots X_{-1}$. The conditional law of $X_{-J}\dots X_{-|x|-1}$ given $\{X_{-\pi_n} \dots X_{-1} = x\}$ is the same as its conditional law given that $\{X_{-|x|} \dots X_{-1} = x\}$ and each cheeseburger in $X(-J,-|x|-1)$ is matched to a cheeseburger order in $\mcl R(x)$ (c.f. Remark~\ref{remark-pi-not-stopping}). 

We want to say that this conditional law is similar to the conditional law of $X_{-J}\dots X_{-|x|-1}$ given only $\{X_{-|x|} \dots X_{-1} = x\}$. For this, we need a lower bound on the conditional probability given $\{X_{-|x|} \dots X_{-1} = x\}$ that there is no cheeseburger in $X(-J,-|x|-1)$ matched to a flexible order in $\mcl R(x)$. This probability is uniformly positive provided there are a large number of cheeseburger orders to the left of the leftmost flexible order in $\mcl R(x)$. But, we do not know that this is the case, so instead we will construct a time $\wt\pi_n \geq \pi_n$ which has similar properties to $\pi_n$ but has the additional property that $X(-\wt\pi_n , -1)$ is likely to contain a large number (of order $n^{\xi/2}$) cheeseburger orders to the left of its leftmost flexible order. The precise properties of the time $\wt\pi_n$ are described in Lemma~\ref{prop-wt-pi-exists} below. 

Recall the definitions of $J_m^H$, $L_m^H$, $J_m^C$, and $L_m^C$ from~\eqref{eqn-J^H-def} and the discussion just below. We also introduce the following additional notation. 

\begin{defn} \label{def-c_f}
For a word $x$ consisting of elements of $\Theta$, we write $\frk c_f(x)$ for the number of cheeseburger orders to the left of the leftmost flexible order in $\mcl R(x)$ (or the number of cheeseburger orders in $\mcl R(x)$ if $\mcl R(x)$ contains no flexible orders). We also let $\frk r(  x) :=  \frk c_f(  x)/\frk o( x) $, with $\frk o(x)$ as in~\eqref{eqn-theta-count-reduced} of Notation~\ref{def-theta-count}. 
\end{defn}

The main result of this section is the following proposition.

\begin{prop} \label{prop-wt-pi-exists}
Let $\ep_1>\ep_2>0$. and let $\pi_n = \pi_n(\ep_1)$ be as in Definition~\ref{def-end-event} There is a random time $\wt \pi_n  = \wt\pi_n(\ep_1,\ep_2) \in [1, J]_{\BB Z}$ with the following properties. 
\begin{enumerate}
\item We have $\wt\pi_n\geq \pi_n$ and $\mcl N_{\tb F}\left(X(-\wt\pi_n,-\pi_n)\right) = 0$. \label{item-wt-pi-mono}
\item Whenever $\pi_n < J$, we have 
\alb
&\mcl N_{\tb H}\left(X(- \pi_n, -1)\right)- \ep_2 n^{\xi/2}   \leq  \mcl N_{\tb H}\left(X(-\wt\pi_n, -1)\right)   \leq   \mcl N_{\tb H}\left(X(- \pi_n,-1)\right) \quad \op{and} \\ 
&\mcl N_{\tb C}\left(X(- \pi_n, -1)\right)  -\frac12 n^{\xi/2}  \leq  \mcl N_{\tb C}\left(X(-\wt\pi_n, -1)\right)   .  
\ale
\label{item-wt-pi-compare}
\item Let $x$ be any realization of $X_{-\wt\pi_n} \dots X_{-1}$ and let $\frk o(x)$ be as in~\eqref{eqn-theta-count-reduced} of Notation~\ref{def-theta-count}. The conditional law of $X_{-J} \dots X_{-|x|-1}$ given $\{X_{-\wt\pi_n} \dots X_{-1} = x\} $ is the same as the conditional law of $X_{-  J_{\frk o(x)}^H} \dots X_{-1}$ (defined as in~\eqref{eqn-J^H-def}) given the event that each cheeseburger in $X(-J_{\frk o(x)}^H , -1)$ is matched to a cheeseburger order in $x$ when we consider the reduced word $\mcl R\left(X(-J_{\frk o(x)}^H , -1) x\right)$.  \label{item-wt-pi-cond}
\item There is an $n_* \in\BB N$ such that for each $n\geq n_*$ and each $\zeta \geq n^{-\xi/2}$, we have (in the notation of Definition~\ref{def-c_f}) 
\[
\BB P\left(  \frk c_f\left(X(-\wt\pi_n ,-1) \right) \leq \zeta n^{\xi/2} \,|\, X_{-\pi_n} \dots X_{-1} \right) \BB 1_{(\pi_n <J)} \preceq \zeta^{100}  
\]
with the implicit constant deterministic and independent of $\zeta$ and $n$. \label{item-wt-pi-C} 
\end{enumerate}
\end{prop}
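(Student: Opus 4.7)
The plan is to construct $\wt\pi_n$ as a short backward extension of $\pi_n$, designed so that the extension is small enough to preserve the count bounds in items 1 and 2, long enough to accumulate the cheeseburger orders needed for item 4, and local enough to inherit a Markov-like property (item 3).

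On $\{\pi_n = J\}$ I set $\wt\pi_n := J$. On $\{\pi_n < J\}$, I take $\wt\pi_n$ to be the smallest $j \in [\pi_n, J]_{\BB Z}$ at which at least one of the following holds: (a) $\frk c_f(X(-j,-1)) \ge n^{\xi/2}$; (b) passing from $j$ to $j+1$ would introduce a net flexible order in the reduced word of the extension $X(-(j+1), -\pi_n)$; (c) passing to $j+1$ would cause the change of $\mcl N_{\tb H}$ or $\mcl N_{\tb C}$ relative to $\pi_n$ to exceed the tolerances $\ep_2 n^{\xi/2}$ and $\tfrac12 n^{\xi/2}$ allowed by item 2; or (d) $j = J$. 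Items 1 and 2 are then immediate from the stopping rule.

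To establish item 3, I would observe that $\wt\pi_n$ is a function of $X_{-\wt\pi_n}\ldots X_{-1}$ once we also know the forward-looking matching data defining $\pi_n$. Conditional on $X_{-\wt\pi_n}\ldots X_{-1} = x$, the iid structure of $X$ then identifies the law of $X_{-J}\ldots X_{-|x|-1}$ with that of a fresh backward iid sample, run until $\mcl R(X_{-j}\ldots X_{-1})$ first contains a burger, conditioned on the matching event of item 3. The identity $\frk o(x) = \mcl N_{\tb H}(\mcl R(x)) + \mcl N_{\tb F}(\mcl R(x)) + 1$ precisely captures the number of hamburger symbols needed before one survives, so the sample terminates at $J_{\frk o(x)}^H$.

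For item 4 the key input is Definition~\ref{def-end-event}(2): on $\{\pi_n < J\}$, the pre-$\tb F$ segment of $X(-\pi_n,-1)$ contains at least $\ep_1 n^{\xi/2}$ hamburger orders. Since this segment is free of $\tb F$ (by construction) and of burgers (since $\pi_n < J$), its symbols are conditionally iid uniform on $\{\tb H, \tb C\}$, and a Chernoff bound gives $\frk c_f(X(-\pi_n,-1)) \ge \tfrac{\ep_1}{2} n^{\xi/2}$ except on an event of probability $e^{-c n^{\xi/2}}$, which is far smaller than $\zeta^{100}$ for every $\zeta \ge n^{-\xi/2}$. Combined with item 2, which caps the change of $\frk c_f$ under the extension, this yields item 4 after choosing $\ep_2$ sufficiently small.

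The main obstacle will be item 3, since $\pi_n$ is not a backward stopping time and the usual strong Markov argument does not apply directly; the Markov-like property has to be recovered from the specific form of the matching condition in Definition~\ref{def-end-event}(3). A secondary difficulty is that item 4 is stated as a pointwise bound on the conditional probability, so care will be required to ensure the Chernoff estimate survives after we condition on the exact realization of $X_{-\pi_n}\ldots X_{-1}$ rather than only on its law.
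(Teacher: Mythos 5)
Your high-level idea — extend $\pi_n$ backward to a time $\wt\pi_n$ that is ``nearly a stopping time'' and then establish items 1--4 — matches the paper's strategy, but your construction and especially your treatment of item 4 diverge in ways that do not work.

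First, a structural difference: the paper defines $\wt\pi_n$ by tracking backward hamburger arrivals at a geometric sequence of scales, setting $\wt\pi_n = J^H_{n,m_n^{K_n}}$ where $m_n^k$ grows dyadically and $K_n$ is the first scale at which $\frk c_f(X(-J^H_{n,m_n^k}, -1))\ge \delta 2^{-k} n^{\xi/2}$. Stopping at a hamburger arrival time $J^H_{n,m}$ is what makes $\mcl N_{\tb F}\bigl(X(-\wt\pi_n, -\pi_n)\bigr) = 0$ automatic (the newly added leftmost symbol is a hamburger, and any flexible order in the reduced word of the extension would have to lie strictly to its left), and it is also what makes the conditional law in item~3 identify cleanly with conditioning on the event $F_{\frk o(x)}(x)$ (Lemma~\ref{prop-J^H-after-pi}). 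Your rule (a)--(d) is not based on hamburger arrival times, so neither of these properties is immediate, and your sketch of item~3 (``iid structure identifies the law'') skips precisely the non-stopping-time subtlety that Lemmas~\ref{prop-J^H-determined} and~\ref{prop-J^H-after-pi} are designed to handle.

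Second, and this is the substantial gap: your argument for item 4 is wrong. You claim that, since the reduced word $\mcl R(X(-\pi_n,-1))$ contains at least $\ep_1 n^{\xi/2}$ hamburger orders left of the first flexible order, a Chernoff bound gives $\frk c_f(X(-\pi_n,-1)) \gtrsim n^{\xi/2}$ with overwhelming probability. But (i) the symbols of the \emph{reduced} word are not iid (they are a deterministic function of a highly correlated reduction), (ii) even unconditionally there is nothing forcing a fixed fraction of the orders preceding the first flexible order to be cheeseburger orders — the whole reason one needs $\wt\pi_n$ at all is that $\frk c_f(X(-\pi_n,-1))$ can legitimately be arbitrarily small — and (iii) item 4 is a bound on a conditional probability \emph{given the realization of} $X_{-\pi_n}\ldots X_{-1}$, so $\frk c_f(X(-\pi_n,-1))$ is deterministic once you condition and there is no Chernoff event left to bound. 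The actual proof of item 4 is the heart of Section~\ref{sec-pi-reg}: one must show that at each dyadic backward growth step the quantity $\frk c_f$ regenerates a constant fraction of $n^{\xi/2}$ with uniformly positive probability, conditionally on the realization so far, even under the non-stopping-time conditioning. This is accomplished via the monotone coupling in Lemmas~\ref{prop-F-event-mono}--\ref{prop-J^H-F-mono}, the inductive regularity estimate Lemma~\ref{prop-J^H<J^C-reg} (which itself rests on Lemmas~\ref{prop-bm-coord-hit}, \ref{prop-J^H<J^C-prob}, \ref{prop-J^H<J^C-uniform}--\ref{prop-J^H<J^C-induct} and \ref{prop-J^H-no-tube}), and then a geometric compounding across the scales $k$ to obtain the $\zeta^{100}$ tail via~\eqref{eqn-small-k_*-prob}. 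Your proposal has no substitute for any of this machinery, and the Chernoff heuristic cannot replace it.
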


\begin{remark}
Condition~\ref{item-wt-pi-C} is the key property of $\wt\pi_n$. The other conditions in Proposition~\ref{prop-wt-pi-exists} hold with $  \pi_n$ in place of $\wt\pi_n$. See the discussion above for an explanation of why we need condition~\ref{item-wt-pi-C}.
\end{remark} 
  
Although $\wt\pi_n$ is not a stopping time for the word $X$ read backward, the following lemma allows us to compare the conditional law of $X_{-J} \dots X_{-1}$ given a realization of $X_{-\wt\pi_n} \dots X_{-1}$ to the conditional law we would get if $\wt\pi_n$ were in fact a stopping time. 

\begin{lem} \label{prop-last-abs-cont}
Let $\wt x$ be a realization of $X_{-\wt \pi_n}\dots X_{-1}$ for which $\wt\pi_n < J$. Let $\frk c_f(\wt x)$ and $\frk r(\wt x)$ be as in Definition~\ref{def-c_f} and let $\frk o(\wt x)$ be as in~\eqref{eqn-theta-count-reduced}.
The conditional law of the word $X_{-J} \dots X_{-\wt\pi_n-1}$ given $\{X_{-\wt \pi_n}\dots X_{-1} = x\}$ is absolutely continuous with respect to its conditional law given only $\{X_{-|\wt x|-1} \dots X_{-1}\}$, with Radon-Nikodym derivative bounded above by $\frk r(\wt x)^{-1 + o_{\frk r(\wt x)}(1)}$. Here the $o_{\frk r(\wt x)}(1)$ tends to zero as $\frk r(\wt x) \rta 0$, at a rate depending only on $\frk r(\wt x)$
\end{lem}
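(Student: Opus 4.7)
The plan is to use property~(3) of Proposition~\ref{prop-wt-pi-exists} to rewrite the conditional law of $X_{-J}\dots X_{-\wt\pi_n-1}$ given $\{X_{-\wt\pi_n}\dots X_{-1}=\wt x\}$ as a conditional law given a genuine stopping-time condition together with an explicit \emph{matching event} $\mcl M$: every cheeseburger in the prepended portion to the left of $\wt x$ is matched, in the full reduction, to a cheeseburger order of $\wt x$ rather than to a flexible order or to a cheeseburger order within the prepended portion. After this rewriting, the law in question is obtained from the ``base'' conditional law (conditioning only on $\{X_{-|\wt x|}\dots X_{-1}=\wt x\}$) by restricting to $\mcl M$ and renormalizing, so its Radon--Nikodym derivative equals $\BB 1_{\mcl M}\big/\BB P(\mcl M \,|\, X_{-|\wt x|}\dots X_{-1}=\wt x)$. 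Consequently the lemma reduces to proving the lower bound
\[
\BB P\bigl(\mcl M \,\bigm|\, X_{-|\wt x|}\dots X_{-1}=\wt x\bigr) \;\geq\; \frk r(\wt x)^{\,1-o_{\frk r(\wt x)}(1)} .
\]

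To prove this lower bound, I would adopt the two-stack description of the reduction: the prepended word builds a hamburger stack and a cheeseburger stack, which are then consumed by the orders of $\mcl R(\wt x)$ read left to right. The event $\mcl M$ fails precisely when a flexible order of $\mcl R(\wt x)$ pops a cheeseburger from the prepended portion, or when a cheeseburger order or flexible order within the prepended word pops one of its own cheeseburgers. The structural feature of $\wt x$ I want to exploit is the ``safe prefix'' of $\frk c_f(\wt x)$ cheeseburger orders in $\mcl R(\wt x)$ that appear strictly before any flexible order; these can safely absorb an initial block of cheeseburgers from the prepended portion without ever exposing a cheeseburger to a flexible pop. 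I would accordingly introduce an explicit sub-event $\mcl M_0\subset\mcl M$ on which (i) the two-dimensional walk $D$ of~\eqref{eqn-discrete-path} associated with the prepended word ends at a specific point so that the reduction of the prepended word has exactly $\frk c(\wt x)$ cheeseburgers and the correct number of hamburgers, and (ii) the combined burger stack presents a hamburger on top at every moment a flexible order of $\mcl R(\wt x)$ fires. Condition~(i) would be controlled by a local central-limit estimate for $D$ in the spirit of~\cite[Theorem~4.1]{gms-burger-cone} and the local bounds of~\cite{gms-burger-local}, while condition~(ii), given (i), reduces to a ballot-type statement about the relative ordering of the cheeseburger time-labels in the prepended reduction with the flexible-order positions in $\mcl R(\wt x)$, which should contribute at most a slowly varying correction in $\frk r$.

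The main obstacle I foresee is obtaining the sharp \emph{polynomial} rate $\frk r^{1-o(1)}$ rather than a merely exponential one. A crude forcing argument, e.g.\ demanding that every symbol of the prepended portion be a hamburger, would pin down $\asymp \frk o(\wt x)\asymp n^{\xi/2}$ symbols and yield only a bound exponentially small in $\frk o(\wt x)$, which is far too weak. To recover the polynomial rate one must exploit the substantial freedom in the microscopic arrangements of the prepended portion that realize $\mcl M$: I would cover $\mcl M$ by a family of configurations parameterized by a coarse sufficient statistic (the total counts of each symbol type together with the sequence of top-of-stack types at the flexible-order times), estimate each piece via the cone and walk estimates of~\cite{gms-burger-local}, and verify that the combinatorial overhead of this covering is polynomial in $\frk r^{-1}$. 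Carrying out this covering and verifying the polynomial loss is the delicate combinatorial heart of the argument.
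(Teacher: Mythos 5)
Your first two reductions are exactly what the paper does: invoke condition~(3) of Proposition~\ref{prop-wt-pi-exists} to express the conditional law as a restriction of the base law to a matching event $\mcl M$, and observe that the Radon--Nikodym derivative is then $\BB 1_{\mcl M}/\BB P(\mcl M\mid\cdot)$, so the lemma reduces to a lower bound $\BB P(\mcl M\mid X_{-|\wt x|}\dots X_{-1}=\wt x)\succeq \frk r(\wt x)^{1+o_{\frk r(\wt x)}(1)}$. That part is fine.

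The third step is where your plan goes wrong, and in two ways. First, your sub-event~(i) --- forcing the prepended walk $D$ to terminate at a \emph{specific point} so that the prepended reduction has \emph{exactly} $\frk c(\wt x)$ cheeseburgers --- is both the wrong count and too restrictive. The count should be \emph{at most} $\frk c_f(\wt x)$ (not $\frk c(\wt x)\geq\frk c_f(\wt x)$), since having $\frk c(\wt x)$ cheeseburgers in the stack does not by itself prevent a flexible order from popping one: some of the $\tb C$'s of $\mcl R(\wt x)$ lie to the right of a $\tb F$. Moreover pinning the endpoint down exactly produces a local-CLT factor of order $\frk o(\wt x)^{-3}$, which is not comparable to the target $\frk r(\wt x)^{1+o(1)}$; for $\frk r(\wt x)$ of moderate size and $\frk o(\wt x)$ large this is far too small, and your covering/ballot argument cannot recover the loss because it is supposed to lose only a $\frk r^{o(1)}$ factor, not a $\frk o^{-3}$ one. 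Second, condition~(ii) is superfluous once (i) is corrected. Because $\tb C$ pops the topmost cheeseburger (skipping over hamburgers via $\tc H\tb C=\tb C\tc H$), if the prepended word contributes at most $\frk c_f(\wt x)$ cheeseburgers then the $\frk c_f(\wt x)$ cheeseburger orders that precede the first $\tb F$ in $\mcl R(\wt x)$ necessarily consume all of them; by the time any $\tb F$ fires the stack has no cheeseburgers, so $\mcl M$ holds automatically. There is no ballot condition to check.

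In other words, the sub-event you want is simply $\{J^H_{\frk o(\wt x)}<J^C_{\frk c_f(\wt x)+1}\}$ --- the $\frk o(\wt x)$-th hamburger appears in the prepended word before the $(\frk c_f(\wt x)+1)$-st cheeseburger --- with no constraint on where the walk ends and no stack bookkeeping at flexible-order times. This event is contained in $\mcl M$ by the argument above, and its probability is bounded below by $\ep^{1+o_\ep(1)}$ with $\ep\approx\frk r(\wt x)$ by Lemma~\ref{prop-J^H<J^C-prob}, which the paper proves by a geometric ``staircase'' iteration of the Brownian estimate Lemma~\ref{prop-bm-coord-hit} across the dyadic scales between $\frk c_f(\wt x)$ and $\frk o(\wt x)$; each scale costs a constant factor and there are $\asymp\log(1/\ep)$ scales, which is precisely where the $\ep^{o(1)}$ correction comes from. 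This route avoids both the local central limit theorem and the combinatorial covering you were worried about.
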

\begin{proof}
Fix a realization $\wt x$ as in the statement of the lemma.   
For $m \in \BB N$, let $ J_{\wt x ,m}^H$ be the smallest $j \geq |\wt x| + 1$ for which $X(-j   ,-|\wt x| -1)$ contains $m$ hamburgers and let $L_{\wt x , m}^H := d^*\left(X(-J_{\wt x ,m}^H , -|\wt x|-1)\right)$. Note that the conditional law of the pairs $(J_{\wt x , m}^H -|\wt x| , L_{\wt x , m}^H)$ given $\{X_{-|\wt x| }\dots X_{-1} = \wt x\}$ is the same as the law of the pairs $( J_{m}^H ,  L_{m}^H)$ of~\eqref{eqn-J^H-def}. 

By condition~\ref{item-wt-pi-cond} of Proposition~\ref{prop-wt-pi-exists}, the conditional law of $X_{-J} \dots X_{-|\wt x|-1}$ given $ \{X_{-\wt\pi_n }\dots X_{-1} = \wt x\}$ is the same as the conditional law of $X_{-J_{\wt x , \frk o(\wt x)}^H} \dots X_{-|\wt x|-1}$ given that every cheeseburger in $X(-J_{\wt x , \frk o(\wt x)}^H,-|\wt x|-1)$ is matched to a cheeseburger order in $X(-|\wt x| , -1)$. By Lemma~\ref{prop-J^H<J^C-prob} (proven just below), the conditional probability that this is the case given $\{ X_{-|\wt x |}\dots X_{-1} = \wt x\}$ is at least $\frk r(\wt x)^{ 1+o_{\frk r(\wt x)}(1)}$. The Radon-Nikodym derivative of the conditional law of $X_{-J} \dots X_{-|\wt x|-1}$ given $ \{X_{-\wt\pi_n }\dots X_{-1} = \wt x\}$ with respect to its conditional law given $ \{X_{-|\wt x|}\dots X_{-1} = \wt x\}$ is bounded above by the reciporical of this conditional probability. 
\end{proof}

\subsection{Probability of few cheeseburgers before a given number of hamburgers} 
\label{sec-J^H<J^C-prob}

In this section we will prove an estimate for the probability that $J_h^H < J_c^C$ for $h$ larger than $c$. We start by addressing the analogous problem for Brownian motion.

\begin{lem} \label{prop-bm-coord-hit}
Let $Z = (U,V)$ be a correlated Brownian motion as in~\eqref{eqn-bm-cov}. For $b > 0$, let $\tau_b^U$ be the smallest $t > 0$ for which $U(t) =b$ and for $\zeta >0$, let $\tau_\zeta^V$ be the smallest $t>0$ for which $V(t) = \zeta$. Then we have 
\eqb \label{eqn-bm-coord-hit}
\BB P\left( \tau_1^U < \tau_\zeta^V ,\,  V(\tau_1^U) \leq -1  \right) \succeq \zeta 
\eqe 
with the implicit constant depending only on $p$. Furthermore, for each $q\in (0,1)$, there exists $\delta_0 > 0$, depending only on $q$, such that for each $\zeta >0$,
\eqb \label{eqn-bm-coord-reg}
\BB P\left( V(\tau_1^U)  - \sup_{s \in [0,\tau_1^U]} V(s)      \leq -\delta_0 \,|\,  \tau_1^U < \tau_\zeta^V \right) \geq 1-q. 
\eqe  
\end{lem}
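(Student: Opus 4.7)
The plan is to reduce both parts of the lemma to explicit computations about standard planar Brownian motion in a wedge, via the linear transformation $A$ from~\eqref{eqn-bm-matrix}. Setting $\widehat Z := A Z$ gives a standard two-dimensional Brownian motion. Writing $\alpha := \sqrt{2(1-p)/(1-2p)}$, $\rho := p/(1-p)$, and $\beta := \sqrt{1-2p}/(1-p)$ (which satisfy $\rho^2 + \beta^2 = 1$), the region $\{U \leq 1\} \cap \{V \leq \zeta\}$ becomes a wedge $R_\zeta \subset \BB R^2$ with vertex at $(\alpha(1-\rho\zeta), \alpha\beta\zeta)$ and opening angle
\eqbn
\theta = \pi - \arccos \rho ,
\eqen
which coincides with the cone angle in~\eqref{eqn-cone-exponent}. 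Call $L_U$ and $L_V$ the images of $\{U=1\}$ and $\{V=\zeta\}$, respectively. A direct computation shows the origin (where $\widehat Z$ starts) lies in the interior of $R_\zeta$ at distance $r_0 = \alpha\sqrt{(1-\rho\zeta)^2 + \beta^2\zeta^2} = \alpha(1 + O(\zeta))$ from the vertex, at angular distance $\phi_0 = \arctan(\beta\zeta/(1-\rho\zeta)) = \beta \zeta + O(\zeta^2)$ from $L_V$.

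Next, I apply the conformal map $z \mapsto z^{\pi/\theta}$ (after translating the vertex to the origin and rotating so that $L_V$ lies along the positive real axis) to send $R_\zeta$ to the upper half plane. By conformal invariance of planar Brownian motion up to time change, the exit distribution of $\widehat Z$ from $R_\zeta$ equals the exit distribution on the real line of a standard planar Brownian motion started at the image of the origin,
\eqbn
(a,b) = \left(r_0^{\pi/\theta}\cos(\pi\phi_0/\theta),\ r_0^{\pi/\theta}\sin(\pi\phi_0/\theta)\right) = \left(\alpha^{\pi/\theta}(1+O(\zeta)),\ \tfrac{\pi\beta}{\theta}\alpha^{\pi/\theta}\zeta(1+O(\zeta))\right) ,
\eqen
so in particular $a \asymp 1$ and $b \asymp \zeta$ as $\zeta \to 0$. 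This exit distribution is the Cauchy density $\tfrac{b/\pi}{(x-a)^2 + b^2}$ on the real line, with $L_V$ mapping to the positive real axis and $L_U$ to the negative real axis.

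For \eqref{eqn-bm-coord-hit}: since a point on $L_U$ at distance $r$ from the vertex has $V$-coordinate equal to $\zeta - r/\alpha$, the event in question corresponds to exit on the negative real axis at $|x| \geq r_1 := (\alpha(1+\zeta))^{\pi/\theta}$. Direct integration of the Cauchy density yields
\eqbn
\BB P\left(\tau_1^U < \tau_\zeta^V,\ V(\tau_1^U) \leq -1\right) = \tfrac{1}{2} - \tfrac{1}{\pi}\arctan\!\left(\tfrac{r_1+a}{b}\right) \asymp \tfrac{b}{a} \asymp \zeta
\eqen
as $\zeta \to 0$, with implicit constants depending only on $p$. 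For $\zeta$ bounded below by a positive constant, the bound reduces to positivity of a continuous function of $\zeta$ on a compact set.

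For \eqref{eqn-bm-coord-reg}: since $\sup_{s \in [0,\tau_1^U]} V(s) \geq V(0) = 0$, the event $\{V(\tau_1^U) \leq -\delta_0\}$ is contained in the event in the statement, so it suffices to bound the complementary conditional probability $\BB P(V(\tau_1^U) > -\delta_0 \mid \tau_1^U < \tau_\zeta^V)$. In the upper half plane picture, the numerator corresponds to exit on the negative real axis at $|x| < r_* := (\alpha(\zeta+\delta_0))^{\pi/\theta}$; on this range one has $(x-a)^2 + b^2 \geq a^2(1+o_\zeta(1))$, so the numerator is $\preceq r_* b/a^2$. Dividing by $\BB P(\tau_1^U < \tau_\zeta^V) \asymp b/a$ yields a conditional probability at most $r_*/a \preceq \delta_0^{\pi/\theta}(1+o_\zeta(1))$ for small $\zeta$, and choosing $\delta_0$ small depending on $q$ handles this regime. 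A compactness/continuity argument handles $\zeta$ in any bounded interval bounded away from zero, and for $\zeta$ very large the conditioning becomes asymptotically trivial so the statement reduces to the unconditional fact that a BM a.s.\ does not end at its running maximum. The main obstacle will be the careful asymptotic analysis of the parameters $(a,b)$ under the conformal map, especially verifying that $b$ scales linearly with $\zeta$ rather than a higher power; once this is established, the Cauchy integral estimates are classical.
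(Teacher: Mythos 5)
Your approach is correct but genuinely different from the paper's, and it's worth comparing the two. The paper applies the same linear change of variables $A$ to reduce to independent Brownian motion, but then argues qualitatively: the probability of not hitting the horizontal line $A L_\zeta'$ before time $1$ is $\asymp \zeta$, and Shimura's cone-conditioned-limit theorem (with angle $\pi$) shows the conditional law has a nondegenerate limit as $\zeta \to 0$, giving \eqref{eqn-bm-coord-hit} without any explicit density computation. Your route instead notes that the whole event is an exit-location problem for standard planar BM in the wedge $A(\{U \le 1, V\le\zeta\})$, conformally maps this wedge to the upper half-plane via $z \mapsto z^{\pi/\theta}$ (your identification of $\theta = \pi - \arccos\rho$ with the cone angle implicit in \eqref{eqn-cone-exponent} is correct, as is $\rho^2+\beta^2=1$), and then reads off the exit law from the Cauchy kernel. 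That gives sharper information (an asymptotic $\asymp \zeta$ rather than just $\succeq\zeta$, and the explicit exponent $\pi/\theta$ in the second bound) at the cost of more delicate bookkeeping of the parameters $(a,b)$ under the power map; your verification that $b$ scales like $\zeta^1$ rather than a higher power is the real content and appears to be carried out correctly. For \eqref{eqn-bm-coord-reg} the paper proceeds differently as well: it first uses a stable-law tail for $\tau^V_1$ to show $\tau_1^U \ge T$ with high conditional probability, then applies Shimura again; your estimate goes directly through the Cauchy integral. The one genuine soft spot in your writeup is the handling of $\zeta$ not tending to zero. The compactness argument for $\zeta$ in a fixed bounded interval is fine, but the $\zeta \to \infty$ reduction to "the unconditional fact that a BM a.s.\ does not end at its running maximum" is not entirely obvious for $V$ evaluated at the stopping time $\tau_1^U$ — since $U$ and $V$ are positively correlated and $U$ is exactly at its running maximum at $\tau_1^U$, one has to actually argue (e.g.\ by decomposing $V = \rho' U + W'$ with $W'$ independent and using that $W'$ fluctuates on scale $\sqrt{\epsilon}$ over $(\tau_1^U - \epsilon,\tau_1^U)$ while $1-U$ also vanishes at the same rate) that $V(\tau_1^U) < \sup_{s\le\tau_1^U} V(s)$ a.s.; this deserves a sentence rather than an appeal to folklore. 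Also note that, as stated, \eqref{eqn-bm-coord-hit} is really a small-$\zeta$ assertion (a $\succeq\zeta$ bound with $p$-dependent constant cannot hold for arbitrarily large $\zeta$), and this is the regime where the lemma is applied; both your proof and the paper's implicitly operate in that regime.
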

\begin{proof}
Let $L := \{1\} \times (-\infty,-1] $ and for $\zeta > 0$ let $L_\zeta' := \BB R \times \{\zeta\}$. 
Let
\eqbn
A := \left(  \begin{array}{cc}
1 &  - \frac{p}{1-p}   \\ 
0  &  \frac{\sqrt{1-2p}}{1-p}
\end{array}        \right) ,\qquad \wt Z  = (\wt U , \wt V):= A Z .
\eqen 
Then $\wt Z$ is a pair of independent Brownian motions. The event $\{\tau_1^U \leq \tau_\zeta^V ,\,  V(\tau_1^V) \leq -1\}$ is the same as the event that $\wt Z$ hits $A L$ before hitting $AL_\zeta'$. The set $AL_\zeta'$ is a horizontal line at distance proportional to $\zeta$ from the origin. Therefore, the probability that $\wt Z$ fails to hit $AL_\zeta'$ before time 1 is proportional to $\zeta$. By \cite[Theorem 2]{shimura-cone}, applied with angle $\pi$, the conditional law of $\wt Z$ given that it fails to hit $AL_\zeta'$ before time 1 converges as $\zeta\rta 0$ (with respect to the uniform topology) to the law of a random path in the lower half plane which has positive probability to hit $AL$ before time 1. This yields~\eqref{eqn-bm-coord-hit}.

To obtain~\eqref{eqn-bm-coord-reg}, we will first argue that for any $q\in(0,1)$, there exists a deterministic time $T>0$ such that for any $\zeta>0$, we have
\eqb \label{eqn-bm-coord-time}
\BB P\left( \tau_1^U \geq T \,|\,  \tau_1^U < \tau_\zeta^V \right) \geq \sqrt{1-q} . 
\eqe  
Let $a_\zeta$ be the distance from the point $A(L\cap L_\zeta')$ to the origin. Note that $a_\zeta$ is bounded below by a constant $a$ which is independent of $\zeta$. Let $\wt\tau_{a}$ be the first time $\wt U$ hits $a $. Then $\wt \tau_{a }$ is independent from $\wt V$ and we have $\tau_1^U \geq \wt \tau_{a }$. Therefore, for any $T > 0$ we have
\alb
\BB P\left( \tau_1^U < T \,|\,  \tau_1^U < \tau_\zeta^V \right) &\leq \frac{\BB P\left(\wt\tau_a < T \wedge \tau_\zeta^V \right)}{\BB P\left( \tau_1^U < \tau_\zeta^V \right)} \\
&\preceq \zeta^{-1}  \sum_{k= \lceil \log_2 T^{-1} \rceil}^\infty \BB P\left( \wt\tau_a \leq 2^{-k} ,\, \tau_\zeta^V \geq 2^{-k-1}    \right) ,
\ale
where in the last line we used~\eqref{eqn-bm-coord-hit}. By the Gaussian tail bound, 
\eqbn
\BB P\left(\wt\tau_a \leq 2^{-k}  \right) \preceq e^{-b 2^k}
\eqen
for $b$ a constant independent of $k$, $T$, and $\zeta$. Furthermore, by Brownian scaling we have
\alb
\BB P\left(\tau_\zeta^V \geq 2^{-k-1} \right) = \BB P\left(\tau_1^V \geq \zeta^{-2} 2^{-k-1} \right) \preceq \zeta  2^{ k/2}
\ale
where here we used that $\tau_1^V$ has the law of a constant multiple of a $1/2$-stable random variable. It follows that
\eqbn
\BB P\left( \tau_1^U < T \,|\,  \tau_1^U < \tau_\zeta^V \right) \preceq \sum_{k= \lceil \log_2 T^{-1} \rceil}^\infty 2^{k/2} e^{-b 2^k} ,
\eqen
which tends to 0 as $T\rta 0$. This implies~\eqref{eqn-bm-coord-time}. 

By \cite[Theorem 2]{shimura-cone} with angle $\pi$, for any $q \in (0,1)$ and $T > 0$, we can find $\delta_0 > 0$, depending only on $q$ and $T$, such that 
\eqbn
\BB P\left(    V(\tau_1^U)  - \sup_{s \in [0,\tau_1^U]} V(s)  \leq -\delta_0   \,|\, \tau_\zeta^V > T\right) \geq \sqrt{1-q} . 
\eqen
By combining this with~\eqref{eqn-bm-coord-time}, we infer~\eqref{eqn-bm-coord-reg}. 
\end{proof}

\begin{lem} \label{prop-J^H<J^C-prob}
For $m\in\BB N$, let $J_m^H$ (resp. $J_m^C$) be the smallest $j\in\BB N$ for which $X(-j,-1)$ contains $m$ hamburgers (resp. cheeseburgers).  
For $\ep >0$, we have
\eqbn
\BB P\left( J_m^H < J_{\lfloor \ep m\rfloor}^C \right) \succeq \ep^{ 1 + o_\ep(1)  }
\eqen
with the implicit constant depending only on $p$ and the $o_\ep(1)$ depending only on $\ep$ and $p$. 
\end{lem}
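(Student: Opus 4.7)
The plan is to derive the claim from its Brownian analogue Lemma~\ref{prop-bm-coord-hit} via Sheffield's invariance principle applied to the backward walk $j \mapsto D(Y(-j,-1))$. As a preliminary I would record the stack-based identity
\[ \mcl N_{\tc H}(X(-j,-1)) \;=\; d(Y(-j,-1)) - \min_{0 \leq i \leq j} d(Y(-i,-1)), \]
and the corresponding identity for $\mcl N_{\tc C}$ with $d^*$. These follow from $\mcl N_{\tc H}(X(-j,-1)) = \mcl N_{\tc H}(Y(-j,-1))$ (the $Y$-process merely resolves the matches of flexible orders in $X$) combined with the fact that the $Y$-reduction decouples into two independent bracket-matching problems, each producing a standard reflected walk. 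By monotonicity of $\mcl N_{\tc C}(X(-j,-1))$ in $j$, the event $\{J_m^H < J_{\lfloor \ep m\rfloor}^C\}$ is equivalent to the single-time condition $\{\mcl N_{\tc C}(X(-J_m^H,-1)) \leq \lfloor \ep m\rfloor - 1\}$.

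Next I would apply Sheffield's invariance principle \cite[Theorem 2.5]{shef-burger} to the backward walk: $s \mapsto m^{-1} D(Y(-\lfloor m^2 s\rfloor,-1))$ converges in law (uniform topology on compacts) to a correlated Brownian motion $\wt Z = (\wt U, \wt V)$ with covariance as in~\eqref{eqn-bm-cov}. Writing $\bar U(t) := \wt U(t) - \inf_{s \leq t} \wt U(s)$, $\bar V(t) := \wt V(t) - \inf_{s \leq t} \wt V(s)$, and $T^{\bar U} := \inf\{t : \bar U(t) = 1\}$, a standard continuous-mapping argument yields joint convergence of $(m^{-2} J_m^H,\, m^{-1} \mcl N_{\tc C}(X(-J_m^H,-1)))$ to $(T^{\bar U}, \bar V(T^{\bar U}))$.

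The core step is to establish the Brownian lower bound $\BB P(\bar V(T^{\bar U}) < \ep) \succeq \ep$, which I would derive by invoking Lemma~\ref{prop-bm-coord-hit} applied to $\wt Z$ with $\zeta = \ep$. That lemma produces an event $\mcl B := \{\tau_1^{\wt U} < \tau_\ep^{\wt V},\, \wt V(\tau_1^{\wt U}) \leq -1\}$ of probability $\succeq \ep$. Since $\bar U \geq \wt U$, on $\mcl B$ we have $T^{\bar U} \leq \tau_1^{\wt U}$. A small-ball argument combined with the strong Markov property at $T^{\bar U}$ and standard Brownian fluctuation estimates near running minima then shows that, with uniformly positive conditional probability on $\mcl B$, the running infimum of $\wt V$ on $[0, T^{\bar U}]$ is attained within $O(\ep^2)$ time of $T^{\bar U}$, which by Brownian scaling forces $\bar V(T^{\bar U}) = \wt V(T^{\bar U}) - \inf_{s \leq T^{\bar U}} \wt V(s) < \ep$.

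The discrete bound $\BB P(J_m^H < J_{\lfloor \ep m\rfloor}^C) \succeq \ep^{1+o_\ep(1)}$ then follows from the weak convergence above combined with the Brownian estimate, the $o_\ep(1)$ loss absorbing the approximation errors (plus a trivial treatment of the degenerate regime $\ep m < 1$, where the probability is bounded below by $\BB P(X_{-1} = \tc C) = 1/4$, since in that case $J_0^C = \infty$). The principal obstacle is the third paragraph: bridging Lemma~\ref{prop-bm-coord-hit}'s unreflected hitting-time statement to the assertion about the reflected quantity $\bar V(T^{\bar U})$ at the reflected stopping time $T^{\bar U}$, which requires careful Brownian fluctuation analysis near running extrema.
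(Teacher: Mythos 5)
Your reduction to the single-time event $\{\mcl N_{\tb C}... \}$ — rather, $\{\mcl N_{\tc C}(X(-J_m^H,-1)) \leq \lfloor \ep m\rfloor - 1\}$ — and the reflected-walk identities for the $Y$-word are fine, but your route diverges from the paper's and has two genuine gaps. The first is uniformity in $m$. Sheffield's invariance principle gives convergence of $(m^{-2}J_m^H,\, m^{-1}\mcl N_{\tc C}(X(-J_m^H,-1)))$ as $m\rta\infty$ for \emph{fixed} $\ep$, so your argument yields the bound only for $m \geq m_*(\ep)$, where $m_*(\ep)$ is an uncontrolled threshold coming from weak convergence. The lemma requires the bound for all $m$ (the implicit constant depends only on $p$, the $o_\ep(1)$ only on $\ep$ and $p$), and in its application (Lemma~\ref{prop-last-abs-cont}, where $\ep = \frk r(\wt x)$ can be as small as a negative power of $m$) the regime $m < m_*(\ep)$ is exactly the one that matters. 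Your "degenerate regime" patch handles $\ep m < 1$ but not $1 \leq \ep m$ with $m < m_*(\ep)$; the trivial bound $\BB P(X_{-1}=\dots=X_{-m}=\tc H) = 4^{-m}$ there is $\succeq \ep^{1+o_\ep(1)}$ only if $m_*(\ep) = O(\log(1/\ep))$, which weak convergence does not supply. This is precisely why the paper runs a multiscale iteration: it splits $[0,J_m^H]$ into increments between the times $J^H_{r}$ for geometrically growing $r = \lfloor \delta^{-k}\ep m\rfloor$, applies the invariance principle to each increment at the \emph{fixed} scale ratio $\delta$ (so the threshold $n_*(\delta)$ is independent of $\ep$), exploits independence of the increments, and lets $\ep$ enter only through the number of scales $\asymp \log(1/\ep)/\log(1/\delta)$, producing $(b\delta)^{\BB k(\ep)} = \ep^{1+o_\ep(1)}$.

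The second gap is the Brownian step you flag as the principal obstacle: the passage from Lemma~\ref{prop-bm-coord-hit} to $\BB P(\bar V(T^{\bar U}) < \ep) \succeq \ep$ does not go through as sketched. The event $\mcl B = \{\tau_1^{\wt U} < \tau_\ep^{\wt V},\, \wt V(\tau_1^{\wt U}) \leq -1\}$ constrains $\wt V$ at the unreflected time $\tau_1^{\wt U}$; it says nothing about where the running minimum of $\wt V$ over $[0,T^{\bar U}]$ is attained, and there is no mechanism forcing it to lie within $O(\ep^2)$ of $T^{\bar U}$ with uniformly positive conditional probability — on $\mcl B$ the path $\wt V$ typically dips well below its time-$T^{\bar U}$ value early and $\bar V(T^{\bar U})$ is of order $1$, so the conditional probability you need is itself small, and the product would come out $\asymp \ep^2$. (Even granting the localization, the modulus-of-continuity step gives $O(\ep\sqrt{\log(1/\ep)})$, not $O(\ep)$.) The paper sidesteps reflection entirely: each multiscale event $E_m^k(\ep)$ asks that an \emph{increment} of the word contain many cheeseburger orders and few cheeseburgers, which is an unreflected statement about $D$ over that increment and is exactly what Lemma~\ref{prop-bm-coord-hit} controls; the cheeseburger orders accumulated at scale $k$ then combinatorially absorb the cheeseburgers produced at scale $k+1$, so the reflected count never needs to be analyzed in the continuum. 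If you want to keep your single-time formulation you would need an independent proof that $\bar V(T^{\bar U})$ has a bounded, strictly positive density at $0$, which is a different (and not obviously easier) estimate.
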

\begin{proof}
The proof is similar to some of the arguments in \cite[Section 2]{gms-burger-cone}. 
Fix $\delta > 0$, to be chosen later. For $k \in\BB N$ and $\ep > 0$, let $r_m^k(\ep) := \lfloor \delta^{-k} \ep m\rfloor$. Also let $\BB k(\ep) := \lfloor \frac{\log \ep }{\log \delta} \rfloor$ be the smallest $k\in\BB N$ for which $r_m^k(\ep) \geq m$. Let $E_m^k(\ep)$ be the event that $X(-J_{r_m^k(\ep)}^H , -J_{r_m^{k-1}(\ep)}^H-1)$ contains at least $r_m^k(\ep)$ cheeseburger orders and at most $r_m^{k-1}(\ep)$ cheeseburgers. Then we have
\eqb \label{eqn-J^H<J^C-contain}
\bigcap_{k=1}^{\BB k(\ep)} E_m^k(\ep) \subset \left\{J_m^H \geq J_{\lfloor \ep m\rfloor}^C \right\} .
\eqe 
By \cite[Theorem 2.5]{shef-burger} and Lemma~\ref{prop-bm-coord-hit}, we have
\eqbn
\BB P\left(E_m^k(\ep) \right) \succeq \delta + o_{\ep m}(1) ,
\eqen
with the implicit constant depending only on $p$ and the $o_{\ep m}(1)$ depending only on $\ep m$ (not on $k$). Hence there is a $b>0$, depending only on $p$, such that for any given $\delta >0$, we can find $n_* = n_*(\delta) \in \BB N$ such that whenever $\ep m \geq n_*$ and $k \in \BB N$, we have
\eqbn
\BB P\left( E_m^k(\ep) \right) \geq b \delta .
\eqen
By~\eqref{eqn-J^H<J^C-contain}, in this case we have
\eqbn
\BB P\left(J_m^H \geq J_{\lfloor \ep m\rfloor}^C \right) \geq (b \delta)^{\BB k(\ep)} \succeq b^{\frac{\log \ep}{\log \delta}} \ep .
\eqen
Since $\delta$ does not depend on $b$, by making $\delta$ arbitrarily small relative to $b$, we obtain the statement of the lemma.
\end{proof}

\subsection{Monotonicity conditioned on cheeseburgers being matched to cheeseburger orders}
\label{sec-F-event-mono}

For a word $  x = x_0\dots x_{|x|}$ consisting of elements of $\Theta$ and $m\in\BB N$, let $F_m( x)$ be the event that every cheeseburger in $X(-J_m^H , -1)$ is matched to a cheeseburger order in $x$ when we consider the reduced word $\mcl R(X(-J_m^H , -1) x )$. Equivalently, there is \emph{not} a $j \in [1,J_m^H]_{\BB Z}$ with the property that $X_{-j} = \tc C$; and on the event $\{X_0 \dots X_{|x|} = x\}$, we have $\phi(-j) \geq 0$ and either $X_{\phi(-j)} = \tb F$ or $\phi(-j) > |x|$. 

For $l \in\BB N$, let $f_l(x)$ be the $l$th smallest $i \geq 0$ such that $x_i =\tb F$ and $x_i$ does not have a match in $x$; or $f_l(x) = |x|+1$ if no such $i$ exists. We note that with $\frk c_f(x)$ as in Definition~\ref{def-c_f}, 
\eqbn
\frk c_f(x) = \mcl N_{\tb C}\left(\mcl R(x_0 \dots x_{f_1(x)-1})\right) .
\eqen

\begin{lem} \label{prop-F-event-mono}
Let $x = x_0 \dots x_{|x|}$ and $x' = x_0'\dots x_{|x'|}'$ be two words such that $\mcl R(x)$ and $\mcl R(x')$ contain no burgers; $x_{f_1(x)} \dots x_{|x|} = x_{f_1(x')}' \dots x_{|x'|}$; $\mcl N_{\tb H}\left( \mcl R(x_0\dots x_{f_1(x)-1}) \right) = \mcl N_{\tb H}\left(\mcl R(x_0' \dots x_{f_1(x')}') \right)$; and $\frk c_f(x) \leq \frk c_f(x')$. 
Then for each $m\in \BB N$, we have $F_m(x) \subset F_m(x')$. 
\end{lem}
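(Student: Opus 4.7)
My plan is to reduce the lemma to a purely combinatorial comparison of two stack processes. The first step is to exploit the hypotheses to understand the structure of the reduced words. Because $\mcl R(x)$ has no burgers and the common suffix $s$ begins with the unmatched flexible order at position $f_1(x)$ of $x$, the burger stack produced by reducing $x^p := x_0 \cdots x_{f_1(x)-1}$ alone must be empty (otherwise the leading $\tb F$ of $s$ would find a match), and every $\tb F$ in $x^p$ pairs within $x^p$. Hence $\mcl R(x^p)$ is a string of only $\tb H$ and $\tb C$ orders, $\mcl R(x) = \mcl R(x^p)\cdot\mcl R(s)$, and $\mcl R(s)$ begins with $\tb F$ and inherits the no-burger property from $\mcl R(x)$. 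Consequently $\frk c_f(x) = \mcl N_{\tb C}(\mcl R(x^p))$, and the identical analysis applied to $x'$ lets me restate the hypotheses as $\mcl N_{\tb H}(\mcl R(x^p)) = \mcl N_{\tb H}(\mcl R(x'^p))$ and $k := \mcl N_{\tb C}(\mcl R(x'^p)) - \mcl N_{\tb C}(\mcl R(x^p)) \geq 0$.

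Fix a realization of $X$ and let $S_0$ be the stack of surviving burgers in $\mcl R(X(-J_m^H,-1))$, ordered by insertion time. A short induction on position shows that each order of $x^p$ which matches within $x^p$ in isolation still matches the same burger of $x^p$ once $S_0$ is prepended, because burgers of $x^p$ are always more recent than burgers in $S_0$. Therefore the net effect of $x^p$ on $S_0$ is exactly that of applying the order sequence $\mcl R(x^p)$, which contains only $\tb H$ and $\tb C$ symbols: the result is a stack $S_1$ obtained from $S_0$ by removing its topmost $\mcl N_{\tb H}(\mcl R(x^p))$ copies of $\tc H$ and its topmost $\mcl N_{\tb C}(\mcl R(x^p))$ copies of $\tc C$. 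The identical argument with $x'^p$ yields $S_1'$; comparing, $S_1$ and $S_1'$ share the same $\tc H$'s, while the (up to) $k$ extra $\tc C$'s in $S_1 \setminus S_1'$ occupy stack positions strictly above every $\tc C$ surviving in $S_1'$. The event $F_m(x)$ is then equivalent to: when the order sequence $\mcl R(s) = o_1 \cdots o_r$ is applied to $S_1$, every $\tc C$ of $S_1$ is popped by some $\tb C$ (rather than by a $\tb F$ or left unmatched); and $F_m(x')$ is the analogous statement for $S_1'$.

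The core of the proof is a stack-position invariant maintained as the orders $o_1,\dots,o_r$ are run on the two initial stacks. Writing $T_i, T_i'$ for the stacks after $i$ steps, I would carry the invariant that $T_i$ and $T_i'$ have identical $\tc H$-contents, and the $\tc C$'s in $T_i \setminus T_i'$ sit strictly above every $\tc C$ of $T_i'$. Preservation is immediate at $\tb H$ steps. At a $\tb C$ step, the topmost $\tc C$ of $T_i$ lies in $T_i \setminus T_i'$ by the invariant (or else $T_i = T_i'$ and the two pops agree), while $T_i'$ pops its own topmost $\tc C$; a direct set-theoretic chase confirms the invariant on the new symmetric difference. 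At a $\tb F$ step the hypothesis $F_m(x)$ is what makes the argument go through: popping a $\tc C$ from $T_i$ here would match a $\tc C$ from $S_0$ to a $\tb F$ in $x$, contradicting $F_m(x)$; hence the top of $T_i$ must be either empty or a $\tc H$, and the invariant then forces the top of $T_i'$ to be the same $\tc H$ (or both stacks to be empty). Thus $o_i$ acts identically on the two stacks at every $\tb F$ step.

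Once the invariant is in place, the conclusion follows quickly: under $F_m(x)$ no $\tb F$ ever pops a $\tc C$ in either run, so the $\tc C$-content of each stack is depleted only by $\tb C$ orders; and since $F_m(x)$ forces $\mcl R(s)$ to contain at least $\mcl N_{\tc C}(S_1) \geq \mcl N_{\tc C}(S_1')$ $\tb C$ symbols, every $\tc C$ of $S_1'$ is popped by some $\tb C$, giving $F_m(x')$. The main obstacle I expect is the bookkeeping for the invariant at a $\tb C$ step — one must check that the $\tc C$ newly promoted into the symmetric difference (the one popped from $T_i'$) still lies strictly above every surviving $\tc C$ of $T_{i+1}'$ — but this is direct once one records that the popped $T_i'$-top is itself the highest $\tc C$ of $T_i'$, so all remaining $\tc C$'s in $T_{i+1}'$ are strictly below it.
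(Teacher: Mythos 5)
Your proof is correct, and it takes a genuinely different route from the paper's. The paper argues by induction on $k=\mcl N_{\tb F}(\mcl R(x))$: it reads more of the word $X$ backward until the first flexible order of $\mcl R(x)$ acquires its match (at time $J_{h+1}^H$), checks that the enlarged pair $(\wt x,\wt x')=(X(-J_{h+1}^H,-1)x,\,X(-J_{h+1}^H,-1)x')$ again satisfies the hypotheses of the lemma with one fewer unmatched $\tb F$, and invokes the inductive hypothesis. Your argument instead fixes the realization of $X(-J_m^H,-1)$ once and for all, reformulates $F_m(x)$ as a statement about running the order sequence $\mcl R(s)$ on the stack $S_1$, and maintains a step-by-step domination invariant between the two runs $(T_i)$ and $(T_i')$. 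The paper's route is shorter on the page but hides the stack mechanics inside the inductive bookkeeping; your route is more verbose but makes the monotonicity transparent and avoids induction entirely. Both rely on the same preliminary observation that $\mcl R(x^p)$ consists solely of $\tb H$ and $\tb C$ orders.

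Two small points worth tightening. First, the inclusion $T_i'\subseteq T_i$ should be stated explicitly as part of the invariant rather than left implicit; it is what you invoke when you say ``$T_i=T_i'$ and the two pops agree'' at a $\tb C$ step with $T_i\setminus T_i'$ containing no $\tc C$, and again at the very end when you conclude that $T_r'$ has no surviving $\tc C$ from $T_r$ having none. (Its preservation is routine: at a $\tb C$ step the $x$-run removes an element outside $T_i'$, and the $x'$-run removes an element of $T_i'\subseteq T_i$.) Second, your closing sentence phrases the conclusion as a count of $\tb C$ symbols in $\mcl R(s)$, which by itself does not guarantee that those $\tb C$'s actually land on $\tc C$'s of $S_1'$; the clean finish is the one you essentially have in hand: under $F_m(x)$ the invariant forces every $\tb F$ of $\mcl R(s)$ to act identically on the two stacks, so no $\tb F$ pops a $\tc C$ from $T_i'$, and since $T_r\subseteq S_1$ retains no $\tc C$ and $T_r'\subseteq T_r$, every $\tc C$ of $S_1'$ is popped by a $\tb C$, which is exactly $F_m(x')$.
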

\begin{proof}
Let $k  := \mcl N_{\tb F}(\mcl R(x))$ and $h := \mcl N_{\tb H}(\mcl R(x_0 \dots x_{f_1(x)-1}))$. By hypothesis the definitions of $k$ and $h$, are unchanged if we replace $x$ with $x'$.

We induct on $k$. The case $k=0$ is trivial. 
Now assume $k > 1$ and we have proven $F_m(x) \subset F_m(x')$ for words $x$ and $x'$ satisfying the hypothesis of the lemma with $\mcl N_{\tb F}(\mcl R(x)) < k$. If $F_m(x)$ occurs, then either $m \leq h$ and $J_{\frk c_f(x) +1}^C > J_m^H$; or $J_{h+1}^H < J_{\frk c_f(x)+1}^C$. In the former case, it is clear that $F_m(x')$ also occurs. Now assume we are in the latter case. Let $\wt x := X(-J_{h+1}^H , -1) x$ and $\wt x' := X(-J_{h+1}^H , -1) x'$. The $\tb F$ symbol corresponding to $x_{f_1(x)}$ is matched to $X_{-J_{h+1}^H}$ in $\mcl R(\wt x)$, and the word $X(-J_{h+1}^H , -1)$ contains no flexible orders. Therefore, we have $\wt x_{f_1(\wt x)} \dots \wt x_{|\wt x|} = x_{f_2(x)} \dots x_{|x|}$. Similarly, $\wt x'_{f_1(\wt x')} \dots  \wt x_{|\wt x'|}' = x_{f_2(x')}' \dots x_{|x'|}'$. By our hypothesis on $x$ and $x'$, we therefore have $\wt x_{f_1(\wt x)} \dots \wt x_{|\wt x|} = \wt x'_{f_1(\wt x')} \dots  \wt x_{|\wt x'|}'$. Furthermore, we have
\alb
\mcl N_{\tb H}\left(\mcl R(\wt x_0 \dots \wt x_{f_1(\wt x)-1}) \right)  = \mcl N_{\tb H}\left(\mcl R(x_{f_1(x )+1} \dots x_{f_2(x)-1}) \right) = \mcl N_{\tb H}\left(\mcl R(\wt x_0' \dots \wt x_{f_1(\wt x')-1}') \right) 
\ale
and since $J_{\frk c_f(x)+1}^C > J_{h+1}^H$, 
\alb
\mcl N_{\tb C}\left(\mcl R(\wt x_0 \dots \wt x_{f_1(\wt x)-1}) \right)  
&= \mcl N_{\tb C}\left(\mcl R(x_{f_1(x )+1} \dots x_{f_2(x)-1}) \right) + \frk c_f(x) - L_{h+1}^H  \\
&\leq \mcl N_{\tb C}\left(\mcl R(x_{f_1(x')+1}' \dots x_{f_2(x')-1}') \right) + \frk c_f(x') - L_{h+1}^H \\
&= \mcl N_{\tb C}\left(\mcl R(\wt x_0' \dots \wt x_{f_1(\wt x')-1}') \right)  .
\ale
Hence the words $\wt x$ and $\wt x'$ satisfy the hypotheses of the lemma.

If $F_m(x)$ occurs, then every cheeseburger in $X(-J_{m }^H , - J_{h+1}^H-1  )$ is matched to a cheeseburger order in $\wt x$ (here we use that $X(-J_{h+1}^H , -1)$ contains no $\tb F$'s). Therefore, $F_{m-h-1}(\wt x)$ occurs. Since $\mcl N_{\tb F}(\wt x)  = k-1$, the inductive hypothesis implies that $F_{m-h-1}(\wt x')$ occurs. Hence, we have $J_{h+1}^H < J_{\frk c_f(x)+1}^C \leq J_{\frk c_f(x')+1}^C$ and each cheeseburger in $X(-J_{m }^H , - J_{h+1}^H-1  )$ is matched to a cheeseburger order in $X(-J_{h+1}^H , -1) x'$. It follows that $F_m(x')$ occurs. 
That is, $F_m(x) \subset F_m(x')$.  
\end{proof}

\begin{lem} \label{prop-J^H-F-mono}
Fix a word $x = x_0 \dots x_{|x|}$ consisting of elements of $\Theta$ and write $h := \mcl N_{\tb H}(\mcl R(x_0 \dots x_{f_1(x)-1}))$. For any $l   \in \BB Z$, any $m\in [1,h]_{\BB Z}$, and any $m' \geq m$, we have
\eqbn
\BB P\left(L_m^H \leq l \,|\, F_{m'}(x) \right) \geq \BB P\left( L_m^H \leq l  \,|\, J_{\frk c_f(x)+1}^C > J_m^H    \right)  .
\eqen
\end{lem}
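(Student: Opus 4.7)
The plan is to analyze $F_{m'}(x)$ via the stack interpretation of the hamburger--cheeseburger reduction and thereby reduce the claim to a comparison of conditional distributions of the reduced word $\mcl R(W)$, where $W := X(-J_m^H, -1)$. The first step is the structural observation that on $\{m \leq h\}$, the event $F_{m'}(x)$ forces $\tilde N_c(W) := \mcl N_{\tc C}(\mcl R(W)) \leq \frk c_f(x)$. Indeed, the unmatched orders of $\mcl R(x_0 \dots x_{f_1(x)-1})$ consist of $h$ copies of $\tb H$ and $\frk c_f(x)$ copies of $\tb C$; in the combined reduction these consume the topmost $\tilde m \leq m \leq h$ hamburgers and topmost $\frk c_f(x)$ cheeseburgers of $\mcl R(W)$. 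If $\tilde N_c > \frk c_f(x)$, a leftover $\tc C$ from $\mcl R(W)$ must remain on top of the stack when the first unmatched $\tb F$ at position $f_1(x)$ is processed, and that $\tb F$ would consume this cheeseburger, contradicting $F_{m'}(x)$.

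Since $L_m^H = \tilde N_c(W) - \mcl N_{\tb C}(\mcl R(W))$ is a function of $\mcl R(W)$ alone, it suffices to compare the conditional laws of $\mcl R(W)$ under the two conditionings. I would condition further on $\mcl R(W) = r$ and use Bayes' rule to write
\[
 \BB P(\mcl R(W) = r \mid F_{m'}(x)) \propto \BB P(\mcl R(W) = r) \cdot \BB P(F_{m'}(x) \mid \mcl R(W) = r),
\]
and similarly for the other conditioning with the indicator $\BB 1_{\{N_c(w) \leq \frk c_f(x)\}}$ in place of $\BB P(F_{m'}(x) \mid \mcl R(W) = r)$. The claim then reduces to showing that, relative to the indicator, the conditional probability $\BB P(F_{m'}(x) \mid \mcl R(W))$ is non-increasing in $L_m^H$. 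Lemma~\ref{prop-F-event-mono} provides the key monotonicity tool: augmenting $x$ by additional $\tb C$ orders before the first unmatched $\tb F$ only enlarges $F_{m'}(x)$, so a suitable dominating word can be used to calibrate the comparison.

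The main obstacle is this final comparison step. The difficulty is that $F_{m'}(x)$ is a global event involving the interaction of $W$, the continuation $X(-J_{m'}^H, -J_m^H-1)$, and $x$, whereas $\{J_{\frk c_f(x)+1}^C > J_m^H\}$ depends only on $W$ via $N_c(W)$. I expect the argument to proceed by induction on the number of unmatched $\tb F$ symbols of $x$ (in the spirit of the inductive proof of Lemma~\ref{prop-F-event-mono}), peeling off one $\tb F$ at a time and tracking how it affects the matching structure of the cheeseburgers in $\mcl R(W)$. Alternatively, one may construct an explicit coupling via rejection sampling from the law of $W$ conditioned on $\{\tilde N_c \leq \frk c_f(x)\}$. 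The delicate bookkeeping of how different matchings contribute to the two conditional probabilities is the crux of the argument.
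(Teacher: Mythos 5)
Your structural observations and plan are correct in outline: the inclusion $F_{m'}(x) \subset \{J_{\frk c_f(x)+1}^C > J_m^H\}$, the Bayes' rule decomposition, and the identification of Lemma~\ref{prop-F-event-mono} as the monotonicity tool are all exactly what the paper uses. Where you go astray is in treating the ``final comparison step'' as a serious obstacle requiring a new induction or coupling argument; it does not. The paper's proof of the monotonicity is a single application of Lemma~\ref{prop-F-event-mono}: conditional on $\{J_{\frk c_f(x)+1}^C > J_m^H\}$ and $\{L_m^H = l\}$, the $m$ hamburgers of $X(-J_m^H,-1)$ are matched to the first $m$ hamburger orders of $x$ and all of its cheeseburgers are matched to cheeseburger orders of $x$ to the left of $x$'s first unmatched $\tb F$. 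Consequently, the reduced word $\mcl R\left(X(-J_m^H,-1)\, x\right)$ has no burgers, its suffix from its leftmost unmatched $\tb F$ onward coincides with $x_{f_1(x)} \dots x_{|x|}$, and it has exactly $h-m$ hamburger orders and $\frk c_f(x) - l$ cheeseburger orders to the left of that $\tb F$. Since, by translation invariance, $\BB P\left(F_{m'}(x) \mid L_m^H = l,\, J_{\frk c_f(x)+1}^C > J_m^H\right)$ equals the unconditional probability of $F_{m'-m}$ applied to this reduced word, Lemma~\ref{prop-F-event-mono} (applied with $\frk c_f$ varying and the other parameters held fixed) says directly that this probability is non-increasing in $l$, uniformly over the orderings of the unmatched orders in $X(-J_m^H,-1)$. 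No re-doing of the induction inside Lemma~\ref{prop-F-event-mono} and no rejection-sampling coupling is needed; the ``delicate bookkeeping'' you anticipate is already contained in the lemma statement.

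One smaller but consequential point: your Bayes' rule computation conditions on the full reduced word $\mcl R(W)$, which overcomplicates things. The paper only needs to condition on the value of $L_m^H$ (together with the restriction $\{J_{\frk c_f(x)+1}^C > J_m^H\}$); the monotonicity in $L_m^H$ alone, via Lemma~\ref{prop-F-event-mono}, immediately yields
\eqbn
\BB P\left( F_{m'}(x) \,|\, L_m^H \leq l,\, J_{\frk c_f(x)+1}^C > J_m^H \right) \geq \BB P\left( F_{m'}(x) \,|\, J_{\frk c_f(x)+1}^C > J_m^H \right),
\eqen
which combined with your Bayes' rule identity finishes the proof. So the gap here is not conceptual so much as a failure to see that the machinery you already identified closes the argument with no further work.
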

\begin{proof}
We have $F_{m'}(x) \subset \{J_{\frk c_f(x)+1}^C > J_m^H  \}$, so 
by Bayes' rule,
\begin{align} \label{eqn-J^H-F-mono-bayes}
&\BB P\left(L_m^H \leq l \,|\, F_{m'}(x) \right)  
  = \frac{\BB P\left( F_{m'}(x) \,|\, L_m^H \leq l ,\, J_{\frk c_f(x)+1}^C > J_m^H  \right) \BB P\left( L_m^H \leq l \,|\,  J_{\frk c_f(x)+1}^C > J_m^H \right) }{\BB P\left( F_{m'}(x) \,|\, J_{\frk c_f(x)+1}^C > J_m^H  \right) } .
\end{align}
On the event $\{J_{\frk c_f(x)+1}^C > J_m^H\}$, the word $\mcl R\left(X(-J_m^H , -1) x \right)$ contains precisely $h - m$ hamburger orders and $\frk c_f(x) - L_m^H$ cheeseburger orders to the left of its leftmost flexible order. By Lemma~\ref{prop-F-event-mono}, it follows that 
\eqbn
\BB P\left( F_{m'}(x) \,|\, L_m^H = l ,\, J_{\frk c_f(x)+1}^C > J_m^H  \right)
\eqen
is decreasing in $l$. In particular, 
\eqbn
\BB P\left( F_{m'}(x) \,|\, L_m^H \leq l ,\, J_{\frk c_f(x)+1}^C > J_m^H  \right) \geq \BB P\left( F_{m'}(x) \,|\, J_{\frk c_f(x)+1}^C > J_m^H  \right) .
\eqen
Combining this with~\eqref{eqn-J^H-F-mono-bayes} yields the statement of the lemma. 
\end{proof}

\subsection{Regularity conditioned on few cheeseburgers before a given number of hamburgers} 
\label{sec-J^H<J^C-reg}
 
In light of Lemma~\ref{prop-J^H-F-mono}, to study the conditional law of $X_{-J_m^H} \dots X_{-1}$ given $F_{m'}(x)$ (in the terminology of that lemma), it suffices to study the conditional law of $X_{-J_m^H} \dots X_{-1}$ given $J_m^H  < J_c^C$ for an appropriate choice of $c$. In this section, we will study this latter conditional law. In particular, we will prove the following.

\begin{lem} \label{prop-J^H<J^C-reg}
For each $q\in (0,1)$, there is a $\zeta>0$ and an $m_* = m_*(q,\zeta ) \in\BB N$ such that for each $h  \geq m_*$ and each $c\in\BB N$, 
\eqbn
\BB P\left( \mcl N_{\tb C}\left(X(-J_h^H , -1)\right) \geq \zeta h  \,|\,  J_h^H < J_c^C \right) \geq 1-q .
\eqen 
\end{lem}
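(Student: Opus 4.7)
The plan is to reduce the estimate to an analogous statement for the correlated Brownian motion $(U, V)$ via the scaling limit, invoke Lemma~\ref{prop-bm-coord-hit}, and separately control the contribution of flexible orders. This correction arises because the statement concerns $X$-reductions while the clean walk description works for $Y$-reductions, where $Y$ is the word obtained from $X$ by resolving each $\tb{F}$ to $\tb{H}$ or $\tb{C}$ according to its $\phi$-match.

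Using the LIFO interpretation of the reduction for $Y$-words, one verifies the identities
\begin{equation*}
J_h^H = \inf\{s \geq 0 : -d(-s) = h\}, \qquad J_c^C = \inf\{s \geq 0 : -d^*(-s) = c\},
\end{equation*}
and
\begin{equation*}
\mcl N_{\tb{C}}\big(Y(-J_h^H, -1)\big) \;=\; \max_{0 \leq k \leq J_h^H} \big(-d^*(-k)\big) - \big(-d^*(-J_h^H)\big),
\end{equation*}
so that $\mcl N_{\tb{C}}(Y(-J_h^H, -1))$ is the ``drawdown'' of the backward process $s \mapsto -d^*(-s)$ at time $J_h^H$. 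Comparing $X$- and $Y$-reductions symbol by symbol, each unmatched flexible order in the $X$-reduced word whose $\phi$-match is a $\tc{C}$ becomes a $\tb{C}$ order in the $Y$-reduced word, so
\begin{equation*}
\mcl N_{\tb{C}}\big(X(-J_h^H, -1)\big) \;\geq\; \mcl N_{\tb{C}}\big(Y(-J_h^H, -1)\big) - \mcl N_{\tb{F}}\big(X(-J_h^H, -1)\big).
\end{equation*}

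Next, applying \cite[Theorem~2.5]{shef-burger} with $n \asymp h^2$, the rescaled backward walk $s \mapsto (-d(-sn), -d^*(-sn))/\sqrt{n}$ converges in law to $(U, V)$; continuous mapping then gives $J_h^H/n \to \tau_1^U$, $J_c^C/n \to \tau_{\tilde c}^V$ (with $\tilde c$ a constant multiple of $c/h$), and $\mcl N_{\tb{C}}(Y(-J_h^H, -1))/h$ converges to $\sup_{s \in [0, \tau_1^U]} V(s) - V(\tau_1^U)$ up to a scaling constant. Lemma~\ref{prop-bm-coord-hit} (the second conclusion) then supplies $\delta_0 > 0$, depending only on $q$ (and $p$), such that \emph{uniformly in $\tilde c \in (0, \infty)$},
\begin{equation*}
\BB P\Big( \sup_{s \in [0, \tau_1^U]} V(s) - V(\tau_1^U) \geq \delta_0 \,\Big|\, \tau_1^U < \tau_{\tilde c}^V \Big) \;\geq\; 1 - q/2.
\end{equation*}

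The main technical obstacle is to transfer this Brownian estimate to the discrete walk \emph{uniformly in $c$}, since $c/h$ may lie anywhere in $(0, \infty)$. I plan to argue by contradiction: if the discrete bound failed, extract sequences $(h_n, c_n)$ with $h_n \to \infty$ along which the conditional probability stays below $1 - q/2$, and pass to a subsequence along which $c_n/h_n \to \tilde c \in [0, \infty]$; the scaling limit, together with Lemma~\ref{prop-bm-coord-hit} at the limiting $\tilde c$ (including the degenerate values $0$ and $\infty$), yields the contradiction. Finally, the flexible-order correction is absorbed via \cite[Corollary~5.2]{gms-burger-cone}: since $\mu > 1/2$, one can choose $\alpha > 0$ with $2(1-\mu+\alpha) < 1$, so $\mcl N_{\tb{F}}(X(-s, -1)) \leq s^{1-\mu+\alpha} = o(h)$ for every $s \leq Ah^2$, and the super-polynomial decay of the failure probability in \cite[Corollary~5.2]{gms-burger-cone} easily absorbs the polynomial lower bound on $\BB P(J_h^H < J_c^C)$ supplied by Lemma~\ref{prop-J^H<J^C-prob}. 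This gives $\mcl N_{\tb{F}}(X(-J_h^H, -1)) \leq \delta_0 h/2$ with conditional probability at least $1 - q/2$, so combining yields $\mcl N_{\tb{C}}(X(-J_h^H, -1)) \geq \delta_0 h/2$ conditionally with probability at least $1 - q$; take $\zeta := \delta_0/2$.
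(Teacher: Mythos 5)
The $Y$-vs-$X$ bookkeeping (resolving flexible orders and bounding the correction via \cite[Corollary~5.2]{gms-burger-cone}) is fine, and your reduction to controlling the drawdown of the backward walk is the right way to think about the target quantity. However, the core step — ``argue by contradiction, extract a subsequence along which $c_n/h_n \to \tilde c \in [0,\infty]$, and apply the scaling limit at the limiting $\tilde c$'' — has a genuine gap at $\tilde c = 0$, and this is precisely the regime that makes the lemma nontrivial.

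The issue is that Sheffield's \cite[Theorem~2.5]{shef-burger} gives weak convergence of the unconditioned rescaled walk to $Z$. To transfer Lemma~\ref{prop-bm-coord-hit} to the discrete walk, you need convergence of the \emph{conditional} law given $\{J_{h_n}^H < J_{c_n}^C\}$. When $c_n/h_n \to \tilde c > 0$, the conditioning event has limiting probability bounded away from zero, so a Portmanteau-type argument works. But when $c_n/h_n \to 0$, one has $\BB P(J_{h_n}^H < J_{c_n}^C) \to 0$ (roughly like $c_n/h_n$ up to slowly varying corrections), and the scaling limit gives you no information about the conditional law given a vanishing-probability event. The fact that the Brownian bound of Lemma~\ref{prop-bm-coord-hit} is uniform in $\zeta$ does not help: the approximation error between the discrete conditional law and the Brownian conditional law is not uniform in $c/h$, and it degenerates exactly as $c/h \to 0$. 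Establishing a random-walk analogue of Lemma~\ref{prop-bm-coord-hit} uniformly down to $c/h \to 0$ is essentially as hard as (or harder than) the original lemma.

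This is exactly what the paper's proof is built to circumvent: rather than a single application of the invariance principle, it runs a multi-scale bootstrap. Setting $m_{c,a} := \lfloor ac\rfloor$ and the events $F_{c,a}$, $E_{c,a}(\zeta)$, Lemmas~\ref{prop-J^H<J^C-uniform}--\ref{prop-J^H<J^C-induct} propagate the estimate from one scale $a$ to a bounded multiple $b\in[(1-\lambda)^{-1}a,\lambda^{-1}a]$, and at each step the invariance principle is only applied at a \emph{bounded} aspect ratio, where conditioning events have probability bounded away from zero. Iterating this over $\asymp \log(h/c)$ scales (plus the separate argument via Lemma~\ref{prop-J^H-no-tube} for $c$ smaller than a fixed threshold) gets you uniformly over all $c\in\BB N$. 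If you want a scaling-limit-style proof, you would first need to establish, independently, an analogue of \cite[Theorem~4.1]{gms-burger-cone} for the walk conditioned on $\{J_h^H < J_c^C\}$ uniformly in $c/h \to 0$ (a meander/excursion-type invariance principle for this particular non-Markovian conditioning), which is a substantial additional result rather than a routine step.
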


In the case where $c \geq h$, we have $\BB P(J_h^H < J_c^C) \geq 1/2$, so the statement of the Lemma~\ref{prop-J^H<J^C-reg} is immediate from \cite[Theorem 2.5]{shef-burger}. Hence we can assume without loss of generality that $c<  h$. 

The proof of Lemma~\ref{prop-J^H<J^C-reg} in this case is similar to the argument of \cite[Section 3.4]{gms-burger-cone}. For $c  \in\BB N$ and $a\geq 1$, let $m_{c,a} := \lfloor a  c \rfloor$. Also let
\alb
F_{c,a}  := \left\{J_{m_{c,a}}^H < J_c^C \right\} ,\qquad  E_{c,a}(\zeta) := \left\{ \mcl N_{\tb C}\left(X(-J_{m_{c,a}}^H , -1)\right) > \zeta m_{c,a} \right\} \cap F_{c,a}. 
\ale
Roughly speaking, will prove by induction that for each $q\in (0,1)$, there exists a $\zeta> 0$ and $m_* \in\BB N$ such that whenever $m_{c,a} \geq m_*$, we have
\eqbn
\BB P\left(E_{c,a}(\zeta) \,|\, F_{c,a}  \right) \geq 1-q. 
\eqen

\begin{lem} \label{prop-J^H<J^C-uniform}
Let $q\in (0,1)$ and $\lambda \in (0,1/2)$. There is a $\delta_0 > 0$, depending only on $q$ and $\lambda$, with the following property. For each $\zeta >0$, there exists $m_* = m_*( \zeta, \delta_0 ,q , \lambda) \in\BB N$ such that for each $c\in\BB N$ and $a>0$ with $m_{c,a} \geq m_*$; each realization $x$ of $X_{-J_{m_{c,a}}^H} \dots X_{-1}$ for which $E_{c,a}(\zeta)$ occurs; and each $b \in \left[(1-\lambda)^{-1} a , \lambda^{-1} a \right]$, we have
\eqbn
\BB P\left(E_{c,b}(\delta_0 ) \,|\, X_{-J_{m_{c,a}}^H} \dots X_{-1} = x ,\, F_{c,b} \right) \geq 1-q .
\eqen
\end{lem}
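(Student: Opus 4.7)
My plan is to rescale by $n := m_{c,a}$ in space and $n^2$ in time, reducing the claim to a Brownian-motion conditioning statement to which Lemma~\ref{prop-bm-coord-hit} applies.

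Conditional on $X_{-J_{m_{c,a}}^H}\dots X_{-1} = x$, the subsequent symbols $X_{-J_{m_{c,a}}^H - k}$ are iid from~\eqref{eqn-theta-probs}, so by~\cite[Theorem~2.5]{shef-burger} the two-dimensional walk of the partial $(d, d^*)$-counts of the extension, rescaled by $1/n$ in space and $1/n^2$ in time, converges in the uniform topology to a correlated Brownian motion $Z = (U,V)$ with covariance as in~\eqref{eqn-bm-cov}. Under this scaling, $F_{c,b}$ becomes (up to negligible error) $\{\tau_\alpha^U < \tau_\beta^V\}$, with $\alpha = (m_{c,b}-m_{c,a})/n$ bounded between positive constants depending only on $\lambda$, and $\beta = (c - L_{m_{c,a}}^H)/n \leq 1/a$. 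In the $\tb F$-free $Y$-process one has the identity $\mcl N_{\tb C}(\mcl R(X(-j,-1))) = -\min_{k\leq j} d^*(X(-k,-1))$, so the target event $E_{c,b}(\delta_0)$ translates (modulo the effect of flexible orders) into $\min_{s\in[0,\tau_\alpha^U]} V(s) \leq -\delta_0(b/a)$; that is, the walk $V$ must dip by a definite amount below its starting value.

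Rescaling space and time so that $\alpha \mapsto 1$, I would apply Lemma~\ref{prop-bm-coord-hit} (invoking its second part together with a time-reversal/symmetry argument to convert the ``drop from maximum'' statement into a ``depth of minimum'' statement) to obtain $\delta_0 > 0$ depending only on $q$ and $\lambda$ such that the conditioned correlated Brownian excursion of $V$ below its starting value reaches $-\delta_0$ with probability at least $1-q$. Combined with the uniform scaling-limit error bounds from~\cite{shef-burger}, this gives the discrete bound provided $m_* = m_*(\zeta,\delta_0,q,\lambda)$ is chosen large enough for the Brownian approximation to hold to the desired precision.

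The main obstacle is handling flexible orders in the genuine $X$-process. In the $Y$-process (with $\tb F$'s replaced by their matched types) the identity $\mcl N_{\tb C}(\mcl R) = -\min d^*$ holds exactly, but in our process a prepended $\tb F$ becomes the new leftmost flexible order (with zero shielding) and a subsequently arriving $\tc C$ may cancel a $\tb F$ rather than a $\tb C$, decoupling $\mcl N_{\tb C}(\mcl R)$ from $-\min d^*$. The hypothesis $E_{c,a}(\zeta)$ gives a buffer of at least $\zeta n$ cheeseburger orders shielding the leftmost $\tb F$ in $\mcl R(x)$, so such disruptive cancellations involving old $\tb F$'s are impossible until this buffer is exhausted. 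To handle new $\tb F$'s prepended during the extension, I would combine the monotonicity results Lemma~\ref{prop-F-event-mono} and Lemma~\ref{prop-J^H-F-mono} to reduce the problem to the $\tb F$-free $Y$-process, with the resulting error absorbed into the dependence $m_* = m_*(\zeta,\delta_0,q,\lambda)$. The uniformity of $\delta_0$ in $\zeta$ is natural because the Brownian lemma produces a constant depending only on $q$ and $\lambda$, independent of the initial buffer size.
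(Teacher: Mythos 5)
Your high-level strategy -- rescale to a Brownian picture and apply Lemma~\ref{prop-bm-coord-hit} via \cite[Theorem 2.5]{shef-burger} -- is the same as the paper's, but two of your key technical steps are incorrect, and together they make your argument substantially more complicated than the actual proof.

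First, and most importantly, you miss the observation that the flexible-order issue you worry about is vacuous here. By the very definition of $J_m^H$ as the first backward time the unmatched-hamburger count reaches $m$, the symbol $X_{-J_m^H}$ is a $\tc H$ which is \emph{not} matched within $X(-J_m^H+1,-1)$; a prepended $\tc H$ fails to match precisely when the reduced word $\mcl R(X(-J_m^H+1,-1))$ contains no $\tb H$ and no $\tb F$, and prepending a burger cannot create new unmatched orders. Hence $\mcl R(X(-J_m^H,-1))$ deterministically contains \emph{no} flexible orders (and no hamburger orders). This is exactly what the paper notes, and it immediately makes the conditioning on $F_{c,b}$ equivalent to a one-sided constraint on the cheeseburger count of the iid continuation. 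Your proposed appeal to Lemma~\ref{prop-F-event-mono} and Lemma~\ref{prop-J^H-F-mono} is both unnecessary and mismatched: those lemmas concern the event $F_m(x)$ (every cheeseburger matched to a cheeseburger order), which is a different conditioning from $F_{c,b} = \{J_{m_{c,b}}^H < J_c^C\}$, and they are not used in the paper's proof of this lemma. Moreover your reading of $E_{c,a}(\zeta)$ as a buffer of cheeseburger orders ``shielding the leftmost $\tb F$'' has no content once one sees there is no $\tb F$ to shield.

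Second, the identity you invoke is wrong. Even in the $\tb F$-free $Y$-process, the unmatched cheeseburger-order count is \emph{not} minus the running minimum of $d^*$. Since prepending a $\tb C$ increments both $\mcl N_{\tb C}$ and $-d^*$, and prepending a $\tc C$ decrements $-d^*$ unconditionally but decrements $\mcl N_{\tb C}$ only when it is positive, the correct relation is the Skorokhod reflection
\[
\mcl N_{\tb C}\left(X(-j,-1)\right) = \max_{k\in[0,j]} d^*\left(X(-k,-1)\right) - d^*\left(X(-j,-1)\right),
\]
i.e., the drop of $d^*$ from its running max, not the depth of its running min. (Concretely, for $X_{-3}X_{-2}X_{-1}=\tb C\tb C\tc C$ one has $\mcl N_{\tb C}=2$, the running max of $d^*$ minus its final value is $1-(-1)=2$, but $-\min d^* = 1$.) Correspondingly, $-m^{-1}\mcl N_{\tb C}(X(-J_m^H,-1))$ converges to $V(\tau_1^U)-\sup_{s\in[0,\tau_1^U]}V(s)$, which is \emph{exactly} what Lemma~\ref{prop-bm-coord-hit} controls --- no ``time-reversal/symmetry'' conversion is needed, and the ``depth-of-minimum'' statement you would convert to is not the right quantity anyway. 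With these two fixes the argument collapses to the paper's two-step proof: note $\mcl N_{\tb F}(\mcl R(x))=0$ to reduce the conditioning on $F_{c,b}$ to a lower-tail event of uniformly positive probability (using $E_{c,a}(\zeta)$ and $b/a$ bounded via $\lambda$), then apply the joint scaling limit and Lemma~\ref{prop-bm-coord-hit}.
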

\begin{proof}
For $x$ as in the statement of the lemma, we have $\mcl N_{\tb F}(\mcl R(x)) = 0$, so the conditional law of $X_{-J_{m_{c,b}}^H } \dots X_{-J_{m_{c,a}}^H -1}$ given $\{X_{-J_{m_{c,a}}^H} \dots X_{-1} = x   \}\cap F_{c,b}$ is the same as its conditional law given that $\mcl N_{\tc C}\left( X(-J_{m_{c,b}}^H ,-J_{m_{c,a}}^H -1)\right) \leq  \mcl N_{\tb C}\left(\mcl R(x)\right) + c$. By \cite[Theorem 2.5]{shef-burger}, the probability of this event is bounded below by a constant which depends on $\zeta$ and $\lambda$, but not on $c$. 

By \cite[Theorem 2.5]{shef-burger}, as $m\rta\infty$ we have
\eqbn
\left( m^{-2} J_m^H ,  - m^{-1} \mcl N_{\tb C}\left(X(-J_m^H ,-1)\right)    \right)  \rta \left(\tau_1^U , V(\tau_1^U) - \sup_{s\in [0,\tau_1^U]} V(s) \right)
\eqen
in law, with the objects on the right defined as in Lemma~\ref{prop-bm-coord-hit}. The statement of the lemma now follows from Lemma~\ref{prop-bm-coord-hit}. 
\end{proof}

\begin{lem} \label{prop-J^H<J^C-ratio}
For each $q , q_0 \in (0,1)$, $\lambda \in (0,1/2)$, and $\ep > 0$, there exists $\zeta_0  = \zeta_0(q,q_0 , \ep)  >0$ and $a_0 = a_0(q,q_0,\ep) > 0$ such that for each $\zeta \in (0,\zeta_0]$, there exists $m_* = m_*(q,q_0 , \lambda, \ep , \zeta)$ such that the following is true. Suppose we are given $c \in \BB N$ and $a \geq a_0$ such that $m_{c,a}  \geq m_*$ and $\BB P\left(E_{c,a} (\ep) \,|\, F_{c,a}  \right) \geq q_0$. Then whenever $b \in \left[(1-\lambda)^{-1} a , \lambda^{-1} a \right]$ and $b' \in \left[(1-\lambda)^{-1} b , \lambda^{-1} b \right]$,
\eqb \label{eqn-J^H<J^C-ratio}
\frac{  \BB P\left(  F_{c,b'}  \,|\,  E_{c,b}(\zeta)^c \cap F_{c,b} \right) }{\BB P\left( F_{c,b'}  \,|\,  E_{c,b}(\zeta) \right)} \leq q .
\eqe 
\end{lem}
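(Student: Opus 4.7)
My plan is to rewrite the target ratio via Bayes' rule and then bound the two resulting conditional probabilities using the hypothesis combined with Lemma~\ref{prop-J^H<J^C-uniform}.

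First, since $E_{c,b}(\zeta)\subset F_{c,b}$ and $F_{c,b'}\subset F_{c,b}$, a direct application of Bayes' rule gives
\[
\frac{\BB P(F_{c,b'}\mid E_{c,b}(\zeta)^c \cap F_{c,b})}{\BB P(F_{c,b'}\mid E_{c,b}(\zeta))}
= \frac{\BB P(E_{c,b}(\zeta)^c\mid F_{c,b'})\,\BB P(E_{c,b}(\zeta)\mid F_{c,b})}{\BB P(E_{c,b}(\zeta)^c\mid F_{c,b})\,\BB P(E_{c,b}(\zeta)\mid F_{c,b'})}.
\]
My strategy reduces the problem to establishing lower bounds of the form $\BB P(E_{c,b}(\zeta)\mid F_{c,b})\geq 1-\eta_1$ and $\BB P(E_{c,b}(\zeta)\mid F_{c,b'})\geq 1-\eta_2$ with $\eta_2/\eta_1$ small compared to $q$.

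For the first lower bound I would propagate the hypothesis $\BB P(E_{c,a}(\epsilon)\mid F_{c,a})\geq q_0$ up from scale $a$ to scale $b$. The strong Markov property at $J^H_{m_{c,a}}$ together with the multinomial structure of the joint law of $(\mcl N_{\tc C},\mcl N_{\tb C})$ at that time shows that $E_{c,a}(\epsilon)$ and $F_{c,b}$ are positively associated given $F_{c,a}$ (many $\tb C$-symbols force few $\tc C$-symbols, which makes $F_{c,b}$ easier); this gives $\BB P(E_{c,a}(\epsilon)\mid F_{c,b})\geq q_0$. I then apply Lemma~\ref{prop-J^H<J^C-uniform} realization-by-realization to each $x$ in $E_{c,a}(\epsilon)$ and average to obtain $\BB P(E_{c,b}(\zeta_0)\mid F_{c,b})\geq q_0(1-q_1)$ for an appropriate $\zeta_0=\zeta_0(q,q_0,\epsilon)>0$ and $q_1$ small. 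The bound of $q_0(1-q_1)$ can be boosted arbitrarily close to $1$ by iterating the argument through a finite chain of intermediate scales between $a$ and $b$, since $b\leq \lambda^{-1}a$ keeps the chain length bounded.

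For the second lower bound, I would use that the additional restriction imposed by $F_{c,b'}$ beyond $F_{c,b}$ is itself positively correlated with $E_{c,b}(\zeta)$: to keep $\mcl N_{\tc C}$ below $c$ over the longer time needed to read $m_{c,b'}$ hamburgers, non-$\tc C$ symbols have to be more numerous, which biases $\mcl N_{\tb C}$ at time $J^H_{m_{c,b}}$ upward. To make this quantitative I would invoke Theorem~2.5 of~\cite{shef-burger} applied to the joint three-coordinate process $(\mcl N_{\tc H},\mcl N_{\tc C},\mcl N_{\tb C})$: the hypothesis plus Lemma~\ref{prop-J^H<J^C-uniform} pins $a$ to a near-critical regime in which the Brownian limit is nondegenerate, and Brownian hitting estimates of the type in Lemma~\ref{prop-bm-coord-hit} give $\BB P(E_{c,b}(\zeta)\mid F_{c,b'})\geq 1-\eta_2$ with $\eta_2\ll\eta_1$ for $\zeta$ small and $a_0$ large.

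The main obstacle is controlling the quantitative gap between $\eta_1$ and $\eta_2$: the Bayes identity forces us to show that switching the conditioning from $F_{c,b}$ to $F_{c,b'}$ makes $E_{c,b}(\zeta)^c$ substantially rarer (by a factor of $q$), which requires a sharp comparison of hitting probability densities in the Brownian limit together with a controlled bootstrap of Lemma~\ref{prop-J^H<J^C-uniform} across finitely many scales.
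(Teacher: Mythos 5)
Your Bayes rewriting of the ratio is algebraically correct, but it trades the original problem for a harder one. You rewrite
\[
\frac{\BB P(F_{c,b'}\mid E_{c,b}(\zeta)^c \cap F_{c,b})}{\BB P(F_{c,b'}\mid E_{c,b}(\zeta))}
= \frac{\BB P(E_{c,b}(\zeta)^c\mid F_{c,b'})}{\BB P(E_{c,b}(\zeta)^c\mid F_{c,b})}\cdot\frac{\BB P(E_{c,b}(\zeta)\mid F_{c,b})}{\BB P(E_{c,b}(\zeta)\mid F_{c,b'})},
\]
and the second factor is indeed close to $1$ for small $\zeta$. But the first factor is a ratio of two probabilities that both tend to $0$ as $\zeta\to 0$, so to make it at most $q$ you need a \emph{lower} bound on $\BB P(E_{c,b}(\zeta)^c\mid F_{c,b})$ that is quantitatively comparable to the upper bound on $\BB P(E_{c,b}(\zeta)^c\mid F_{c,b'})$. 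Your plan only produces one-sided lower bounds $\BB P(E_{c,b}(\zeta)\mid F_{c,\cdot})\geq 1-\eta_i$, i.e., only upper bounds on both complements, which bounds the numerator of that first factor but not its denominator. This is a genuine gap, not a cosmetic one: later in the section (Lemma~\ref{prop-J^H<J^C-induct}) one proves precisely that $\BB P(E_{c,b}(\zeta)\mid F_{c,b})$ is close to $1$, so $\BB P(E_{c,b}(\zeta)^c\mid F_{c,b})$ is genuinely small, and there is no obvious way to floor it without essentially solving the whole problem of getting two-sided estimates on it. Your ``positive association via multinomial structure'' claim is also unsupported as stated; these events are defined through cone stopping times and FKG-type correlation inequalities are not immediate here.

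The paper avoids the ratio-of-small-quantities issue entirely. It bounds the \emph{denominator} $\BB P(F_{c,b'}\mid E_{c,b}(\zeta))$ from below by a constant independent of $\zeta$: using Lemma~\ref{prop-J^H<J^C-uniform} one finds a fixed $\delta_0$ with $\BB P(E_{c,b}(\delta_0)\mid E_{c,b}(\zeta))\succeq 1$, and then $\BB P(F_{c,b'}\mid E_{c,b}(\delta_0))\succeq 1$ by \cite[Theorem 2.5]{shef-burger}, so the denominator is uniformly bounded below. For the \emph{numerator}, the key observation is that on $F_{c,b'}\cap E_{c,b}(\zeta)^c$, the reduced word at time $J^H_{m_{c,b}}$ has at most $\zeta m_{c,b}$ cheeseburger orders, and for $F_{c,b'}$ to hold there are fewer than $c$ cheeseburgers at time $J^H_{m_{c,b'}}$; hence the block $X(-J^H_{m_{c,b'}},-J^H_{m_{c,b}}-1)$ contains at most $\zeta m_{c,b}+c$ cheeseburgers. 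But this block accumulates $m_{c,b'}-m_{c,b}\asymp m_{c,b}$ hamburgers, so having $O(\zeta m_{c,b})$ cheeseburgers in the same interval is, in the Brownian limit, an event whose probability tends to $0$ as $\zeta\to 0$ (uniformly once $m_{c,a}$ is large and $c\leq m_{c,a}/a_0$ is negligible). This one-sided upper-bound/lower-bound split is what your proposal is missing.
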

\begin{proof}
By Lemma~\ref{prop-J^H<J^C-uniform}, we can find $\delta_0 > 0$, depending only on $p$, such that for each $\zeta > 0$, there exists $\wt m_* = \wt m_*(\zeta,\delta_0,\lambda)$ such that whenever $m_{c,a}  \geq \wt m_*$ and $b$ is as in the statement of the lemma, we have
\eqbn
\BB P\left( E_{c,b}(\delta_0 ) \,|\, E_{c,a} (\zeta) \cap F_{c,b}   \right)  \geq\frac12 .
\eqen
Hence if $m_{c,a}  \geq \wt m_*$, then
\begin{align}
\BB P\left( E_{c,b}(\delta_0) \,|\, E_{c,b}(\zeta) \right)   &\geq \BB P\left(E_{c,b}(\delta_0) \,|\ F_{c,b}  \right)\notag \\
&\geq \BB P\left(E_{c,b}(\delta_0) \,|\, E_{c,a} (\ep)    \cap F_{c,b} \right)  \BB P\left(  E_{c,a} (\ep) \,|\,  F_{c,b}\right) \notag \\
&\geq \frac12\BB P\left(  E_{c,a} (\ep) \,|\,  F_{c,b}\right)  . \label{eqn-delta0-split}
\end{align}
By Bayes' rule,
\begin{align}
\BB P\left(  E_{c,a} (\ep) \,|\,  F_{c,b}\right) &=  \frac{\BB P\left( F_{c,b} \,|\,  E_{c,a} (\ep)\right) \BB P\left(   E_{c,a}(\ep) \,|\, F_{c,a} \right) }{   \BB P\left( F_{c,b} \,|\,  E_{c,a}(\ep)\right) \BB P\left(   E_{c,a}(\ep) \,|\, F_{c,a}\right) + \BB P\left( F_{c,b}  \cap E_{c,a}(\ep)^c  \,|\, F_{c,a} \right)          }  . \label{eqn-ep-to-delta-bayes}
\end{align} 
By \cite[Theorem 2.5]{shef-burger} and our assumption on $\BB P\left(E_{c,a} (\ep) \,|\, F_{c,a}  \right)$, this quantity is bounded below by a constant depending only on $q_0$ and $\ep$ (not on $\zeta$). By~\eqref{eqn-delta0-split}, we arrive at
\eqbn  
\BB P\left( E_{c,b}(\delta_0) \,|\, E_{c,b}(\zeta) \right)  \succeq 1 .
\eqen
By combining this with \cite[Theorem 2.5]{shef-burger} we obtain that for $b'$ as in the statement of the lemma,
\eqb  \label{eqn-top-bound}
\BB P\left( F_{c,b'}  \,|\,  E_{c,b}(\zeta) \right) \geq \BB P\left(  F_{c,b'}    \,|\, E_{c,b}(\delta_0) \right) \BB P\left( E_{c,b}(\delta_0) \,|\, E_{c,b}(\zeta) \right)  \succeq 1 .
\eqe 
 
Next we consider the numerator in~\eqref{eqn-J^H<J^C-ratio}.  
Observe that if $ F_{c,b'}  \cap E_{c,b}(\zeta)^c  $ occurs, then $X(-J_{m_{c,b'}}^H , -J_{m_{c,b}}^H-1)$ contains at most $\zeta m_{c,b} + c$ cheeseburgers. To estimate the probability that this occurs, consider the correlated Brownian motion $Z = (U,V)$ of~\eqref{eqn-bm-cov}. By \cite[Theorem 2.5]{shef-burger}, for each $s > 0$, the probability that $X(-J_{m_{c,b'}}^H , -J_{m_{c,b}}^H-1)$ contains at most $s m_{c,b}  $ cheeseburgers converges as $m_{c,b} \rta\infty$ to the probability that $U$ reaches height $b' - b$ before $V$ reaches height $s b$. The probability that this occurs tends to 0 as $s \rta 0$, at a rate depending only on $\lambda$. It follows that for each $\alpha>0$, we can find $\zeta_0 > 0$ and $a_0 > 0$ (depending only on $\alpha$) such that for each $\zeta \in (0,\zeta_0]$, there exists $m_* = m_*(\zeta, \alpha , \ep) \geq \wt m_*$ such that whenever $a \geq a_0$ and $m_{c,a} \geq m_*$, we have
\eqb \label{eqn-bottom-bound}
 \BB P\left(  F_{c,b'}  \,|\,  E_{c,b}(\zeta)^c \cap F_{c,b} \right)  \leq \alpha .
\eqe 
We conclude by combining~\eqref{eqn-top-bound} and~\eqref{eqn-bottom-bound} and choosing $\alpha$ sufficiently small depending on $q$ and $\lambda$.  
\end{proof}

\begin{lem} \label{prop-J^H<J^C-induct}
Let $q , q_0\in (0,1)$ and $\lambda \in (0,1/2)$. There is a $\zeta_0 > 0$ and an $a_0 > 0$ (depending only on $q $ and $q_0$) such that for each $\zeta \in (0,\zeta_0]$ we can find $m_* = m_*(q,q_0, \zeta) \in \BB N$ with the following property. Suppose $c  \in \BB N$ and $a \geq a_0$ with $m_{c,a} \geq m_*$ and $\BB P\left(E_{c,a} (\zeta) \,|\, F_{c,a}  \right) \geq q_0$. Suppose also that $b \in \left[(1-\lambda)^{-1} a , \lambda^{-1}  a \right]$. Then $\BB P\left(E_{c,b} (\zeta) \,|\, F_{c,b}  \right) \geq  1-q$. 
\end{lem}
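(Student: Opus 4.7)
The plan is to reduce the conclusion to two sub-estimates built from Lemmas~\ref{prop-J^H<J^C-uniform} and~\ref{prop-J^H<J^C-ratio}, via two applications of Bayes' rule. First I would force $\zeta_0 \leq \delta_0(q_1,\lambda)$, where $\delta_0$ is the constant produced by Lemma~\ref{prop-J^H<J^C-uniform} with its $q$-parameter set to a small $q_1>0$ to be chosen below. Since our $b$ lies in the range $[(1-\lambda)^{-1}a, \lambda^{-1}a]$ demanded by Lemma~\ref{prop-J^H<J^C-uniform}, averaging that lemma's conclusion over realizations of $X_{-J_{m_{c,a}}^H}\dots X_{-1}$ on which $E_{c,a}(\zeta)$ occurs gives $\BB P(E_{c,b}(\delta_0)\,|\,E_{c,a}(\zeta)\cap F_{c,b}) \geq 1-q_1$. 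Because $\zeta\leq\delta_0$ we have $E_{c,b}(\delta_0)\subset E_{c,b}(\zeta)$, and hence
\[
\BB P(E_{c,b}(\zeta)\,|\,F_{c,b}) \;\geq\; (1-q_1)\,\BB P(E_{c,a}(\zeta)\,|\,F_{c,b}).
\]
So it suffices to show $\BB P(E_{c,a}(\zeta)\,|\,F_{c,b}) \geq 1-q_2$ for some $q_2$ satisfying $(1-q_1)(1-q_2)\geq 1-q$.

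For that, I would apply Bayes' rule to the partition $F_{c,a}=E_{c,a}(\zeta)\sqcup(E_{c,a}(\zeta)^c\cap F_{c,a})$ and use $F_{c,b}\subset F_{c,a}$ together with the hypothesis $\BB P(E_{c,a}(\zeta)\,|\,F_{c,a})\geq q_0$ (so that $\BB P(F_{c,b}\,|\,F_{c,a}) \geq q_0\,\BB P(F_{c,b}\,|\,E_{c,a}(\zeta))$) to get
\[
\BB P(E_{c,a}(\zeta)^c\,|\,F_{c,b}) \;\leq\; \frac{1-q_0}{q_0}\cdot \frac{\BB P(F_{c,b}\,|\,E_{c,a}(\zeta)^c\cap F_{c,a})}{\BB P(F_{c,b}\,|\,E_{c,a}(\zeta))}.
\]
The right-hand ratio is then bounded by rerunning the argument of the proof of Lemma~\ref{prop-J^H<J^C-ratio} with ``its $b$'' replaced by our $a$ and ``its $b'$'' by our $b$. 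Exactly as in~\eqref{eqn-top-bound}, the denominator is bounded below by a constant depending only on $q_0$ and $\lambda$ via Lemma~\ref{prop-J^H<J^C-uniform} and~\cite[Theorem 2.5]{shef-burger}. Exactly as in~\eqref{eqn-bottom-bound}, on $E_{c,a}(\zeta)^c\cap F_{c,a}\cap F_{c,b}$ the bound $L_{m_{c,a}}^H\geq -\zeta m_{c,a}$ forces the segment $X(-J_{m_{c,b}}^H,-J_{m_{c,a}}^H-1)$ to contain at most $\zeta m_{c,a}+c$ cheeseburgers while accumulating $m_{c,b}-m_{c,a}$ hamburgers, and the scaling limit shows this probability can be made smaller than any prescribed $\alpha>0$ by taking $\zeta_0$ small and $a_0$ large (the latter makes $c/m_{c,a}=1/a$ small). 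Since $b/a$ stays in the proportional range $[(1-\lambda)^{-1},\lambda^{-1}]$, the Brownian scaling constants behave as in that proof.

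Choosing $\alpha$ small enough in terms of $q_0$ and $q_2$ gives $\BB P(E_{c,a}(\zeta)^c\,|\,F_{c,b})\leq q_2$, and then fixing $q_1$ small enough so that $(1-q_1)(1-q_2)\geq 1-q$ completes the proof. The main technical subtlety is that Lemma~\ref{prop-J^H<J^C-ratio} as stated requires ``its $b$'' to be strictly larger than ``its $a$'', so it does not literally deliver the ratio at scales $(a,b)$ used above; however, the internal estimates corresponding to~\eqref{eqn-top-bound} and~\eqref{eqn-bottom-bound} depend only on the proportionality of the two scales involved, not on which of them serves as the ``hypothesis scale'', so the same arguments transfer verbatim. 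I expect the verification of the numerator bound at the new scales to be the most delicate step, essentially because it requires careful bookkeeping of how $L_{m_{c,a}}^H\geq -\zeta m_{c,a}$ propagates through the burger-count computation leading to the $\zeta m_{c,a}+c$ upper bound on cheeseburgers in the segment.
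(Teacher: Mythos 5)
Your proof has a genuine gap in the estimate of the denominator $P_1 := \BB P(F_{c,b}\,|\,E_{c,a}(\zeta))$. You claim, by analogy with \eqref{eqn-top-bound}, that $P_1$ is bounded below by a constant depending only on $q_0$ and $\lambda$ (uniformly as $\zeta\rta 0$). But the proof of \eqref{eqn-top-bound} in Lemma~\ref{prop-J^H<J^C-ratio} does \emph{not} ``depend only on the proportionality of the two scales.'' The chain there is
\[
\BB P\left(F_{c,b'}\,|\,E_{c,b}(\zeta)\right) \geq \BB P\left(F_{c,b'}\,|\,E_{c,b}(\delta_0)\right)\,\BB P\left(E_{c,b}(\delta_0)\,|\,E_{c,b}(\zeta)\right),
\]
and the uniform lower bound on the second factor is obtained via \eqref{eqn-delta0-split}--\eqref{eqn-ep-to-delta-bayes}, which propagate the hypothesis $\BB P(E_{c,a}(\ep)\,|\,F_{c,a})\geq q_0$ from a scale $a$ \emph{strictly below} the scale $b$ at which the $E$-event lives, using Lemma~\ref{prop-J^H<J^C-uniform} to move up. When you transplant the argument to scales $(a,b)$ with the $E$-event at scale $a$ itself, the analogous step would require $\BB P(E_{c,a}(\delta_0)\,|\,E_{c,a}(\zeta))\succeq 1$, i.e.\ essentially $\BB P(E_{c,a}(\delta_0)\,|\,F_{c,a})\succeq 1$; but your only hypothesis is $\BB P(E_{c,a}(\zeta)\,|\,F_{c,a})\geq q_0$ at the \emph{smaller} threshold $\zeta\leq\delta_0$, which gives no control on the $\delta_0$-threshold event at the same scale. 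Concretely, nothing in the hypothesis rules out the conditional law of $\mcl N_{\tb C}(X(-J_{m_{c,a}}^H,-1))$ given $E_{c,a}(\zeta)$ being concentrated just above $\zeta m_{c,a}$, in which case $P_1$ is of the same (small) order as your numerator $P_2$ and the ratio $P_2/P_1$ does not go to zero.

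This is exactly the obstruction the paper's proof is built to avoid: it introduces an intermediate scale $\wt b = (a+b)/2$ and applies Lemma~\ref{prop-J^H<J^C-ratio} at the pair $(\wt b, b)$ with the hypothesis at $a < \wt b$, so that Lemma~\ref{prop-J^H<J^C-uniform} has a genuine gap of scales ($a$ up to $\wt b$) through which to push the $q_0$-bound. Your plan collapses that gap by putting both the hypothesis and the $E$-event at scale $a$, and the closing sentence of your write-up (``the same arguments transfer verbatim'') is precisely where the argument fails. Everything up to and including your reduction to bounding $\BB P(E_{c,a}(\zeta)^c\,|\,F_{c,b})$ is fine, and the numerator bound via the Brownian scaling limit is also fine; the missing ingredient is the intermediate scale, without which the denominator estimate cannot be made uniform in $\zeta$.
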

\begin{proof} 
Given $a$ and $b$ as in the statement of the lemma, let $\wt b := (a+b)/2$. 
By Lemma~\ref{prop-J^H<J^C-uniform} we can find $\zeta_0 > 0$ (depending only on $  q$ and $\lambda$) such that for $\zeta \in (0,\zeta_0]$ and $\ep \in (0,\zeta]$, there exists $\wt m_* = \wt m_*(\zeta , \ep,  q ,\lambda )\in \BB N$ such that if $m_{c,a} \geq \wt m_*$ and $b \in \left[(1-\lambda)^{-1} a , \lambda^{-1} a \right]$, then
\begin{align}\label{eqn-last-given-mid}
\BB P\left(E_{c,b}(\zeta) \,|\, E_{c,\wt b}(\ep) \cap F_{c,b} \right)  \geq  1- q  \quad \op{and} \quad \BB P\left(E_{c,\wt b}(\zeta) \,|\, E_{c,a}(\zeta) \cap F_{c,\wt b}   \right)  \geq  1- q   . 
\end{align} 
Henceforth fix $\zeta \in (0,\zeta_0]$. 

Fix $\alpha \in (0,1)$ to be chosen later (depending on $q, q_0 $, and $\zeta$). By Lemma~\ref{prop-J^H<J^C-ratio}, we can find $\zeta'  \in (0, \zeta]$ and $a_0 > 0$ (depending on $\alpha$, $q_0$, and $\zeta$) and $m_*   \geq \wt m_*$ (depending on $\alpha$, $q_0$, $\zeta'$, and $\zeta$) for which the following holds. If $a\geq a_0$, $m_{c,a} \geq m_*$, and $   \BB P\left(E_{c,a} (\zeta) \,|\, F_{c,a}  \right)  \geq q_0$, then 
\eqb \label{eqn-ratio-compare}
   \BB P\left(  F_{c,b}  \,|\,  E_{c,\wt b}(\zeta')^c \cap F_{c,\wt b} \right)    \leq  \alpha \BB P\left( F_{c,b}  \,|\,  E_{c,\wt b}(\zeta') \right) .
\eqe
In this case we have
\begin{align}
&\BB P\left(E_{c,b} (\zeta) \,|\, F_{c,b}  \right) \\
&\qquad\geq \frac{ \BB P\left(E_{c,b} (\zeta) \,|\, E_{c,\wt b }(\zeta')  \right)  \BB P\left(E_{c,\wt b}(\zeta') \,|\, F_{c,\wt b}\right)     }{ \BB P\left(F_{c,b}   \,|\, E_{c,\wt b }(\zeta')  \right)  \BB P\left(E_{c,\wt b}(\zeta') \,|\, F_{c,\wt b}\right)  + \BB P\left(F_{c,b}  \,|\, E_{c,\wt b }(\zeta')^c \cap F_{c,\wt b}  \right)  \BB P\left(E_{c,\wt b}(\zeta')^c \,|\, F_{c,\wt b}\right)        } \notag \\
&\qquad\geq \frac{ \BB P\left(E_{c,b} (\zeta) \,|\, E_{c,\wt b }(\zeta')  \right)}{    \BB P\left(F_{c,b}   \,|\, E_{c,\wt b }(\zeta')  \right)   } \times \frac{  \BB P\left(E_{c,\wt b}(\zeta') \,|\, F_{c,\wt b}\right)     }{   \BB P\left(E_{c,\wt b}(\zeta') \,|\, F_{c,\wt b}\right)  +  \alpha \BB P\left(E_{c,\wt b}(\zeta')^c \,|\, F_{c,\wt b}\right)        } .
\label{eqn-a-induct1}
\end{align}
By~\eqref{eqn-last-given-mid},
\alb
\frac{ \BB P\left(E_{c,b} (\zeta) \,|\, E_{c,\wt b }(\zeta')  \right)}{    \BB P\left(F_{c,b}   \,|\, E_{c,\wt b }(\zeta')  \right)   }
  = \BB P\left( E_{c,b} (\zeta)  \,|\,  E_{c,\wt b }(\zeta') \cap F_{c,b} \right) 
  \geq 1-  q .
\ale
Furthermore,
\eqb \label{eqn-a-lower}
\BB P\left(E_{c,\wt b}(\zeta') \,|\, F_{c,\wt b}\right) \geq \BB P\left(E_{c,\wt b}(\zeta') \,|\, E_{c,a}(\zeta)\cap F_{c,\wt b}\right) \BB P\left( E_{c,a}(\zeta)  \,|\,  F_{c,\wt b} \right) \geq (1- q ) \BB P\left( E_{c,a}(\zeta)  \,|\,  F_{c,\wt b} \right).
\eqe 
By Bayes' rule,
\begin{align}
 \BB P\left( E_{c,a}(\zeta)  \,|\,  F_{c,\wt b} \right) \notag \\
  \geq  \frac{  \BB P\left( F_{c,\wt b } \,|\,  E_{c,a}(\zeta)  \right) \BB P\left( E_{c,a}(\zeta)  \,|\,  F_{c,a} \right)       }{\BB P\left( F_{c,\wt b }   \,|\,  E_{c,a}(\zeta)  \right) \BB P\left( E_{c,a}(\zeta)  \,|\,  F_{c,a} \right)  + \BB P\left( F_{c,\wt b }  \,|\,  E_{c,a}(\zeta)^c \cap F_{c,a} \right)     }  .  
\end{align} 
By \cite[Theorem 2.5]{shef-burger} and our assumption on $\BB P\left( E_{c,a}(\zeta)  \,|\,  F_{c,a} \right)$, this quantity is at least a positive constant $c $ depending on $q_0$, $\lambda$ and $\zeta$ (but not on $\zeta'$). Therefore,~\eqref{eqn-a-lower} implies $\BB P\left(E_{c,\wt b}(\zeta') \,|\, F_{c,\wt b}\right) \geq (1-  q ) c $, so~\eqref{eqn-a-induct1} implies
\eqbn
\BB P\left(E_{c,b'} (\zeta) \,|\, F_{c,b'}  \right)  \geq \frac{(1-  q)^2 c }{(1-  q )c  + \alpha } .
\eqen
If we choose $\alpha$ sufficiently small relative to $c $ (and hence $\zeta'$ sufficiently small and $m_*$ sufficiently large), we can make this quantity as close to $1-q$ as we like. 
\end{proof}

In order to deal with the case of very small values of $c$, we will need the following elementary lemma.

\begin{lem} \label{prop-J^H-no-tube}
There is a constant $C > 0$, depending only on $p$, such that for each $m\in\BB N$ and each $\zeta >0$, 
\eqbn
\BB P\left( \mcl N_{\tb C}\left(X(-J_r^H , -1)\right) \leq \zeta m ,\,  \forall r \in [1,m]_{\BB Z} \right) \preceq e^{-C/\zeta}  
\eqen
with the implicit constant depending only on $p$. 
\end{lem}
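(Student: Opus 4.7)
The plan is to prove the bound by partitioning the hamburger index range $[1,m]_{\BB Z}$ into blocks and applying the strong Markov property of $X$ read backward at the hitting times
\[
T_k := J_{k \lceil \zeta m \rceil}^H, \qquad k=0,1,\dots,K,
\]
with $K := \lfloor 1/(2\zeta)\rfloor$. We may assume $\zeta m \geq 1$, since in the complementary regime the event forces $\mcl N_{\tb C}(X(-J_r^H,-1)) \equiv 0$ for all $r\in[1,m]_{\BB Z}$, an event of probability $O((3/4)^{m^2})$ by standard random walk estimates, which is negligible compared to $e^{-C/\zeta}$.

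The main ingredient is a \emph{concatenation lower bound}: for any $j > T_{k-1}$, writing $w_{\mathrm{new}} := X_{-j}\dots X_{-T_{k-1}-1}$ and $w_{\mathrm{old}} := X_{-T_{k-1}}\dots X_{-1}$, the relations~\eqref{eqn-theta-relations} imply
\[
\mcl N_{\tb C}\bigl(X(-j,-1)\bigr) \;\geq\; \mcl N_{\tb C}\bigl(\mcl R(w_{\mathrm{new}})\bigr),
\]
because the $\tb C$-orders appearing in $\mcl R(w_{\mathrm{new}})$ sit at the absolute leftmost of $\mcl R(w_{\mathrm{new}} \cdot w_{\mathrm{old}})$ and hence have no burger to their left that could annihilate them. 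By the strong Markov property at $T_{k-1}$, the word $w_{\mathrm{new}}$ is a fresh iid word of random length $j - T_{k-1}$, independent of $\mcl F_{T_{k-1}}$. Applying the scaling limit of~\cite[Theorem~2.5]{shef-burger} to the fresh iid tail, the process $l \mapsto \mcl N_{\tb C}(\mcl R(X_{-T_{k-1}-l}\dots X_{-T_{k-1}-1}))$ has typical fluctuations of order $\sqrt{l}$, so there exist constants $C_0, \delta_0 > 0$ (depending only on $p$) such that
\[
\BB P\bigl(\,\exists\,l \in [1, C_0(\zeta m)^2]\colon \mcl N_{\tb C}(\mcl R(X_{-T_{k-1}-l}\dots X_{-T_{k-1}-1})) > 2\zeta m \,\bigm|\, \mcl F_{T_{k-1}}\bigr) \;\geq\; 2\delta_0.
\]

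The second step is to convert this into a lower bound on the block event $\wh A_k := \{\exists\,r\in B_k : \mcl N_{\tb C}(X(-J_r^H,-1)) > \zeta m\}$ for $B_k := [(k-1)\lceil\zeta m\rceil+1, k\lceil\zeta m\rceil]_{\BB Z}$. Two routine sub-arguments are needed. First, with positive probability the fresh window of length $C_0(\zeta m)^2$ fits inside the block, i.e., $T_k \geq T_{k-1} + C_0(\zeta m)^2$: indeed, $\mcl N_{\tc H}(X(-j,-1))$ grows by at most one per step, so to advance from $(k-1)\lceil\zeta m\rceil$ to $k\lceil\zeta m\rceil$ requires at least $\lceil\zeta m\rceil$ steps, and by Sheffield's scaling limit it in fact requires at least $c (\zeta m)^2$ steps with positive probability, uniformly in $W_0 := X(-T_{k-1},-1)$. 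Second, if $\mcl N_{\tb C}(X(-j,-1)) > 2\zeta m$ at some $j \in [T_{k-1}, T_k]$ and $r := \mcl N_{\tc H}(X(-j,-1)) \in B_k$, then since $\mcl N_{\tb C}$ changes by at most one per step and $\tc H$'s arrive at positive density, with constant conditional probability $\mcl N_{\tb C}(X(-J_r^H,-1)) > \zeta m$ as well (the value at the nearest preceding $J_r^H$ can have dropped by at most the number of $\tc C$ steps in between, which is much less than $\zeta m$ on a short window). Combining these gives $\BB P(\wh A_k \mid \mcl F_{T_{k-1}}) \geq \delta_0$ uniformly in $k$ and the realization of $\mcl F_{T_{k-1}}$.

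Since the event in the statement implies $\bigcap_{k=1}^{K} \wh A_k^c$, iterating via strong Markov yields
\[
\BB P\bigl(\mcl N_{\tb C}(X(-J_r^H,-1)) \leq \zeta m,\ \forall r \in [1,m]_{\BB Z}\bigr)
\;\leq\; (1-\delta_0)^K
\;\leq\; e^{-\delta_0 K}
\;\leq\; e^{-\delta_0/(4\zeta)},
\]
giving the required bound with $C = \delta_0/4$. The hardest part is ensuring the per-block estimate is truly uniform in the (possibly pathological) initial reduced word $W_0$; the concatenation lower bound neutralizes any pathology of the old part, so the only remaining work is the elementary but slightly fiddly alignment between the fresh-window length and the block-stopping-time structure.
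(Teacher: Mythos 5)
Your block decomposition and strong Markov iteration match the paper's strategy, and your concatenation lower bound $\mcl N_{\tb C}(X(-j,-1)) \geq \mcl N_{\tb C}(\mcl R(X_{-j}\dots X_{-T_{k-1}-1}))$ is correct; it is the same fact as the monotonicity of $r\mapsto\mcl N_{\tc C}(X(-J_r^H,-1))$ exploited in the paper. The gap is in converting ``the fresh window's $\tb C$-count exceeds $2\zeta m$ at some arbitrary time $j$'' into ``it exceeds $\zeta m$ at one of the stopping times $J_r^H$ with $r$ in the block''. You assert that the drop between $j$ and the nearest preceding $J_r^H$ is ``much less than $\zeta m$'' because $\tc H$'s ``arrive at positive density'', but the hamburger times form a heavy-tailed sequence: the law of $J_1^H$ is regularly varying of index $\mu\in(1/2,1)$ (cf.\ Lemma~\ref{prop-I-reg-var}), so on a window of length $\asymp(\zeta m)^2$ containing $\asymp\zeta m$ stopping times a typical gap $J_{r+1}^H - J_r^H$ is of order $\zeta m$, the same order as the threshold. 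There is no a priori reason that the time at which $\mcl N_{\tb C}$ crosses $2\zeta m$ is close to a hamburger time, so the per-block estimate is not established by your alignment heuristic.

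The paper's argument works directly on the stopping-time lattice and so sidesteps alignment: the process $L_r^H = d^*(X(-J_r^H,-1))$ has iid increments (\cite[Lemma~2.1]{gms-burger-cone}), and since $L_r^H$ fluctuates on scale $r$ those increments are heavy-tailed with index $1$, so a block of $\asymp\zeta m$ increments produces a decrease of more than $\zeta m$ with a constant probability $q$ uniform in $m$, $\zeta$, and $k$. Because $\mcl N_{\tc C}(X(-J_r^H,-1))$ is non-decreasing, such a decrease forces $\mcl N_{\tb C}(X(-J_r^H,-1)) > \zeta m$, so the target event prevents all $\lfloor 1/\zeta\rfloor$ independent block decreases, giving $(1-q)^{\lfloor 1/\zeta\rfloor}$. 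The cleanest repair to your argument is to formulate the per-block estimate for $L^H$ of the fresh word evaluated at its own hamburger stopping times, rather than for the cheeseburger-order count at a generic time $j$ in the fresh window.
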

\begin{proof}
Let $\BB k_\zeta := \lfloor 1/\zeta \rfloor$. For $m \in\BB N$ and $k\in [1,\BB k_\zeta]_{\BB Z}$, let $r_m^k(\zeta) := \lfloor k \zeta m\rfloor$ and let
\eqbn
E_m^k(\zeta) := \left\{  L_{\lfloor k \zeta m\rfloor}^H - L_{\lfloor (k-1) \zeta m\rfloor + 1}^H \geq \zeta m   \right\} ,
\eqen
where here we use the convention that $L_0^H = 0$. The events $E_m^k(\zeta)$ are independent. By~\cite[Lemma 2.1]{gms-burger-cone}, we can find $q \in (0,1)$, depending only on $p$, such that 
\eqbn
\BB P\left( E_m^k(\zeta)  \right) \geq q ,\quad \forall m\in\BB N ,\quad \forall \zeta>0 ,\quad \forall k \in [1,\BB k_\zeta]_{\BB Z} .
\eqen
On the event $E_m^k(\zeta)$, we have
\eqbn
\mcl N_{\tb C}\left(X(-\lfloor k \zeta m\rfloor , -1)\right) \geq \zeta m .
\eqen
Hence
\eqbn
\BB P\left( \mcl N_{\tb C}\left(X(-J_r^H , -1)\right) \leq \zeta m ,\,  \forall r \in [1,m]_{\BB Z} \right)  \leq \BB P\left(\bigcap_{k=1}^{\BB k_\zeta} E_m^k(\zeta) \right) \leq (1-q)^{\BB k_\zeta} .
\eqen
\end{proof}

\begin{proof}[Proof of Lemma~\ref{prop-J^H<J^C-reg}]
By Lemma~\ref{prop-J^H<J^C-induct}, for each $q \in (0,1/2)$ we can find $\zeta_0 >0$ and $a_0 > 0$ such that for each $\zeta \in (0,\zeta_0]$, there exists $\wt m_* = \wt m_*(q,  \zeta) \in \BB N$ with the following property. Suppose $c  \in \BB N$ and $a\geq a_0$ with $m_{c,a} \geq \wt m_*$ and $\BB P\left(E_{c,a} (\zeta) \,|\, F_{c,a}  \right) \geq 1/2$. Then for each $b \in \left[(1-\lambda)^{-1} a , \lambda^{-1}  a \right]$ we have $\BB P\left(E_{c,b} (\zeta) \,|\, F_{c,b}  \right) \geq  1-q$. By \cite[Theorem 2.5]{shef-burger}, we can find $\zeta  \in (0,\zeta_0]$ (depending on $a_0$ and $q$) such that for every $c\in \BB N$ and $a \in [1,a_0]$ we have $\BB P\left(E_{c,a} (\zeta ) \,|\, F_{c,a}  \right) \geq 1-q$. By induction, it follows that for every $c \geq \wt m_*$ and every $a \geq 1$ we have $\BB P\left(E_{c,a} (\zeta ) \,|\, F_{c,a}  \right) \geq  1- q$. Hence for $h\geq c \geq \wt m_* $, we have
\eqbn
\BB P\left( \mcl N_{\tb C}\left(X(-J_h^H , -1)\right) \geq \zeta  h  \,|\,  J_h^H < J_c^C \right) \geq 1-q .
\eqen

We still need to consider the case where $c$ is smaller than $\wt m_*$. For $h\in \BB N$, let $M_h$ be the smallest $m\in\BB N$ for which $\mcl N_{\tb C}\left(X(-J_m^H , -1)\right) \geq h^{1/3}$. By Lemma~\ref{prop-J^H-no-tube}, 
\eqbn
\BB P\left(M_h < h^{1/2} \right) = o_h^\infty(h) ,
\eqen
at a rate depending only on $p$. Hence for any $c\in\BB N$, 
\alb
 \BB P\left(M_h < h^{1/2} \,|\, J_h^H < J_c^C \right) \leq o_h^\infty(h) \BB P\left(J_h^H < J_1^C \right)^{-1} .
\ale
By, e.g., Lemma~\ref{prop-I-reg-var}, we have that $\BB P\left(J_h^H < J_1^C\right)$ is bounded below by some power of $h$. It follows that we can find $m_* \geq \wt m_*$ such that $m_*^{1/3} \geq \wt m_*$ and for $h\geq m_*$ and $c\in\BB N$, we have
\eqbn
\BB P\left(   M_h < h^{1/2}  \,|\, J_h^H < J_c^C \right) \geq  1 - q .
\eqen
Let $x$ be a realization of $X_{-J_{M_h}^H} \dots X_{-1}$ for which $M_h < h^{1/2}$ and $J_c^C > J_{M_h}^H$. The conditional law of $X_{-J_h^H} \dots X_{-J_{M_h}^H-1}$ given $\{ X_{-J_{M_h}^H} \dots X_{-1}\}$ and $\{J_h^H < J_c^C\}$ is the same as its conditional law given $\{ X_{-J_{M_h}^H} \dots X_{-1}\}$ and the event that $X(-J_h^H , -J_{M_h}^H-1)$ contains at most $c - d^*(\mcl R(x)) \geq h^{1/3} \geq \wt m_*$ cheeseburger orders. By the $c \geq \wt m_*$ case, we infer that there exists $\zeta>0$ such that for each $h\geq m_*$ and $c \leq h$, we have 
\eqbn
\BB P\left( \mcl N_{\tb C}\left(X(-J_h^H , -J_{M_h}^H -1)\right) \geq \zeta h    \,|\, M_h < h^{1/2} ,\, J_h^H < J_c^C \right) \geq 1-q .
\eqen
Hence for such an $h$ we have
\eqbn
\BB P\left( \mcl N_{\tb C}\left(X(-J_h^H , -1)\right) \geq \zeta h    \,|\,  J_h^H < J_c^C \right) \geq (1-q )^2 .
\eqen
Since $q$ was arbitrary, this completes the proof.
\end{proof}

\subsection{Proof of Proposition~\ref{prop-wt-pi-exists}}

To deduce Proposition~\ref{prop-wt-pi-exists} from the results of the preceding subsections, we will need two further lemmas.

\begin{lem} \label{prop-J^H-determined}
For $n\in\BB N$, let $\pi_n = \pi_n(\ep_1)$ be as in Definition~\ref{def-end-event}.  
Let $x = x_{-|x|} \dots x_{-1}$ be a word consisting of elements of $\Theta$ such that $\mcl R(x)$ contains no orders and with positive probability we have $ X_{-|x|} \dots X_{-1} = x$ and $\pi_n < |x|$. Then $\pi_n$ is determined by $x$ on this event, i.e.\ there exists $\frk j(x) \in [1,|x|]_{\BB Z}$ such that whenever  $ X_{-|x|} \dots X_{-1} = x$ and $\pi_n < |x|$, it holds that $\pi_n = \frk j(x)$. 
\end{lem}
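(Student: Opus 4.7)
The plan is to exhibit a function $\frk j$ of $x$ explicitly and to verify that $\pi_n = \frk j(x)$ on the event in question by reading off conditions (1)--(3) of Definition~\ref{def-end-event} from $x$. The one quantity in those conditions whose value might depend on letters outside $x$ is $J$, so the natural case split is according to whether $\mcl R(x)$ contains a burger (equivalently, under our hypothesis, $\mcl R(x)$ is a nonempty word of burgers) or $\mcl R(x) = \emptyset$.

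In the first case, $X(-|x|,-1) = \mcl R(x)$ already contains a burger, so $J \leq |x|$ and $J$ is recovered from $x$ as the smallest $j \in [1,|x|]$ for which $\mcl R(X_{-j}\dots X_{-1})$ contains a burger. With $J$ in hand and for any $j \in [1,J]$, conditions (1) and (2) depend only on $X(-j,-1)$. For condition (3), a cheeseburger in $\mcl R(X_{-J}\dots X_{-j-1})$ satisfies $\phi(\cdot) \in [-j,-1]$ exactly when the formal reduction of the concatenation $X(-J,-j-1)\cdot X(-j,-1)$ cancels it (otherwise it survives into $\mcl R(x)$), in which case its partner order is fully determined by that reduction; the partner's type ($\tb C$ or $\tb F$) is then also determined by $x$. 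So all three conditions are $x$-measurable and $\pi_n$ is read off directly.

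In the second case, one first verifies $J > |x|$: otherwise, writing $v_1 = \mcl R(X_{-|x|}\dots X_{-J-1})$ and $v_2 = \mcl R(X_{-J}\dots X_{-1})$ in their canonical orders-then-burgers form, the orders of $v_1$ sit leftmost in $v_1 v_2$ and the burgers of $v_2$ rightmost, neither admitting a cancelling partner, so $\mcl R(v_1 v_2) = \mcl R(x) = \emptyset$ would force $v_2$ to have no burgers, contradicting the definition of $J$. Next, on the event $\pi_n < |x|$, setting $w_1 = \mcl R(X_{-|x|}\dots X_{-\pi_n-1})$ and $w_2 = \mcl R(X_{-\pi_n}\dots X_{-1})$, the same canonical-form argument applied to $w_1 w_2$ shows $w_1$ consists only of burgers, $w_2$ only of orders, and every order of $w_2$ is matched inside $w_1 w_2$ to a burger of $w_1$. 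I then use this to deduce that any cheeseburger in $u := \mcl R(X_{-J}\dots X_{-|x|-1})$ must have $\phi(\cdot) \geq 0$ (its partner cannot lie inside $x$, for otherwise that partner would be unmatched in $x$, contradicting $\mcl R(x) = \emptyset$), which violates condition (3) at $\pi_n$; hence on the event, $u$ contains no cheeseburgers.

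The payoff is that, with $u$ free of cheeseburgers, the $u$-to-$(X_{-|x|}\dots X_{-j-1})$ interface only ever cancels hamburger-burgers, so for each $j \in [1,|x|-1]$ the cheeseburgers of $\mcl R(X_{-J}\dots X_{-j-1})$ coincide with those of $\mcl R(X_{-|x|}\dots X_{-j-1})$, each of which is matched inside $x$. Condition (3) at $j$ then reduces to the $x$-measurable statement that each such cheeseburger's partner inside $x$ is a $\tb C$ rather than a $\tb F$, and $\pi_n$ equals the smallest $j \in [1,|x|-1]$ for which (1), (2), and this reduced (3) all hold; call this common value $\frk j(x)$. The main obstacle I foresee is exactly this second case: $J$ and $u$ are invisible to $x$, and one must argue that the hypothesis $\pi_n < |x|$ by itself forces the hidden past to be benign enough ($u$ cheeseburger-free) that condition (3) becomes $x$-measurable.
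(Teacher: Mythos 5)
Your argument correctly proves the statement \emph{as literally written}, where the hypothesis reads ``$\mcl R(x)$ contains no orders,'' and under that reading your two cases (a nonempty pile of burgers, or $\mcl R(x)=\emptyset$) are exhaustive and the reasoning in each checks out. But this hypothesis is almost certainly a misprint for ``$\mcl R(x)$ contains no \emph{burgers}.'' That is how the lemma is invoked in Lemma~\ref{prop-J^H-after-pi}, where $x$ is a realization of $X(-J_{n,m}^H,-1)$ on the event $\{J > J_{n,m}^H\}$ and the parenthetical there explicitly says ``i.e.\ $\mcl R(x)$ contains no burgers''; it is also precisely what forces $\pi_n<|x|<J$, which both your Case~2 and the paper's argument rely on. Under the intended hypothesis $\mcl R(x)$ is a (possibly nonempty) word of orders, and your Case~2 deduction that $u:=\mcl R(X_{-J}\dots X_{-|x|-1})$ has no cheeseburgers fails: a cheeseburger of $u$ can be matched to a $\tb C$ at some position $-k\in[-|x|,-1]$ that survives as an unfulfilled order in $\mcl R(x)$, and condition~(3) of Definition~\ref{def-end-event} at $\pi_n$ then remains satisfied whenever $k\leq\pi_n$, since that $\tb C$ is a cheeseburger order of $X(-\pi_n,-1)$. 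In other words, whether condition~(3) holds at a given $j$ is genuinely \emph{not} $x$-measurable once $\mcl R(x)$ contains orders, so the strategy of certifying each of (1)--(3) from $x$ alone cannot work.

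The paper's proof avoids this obstacle by never attempting to read condition~(3) off $x$. It defines $\frk j(x)$ directly as the smallest $j\in[1,|x|]_{\BB Z}$ satisfying (1), (2), and the $x$-measurable surrogate (3'): every cheeseburger of $\mcl R(x_{-|x|}\dots x_{-j-1})$ is matched to a cheeseburger order of $\mcl R(x_{-j}\dots x_{-1})$. Since (3) at $\pi_n$ trivially implies (3') at $\pi_n$, minimality gives $\frk j(x)\leq\pi_n$; for the reverse inequality one argues by contradiction: if $\frk j(x)<\pi_n$ then condition~(3) fails at $\frk j(x)$, the offending cheeseburger must lie outside $x$ by (3'), it persists as a cheeseburger of $X(-J,-\pi_n-1)$ because $\phi(\cdot)\geq-\frk j(x)>-\pi_n$, and its offending feature (match is a $\tb F$, or $\phi(\cdot)\geq 0$) then contradicts condition~(3) at $\pi_n$. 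This argument never needs to know whether $u$ has cheeseburgers, nor whether (3) holds at any intermediate $j$ --- which is exactly what makes it go through when $\mcl R(x)$ contains orders. Your Case~1 becomes vacuous once the hypothesis is corrected; the genuine content lies in the mechanism replacing your Case~2.
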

\begin{proof}
Given $x$ as in the statement of the lemma, let $\frk j(x)$ be the smallest $j \in [1,|x|]_{\BB Z}$ such that $j \geq n - \ep_1 n^{\xi/2}$, there are at least $\ep_1  n^{\xi/2}$ hamburger orders to the left of the leftmost flexible order in $\mcl R(x_{-j} \dots x_{-1})$, and each cheeseburger in $\mcl R(x_{-|x| } \dots x_{-j-1})$ is matched to a cheeseburger order in $\mcl R(x_{-j} \dots x_{-1})$. Such a $j$ exists since we are assuming that for some $k\in\BB N$ it holds with positive probability we have $ X_{-\pi_n - k} \dots X_{-1} = x$. We claim that for each $k\in\BB N$ for which this is the case, we have $\frk j(x) = \pi_n$ on the event $\{X_{-\pi_n - k} \dots X_{-1} = x\}$. 

Indeed, suppose $X_{-\pi_n - k} \dots X_{-1} = x$. By definition, the time $\pi_n$ satisfies the defining properties of $\frk j(x)$. Since $\frk j(x)$ is the smallest time in $[1,|x|]_{\BB Z}$ satisfying these properties, it follows that $\pi_n \geq \frk j(x)$. If $\pi_n > \frk j(x)$, then by minimality of $\pi_n$ 
there is a cheeseburger in $X(-J,-\frk j(x) -1)$ which is either matched to a flexible order in $X(-\frk j(x) , -1)$ or does not have a match in $X(-\frk j(x),-1)$. By definition of $\frk j(x)$, this cheeseburger cannot appear in $X(-|x| , -\frk j(x) -1)$. Hence there is a cheeseburger in $X(-J,-|x|-1)$ which either does not have a match in $X(-|x| , -1)$ or is matched to a flexible order in $X(-\frk j(x) ,-1)$. Since $\pi_n \geq \frk j(x)$, this cheeseburger appears in $X(-J,-\pi_n-1)$, which contradicts the definition of $\pi_n$.
\end{proof}

\begin{lem} \label{prop-J^H-after-pi}
Let $J_{n,0}^H = \pi_n$ and for $m\in\BB N$ let $J_{n,m}^H$ be the smallest $j \geq \pi_n$ for which $X(-j,-\pi_n-1)$ contains $m$ hamburgers. Let $x=x_{-|x|}\dots x_{-1}$ be a realization of $X(-J_{n,m}^H , -1)$ for which $J>J_{n,m}^H$ (i.e.\ $\mcl R(x)$ contains no burgers). Let $\frk o(x)$ be as in~\eqref{eqn-theta-count-reduced}. The conditional law of $X_{-J} \dots X_{-|x|-1}$ given $\{X_{-J_{n,m}^H} \dots X_{-1}= x\} $ is the same as the conditional law of $X_{-J_{\frk o(x)}^H} \dots X_{-1}$ given the event $F_{\frk o(x)}(x)$ of Section~\ref{sec-F-event-mono}.
\end{lem}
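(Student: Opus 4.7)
The plan is to realize both conditional laws as the law of the same i.i.d.\ sampling scheme subject to a common stopping rule and a common conditioning event, so that the asserted identity becomes tautological. First I would apply Lemma~\ref{prop-J^H-determined} to deduce that on $\{X_{-|x|}\cdots X_{-1}=x\}$ the value of $\pi_n$ is determined by $x$, and hence the event $\{X_{-J_{n,m}^H}\cdots X_{-1}=x\}$ coincides with $\{X_{-|x|}\cdots X_{-1}=x\}$ together with the constraint extracted from condition~3 of Definition~\ref{def-end-event}: every cheeseburger in $\mcl R(X(-J,-|x|-1))$ must be matched to a $\tb C$ (rather than a $\tb F$) of $X(-\pi_n,-1)$ in the full reduction $\mcl R(X(-J,-1))$. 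The remaining ingredients of Definition~\ref{def-end-event} and of $J_{n,m}^H$ depend only on $x$. Writing $W_k := X_{-|x|-k}$ for $k \ge 1$, the $W_k$'s are i.i.d.\ from~\eqref{eqn-theta-probs} and independent of $X(-|x|,-1)$. Thus the conditional law of $X_{-J}\cdots X_{-|x|-1}$ under the LHS conditioning is the law of $(W_{\wt J},\ldots,W_1)$, where $\wt J$ is the smallest $k\ge 1$ such that $\mcl R(W_k\cdots W_1\,x)$ contains a burger, conditioned on the event $\wt E$ that every cheeseburger in $\mcl R(W_{\wt J}\cdots W_1)$ is matched to a $\tb C$ of $x$ when the combined reduction is formed.

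Next I would argue that on $\wt E$ the unmatched burger in $\mcl R(W_{\wt J}\cdots W_1\,x)$ is necessarily a hamburger, and that $\mcl N_{\tc H}(\mcl R(W_{\wt J}\cdots W_1)) = \frk o(x)$. This follows from a bookkeeping of the reduction: each hamburger prepended at a step strictly before $\wt J$ consumes exactly one $\tb H$ or $\tb F$ from a pool that starts at the $\mcl N_{\tb H}(\mcl R(x))+\mcl N_{\tb F}(\mcl R(x))$ hamburger-compatible orders of $\mcl R(x)$ and is augmented by each prepended $\tb H$ or $\tb F$; the requirement that this pool be exhausted at step $\wt J-1$ (so that the prepended $\tc H$ at step $\wt J$ finds no match) gives $\mcl N_{\tc H}(\mcl R(W_{\wt J}\cdots W_1)) = \mcl N_{\tb H}(\mcl R(x))+\mcl N_{\tb F}(\mcl R(x))+1 = \frk o(x)$. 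Since the hamburger and cheeseburger counts of $\mcl R(W_k\cdots W_1)$ are nondecreasing in $k$ and, on $\wt E$, the cheeseburger count at $k=\wt J$ is bounded by $\mcl N_{\tb C}(\mcl R(x))$, the only way a burger can first appear in $\mcl R(W_k\cdots W_1\,x)$ is via the hamburger count reaching $\frk o(x)$; consequently $\wt J$ is the smallest $k$ with $\mcl N_{\tc H}(\mcl R(W_k\cdots W_1))=\frk o(x)$.

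Finally, translation invariance of the i.i.d.\ bi-infinite word $X$ shows that $(W_k)_{k\ge 1}$ has the same joint distribution as $(X_{-k})_{k\ge 1}$. Under this relabeling, the stopping time becomes $J_{\frk o(x)}^H$ as in~\eqref{eqn-J^H-def}, the random word $(W_{\wt J},\ldots,W_1)$ becomes $X_{-J_{\frk o(x)}^H}\cdots X_{-1}$, and $\wt E$ becomes precisely $F_{\frk o(x)}(x)$ of Section~\ref{sec-F-event-mono}. The equality of the two conditional laws follows. The main delicate point will be the bookkeeping in the second paragraph: the induction on $k$ must carefully track how a $\tc H$ prepended in the extension may consume a $\tb H$ or $\tb F$ originating either from $\mcl R(x)$ or from earlier prepended $W_j$'s, which is routine but notationally involved.
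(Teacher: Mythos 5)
Your overall strategy is the same as the paper's: reduce the conditioning event to $\{X_{-|x|}\cdots X_{-1}=x\}$ intersected with a matching constraint on the freshly read symbols, identify that constraint with $F_{\frk o(x)}(x)$ and the time $J$ with the translated $J^H_{\frk o(x)}$, and conclude by translation invariance. However, two steps need to be shored up. First, the constraint you extract in your first paragraph is not what condition~3 of Definition~\ref{def-end-event} actually gives: that condition only constrains the cheeseburgers in $X(-J,-\pi_n-1)$ and only refers to matches with cheeseburger orders of $X(-\pi_n,-1)$, whereas a cheeseburger of $X(-J,-|x|-1)$ may legitimately be matched to a cheeseburger order located in the segment $X(-J_{n,m}^H,-\pi_n-1)$ (that reduced word can contain $\tb C$'s). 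Passing from the $\pi_n$-based condition to the event $\wt E$ you actually use ("matched to a $\tb C$ of $x$", i.e.\ to a $\tb C$ of $\mcl R(x)$) is valid, but it requires the observation that $X(-J_{n,m}^H,-\pi_n-1)$ contains no flexible orders: by minimality of $J_{n,m}^H$ the symbol $X_{-J_{n,m}^H}$ is a hamburger with no match in $X(-J_{n,m}^H+1,-\pi_n-1)$, so that word contains no $\tb H$'s or $\tb F$'s, and hence every $\tb F$ of $\mcl R(x)$ lies in $X(-\pi_n,-1)$. Only then do "not matched to an $\tb F$ of $X(-\pi_n,-1)$" and "matched to a $\tb C$ of $\mcl R(x)$" coincide. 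This is precisely the point the paper's proof isolates, and your write-up skips it.

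Second, your bookkeeping paragraph establishes only one inclusion: on $\wt E$, the first time a burger appears in the combined reduction coincides with the first time the fresh hamburger count reaches $\frk o(x)$, which (after relabeling) gives $\wt E\subseteq F_{\frk o(x)}(x)$. For the two conditional laws to agree you also need the converse inclusion $F_{\frk o(x)}(x)\subseteq\wt E$, i.e.\ that on $F_{\frk o(x)}(x)$ one has $J = J^H_{\frk o(x)}$, so that the event defining $\wt E$ (which refers to $\wt J$) and the event defining $F_{\frk o(x)}(x)$ (which refers to $J^H_{\frk o(x)}$) are genuinely the same subset of realizations of the fresh word. The argument is a short symmetric counting — on $F_{\frk o(x)}(x)$ the rightmost $\frk o(x)-1$ hamburgers of $X(-J^H_{\frk o(x)},-|x|-1)$ must absorb the $\frk o(x)-1$ hamburger and flexible orders of $\mcl R(x)$, leaving the leftmost hamburger unmatched — but it is not a formal consequence of the forward direction and must be stated, as the paper does with its "Conversely" paragraph.
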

\begin{proof}
For $m\in\BB N$, let $\wt J_{x,m}^H$ be the smallest $j\geq |x|+1$ for which $X(-j , -|x|-1)$ contains $m$ hamburgers. Also let $\frk j(x)$ be as in Lemma~\ref{prop-J^H-determined}, so that on the event $\{X_{-J_{n,m}^H} \dots X_{-1}= x\} $, we have $\pi_n = \frk j(x)$. 
 
The event $\{X_{- J_{n,m}^H} \dots X_{-1} = x  \}$ is the same as the event that $\{X_{-|x|} \dots X_{-1} = x\}$ and each $j \in [|x|+1 , J]_{\BB Z}$ such that $X_{-j} = \tc C$ and $-\phi(-j) \leq \frk j(x)$ is such that $X_{-j}$ is matched to a cheeseburger in $X(-\pi_n,-1) = \mcl R(x_{-\frk j(x)} \dots x_{-1})$. Since $X(- J_{n,m}^H , -\pi_n-1)$ contains no flexible orders, this is equivalent to the condition that $\{X_{-|x|} \dots X_{-1} = x\}$ and each cheeseburger in $X(-J,-|x|-1)$ is matched to a cheeseburger order in $X(-|x|,-1)$. If this is the case, then every hamburger order and every flexible order in $X(-|x|,-1)$ is matched to a hamburger in $X(-J,-|x|-1)$, so $J = \wt J_{x,\frk o(x)}^H$.

Conversely, if $X_{-|x|} \dots X_{-1} = x $ and every cheeseburger in $X(-\wt J_{x,\frk o(x)}^H,-|x|-1)$ matched to a cheeseburger order in $X(-|x|,-1)$, then the rightmost $\frk o(x)-1$ hamburgers in $X(-\wt J_{x,\frk o(x)}^H,-|x|-1)$ must be matched to the $\frk o(x)-1$ hamburger orders and flexible orders in $X(-|x|,-1)$. The leftmost hamburger in $X(-\wt J_{x,\frk o(x)}^H,-|x|-1)$ then has no match in $X(-|x|,-1)$ so we have $\wt J_{x,\frk o(x)}^H = J$. 

Thus, $\{X_{- J_{n,m}^H} \dots X_{-1} = x  \}$ is the same as the event that $\{X_{-|x|} \dots X_{-1} = x\}$ and every cheeseburger in $X(-\wt J_{x,\frk o(x)}^H,-|x|-1)$ matched to a cheeseburger order in $X(-|x|,-1)$. By translation invariance, this implies the statement of the lemma. 
\end{proof}

\begin{proof}[Proof of Proposition~\ref{prop-wt-pi-exists}]  
For $n,m\in\BB N$, let $J_{n,m}^H$ be the smallest $j\geq \pi_n$ such that $X(-j,-\pi_n-1)$ contains $m$ hamburgers. Also let $L_{n,m}^H := d^*\left(X(-J_{n,m}^H ,-\pi_n -1)\right)$. 

For $n  \in\BB N$ let $m_n^0 = 0$ and for $k\in\BB N$, inductively define 
\[
m_n^k := \lfloor 2^{ - k-1} (\ep_1 - \ep_2) \ep_2 n^{\xi/2} \rfloor   + m_n^{k-1}  .
\]
Also let $\BB k_n$ be the largest $k\in\BB N$ for which $2^{-k-1} (\ep_1 - \ep_2)\ep_2 n^{\xi/2} \geq n^{\xi/4}$. 

Fix $\delta \in (0,1/2)$ to be chosen later. For $k\in [0 ,\BB k_n]_{\BB Z}$, let 
\eqbn
E_n^k := \left\{ \frk c_f\left( X(-J_{n,m_n^k}^H ,- 1 )  \right) \geq \delta  2^{-k}  n^{\xi/2}   \right\} .
\eqen
Let $K_n$ be the minimum of $\BB k_n$ and the smallest $k\in [0,\BB k_n]_{\BB Z}$ for which $E_n^k$ occurs
and let 
\[
\wt\pi_n := J_{n,m_n^{K_n}}^H .
\]

It is clear that $\wt \pi_n \geq \pi_n$, $\mcl N_{\tb F}\left(X(-\wt\pi_n,-\pi_n)\right) = 0$, and $\mcl N_{\tb H}\left(X(-\wt\pi_n ,-1) \right) \leq \mcl N_{\tb H}\left(X(-\pi_n,-1)\right)$. Furthermore, 
\eqbn
m_n^{K_n} \leq \sum_{k=1}^{\BB k_n} 2^{ - k-1} \ep_2 n^{\xi/2} \leq \ep_2 n^{\xi/2} .
\eqen
Hence if $\pi_n < J$, we have
\eqbn
\mcl N_{\tb H}\left(X(-\wt\pi_n, -1)\right) \geq \mcl N_{\tb H}\left(X(- \pi_n, -1)\right)  -   \ep_2 n^{\xi/2}   .
\eqen
By definition of $\pi_n$, on the event $\{\pi_n < J\}$, there are at least $\ep_1 n^{\xi/2}$ hamburger orders to the left of the leftmost flexible order in $X(-\pi_n,-1)$. Since $X(-\wt\pi_n,-\pi_n-1)$ has at most $\ep_2 n^{\xi/2} \leq \ep_1 n^{\xi/2}$ hamburgers, it follows that this flexible order does not have a match in $X(-\wt\pi_n,-\pi_n-1)$. Therefore, 
\eqbn
\mcl N_{\tc C}\left(X(-\wt\pi_n,-\pi_n-1)\right) \leq \frk c_f\left(X(-\pi_n,-1)\right) .
\eqen
If in fact $\frk c_f\left(X(-\pi_n,-1)\right) \geq n^{\xi/2}/2$, we have $\wt\pi_n = \pi_n$. Otherwise, we have 
\eqbn
\mcl N_{\tb C}\left(X(-\wt\pi_n, -1)\right) \geq   \mcl N_{\tb C}\left(X(- \pi_n, -1)\right)  -     \mcl N_{\tc C}\left(X(-\wt\pi_n,-\pi_n-1)\right) \geq  \mcl N_{\tb C}\left(X(- \pi_n, -1)\right)  - \frac12 n^{\xi/2} .
\eqen
This verifies conditions~\ref{item-wt-pi-mono} and~\ref{item-wt-pi-compare} in the statement of the lemma. 

To obtain condition~\ref{item-wt-pi-cond}, let $x$ be a realization of $X_{-\wt\pi_n} \dots X_{-1}$. By Lemma~\ref{prop-J^H-determined}, the time $\pi_n$ is determined by the word $X_{-\wt\pi_n} \dots X_{-1}$, so also $K_n$ is determined by $X_{-\wt\pi_n} \dots X_{-1}$ (since $m_n^{K_n}$ is the number of hamburgers in $X(-\wt\pi_n,-\pi_n-1)$). Let $\frk k$ be the realization of $K_n$ corresponding to our given word $x$. 
Since $K_n$ is the minimum of $\BB k_n$ and the \emph{smallest} $k\in [0,\BB k_n]_{\BB Z}$ for which $E_n^k$ occurs, the event $\{X_{-\wt\pi_n} \dots X_{-1} = x\}$ is the same as the event that $\{X_{-J_{n,m_n^{\frk k} }^H} \dots X_{-1} = x\}$. Condition~\ref{item-wt-pi-cond} now follows from Lemma~\ref{prop-J^H-after-pi}. 

It remains to prove condition~\ref{item-wt-pi-C}.
Let $k\in [0,\BB k_n-1]_{\BB Z}$ and let $x^k$ be a realization of $X_{-J_{n,m_n^k}^H} \dots X_{- 1}$ such that the number of hamburger orders to the left of the leftmost flexible order in $\mcl R(x^k)$ is at least $m_n^{k+1} - m_n^k$. By definition, there are at least $\ep_1 n^{\xi/2}$ hamburger orders to the left of the leftmost flexible order in $X(-\pi_n,-1)$ on the event $\{\pi_n < J\}$ and we always have $\ep_1 n^{\xi/2} - m_n^k  < m_n^{k+1} - m_n^k$. Therefore if $\pi_n  <J$, the realization of $X_{-J_{n,m_n^k}^H} \dots X_{- 1}$ must satisfy the above condition. 
 
By Lemma~\ref{prop-J^H-after-pi}, the conditional law of $X_{-J} \dots X_{-J_{n,m_n^k}^H-1}$ given $\{X_{-J_{n,m_n^k}^H} \dots X_{- 1} = x^k\}$ is the same as the conditional law of $X_{-J_{\frk o(x^k)}^H} \dots X_{-1}$ given the event $F_{\frk o(x^k)}(x^k)$ of Section~\ref{sec-F-event-mono}. By Lemma~\ref{prop-J^H-F-mono}, for each $l\in\BB Z$ we have 
\eqb \label{eqn-m^k-compare}
\BB P\left( L_{n,m_n^{k+1}}^H - L_{n,m_n^k}^H    \leq l \,|\,    X_{-J_{n,m_n^k}^H} \dots X_{- 1} = x^k     \right) \geq \BB P\left( L_{m_n^{k+1} - m_n^k}^H  \leq l  \,|\,   J_{m_n^{k+1} - m_n^k}^H  < J_{\frk c_f(x^k)+1}^C   \right)  . 
\eqe 
By Lemma~\ref{prop-J^H<J^C-reg}, for each $q\in (0,1)$ we can find $\delta  >0$ and $n_* \in\BB N$ (depending only on $\ep_1$ and $\ep_2$) such that for any $n \geq  n_*$ and any choice of realization $x^k$ as above, we have
\eqb \label{eqn-m^k-delta}
\BB P\left( \mcl N_{\tb C}\left( X(-J_{m_n^{k+1} - m_n^k}^H  , -1\right)  \geq \delta 2^{-k}  n^{\xi/2}    \,|\,\mcl N_{\tc C}\left( X(-J_{m_n^{k+1} - m_n^k}^H  , -1\right)  \leq \frk c_f(x^k) \right) \geq 1-q. 
\eqe 
If $\mcl N_{\tb C}\left( X(-J_{m_n^{k+1} - m_n^k}^H  , -1\right)  \geq \delta  2^{-k}  n^{\xi/2} $ and $\mcl N_{\tc C}\left( X(-J_{m_n^{k+1} - m_n^k}^H  , -1\right)  \leq \frk c_f(x^k)$, then $L_{m_n^{k+1} - m_n^k}^H \leq    \frk c_f(x^k) - \delta  2^{-k}  n^{\xi/2} $.
By~\eqref{eqn-m^k-compare} and~\eqref{eqn-m^k-delta}, 
\eqbn
\BB P\left( L_{n,m_n^{k+1}}^H - L_{n,m_n^k}^H    \leq  \frk c_f(x^k) - \delta   2^{-k}  n^{\xi/2}  \,|\,    X_{-J_{n,m_n^k}^H} \dots X_{- 1} = x^k     \right) \geq 1-q . 
\eqen
If $L_{n,m_n^{k+1}}^H - L_{n,m_n^k}^H    \leq  \frk c_f(x^k) - \delta  2^{-k}  n^{\xi/2}$ and $ X_{-J_{n,m_n^k}^H} \dots X_{- 1} = x^k$, then 
\eqbn
\frk c_f\left(X(-L_{n,m_n^{k+1}}^H , -1)\right) = \frk c_f(x^k) - \left(L_{n,m_n^{k+1}}^H - L_{n,m_n^k}^H\right)  \geq \delta  2^{-k}  n^{\xi/2} .
\eqen
Note that here we use that $m_n^k < \ep_1 n^{\xi/2}$, so that the leftmost flexible order in $\mcl R(x^k)$ coincides with the leftmost flexible order in $X(-\pi_n,-1)$. 
Hence for this choice of $\delta$, we have
\eqbn
\BB P\left( E_n^{k+1}  \,|\,    X_{-J_{n,m_n^k}^H} \dots X_{- 1} = x^k     \right) \geq 1-q .
\eqen
Since each realization $x^k$ as above encodes a realization of $X_{-\pi_n} \dots X_{-1}$, it follows that
\eqb \label{eqn-small-k_*-prob}
\BB P\left(K_n \geq k \,|\, X_{-\pi_n} \dots X_{-1} \right) \BB 1_{(\pi_n <J)} \leq q^k  ,\quad \forall k \in [1,\BB k_n]_{\BB Z} .
\eqe 

For $\zeta \geq n^{\xi/2}$, let $k_n^\zeta := \BB k_n \wedge \lfloor \log_2 \frac{ \delta  }{\zeta}  - 1\rfloor $ be the minimum of $\BB k_n$ and the smallest $k\in\BB N$ for which $\delta 2^{-k} n^{\xi/2} \geq \zeta n^{\xi/2}$. By~\eqref{eqn-small-k_*-prob} and the definition of $\wt\pi_n$, 
\eqbn
\BB P\left(\frk c_f\left(X(-\wt\pi_n ,-1) \right) \leq   \zeta n^{\xi/2} \,|\, X_{-\pi_n} \dots X_{-1}   \right) \leq q^{k_n^\zeta}  \preceq (\zeta \wedge n^{-\xi/4})^{a(q)}
\eqen
with the implicit constant independent of $\zeta$ and $n$; and $a(q) \rta \infty$ as $q\rta 0$, at a rate depending only on $\ep_1$ and $\ep_2$. By choosing $q$ sufficiently small (and hence $\delta$ sufficiently small) we can arrange that $a(q) \geq  200$, which yields condition~\ref{item-wt-pi-C}. 
\end{proof}

\section{Local estimates for the last segment of the word}
\label{sec-end-local}

In this section we will use the following notation. Let $J$ be as in~\eqref{eqn-J-def}. Let $\mu$ be as in~\eqref{eqn-cone-exponent} and fix $\nu \in (1-\mu,1/2)$ and $\xi \in (2\nu , 1)$. We treat $\nu$ and $\xi$ as fixed throughout this section and do not make dependence on these parameters explicit. 

Fix $\ep_0 , \ep_1 , \ep_2  \in (0,1)$ with $\ep_1 > \ep_2$. For $l\in \BB N$, let $\mcl E_n^l = \mcl E_n^l(\ep_1)$ be the event of Definition~\ref{def-end-event}. Also let $\pi_n = \pi_n(\ep_1)$ be as in Definition~\ref{def-end-event} and let $\wt\pi_n = \wt\pi_n(\ep_1,\ep_2)$ be as in Proposition~\ref{prop-wt-pi-exists}.  
We will also need to consider the following event, which is illustrated in Figure~\ref{fig-box-event}. 
 
\begin{defn} \label{def-B_n^delta}
For $n\in\BB N$ and $\delta>0$, let $m_n^\delta := \lfloor (1-\delta) n\rfloor$. For $C>1$, let $\mcl B_n^\delta(C)$ be the event that the following is true:
\alb
&J > m_n^\delta  ,\quad \mcl N_{\tb F}\left(X(-m_n^\delta,-1)\right) \leq n^\nu,\quad\op{and} \\
&\mcl N_\theta\left(X(-m_n^\delta,-1)\right)  \in  \left[C^{-1} (\delta n)^{1/2} , C (\delta n)^{1/2} \right]_{\BB Z}     , \quad   \forall \theta \in \left\{\tb H,\tb C\right\}  .
\ale
\end{defn}

Recall the definition of the path $Z^n = (U^n,V^n)$ from~\eqref{eqn-Z^n-def}.  
Roughly speaking, $\mcl B_n^\delta(C)$ is the event that $Z^n(1-\delta)$ is where we expect it to be when we condition on $\mcl E_n^l$. The reader should note the similarity between the event of Definition~\ref{def-B_n^delta} and the event $\mcl B^\delta(C)$ of~\eqref{eqn-bm-B-def}.

\begin{figure}[ht!]
 \begin{center}
\includegraphics{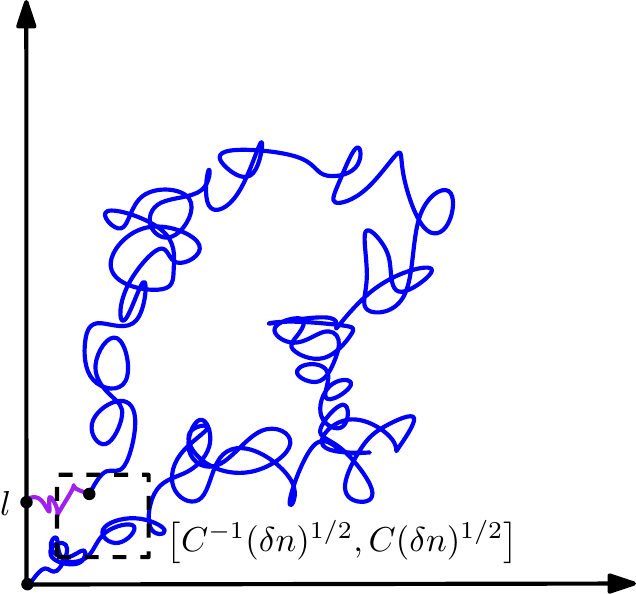} 
\caption{An illustration of the path $D$ of~\eqref{eqn-discrete-path} on the event $\mcl B_n^\delta(C) \cap \mcl E_n^l$ for $l\in \left[\ep_0 n^{\xi/2} , \ep_0^{-1} n^{\xi/2} \right]_{\BB Z}$, as defined in Definitions~\ref{def-end-event} and~\ref{def-B_n^delta}. By Lemma~\ref{prop-E^l-abs-cont}, the conditional law of the blue part of the path given $\mcl B_n^\delta(C) \cap \mcl E_n^l$ is mutually absolutely continuous with respect to its conditional law given only $\mcl B_n^\delta(C)$. By \cite[Theorem 4.1]{gms-burger-cone}, this latter conditional law converges as $n\rta\infty$ ($\delta$ fixed) to the conditional law of a correlated Brownian motion $\wh Z = (\wh U , \wh V)$ conditioned to stay in the first quadrant until time $1-\delta$ and conditioned on the event $\mcl B^\delta(C)$ of~\eqref{eqn-bm-B-def} in Section~\ref{sec-bm-cond}, defined with $1-\delta$ in place of 1. 
}\label{fig-box-event}
\end{center}
\end{figure}
 
The first main result of this subsection is a description of the conditional law of $X_{-m_n^\delta} \dots X_{-1}$ given $\mcl B_n^\delta(C) \cap \mcl E_n^l$ for $l\in \left[\ep_0 n^\xi , \ep_0^{-1} n^\xi\right]$, which will allow us to compare this law to the law of a correlated Brownian motion conditioned to stay in the first quadrant using \cite[Theorem 4.1]{gms-burger-cone}.

\begin{prop} \label{prop-E^l-abs-cont}
Fix $C>1$. For $\delta \in (0,1/2)$ and $n\in\BB N$, let $m_n^\delta $ and let $\mcl B_n^\delta(C)$ be as in Definition~\ref{def-B_n^delta}. 
For each $\delta \in (0,1/2)$, there exists $n_* = n_*(\delta,C,\ep_0,\ep_1)$ such that for each $n\geq n_*$ and each $l\in \left[\ep_0 n^{\xi/2} , \ep_0^{-1} n^{\xi/2}\right]_{\BB Z}$, the conditional law of $X_{-m_n^\delta} \dots X_{-1}$ given $\mcl E_n^l \cap \mcl B_n^\delta(C)$ is mutually absolutely continuous with respect to its conditional law given only $\mcl B_n^\delta( C) $, with Radon-Nikodym derivative bounded above and below by positive constants depending only on $C$, $\ep_0$, and $\ep_1$. 
\end{prop}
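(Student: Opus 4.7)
The proof proceeds by Bayes' rule. For any realization $x$ of $X_{-m_n^\delta}\dots X_{-1}$ on which $\mcl B_n^\delta(C)$ holds,
\begin{equation*}
\frac{d\BB P(\cdot \mid \mcl E_n^l \cap \mcl B_n^\delta(C))}{d\BB P(\cdot \mid \mcl B_n^\delta(C))}(x) = \frac{\BB P(\mcl E_n^l \mid X_{-m_n^\delta}\dots X_{-1} = x)}{\BB P(\mcl E_n^l \mid \mcl B_n^\delta(C))}.
\end{equation*}
The denominator is the $\BB P(\cdot\mid \mcl B_n^\delta(C))$-average of the numerator, so it suffices to show that $\BB P(\mcl E_n^l \mid X_{-m_n^\delta}\dots X_{-1} = x)$ is pinched between two positive constants (depending only on $C, \ep_0, \ep_1$) times a single function of $n, l, \delta$, uniformly over all realizations $x$ on which $\mcl B_n^\delta(C)$ holds.

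The first step is to observe that, since $\xi < 1$, for all sufficiently large $n$ one has $\pi_n \geq n - \ep_1^{-1} n^\xi > m_n^\delta$, and hence $\wt\pi_n \geq \pi_n > m_n^\delta$ on $\mcl E_n^l$. I would then decompose
\begin{equation*}
\BB P(\mcl E_n^l \mid X_{-m_n^\delta}\dots X_{-1} = x) = \sum_{z} \BB P(X_{-\wt\pi_n}\dots X_{-m_n^\delta-1} = z \mid X_{-m_n^\delta}\dots X_{-1} = x) \cdot \BB P(\mcl E_n^l \mid X_{-\wt\pi_n}\dots X_{-1} = zx)
\end{equation*}
where the sum is over realizations $z$ specifying the intermediate segment (and implicitly the value of $\wt\pi_n - m_n^\delta$). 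For each extension $\bar z := z x$ with $\wt\pi_n < J$, Proposition~\ref{prop-wt-pi-exists}(3) identifies the conditional law of $X_{-J}\dots X_{-\wt\pi_n-1}$ given $\{X_{-\wt\pi_n}\dots X_{-1} = \bar z\}$ with that of the unconditioned segment $X_{-J_{\frk o(\bar z)}^H}\dots X_{-1}$ conditioned on the event $F_{\frk o(\bar z)}(\bar z)$ that every cheeseburger is matched to a cheeseburger order of $\bar z$. Under this identification, the event $\mcl E_n^l$ becomes a precise local-limit event for the unconditioned walk of length $N := n - \wt\pi_n$: the $\frk o(\bar z)$-th hamburger must land at step $N$, exactly $\frk c(\bar z) - l$ cheeseburgers must appear, no unmatched orders may occur, and $F_{\frk o(\bar z)}(\bar z)$ must hold.

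I would then apply the local limit estimates of \cite{gms-burger-local} (analogues of those used in~\cite[Theorem 1.10]{gms-burger-local}) to obtain that, uniformly over $\bar z$ satisfying the regularity guarantees of Proposition~\ref{prop-wt-pi-exists}, the conditional probability above factors as a universal function $c_n(l)$ of $n$ and $l$, times a multiplicative factor bounded above and below by positive constants depending only on the rescaled macroscopic coordinates $(\frk h(\bar z)/n^{1/2}, \frk c(\bar z)/n^{1/2})$, together with the ratio $\frk c_f(\bar z)/\frk o(\bar z)$ controlling the $F$-conditioning cost (which by Lemma~\ref{prop-J^H<J^C-prob} is polynomial in $\frk c_f(\bar z)/\frk o(\bar z)$). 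The event $\mcl B_n^\delta(C)$ forces the macroscopic coordinates to lie in a compact set, so this multiplicative factor is bounded above and below on the set of "good" $\bar z$, and Proposition~\ref{prop-wt-pi-exists}(4) guarantees that $\frk c_f/\frk o \succeq 1$ except on an event whose conditional probability decays faster than any polynomial in $n$.

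Summing over $z$, the $x$-dependence passes through only via the law of $X_{-\wt\pi_n}\dots X_{-m_n^\delta-1}$, and the bounded multiplicative factor together with the negligibility of the exceptional set yields $\BB P(\mcl E_n^l \mid X_{-m_n^\delta}\dots X_{-1} = x) \asymp c_n(l)$ uniformly over $x$ with $\mcl B_n^\delta(C)$. Dividing by the analogous average yields the uniform two-sided bound on the Radon-Nikodym derivative. The main obstacle is verifying that the local-limit estimates of \cite{gms-burger-local} apply \emph{uniformly} over the range of state values at $\wt\pi_n$ allowed by $\mcl B_n^\delta(C)$; this is made possible by the regularity properties of $\wt\pi_n$ (Proposition~\ref{prop-wt-pi-exists}(2) and~(4)) together with the fact that the word segment of length $N \asymp n^\xi$ from $-n$ to $-\wt\pi_n-1$ is macroscopic, so that standard local limit theorems can be invoked with error terms $1 + o_n(1)$ that cancel between numerator and denominator.
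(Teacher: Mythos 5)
Your high-level plan---use Bayes' rule to reduce the claim to a two-sided bound $\BB P(\mcl E_n^l \mid X_{-m_n^\delta}\dots X_{-1} = x) \asymp c_n(l)$ uniformly over realizations $x$ on which $\mcl B_n^\delta(C)$ holds, and establish that bound by conditioning on $X_{-\wt\pi_n}\dots X_{-1}$, applying Proposition~\ref{prop-wt-pi-exists}(3), and invoking local limit estimates---is exactly the paper's plan. The difference is that the paper packages the two-sided bound as Propositions~\ref{prop-end-lower} and~\ref{prop-end-upper} (proved separately in Sections~\ref{sec-end-lower}--\ref{sec-end-upper}), so the proof of the proposition at hand is literally one line. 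Your sketch is in effect an outline of the content of those two earlier propositions.

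There is a concrete gap in the way you dispose of bad realizations of $X_{-\wt\pi_n}\dots X_{-1}$. First, you assert that Proposition~\ref{prop-wt-pi-exists}(4) makes $\frk c_f/\frk o \succeq 1$ hold ``except on an event whose conditional probability decays faster than any polynomial in $n$.'' It does not: that estimate gives only $\preceq \zeta^{100}$ for $\frk c_f \leq \zeta n^{\xi/2}$, which at the smallest allowed scale $\zeta = n^{-\xi/2}$ is merely $n^{-50\xi}$. What actually saves the upper bound is a dyadic summation (the events $\wh E_n^l(k_1,k_2,k_3)$ in the proof of Proposition~\ref{prop-end-upper}): at scale $\zeta \asymp 2^{-k_3}$, the probability $\preceq 2^{-100 k_3}$ overwhelms the conditioning cost $\preceq 2^{k_3(1+o_{k_3}(1))}$ from Lemma~\ref{prop-last-abs-cont}, and the geometric sum over $k_3$ converges. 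Second, the ``regularity guarantees of Proposition~\ref{prop-wt-pi-exists}'' do \emph{not} confine $(\frk h(\bar z), \frk c(\bar z))$ to a compact rescaled set; parts (2)--(4) only control increments between $\pi_n$ and $\wt\pi_n$ and a lower bound on $\frk c_f$. So ``the multiplicative factor is bounded above and below on the set of good $\bar z$'' does not follow from compactness. What kills the large-$\frk h$ and large-$|\frk c - l|$ tails is the exponential decay $\exp(-a_0 \frk o(\bar z)^2/(n-|\bar z|) - a_1|\frk c(\bar z)-l|/(n-|\bar z|)^{1/2})$ in the local limit estimate (Lemma~\ref{prop-last-increment-upper}), again combined with a dyadic sum over $(k_1,k_2)$. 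Without these two summations your argument does not close, and making them explicit is essentially the work of Proposition~\ref{prop-end-upper}.
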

 
To complement Lemma~\ref{prop-end-box}, we have a result which tells us that if $C$ is large, then the event $\mcl B_n^\delta(C)$ is likely to occur when we condition on $\mcl E_n^l$. 
  
\begin{prop} \label{prop-end-box}
For each $q\in (0,1)$, there is a $C = C(q,\ep_0,\ep_1) >1$ such that the following is true.  
For each $\delta \in (0,1/2)$, there exists $n_* = n_*(\delta , C,q,\ep_0 , \ep_1) \in\BB N$ such that for each $n\geq n_*$ and each $l\in \left[\ep_0 n^{\xi/2} , \ep_0^{-1} n^{\xi/2}\right]_{\BB Z}$, we have 
\eqbn
\BB P\left(\mcl B_n^\delta(C)      \,\big|\, \mcl E_n^l    \right) \geq 1-q. 
\eqen
\end{prop}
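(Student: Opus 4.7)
The plan is to bound $\BB P(\mcl B_n^\delta(C)^c \mid \mcl E_n^l)$ by decomposing $\mcl B_n^\delta(C)^c$ along the three defining conditions of $\mcl B_n^\delta(C)$ in Definition~\ref{def-B_n^delta} and controlling each piece via Bayes' rule together with Proposition~\ref{prop-E^l-abs-cont} and the scaling-limit estimates. The easy parts come first: on $\mcl E_n^l$ we have $J = n > m_n^\delta$ for large $n$, so the condition $J > m_n^\delta$ is automatic. For the bound $\mcl N_{\tb F}(X(-m_n^\delta,-1)) \leq n^\nu$, one uses \cite[Corollary 5.2]{gms-burger-cone}, which gives an unconditional $o_n^\infty(n)$ bound on the complement (since $\nu > 1 - \mu$), combined with a polynomial lower bound $\BB P(\mcl E_n^l) \succeq n^{-K}$ obtained from \cite[Theorem~1.10]{gms-burger-local} and basic arguments relating the $\mcl E_n^l$ event to the event $\{X(1,n) = \emptyset\}$ with specific boundary structure; Bayes' rule then turns the $o_n^\infty(n)$ bound into an $o_n^\infty(n)$ conditional bound.

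The main content is the bound on $\mcl N_{\tb H}$ and $\mcl N_{\tb C}$ lying outside $[C^{-1}(\delta n)^{1/2}, C(\delta n)^{1/2}]$. Fix a moderate threshold $C_* = C_*(\ep_0,\ep_1)$ chosen so that the rescaled Brownian meander endpoint (with density~\eqref{eqn-f-density-def}) assigns positive mass to $[C_*^{-1},C_*]^2$. Applying Proposition~\ref{prop-E^l-abs-cont} at this scale yields a key lower bound $\BB P(\mcl E_n^l \cap \mcl B_n^\delta(C_*)) \succeq c(\ep_0,\ep_1,C_*) \BB P(\mcl B_n^\delta(C_*))$, which (combined with the scaling-limit probability of $\mcl B_n^\delta(C_*)$) provides a useful denominator. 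For the numerator $\BB P(\mcl E_n^l \cap \mcl B_n^\delta(C)^c)$, perform a dyadic decomposition into shells $\mcl B_n^\delta(C_{k+1}) \setminus \mcl B_n^\delta(C_k)$ with $C_k = 2^k C$, together with analogous ``near-axis'' shells corresponding to very small counts. On each shell I apply Proposition~\ref{prop-E^l-abs-cont} with $C_{k+1}$ in place of $C$ to compare $\BB P(\mcl E_n^l \mid x)$ on the shell with $\BB P(\mcl E_n^l \mid \mcl B_n^\delta(C_{k+1}))$; the unconditional weight of the shell is controlled via \cite[Theorem 4.1]{gms-burger-cone} and the explicit form~\eqref{eqn-f-density-def}, which furnishes Gaussian tails for large $k$ and $|z|^{2\mu}$ decay for the near-axis shells.

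The hard part is that the Radon--Nikodym constants in Proposition~\ref{prop-E^l-abs-cont} are allowed to depend on $C_{k+1}$, so they may grow as $k \to \infty$. This is what makes the dyadic sum delicate: one needs the density decay at the ends of the quadrant to beat the growth of these constants. Fortunately, the Gaussian decay of $\wh{f}_{1-\delta}$ at infinity and its $|Az|^{2\mu}$ vanishing near the axes are both faster than any polynomial, so even under a pessimistic polynomial blow-up of the Proposition~\ref{prop-E^l-abs-cont} constants the dyadic sum converges, and the contribution from large $|k|$ becomes negligible. Dividing the resulting bound on the numerator by the denominator produced in the previous paragraph yields a bound on $\BB P(\mcl B_n^\delta(C)^c \mid \mcl E_n^l)$ that tends to zero as $C \to \infty$, uniformly in $l \in [\ep_0 n^{\xi/2}, \ep_0^{-1} n^{\xi/2}]_{\BB Z}$ and in $n$ beyond some $n_*(\delta, C, q, \ep_0, \ep_1)$; choosing $C$ large enough then gives the required $1-q$ lower bound.
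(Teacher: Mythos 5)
Your proposal has a genuine gap in the near-axis part of the dyadic decomposition. You propose to decompose $\mcl B_n^\delta(C)^c$ into dyadic shells and, on each shell, compare $\BB P(\mcl E_n^l \mid x)$ to a reference value via Proposition~\ref{prop-E^l-abs-cont}, arguing that the density decay of $\wh f_{1-\delta}$ ``beats'' the growth of the Radon--Nikodym constants. This works for the far tail, where $\wh f_{1-\delta}$ has Gaussian decay. But near the axes the argument fails quantitatively. The only available upper bound behind Proposition~\ref{prop-E^l-abs-cont} is Proposition~\ref{prop-end-upper}, whose bound scales like $\frk m(x)^{-2-2\mu}$ where $\frk m(x) = \frk h(x)\wedge \frk c(x)$; on a near-axis shell where $\frk m(x) \asymp 2^{-k}(\delta n)^{1/2}$, this factor grows like $2^{(2+2\mu)k}$. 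Meanwhile the formula~\eqref{eqn-f-density-def} gives $\wh f_{1-\delta}(z) \sim \epsilon$ at distance $\epsilon$ from an axis (through the $\sin(2\mu \arg A z)$ factor, not a faster-than-polynomial rate), so the shell probability is $\asymp 2^{-2k}$. The product of the two grows like $2^{2\mu k}$, and since $\mu > 1/2$ the near-axis dyadic sum diverges. Your claim that the decay near the axes ``is faster than any polynomial'' is not correct — only the decay at infinity is.

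This is precisely the obstruction the paper flags at the start of Section~\ref{sec-end-upper-full}: ``The estimate of Proposition~\ref{prop-end-upper} is not quite enough to prove Proposition~\ref{prop-end-box}, as it degenerates when ... $\frk m(x)$ is small.'' The paper's actual proof circumvents it with two additional inputs that your sketch does not have substitutes for. First, Proposition~\ref{prop-end-upper-n} gives an upper bound on $\BB P(\mcl E_n^l \mid x)$ that depends only on $n-|x|$ and is uniform over all realizations $x$ — including near-axis ones. Its proof reads further backward past $-m_n^\delta$ and uses a geometric-series-type argument (stopping at the first scale where both order counts become $\asymp (\delta n)^{1/2}$, controlled by \cite[Theorem 2.5]{shef-burger}) to escape the near-axis regime before applying Proposition~\ref{prop-end-upper}; this bootstrapping is the missing idea. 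Second, Proposition~\ref{prop-end-upper-max} gives a bound in terms of $\frk h(x)\vee\frk c(x)$, which handles the far tail in a way your shell argument cannot directly obtain from Proposition~\ref{prop-end-upper} either, since a large $\frk h\vee\frk c$ is compatible with small $\frk m$. The paper then runs a short Bayes'-rule argument splitting $\mcl B_n^\delta(C)^c$ into the events $\ul G_n^\delta(\zeta)$ (near axis) and $\ol G_n^\delta(C)$ (far tail), with \cite[Theorem 4.1]{gms-burger-cone} and Proposition~\ref{prop-end-lower} supplying the denominator — structurally similar to your plan but with the two new upper bounds doing the heavy lifting.
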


The following result is needed to prove tightness of the conditional law of $Z^n|_{[0,2]}$ given $\{X(1,2n)=\emptyset\}$, plus absolute continuity with respect to Lebesgue measure of the law of a subsequential limiting path evaluated at a fixed time, in the next subsection.
 
\begin{prop} \label{prop-E^l-tight}
Define $Z^n = (U^n,V^n)$ as in~\eqref{eqn-Z^n-def}. For $\alpha>0$ and $n\in\BB N$, let $\wt G_n(\alpha,\zeta)$ be the event that the following is true. For each $s,t\in [0,1]$ with $|s-t| \leq \zeta$, we have $|Z^n(-s) - Z^n(-t)| \leq \alpha$. For each $\alpha>0$ and $q\in (0,1)$, there exists $\zeta = \zeta(q,\alpha,\ep_0,\ep_1) >0$ and $n_*  = n_*(\alpha,\zeta,\ep_0,\ep_1) \in\BB N$ such that for $n\geq n_*$ and $l\in \left[\ep_0 n^{\xi/2} , \ep_0^{-1} n^{\xi/2}\right]_{\BB Z}$, we have
\eqbn
\BB P\left(\wt G_n(\alpha,\zeta)\,|\, \mcl E_n^l\right) \geq 1- q.
\eqen
Furthermore, for each $t\in (0,1)$, each $q\in (0,1)$, and each set $A\subset [0,\infty)^2$ with zero Lebesgue measure, there exists $\beta = \beta(t,q,A,\ep_0,\ep_1) > 0$ and $n_* = n_*(  t,q , A , \ep_0,\ep_1) \in\BB N $ such that for each $n\geq n_*$ and $l\in \left[\ep_0 n^{\xi/2} , \ep_0^{-1} n^{\xi/2}\right]_{\BB Z}$, we have
\begin{align} \label{eqn-E^l-no-hit}
 \BB P\left(  \op{dist}( Z^n(t) , A )  < \beta  \,|\, \mcl E_n^l \right) \leq  q   .
\end{align}
\end{prop}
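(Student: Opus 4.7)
The plan is to split the interval $[-1,0]$ at $-m_n^\delta/n$ for a small fixed $\delta>0$, treating the main segment $[-(1-\delta),0]$ by transfer to the conditional law given $\mcl B_n^\delta(C)$ via Proposition~\ref{prop-E^l-abs-cont}, and the short end segment $[-1,-(1-\delta)]$ by direct random-walk estimates. By Proposition~\ref{prop-end-box}, for each $q\in (0,1)$ we may fix $C=C(q,\ep_0,\ep_1)>1$ so that $\BB P(\mcl B_n^\delta(C)\mid \mcl E_n^l)\geq 1-q/3$ uniformly for all $n$ large and $l$ in the prescribed range. It therefore suffices to work on $\mcl E_n^l \cap \mcl B_n^\delta(C)$ throughout.

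For the modulus-of-continuity statement, the restriction $Z^n|_{[-(1-\delta),0]}$ is determined by $X_{-m_n^\delta}\dots X_{-1}$. By Proposition~\ref{prop-E^l-abs-cont}, the conditional law of this word given $\mcl E_n^l\cap \mcl B_n^\delta(C)$ has Radon--Nikodym derivative with respect to the conditional law given only $\mcl B_n^\delta(C)$ bounded above by a constant depending only on $C,\ep_0,\ep_1$. By \cite[Theorem~4.1]{gms-burger-cone}, the latter law converges weakly as $n\to\infty$ to the law of the Brownian path $\wh Z^\delta$ of Remark~\ref{remark-bm-limit-Z^delta}, which is a.s.\ uniformly continuous. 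This yields $\zeta_1=\zeta_1(\alpha,q,\delta,C)$ such that the oscillation of $Z^n$ on $[-(1-\delta),0]$ at scale $\zeta_1$ exceeds $\alpha/2$ with conditional probability at most $q/3$. For the short interval $[-1,-(1-\delta)]$, we bound $\sup_{s,t\in[1-\delta,1]}|Z^n(-s)-Z^n(-t)|\leq \alpha/2$ directly. On $\mcl E_n^l\cap \mcl B_n^\delta(C)$, the reduced word $X(-m_n^\delta,-1)$ has $\tb H$- and $\tb C$-counts of order $\sqrt{\delta n}$ and at most $n^\nu$ flexible orders; together with $\mcl E_n^l$, which forces $\mcl R(X(-n,-1))=\tc H \tb C^l$, this pins down the total counts of each symbol type in $X_{-n}\dots X_{-m_n^\delta-1}$ up to small fluctuations. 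Standard maximal inequalities for random walks with given totals (with flexible orders handled by a crude bound of $n^\nu$ using $\nu<1/2$) then give oscillation of $D$ on $[-n,-m_n^\delta]_{\BB Z}$ of size $O(\sqrt{\delta n})$, hence $O(\sqrt{\delta})$ after rescaling by $n^{-1/2}$. Choosing $\delta=\delta_0(\alpha,q,C)$ small enough makes this at most $\alpha/2$ with conditional probability at least $1-q/3$; setting $\zeta=\min(\zeta_1,\delta_0)$ and combining the two regions gives the first assertion.

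For the avoidance of null sets, given $t\in (0,1)$ we pick $\delta\in (0,1-t)$, so the event $\{\op{dist}(Z^n(-t),A)<\beta\}$ is determined by $X_{-m_n^\delta}\dots X_{-1}$. By the same reduction via Propositions~\ref{prop-end-box} and~\ref{prop-E^l-abs-cont}, it suffices to control this probability under the law conditioned only on $\mcl B_n^\delta(C)$. By \cite[Theorem~4.1]{gms-burger-cone} together with Remark~\ref{remark-bm-limit-Z^delta}, $Z^n(-t)$ under this law converges in distribution to $\wh Z^\delta(t)$, whose marginal admits a continuous bounded density on $(0,\infty)^2$ (via a Bayes-rule computation from the explicit endpoint density~\eqref{eqn-f-density-def}). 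Since $A$ is Lebesgue-null, $\BB P(\op{dist}(\wh Z^\delta(t),A)<\beta)\to 0$ as $\beta\to 0$, and choosing $\beta$ small (and $n$ large) completes the argument.

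The main obstacle is controlling the path on the short interval $[-1,-(1-\delta)]$, where Proposition~\ref{prop-E^l-abs-cont} does not directly apply, since this segment of the word is subject to the global consistency constraint encoded in $\mcl E_n^l$ and is not analyzed by our earlier scaling limit results. We circumvent this by exploiting that $\mcl B_n^\delta(C)$ together with $\mcl E_n^l$ reduce the relevant features of this segment to explicit bounds on total symbol counts, with only a negligible number ($\leq n^\nu$) of flexible orders to contend with, after which classical random walk maximal estimates yield the required $O(\sqrt{\delta})$ oscillation bound.
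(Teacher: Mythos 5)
The first two ingredients (Proposition~\ref{prop-end-box} to reduce to $\mcl B_n^\delta(C)$, Proposition~\ref{prop-E^l-abs-cont} together with \cite[Theorem~4.1]{gms-burger-cone} to handle $[-(1-\delta),0]$) and the density argument for~\eqref{eqn-E^l-no-hit} coincide with the paper's proof. However, your treatment of the short end segment $[-1,-(1-\delta)]$ has a genuine gap. You assert that on $\mcl E_n^l\cap\mcl B_n^\delta(C)$ the totals of each symbol type in $X_{-n}\dots X_{-m_n^\delta-1}$ are pinned down up to small fluctuations, and that ``standard maximal inequalities for random walks with given totals'' then give oscillation $O(\sqrt{\delta n})$. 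This is not correct as stated: conditioning on $\mcl E_n^l$ is \emph{not} conditioning on a random-walk bridge with specified endpoint. The event $\mcl E_n^l$ forces $\mcl R(X(-j,-1))$ to contain no burgers for every $j\leq n$ (a quadrant-type constraint throughout the interval), imposes a specific form for the reduced word $X(-n,-1)$, and in addition imposes the regularity conditions on the auxiliary time $\pi_n$ from Definition~\ref{def-end-event} involving the matching structure of flexible orders. None of this reduces to a constraint on increments, and a maximal inequality for bridges does not control the conditional law of the walk on $[-n,-m_n^\delta]_{\BB Z}$ under $\mcl E_n^l$.

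What is actually needed here, and what the paper proves, is the Radon--Nikodym-type control of Proposition~\ref{prop-end-reg}: conditional on $\mcl E_n^l$ and a realization $x$ of $X_{-m_n^\delta}\dots X_{-1}$ with $\mcl B_n^\delta(C)$, the oscillation $\sup_j |X(-j,-m_n^\delta-1)|$ is $O(\sqrt{\delta n})$ with high conditional probability. The proof of that proposition (via Corollary~\ref{prop-good-pi} and Lemma~\ref{prop-good-pi-reg}) is a nontrivial Bayes-rule argument: it compares the conditional probability of $\mcl E_n^l$ given the ``bad oscillation'' event with its conditional probability given only the earlier word, using the local estimates of \cite{gms-burger-local} and the machinery of $\wt\pi_n$ from Proposition~\ref{prop-wt-pi-exists} to neutralize the flexible-order matchings. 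Your crude $n^\nu$ bound on flexible orders is a sensible observation but does not by itself decouple the matching constraint from the random-walk estimate. To repair the proposal you should invoke Proposition~\ref{prop-end-reg} (or reproduce the Bayes-rule argument behind it) rather than appealing to a maximal inequality directly.
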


The idea of the proofs of Propositions~\ref{prop-E^l-abs-cont},~\ref{prop-end-box}, and~\ref{prop-E^l-tight} is to estimate $\BB P\left(\mcl E_n^l \,|\, X_{-m_n^\delta } \dots X_{-1}\right)$ when $l\in  [\ep_0 n^{\xi/2} , \ep_0^{-1} n^{\xi/2}]_{\BB Z}$ using the results of~\cite{gms-burger-local} and Section~\ref{sec-pi-reg}; and to use such estimates, together with Bayes' rule, to obtain statements about the conditional law of $X_{-m_n^\delta}\dots X_{-1}$ given $\mcl E_n^l$. 
 
In Section~\ref{sec-reg-var}, we will recall some basic facts about regularly varying functions and some results from \cite{gms-burger-cone,gms-burger-local} regarding regular variation of certain quantities associated with the word $X$. In Section~\ref{sec-end-lower}, we will prove a lower bound for the conditional probability of $\mcl E_n^l$ given a realization of $X_{-m_n^\delta} \dots X_{-1}$ for which $\mcl B_n^\delta(C)$ occurs. In Section~\ref{sec-end-upper}, we will prove our first upper bound for the conditional probability of $\mcl E_n^l$ given $X_{-N_n} \dots X_{-1}$, where $N_n$ is a stopping time for the word $X$, read backward. In Section~\ref{sec-end-upper-full}, we will use the upper bound of Section~\ref{sec-end-upper} to deduce two sharper upper bounds for the same probability. In Section~\ref{sec-end-reg}, we will prove a regularity estimate for the last segment of the word when we condition on $\mcl E_n^l$ and a realization of $X_{-m_n^\delta}\dots X_{-1}$. In Section~\ref{sec-end-properties}, we will combine the estimates of the earlier subsections to prove Propositions~\ref{prop-E^l-abs-cont},~\ref{prop-end-box}, and~\ref{prop-E^l-tight}. The reader may wish to read Section~\ref{sec-end-properties} before the other sections to get an idea for how all of the results of this section fit together.

\subsection{Regular variation} 
\label{sec-reg-var}

Here we recall some basic facts about regularly varying functions and some results from~\cite{gms-burger-cone,gms-burger-local} which we will need in the sequel.  
 
Let $\alpha > 0$. Recall that a function $f : [1,\infty) \rta (0,\infty)$ is called \emph{regularly varying of exponent $\alpha$} if for each $\lambda >0$, 
\eqbn
\lim_{t\rta\infty} \frac{f(\lambda  t)}{f(t)} =\lambda^{-\alpha} .
\eqen
A function $f$ is called \emph{slowly varying} if $f$ is regularly varying of exponent 0. A function $f$ is regularly varying of exponent $\alpha$ if and only if $f(t) = \psi(t) t^{-\alpha}$, where $\psi$ is slowly varying.  
See~\cite{reg-var-book} for more on regularly varying functions.

We recall the definition of the exponent $\mu$ from~\eqref{eqn-cone-exponent}. The following lemma is taken from \cite[Section A.2]{gms-burger-cone}.  

\begin{lem} \label{prop-I-reg-var}
Let $I$ be the smallest $i\in \BB N$ for which $X(1,i)$ contains an order. Then the law of $I$ is regularly varying with exponent $\mu$, i.e.\ there exists a slowly varying function $\psi_0 : [0,\infty) \rta (0,\infty)$ such that
\eqbn
\BB P\left(I > n \right) = \psi_0(n) n^{-\mu} ,\qquad \forall n\in\BB N .
\eqen
\end{lem}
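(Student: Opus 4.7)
The plan is to translate the event $\{I>n\}$ into a cone-exit event for a two-dimensional random walk and then invoke known survival asymptotics for walks in cones. First, I would observe that $X(1,n)$ contains no orders precisely when every order symbol in $X_1\dots X_n$ has been fulfilled by an earlier burger. Tracking the number of unmatched hamburgers and cheeseburgers after reading $X_1 \dots X_i$ yields a two-dimensional process $\widetilde D$ that is a close relative of $D$ from~\eqref{eqn-discrete-path}, and the event $\{I > n\}$ is exactly the event that $\widetilde D$ stays in the non-negative quadrant throughout $i = 1,\dots, n$.

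Second, I would reduce to a genuinely Markovian random walk. The walk $\widetilde D$ is not Markov because of the $\tb F$ symbols, whose contribution depends on a forward match. I would compare $\widetilde D$ to the walk $\widehat D$ obtained by replacing each $\tb F$ by an independent fair coin flip between $\tb H$ and $\tb C$; the increments of $\widehat D$ are centered, have bounded support, and have covariance exactly as in~\eqref{eqn-bm-cov}. Since the number of $\tb F$ symbols in $X(1,n)$ is $n^{1-\mu+o_n(1)} = o_n(n^{1/2})$ with overwhelming probability (compare \cite[Corollary 5.2]{gms-burger-cone}), the two walks coincide on most of the interval $[1,n]_{\BB Z}$, and a careful local surgery argument (conditioning on the positions of the flexible orders, which are sparse) would allow me to transfer cone-survival asymptotics between them.

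Third, for the Markovian walk $\widehat D$, I would apply a survival-probability-in-a-cone result such as \cite{shimura-cone-walk} or, more generally, the Denisov--Wachtel theory in \cite{dw-cones}. After applying the linear change of coordinates $A$ from~\eqref{eqn-bm-matrix}, the problem becomes the survival probability of a standard two-dimensional centered random walk (with bounded steps) in a wedge of opening angle $\pi - \arctan(\sqrt{1-2p}/p)$. These theorems give an asymptotic of the form $\widehat\psi(n)\, n^{-\mu}$ with $\mu$ exactly the cone exponent~\eqref{eqn-cone-exponent} and $\widehat\psi$ slowly varying.

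The main obstacle is step two: controlling the error from replacing $\widetilde D$ by $\widehat D$ sharply enough to preserve regular variation (not just the rough exponent up to $n^{o_n(1)}$ errors). The sparsity of $\tb F$ symbols gives a rough bound, but to get exact regular variation I would want a coupling argument along the lines of those developed in \cite[Section A]{gms-burger-cone}, exploiting that flipping a small number of symbols near the boundary of the cone has controlled effect on the survival event.
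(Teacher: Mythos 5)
There is a fatal error in step two. The walk $\widehat D$ obtained by replacing each $\tb F$ with an independent fair coin flip between $\tb H$ and $\tb C$ does \emph{not} have the covariance structure in~\eqref{eqn-bm-cov}. Its increment law is simply the uniform measure on $\{\pm e_1,\pm e_2\}$ (mass $1/4$ on $\tc H$, $1/4$ on $\tc C$, $(1-p)/4 + p/4 = 1/4$ on $-e_1$, and $1/4$ on $-e_2$), so its per-step covariance matrix is $\tfrac12 I$ --- variance $1/2$ in each coordinate and zero cross-correlation. By contrast,~\eqref{eqn-bm-cov} prescribes variance $(1-p)/2$ and covariance $p/2$, which for $p\in(0,1/2)$ is strictly positive. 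The nontrivial correlation in the limit of $Z^n$ is an emergent effect of the fact that each $\tb F$ is matched to whichever burger type is on top of the stack, and this matching is strongly tied to the current position of the walk. Replacing it by an independent coin flip destroys exactly the mechanism that produces the correlation. Consequently, step three would yield the cone exponent for an \emph{uncorrelated} planar walk in a quadrant, namely $1$, rather than $\mu = \kappa/8 \in (1/2,1)$. No amount of ``surgery'' on the sparse $\tb F$ positions can repair this, because the discrepancy is not a boundary effect: it is a change in the bulk diffusion matrix.

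Beyond this, note that the present paper does not actually prove Lemma~\ref{prop-I-reg-var} --- it quotes it verbatim from \cite[Section A.2]{gms-burger-cone}. The argument there works directly with the non-Markovian walk $D$ (equivalently $Y$) and does not pass through any comparison with an i.i.d.-increment walk; upgrading from the rough exponent $n^{-\mu+o_n(1)}$ (which is what \cite[Theorem~2.5]{shef-burger} and Shimura's result readily give) to genuine regular variation is exactly the content that requires a substantively different argument, and that argument is not a Denisov--Wachtel invariance principle for cone-constrained walks. If you want to use the Denisov--Wachtel framework, you would at minimum need a walk whose increment covariance actually matches~\eqref{eqn-bm-cov}; but no random walk with increments that are independent of the past and functions only of the current symbol can have that covariance, since the correlation is intrinsically a long-range feature of the burger model.
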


The following is \cite[Lemma 4.1]{gms-burger-local}. 

\begin{lem} \label{prop-interval-reg-var}
Let $\psi_0$ be the slowly varying function from Lemma~\ref{prop-I-reg-var}. There is a slowly varying function $\psi_2$ such that for $n\in\BB N$ and $k \in [1 ,  n]_{\BB Z}$, we have
\eqbn
\BB P\left(\text{$\exists \, j \in [n- k ,n  ]_{\BB Z}$ such that $X(-j,-1)$ contains no orders}\right) = (1+o_{k/n}(1)) \psi_0(n) \psi_2(k)  (k/n)^\mu .
\eqen 
\end{lem}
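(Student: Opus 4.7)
The plan is to reinterpret the event as a weak $\pi/2$-cone-time event for the walk $D$ associated with the word $X$ (extended to the negative integers), then apply the regular variation of one-point cone probabilities from Lemma~\ref{prop-I-reg-var}, combined with a Markov decomposition at the split time $-(n-k)$.

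First, via the same identification used throughout the paper (Sheffield's bijection applied to the interval $[-j,-1]$), the event $\{X(-j,-1)\text{ has no orders}\}$ is equivalent to $-j$ being a weak $\pi/2$-cone time for $D$ with right endpoint $0$, in the sense that the coordinatewise infimum of $D$ on $[-j,0]_{\BB Z}$ is attained at $-j$. By stationarity and symmetry of the iid word $X$, the probability of this single-time event is $\psi_0(j) j^{-\mu}$ (this is exactly Lemma~\ref{prop-I-reg-var} read in the backward direction). Hence $E_{n,k} := \{\exists j \in [n-k,n]_{\BB Z} : X(-j,-1)\text{ has no orders}\}$ is the event that $D$ admits a weak $\pi/2$-cone time with right endpoint $0$ somewhere in the window $[-n,-(n-k)]_{\BB Z}$.

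Next, I would decompose $E_{n,k}$ as a disjoint union over the smallest such index: writing $E_{n,k} = \bigsqcup_{j=n-k}^n C_j$ with $C_j = \{j\text{ is the smallest index in }[n-k,n]\text{ for which } X(-j,-1)\text{ has no orders}\}$. Conditioning on $X_{-(n-k)}\dots X_{-1}$ and using the independence of $X_{-j},\dots,X_{-(n-k)-1}$ from $X_{-(n-k)},\dots,X_{-1}$, each summand factors as
\[
\BB P(C_j) = \BB P\bigl(-j\text{ is a weak }\pi/2\text{-cone time for }D\text{ with right endpoint }0\bigr)\cdot h(j-(n-k)) ,
\]
where $h(m)$ is the conditional probability that no ``earlier'' cone time occurs in the last $m$ steps of the pre-split interval; by stationarity of $X$ this depends only on $m := j-(n-k) \in [0,k]_{\BB Z}$. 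The first factor equals $\psi_0(j) j^{-\mu} = (1+o_{k/n}(1))\psi_0(n) n^{-\mu}$ uniformly over $j \in [n-k,n]$ since $\psi_0$ is slowly varying and $k=o(n)$. Summing yields
\[
\BB P(E_{n,k}) = (1+o_{k/n}(1))\,\psi_0(n)\, n^{-\mu}\sum_{m=0}^k h(m) .
\]
Setting $\psi_2(k) := k^{-\mu}\sum_{m=0}^k h(m)$ gives exactly the claimed form $(1+o_{k/n}(1))\psi_0(n)\psi_2(k)(k/n)^\mu$; slow variation of $\psi_2$ follows from Karamata's theorem once $h(m)$ is shown to be regularly varying of exponent $\mu-1$, which in turn follows from the scaling limit of the walk to correlated Brownian motion (\cite[Theorem 2.5]{shef-burger}) together with the Brownian cone-exit asymptotic (\cite[Theorem 2]{shimura-cone}).

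The main obstacle is that $D$ does not have iid increments, since flexible orders in $X$ are resolved through the global matching in $Y$; the Markov factorization above therefore requires care. This is handled by working with the iid word $X$ itself and tracking the flexible orders directly: by \cite[Corollary 5.2]{gms-burger-cone}, a window of length $m$ typically contains at most $m^{1-\mu+o(1)}$ flexible orders, and the associated corrections to the cone-time event are absorbed into the $(1+o_{k/n}(1))$ error using the local estimates of~\cite{gms-burger-local}.
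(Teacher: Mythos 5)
The paper does not prove this lemma itself; it is imported as \cite[Lemma~4.1]{gms-burger-local}. So I can only assess your argument on its own terms, and there is a real gap in the factorization step.

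Decomposing over the \emph{smallest} index $j \in [n-k,n]_{\BB Z}$ with $X(-j,-1)$ containing no orders does not give a product formula. Writing $F_j := \{X(-j,-1) \text{ has no orders}\}$, your factor $h(m)$ would have to be $\BB P\bigl(\bigcap_{j'=n-k}^{j-1} F_{j'}^c \mid F_j\bigr)$, and this genuinely depends on $j$, not merely on $m = j - (n-k)$. Concretely, take $p=0$ (no $\tb F$'s) and enumerate: one finds $\BB P(F_1^c \mid F_2) = 1/3$ while $\BB P(F_3^c \mid F_4) = 2/5$, yet both play the role of $h(1)$ for suitable $(n,k,j)$. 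The structural problem is that $F_j$ and the events $F_{j'}^c$ for $j'<j$ are \emph{all} determined by $X_{-j},\dots,X_{-1}$, so conditioning on the pre-split part does not decouple them. The correct decomposition is over the \emph{largest} such $j$. The key observation is that on $F_j$ the reduced word $\mcl R(X_{-j}\dots X_{-1})$ consists of burgers only, so for $j'>j$ one has $\mcl R(X_{-j'}\dots X_{-1}) = \mcl R(X_{-j'}\dots X_{-j-1})\,\mcl R(X_{-j}\dots X_{-1})$ with no further cancellation (an order to the left of a burger does not cancel), whence $F_{j'}$ holds iff $X(-j',-j-1)$ contains no orders. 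The latter is determined by the fresh symbols $X_{-j'},\dots,X_{-j-1}$ and so is genuinely independent of $F_j$. This gives $\BB P(E_{n,k}) = \sum_{j=n-k}^n \BB P(F_j)\, g(n-j)$ \emph{exactly}, with $g(m) := \BB P\bigl(X(-m',-1) \text{ contains an order for all } m'\in[1,m]_{\BB Z}\bigr)$. Your remaining steps (slow variation of $\psi_0$, setting $\psi_2(k) := k^{-\mu}\sum_{m=0}^k g(m)$, Karamata) go through with $g$ in place of $h$. Be aware, though, that establishing the regular variation of $g$ with exponent $\mu-1$ is where the real work of the cited lemma lies: the invariance principle alone gives one-point asymptotics for $\BB P(F_j)$ but does not immediately yield the precise decay rate of the ``no return to the boundary'' probability $g(m)$, and that is essentially what~\cite{gms-burger-local} has to prove.
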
 

We will make use of the functions $\psi_0$ and $\psi_2$ throughout the remainder of this section. The reason why we use the notation $\psi_0$ and $\psi_2$ (and not $\psi_1$) is to be consistent with~\cite{gms-burger-local} (which also includes another slowly varying function $\psi_1$ which is not needed for the present paper).

\subsection{Lower bound}
\label{sec-end-lower}

In this subsection we will prove the following lower bound for the conditional probability of $\mcl E_n^l$ given a realization of $X_{-m_n^\delta} \dots X_{-1}$ for which $\mcl B_n^\delta(C)$ occurs. 

\begin{prop} \label{prop-end-lower}
For $n\in\BB N$, $\delta>0$ and $C>1$ let $m_n^\delta$ and $\mcl B_n^\delta(C)$ be as in Definition~\ref{def-B_n^delta}. Also let $\delta \in (0,1/2)$ and let $x$ be a realization of $X_{-m_n^\delta} \dots X_{-1}$ for which $\mcl B_n^\delta(C)$ occurs. 
Let $\psi_0$ and $\psi_2$ be the slowly varying functions from Lemmas~\ref{prop-I-reg-var} and~\ref{prop-interval-reg-var}. There is an $n_* = n_*(\delta , C, \ep_0,\ep_1) \in \BB N$ such that for each $n\geq n_*$, each $l \in \left[\ep_0 n^{\xi/2} , \ep_0^{-1} n^{\xi/2} \right]_{\BB Z}$, and each realization $x$ as above we have
\eqbn
\BB P\left( \mcl E_n^l   \,|\, X_{  - m_n^\delta } \dots X_{-1} =   x   \right) \succeq  \psi_0\left(\delta n \right) \psi_2(n^\xi) \delta^{-1-\mu} n^{-1 -  \mu + \xi  (\mu-1/2) }   
\eqen
with the implicit constant depending only on $\ep_0$, $\ep_1$, and $C$. 
\end{prop}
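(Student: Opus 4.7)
The plan is to establish the lower bound by exhibiting, conditionally on $X_{-m_n^\delta}\dots X_{-1} = x$, a concrete sub-event of $\mcl E_n^l$ whose probability can be evaluated by combining a regular-variation ``no-orders'' estimate with a local central limit theorem for the two-dimensional walk $D$.

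Let $h_x := \mcl N_{\tb H}(\mcl R(x))$, $c_x := \mcl N_{\tb C}(\mcl R(x))$, and $f_x := \mcl N_{\tb F}(\mcl R(x))$. On $\mcl B_n^\delta(C)$ we have $h_x, c_x \in [C^{-1}(\delta n)^{1/2}, C(\delta n)^{1/2}]$ and $f_x \leq n^\nu$; since $\xi < 1$, for $n$ large enough we also have $c_x > l$. The first step is to identify a concrete sufficient condition for $\mcl E_n^l$ in terms of the extension $y := X_{-n}\dots X_{-m_n^\delta - 1}$: it is enough that $X_{-n} = \tc H$; that $\mcl R(y)$ contains no orders and consists of exactly $H := h_x + 1$ hamburgers followed (in the order induced by $y$) by $C := c_x + f_x - l$ cheeseburgers; and that some time in $[n - \ep_1^{-1}n^\xi, n]$ realises the $\pi_n$-regularity required for $\mcl E_n^l$. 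Under these constraints the $h_x$ hamburger orders of $\mcl R(x)$ consume the rightmost $h_x$ hamburgers of $\mcl R(y)$; the $f_x$ flexible orders of $\mcl R(x)$ then consume $f_x$ cheeseburgers (since cheeseburgers are the most recent unmatched burgers at that stage); and the $c_x$ cheeseburger orders consume the remaining $c_x - l$ cheeseburgers, leaving exactly $\tb C^l \tc H$, as required.

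The next step is to decompose $[-n, -m_n^\delta - 1]_{\BB Z}$ into three disjoint consecutive sub-intervals $I_1, I_2, I_3$ of lengths $\sim n^\xi$, $\delta n - O(n^\xi)$, and $\sim n^\xi$ respectively, and to prescribe a sub-event on each. On $I_1$ we impose $X_{-n} = \tc H$, the $\pi_n$-regularity, and a specific value of $D$ at the right endpoint of $I_1$ of order $n^{\xi/2}$ in each coordinate. On $I_2$ we demand that the reduction of the restriction of $X$ to $I_2$ contains no orders. On $I_3$ we demand that $D$ moves from its prescribed value at the left endpoint of $I_3$ to a value at the right endpoint of $I_3$ for which the overall reduction $\mcl R(y)$ has the required counts and ordering of burger types. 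Since $I_1, I_2, I_3$ are disjoint intervals of the i.i.d.\ word $X$, the three sub-events are independent, so their probabilities multiply. Lemma~\ref{prop-I-reg-var} yields a contribution of $\asymp \psi_0(\delta n)(\delta n)^{-\mu}$ on $I_2$; an application of Lemma~\ref{prop-interval-reg-var} combined with a local estimate for the walk at a prescribed endpoint yields the factor $\psi_2(n^\xi) n^{\xi(\mu-1/2)}$ on $I_1$; and a local central limit theorem for $D$ over $I_3$ yields the remaining factor of $\asymp (\delta n)^{-1}$, which is the Gaussian density at a two-dimensional target of scale $(\delta n)^{1/2}$ in both coordinates.

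Multiplying these three factors gives exactly $\psi_0(\delta n) \psi_2(n^\xi) \delta^{-1-\mu} n^{-1-\mu + \xi(\mu-1/2)}$, as claimed. The main obstacle will be arranging the burger-type ordering consistently across the three pieces: the ``no orders'' event on $I_2$ alone is compatible with many possible orderings of hamburgers and cheeseburgers, and one must arrange that the rightmost $\sim f_x + (c_x - l)$ burgers of $\mcl R(y)$ are cheeseburgers, so that the flexible orders and the surviving cheeseburger orders of $\mcl R(x)$ are matched against cheeseburgers. Since $f_x \leq n^\nu \ll n^{\xi/2}$, this additional arrangement can be absorbed into the construction on $I_3$ at the cost of only a constant factor depending on $C$, $\ep_0$, and $\ep_1$, which is acceptable for a lower bound of the stated form.
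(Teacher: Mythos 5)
Your exponent accounting is consistent with the claimed bound, and the spirit of the argument (short end pieces of scale $n^\xi$ contributing local factors, a long middle piece contributing the regular-variation factor) does parallel the paper's use of the local estimates from \cite{gms-burger-local}. However, the proposed sufficient condition for $\mcl E_n^l$ is incorrect, and this is not a detail that can be absorbed into a constant factor.

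You claim that $\mcl R(y) = \tc H^{h_x+1}\tc C^{c_x+f_x-l}$ forces the reduction $\mcl R(yx)$ to be $\tb C^l\tc H$, on the grounds that the hamburger orders of $\mcl R(x)$ act first, ``then'' the flexible orders, ``then'' the cheeseburger orders. But the reduction processes the orders of $\mcl R(x)$ in the order they occur, not sorted by type. If cheeseburger orders precede a flexible order, they may exhaust the cheeseburger block of $\mcl R(y)$ first, whereupon the flexible order must consume a hamburger and the desired reduced word is destroyed. A minimal counterexample: take $\mcl R(x) = \tb C\,\tb F$ (so $h_x=0$, $c_x=1$, $f_x=1$) and $l=1$. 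Your prescription gives $\mcl R(y)=\tc H\,\tc C$, and $\mcl R(\tc H\,\tc C\,\tb C\,\tb F)=\emptyset$, whereas $\wh{\mcl E}_n^l$ requires the reduction $\tb C\,\tc H$, which instead needs $\mcl R(y)=\tc H\,\tc H$. In the relevant regime this failure is generic: since $f_x\le n^\nu\ll l\asymp n^{\xi/2}$ on $\mcl B_n^\delta(C)$, the proposed stack always contains strictly fewer cheeseburgers ($c_x+f_x-l$) than there are cheeseburger orders ($c_x$), so some suffix of the orders necessarily meets an empty cheeseburger stack, and whether a flexible order falls in that suffix depends on the internal ordering of $\mcl R(x)$, which $\mcl B_n^\delta(C)$ does not constrain, while the proposition requires a bound uniform over all such $x$. (There is also a subsidiary issue: the event you impose on $I_3$ refers to the walk's value at the right end of $I_2$, so the three pieces are not independent as stated.)

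The paper circumvents the ordering problem by growing backward from $x$ via stopping times that manufacture a \emph{buffer}: before the final $\asymp n^\xi$ backward steps it arranges that the word $\wt x$ just to the left of $x$ has a reduced word with at least $\ep_1 n^{\xi/2}$ hamburger orders, at least $\tfrac12 n^{\xi/2}$ cheeseburger orders, and no flexible orders, all sitting to the left of every flexible order in $\mcl R(\wt x\,x)$. The final backward steps then interact only with the buffer, never touching the flexible orders of $\mcl R(x)$, so the one-dimensional local estimate \cite[Lemma 2.10]{gms-burger-local} applies cleanly; the bulk $\asymp\delta n$ backward steps are handled in a single block by the two-dimensional local estimate \cite[Proposition 4.4]{gms-burger-local}, rather than by factoring a ``no orders'' probability against a separate local CLT. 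Some device of this kind is essential to get a bound uniform over the ordering of $\mcl R(x)$.
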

\begin{proof}
Throughout, we assume that $n$ is large enough that $C^{-1} \delta^{1/2} n^{1/2} \geq 100 n^{\xi/2}$. Fix a realization $x$ as in the statement of the proposition and an $l \in \left[\ep_0 n^{\xi/2} , \ep_0^{-1} n^{\xi/2} \right]_{\BB Z}$. Let $\frk h(x)$ and $\frk c(x)$ be as in~\eqref{eqn-theta-count-reduced} and let 
\[
\frk h_n(x) := \frk h(x) - \lfloor 2\ep_1 n^{\xi/2} + n^\nu \rfloor .
\]
For $m\in\BB N$, let $J_{x,m}^H$ be the smallest $j \geq |x|$ for which $X(-j,-|x|-1)$ contains $m$ hamburgers and let $L_{x,m}^H := d^*\left(X(-J_{x,m}^H , -|x|-1)\right)$. Let $\wt E_n (x)$ be the event that the following is true.
\begin{enumerate}
\item $J_{x,\frk h_n(x)}^H \in \left[ n- \ep_1^{-1} n^{\xi } , n-n^{\xi  } \right]_{\BB Z}$. 
\item $L_{x,\frk h_n(x)}^H \in \left[\frk c(x) - l  - 2 n^{\xi/2} , \frk c(x)  - l -n^{\xi/2} \right]_{\BB Z}$. \label{item-lower1-L}
\item $\mcl N_{\tc C}\left(X(-J_{x,\frk h_n(x)}^H  , -1)\right) \leq \frk c(x) - l -  n^{\xi/2}$.   
\end{enumerate}
By \cite[Proposition 4.4, Assertion 2]{gms-burger-local} (applied with $h = \frk h_n(x)$, $c = \frk c(x) - l - n^{\xi/2}$, $n-|x| \asymp \delta n$ in place of $n$, $k\asymp n^\xi$, and $r\asymp n^{\xi/2}$) we can find $n_*^0 = n_*^0(\ep_0,\ep_1,C) \in \BB N$ such that for $n\geq n_*^0$, 
\eqb \label{eqn-end-lower1}
\BB P\left(\wt E_n (x) \,|\, X_{  - m_n^\delta } \dots X_{-1} =   x \right) \succeq \psi_0(\delta n) \psi_2(n^\xi) \delta^{-1-\mu} n^{-(1-\xi)(1+\mu) } ,
\eqe 
with the implicit constant depending only on $\ep_0 ,\ep_1$, and $C$. 

Let $\wt J_{x }^C$ be the smallest $j \geq J_{x,\frk h_n(x)}^H + 1$ for which $X(-j,-J_{x,\frk h_n(x)}^H-1)$ contains $\lfloor (1/4) n^{\xi/2} \rfloor$ cheeseburgers. 
Let $\wt F_n(x)$ be the event that the following is true.
\begin{enumerate}
\item $\mcl N_{\tb H}\left(X(-\wt J_{x }^C , -J_{x,\frk h_n(x)}^H-1 )\right) \geq \ep_1 n^{\xi/2}$. 
\item $\wt J_{x }^C -  J_{x,\frk h_n(x)}^H \leq \frac12 n^{\xi }$. 
\item $|X(-\wt J_{x }^C , -J_{x,\frk h_n(x)}^H-1 )| \leq \frac12 n^{\xi/2}$.  \label{item-lower0-len}
\end{enumerate}
The event $\wt F_n(x)$ is independent from $\wt E_n (x)$ and by \cite[Theorem 2.5]{shef-burger}, by possibly increasing $n_*^0$ we can arrange that for $n\geq n_*^0$, 
\eqb  \label{eqn-end-lower2}
\BB P\left(\wt F_n(x) \,|\, \wt E_n (x) ,\, X_{  -  m_n^\delta } \dots X_{-1} =   x \right) \succeq 1, 
\eqe 
with the implicit constant depending only on $\ep_1$. 

Let $\wt x$ be a realization of $X_{-\wt J_{x }^C} \dots X_{-|x|-1}$ for which $\wt E_n (x) \cap \wt F_n(x)$ occurs and let $\frk o(\wt x x)$, $\frk c(\wt x x)$, and $\frk c_f(\wt x x)$ be as in~\eqref{eqn-theta-count-reduced} and Definition~\ref{def-c_f}, respectively. Then whenever $X_{-\wt J_{x  }^C} \dots X_{ -1} = \wt x  x$, we have the following.
\begin{enumerate}
\item $|\wt x x| \in \left[n-\ep_1^{-1} n^{\xi } , n- \frac12 n^{\xi}\right]_{\BB Z}$. \label{item-wt-x-time}
\item Since each of $X(-\wt J_{x }^C , -J_{x,\frk h_n(x)}^H-1)$ and $X(-J_{x,\frk h_n(x)}^H , -|x|-1)$ contains no flexible orders, also $\mcl R(\wt x)$ contains no flexible orders and $\mcl N_{\tb F}\left(\mcl R(\wt x x) \right) \leq \mcl N_{\tb F}\left(\mcl R(x)\right) \leq n^\nu$. \label{item-wt-x-F}
\item There are at least $\mcl N_{\tb H}\left(\mcl R(\wt x )\right) \geq   \ep_1 n^{\xi/2}$ hamburger orders to the left of the leftmost flexible order in $\mcl R(\wt x x)$. \label{item-wt-x-h_f}
\item We have 
\alb
\frk c(\wt x) &\geq \mcl N_{\tb C}\left(  X(-J_{x,\frk h_n(x)}^H  , -|x|-1) \right)  - \mcl N_{\tc C}\left(X(-\wt J_{x }^C , -J_{x,\frk h_n(x)}^H-1 ) \right)  \\
 &\geq         - L_{x,\frk h_n(x)}^H  -  \mcl N_{\tc C}\left(X(-J_{x,\frk h_n(x)}^H  , -1)\right) -  \frac14 n^{\xi/2} \geq \frac12 n^{\xi/2} ,
\ale
so there are at least $\frac12 n^{\xi/2} $ cheeseburger orders to the left of the leftmost flexible order in $\mcl R(\wt x x)$.\label{item-wt-x-c_f}
\item By condition~\ref{item-wt-x-h_f}, we have $ \ep_1 n^{\xi/2} \leq  \frk o(\wt x x) \leq \frk h(x) - \frk h_n(x) + n^\nu +|X(-\wt J_{x }^C , -J_{x,\frk h_n(x)}^H-1 )| \leq 2 n^{\xi/2}$. \label{item-wt-x-o}
\item By our choice of $l$ together with condition~\ref{item-lower1-L} in the definition of $\wt E_n(x)$ and condition~\ref{item-lower0-len} in the definition of $\wt F_n(x)$ we have $0 \leq 1 - \frk c(\wt x x)     \leq \ep_1^{-1} n^{\xi/2}$. \label{item-wt-x-dist} 
\end{enumerate}

For $m\in\BB N$, let $\wh J_{\wt x x,m}^H$ be the smallest $j\geq |\wt x x|$ for which $X(-j,-|\wt x x|-1)$ contains $m$ hamburgers, and set $\wh L_{\wt x x,m}^H := d^*\left( X(J_{\wt x x,m}^H , -|\wt x x|-1)\right)$. By observation~\ref{item-wt-x-c_f} above, if $X_{-\wt J_x^C} \dots X_{-1} = \wt x x$ and 
\alb
&\left(\wh J_{\wt x x,\frk o(\wt x x) }^H - |\wt x x|   , \wh L_{\wt x x,\frk o(\wt x x)}^H    \right) = \left(n-|\wt x x| ,  \frk c(\wt x x) - l\right)  \\
&\mcl N_{\tc C}\left(X(-\wh J_{\wt x x,\frk o(\wt x x)}^H  ,-   |\wt x x| -1 )  \right) \leq \frac12 n^{\xi/2}
\ale
then $J = \wh J_{x,\frk o(\wt x x)}^H$ and $\wh{\mcl E}_n^l$ occurs and every cheeseburger in $X(-\wh J_{x,\frk o(\wt x x)}^H  ,-   |\wt x x| -1 )$ is matched to a cheeseburger order in $\mcl R(\wt x x)$. By observations~\ref{item-wt-x-time} and~\ref{item-wt-x-h_f} above, the time $\wt J_{x }^C  =|\wt x x|$ satisfies the defining conditions of $\pi_n$. Therefore $\mcl E_n^l$ occurs. 
By observations~\ref{item-wt-x-o} and~\ref{item-wt-x-dist} together with \cite[Lemma 2.10]{gms-burger-local},  
\eqbn
\BB P\left( \mcl E_n^l \,|\, \wt E_n (x) ,\, \wt F_n(x) ,\, X_{  -  m_n^\delta} \dots X_{-1} =   x   \right) \succeq n^{-3\xi/2}. 
\eqen
 By combining this with~\eqref{eqn-end-lower1} and~\eqref{eqn-end-lower2}, we obtain the statement of the proposition.
\end{proof}
 
\subsection{Upper bound when the tip of the path is in the bulk}
\label{sec-end-upper}

In this subsection, we will prove an upper bound for the conditional probability of the event $\mcl E_n^l$ given a realization of $X_{- N_n } \dots X_{-1}$, where $N_n$ is a stopping time for the word $X$, read backward. This upper bound degenerates near the boundary of the first quadrant, but we will deduce from it various upper bounds without this drawback in Section~\ref{sec-end-upper-full} below. 

\begin{prop} \label{prop-end-upper}
Fix $\ol\xi \in (\xi ,1)$. Let $N_n$ be a stopping time for the word $x$, read backward, and let $x$ be a realization of $X_{- N_n} \dots X_{-1}$ such that $|x| < n - \ep_1^{-1} n^{\xi}$ and $\mcl R(x)$ contains no burgers, at least $ n^{\ol \xi/2}$ hamburger orders, at least $n^{\ol\xi/2}$ cheeseburger orders, and at most $n^\nu$ flexible orders. Let $\frk h(x)$ and $\frk c(x)$ be as in~\eqref{eqn-theta-count-reduced} and let 
\eqb \label{eqn-hc-for-end-upper}
  \frk m(x) := \frk h(x) \wedge \frk c(x)   .
\eqe 
Let $\psi_0$ and $\psi_2$ be the slowly varying functions from Lemmas~\ref{prop-I-reg-var} and~\ref{prop-interval-reg-var}. For each $l \in \left[\ep_0 n^{\xi/2} , \ep_0^{-1} n^{\xi/2} \right]_{\BB Z}$, we have
\eqbn
\BB P\left( \mcl E_n^l   \,|\, X_{  -  N_n} \dots X_{-1} =   x   \right) \preceq    \psi_0\left(\frk m(x)^2 \right) \psi_2(n^\xi) \frk m(x)^{-2 - 2\mu}  n^{\xi  (\mu-1/2) }  .
\eqen
with the implicit constant depending only on $\ol\xi$, $\ep_0$, and $\ep_1$.
\end{prop}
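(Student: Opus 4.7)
The plan is to bound $\BB P(\wh{\mcl E}_n^l \mid X_{-N_n}\dots X_{-1} = x)$, which dominates the quantity of interest since $\mcl E_n^l \subset \wh{\mcl E}_n^l$ by definition; the $\pi_n$-regularity conditions in the definition of $\mcl E_n^l$ only further restrict the event. Given a realization $x$ as in the hypothesis, the event $\wh{\mcl E}_n^l$ decomposes into: (i) $X_{-n} = \tc H$, contributing an independent factor of $1/4$, (ii) the reduced word $\mcl R(X_{-n+1}\dots X_{-1})$ consists of exactly $l$ cheeseburger orders, and (iii) no burger survives the reduction of $X_{-j}\dots X_{-1}$ for any $j \in (|x|,n-1]_{\BB Z}$, equivalently the backward walk stays in a cone-like region throughout. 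Together, (ii) and (iii) say that the reduced-word state, starting from $\mcl R(x)$ at time $-|x|$, evolves through $n-|x|$ additional backward steps while staying in the cone and terminating at the specific point consisting of $l$ cheeseburger orders.

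I would then apply the upper-bound counterpart of \cite[Proposition~4.4]{gms-burger-local}, i.e., Assertion~1 of that proposition, which is the same estimate whose Assertion~2 was used in the proof of Proposition~\ref{prop-end-lower}. The parameters to plug in are the hamburger target $h \asymp \frk o(x) = \frk h(x) + \mcl N_{\tb F}(\mcl R(x)) + 1$, the cheeseburger offset $c \asymp \frk c(x) - l$, total time $n-|x|$, intermediate window $k \asymp n^\xi$, and fine scale $r \asymp n^{\xi/2}$. A key feature of Prop.~4.4 is that the bound is governed by the starting scale $\frk m(x)$ rather than the raw elapsed time: the relevant relaxation time for the walk to descend from scale $\frk m(x)$ down to the endpoint scale is $\frk m(x)^2$, and longer elapsed times $n-|x|$ do not help, so the bound is saturated when $n-|x| \asymp \frk m(x)^2$. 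The factor $\psi_0(\frk m(x)^2)\, \frk m(x)^{-2\mu}$ is the cone-survival probability over time $\frk m(x)^2$, in the spirit of Lemmas~\ref{prop-I-reg-var} and~\ref{prop-interval-reg-var} scaled appropriately; the additional $\frk m(x)^{-2}$ is the two-dimensional local-density penalty for landing in the $n^{\xi/2}$-scale target window; and $\psi_2(n^\xi)\, n^{\xi(\mu-1/2)}$ is the fine-scale hitting factor over the final $\asymp n^\xi$ steps, matching the analogous factor in the lower bound of Proposition~\ref{prop-end-lower}.

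The main obstacle is the presence of up to $n^\nu$ flex orders in $\mcl R(x)$, since the reduction of $X_{-n+1}\dots X_{-1}$ to a word of $l$ cheeseburger orders depends in a delicate way on how each flex order in $\mcl R(x)$ pairs with a later raw burger. Because $2\nu < \xi \leq \ol\xi$, the number of flex orders is of strictly smaller order than the target scale $n^{\xi/2}$; I would handle this either by summing over the at most $2^{n^\nu} = n^{o(1)}$ possible flex/hamburger vs.\ flex/cheeseburger matching patterns, or by conditioning on the indices and types of the matching burgers and applying Prop.~4.4 to the resulting flex-free subproblem on each piece. Either way, the combinatorial overhead is subpolynomial and is absorbed into the slowly varying factors $\psi_0$ and $\psi_2$ without affecting the leading exponents, yielding the stated upper bound with implicit constant depending only on $\ol\xi$, $\ep_0$, and $\ep_1$.
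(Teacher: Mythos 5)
Your proposal correctly locates the main tool, namely \cite[Proposition~4.4]{gms-burger-local}, and correctly identifies the central difficulty: the event $\mcl E_n^l$ (or $\wh{\mcl E}_n^l$) is not a function only of the symbol counts in $X(-n,-|x|-1)$, because the flexible orders in $\mcl R(x)$ pair up with later burgers in a way that depends on the full ordering. However, the two fixes you propose for that difficulty do not work, and the gap is precisely what the paper's construction of the random time $\wt\pi_n$ is designed to close.

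First, the arithmetic: $2^{n^\nu}$ is not $n^{o_n(1)}$. Since $\nu > 0$, $2^{n^\nu} = e^{n^\nu \log 2}$ grows faster than any polynomial in $n$. A union bound over matching patterns would therefore destroy the bound entirely, not merely contribute a slowly varying factor. Second, even if one could somehow get the overhead down to $n^{o_n(1)}$, the paper explicitly explains in Section~\ref{sec-outline} why that is still not good enough: the upper bound of Proposition~\ref{prop-end-upper} has to match the lower bound of Proposition~\ref{prop-end-lower} up to a constant (not up to $n^{o_n(1)}$), because these two estimates feed into the Radon--Nikodym bound of Proposition~\ref{prop-E^l-abs-cont}. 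The authors state that ``we are unable to address the combinatorics of how the words \dots match up in the presence of $\tb F$-symbols (indeed, we expect that it is not possible to do so in a sufficiently precise way).'' Your fix (a) is exactly the approach being ruled out there. Your fix (b), conditioning on the indices and types of the matching burgers, is also not a proof: those indices and types are dependent random quantities and conditioning on them does not reduce to a clean flex-free instance of Prop.~4.4; you would still have to re-aggregate and the same loss reappears.

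The paper's actual route is substantially more delicate and, in particular, does \emph{not} replace $\mcl E_n^l$ by the larger event $\wh{\mcl E}_n^l$; the regularity conditions inside $\mcl E_n^l$ (which guarantee $\pi_n < J$ with a lower bound on the number of hamburger orders before the first flexible order) are what make the proof possible. The argument constructs a random time $\wt\pi_n \geq \pi_n$ (Proposition~\ref{prop-wt-pi-exists}) with the key property (condition~\ref{item-wt-pi-C}) that $\frk c_f(X(-\wt\pi_n,-1))$, the number of cheeseburger orders sitting to the left of the leftmost flexible order, is unlikely to be small. This is exactly what is needed to make the flexible orders ``irrelevant'': on the event that every cheeseburger produced after time $-\wt\pi_n$ gets matched to one of those protected cheeseburger orders, the counting bound of \cite[Lemma~2.8]{gms-burger-local} applies cleanly, and Lemma~\ref{prop-last-abs-cont} shows that the price for conditioning on that matching event is a Radon--Nikodym factor of size $\frk r(\wt x)^{-1+o(1)}$, which is then absorbed by the strong tail bound $\zeta^{100}$ of condition~\ref{item-wt-pi-C}. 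The proof then decomposes over a three-parameter dyadic family $\wh E_n^l(k_1,k_2,k_3)$ of events recording the scale of $\mcl N_{\tb H}$, $\mcl N_{\tb C}-l$, and $\frk c_f$ at time $-\wt\pi_n$ (Lemmas~\ref{prop-last-increment-upper},~\ref{prop-pi-increment-upper},~\ref{prop-mid-increment-upper}), and the geometric-type sum over $(k_1,k_2,k_3)$ converges to give the stated bound with an implicit constant. None of this machinery appears in your proposal, and without it the proposal does not yield a proof.
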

 
Recall the time $\wt\pi_n = \wt\pi_n(\ep_1,\ep_2)$ from Proposition~\ref{prop-wt-pi-exists}.  
We will prove Proposition~\ref{prop-end-upper} by considering the time intervals $\left[ N_n , \wt\pi_n\right]_{\BB Z}$ and $[\wt\pi_n , n]_{\BB Z}$ separately; estimating the former using \cite[Proposition 4.4]{gms-burger-local} and Proposition~\ref{prop-wt-pi-exists}; and estimating the latter using \cite[Proposition 2.2]{gms-burger-local}. 

\begin{lem} \label{prop-last-increment-upper}
Let $\wt x$ be a realization of $X_{-\wt \pi_n}\dots X_{-1}$ for which $\wt\pi_n < J$. Let $\frk c_f(\wt x)$ and $\frk r(\wt x)$ be as in Definition~\ref{def-c_f} and let $\frk o(\wt x)$ be as in~\eqref{eqn-theta-count-reduced}.
There are constants $a_0 , a_1 > 0$, depending only on $\ep_1$ and $\ep_2$ such that for each $n\in\BB N$, each realization $\wt x$ as in that lemma, and each $l \in\BB N$,
\alb
 \BB P\left( \mcl E_n^l  \,|\, X_{-\wt\pi_n} \dots X_{-1} = \wt x  \right) 
  \preceq   \exp\left(- \frac{a_0 \frk o(\wt x)^2 }{ n-|\wt x| }  - \frac{a_1 |\frk c(\wt x) - l|}{ (n-|\wt x|)^{1/2} }   \right)  \frk o(\wt x)^{-3  }  \frk r(\wt x)^{-1+o_{\frk r(\wt x)}(1)}  ,
\ale
with the implicit constants depending only on $\ep_1$ and $\ep_2$. 
\end{lem}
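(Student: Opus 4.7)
The plan is to apply Lemma~\ref{prop-last-abs-cont} to reduce from the conditioned law to the unconditional fresh-word law, translate $\mcl E_n^l$ into an explicit local-limit-theorem event for $(J_m^H, L_m^H)$ with $m:=\frk o(\wt x)$, and then apply the joint local limit theorem of~\cite[Proposition 2.2]{gms-burger-local} together with a standard Gaussian tail bound.

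First, the regularity conditions on $\pi_n$ appearing in Definition~\ref{def-end-event} depend only on $\wt x$ (cf.\ Lemma~\ref{prop-J^H-determined}), so if they fail the probability vanishes and the bound is trivial; I may therefore assume they hold. By condition~\ref{item-wt-pi-cond} of Proposition~\ref{prop-wt-pi-exists} and the derivation in the proof of Lemma~\ref{prop-J^H-after-pi}, the conditional law of $X_{-J}\dots X_{-|\wt x|-1}$ given $\{X_{-\wt\pi_n}\dots X_{-1}=\wt x\}$ is the law of $X_{-J_m^H}\dots X_{-1}$ for a fresh independent copy of the inventory accumulation model, conditioned on the matching event $F_m(\wt x)$ of Section~\ref{sec-F-event-mono}. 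Under this identification, a direct combinatorial check shows that $\mcl E_n^l$ translates into the event $\{J_m^H = n-|\wt x|\}\cap\{L_m^H = \frk c(\wt x)-l\}$ on the fresh word: the first equation puts the leftmost (and $m$-th backward) hamburger at position $-(n-|\wt x|)$ so that $J=n$ and $X_{-J}=\tc H$; the $m-1$ remaining hamburgers in the fresh segment then match the $m-1$ hamburger and flexible orders in $\mcl R(\wt x)$; all cheeseburgers in the fresh segment are consumed by cheeseburger orders in $\wt x$ by $F_m(\wt x)$; and the constraint on $L_m^H$ leaves precisely $l$ cheeseburger orders in $\mcl R(X(-n,-1))$.

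Applying Lemma~\ref{prop-last-abs-cont} then gives
\begin{equation*}
\BB P\!\left(\mcl E_n^l \,\big|\, X_{-\wt\pi_n}\dots X_{-1}=\wt x\right)
\;\preceq\; \frk r(\wt x)^{-1+o_{\frk r(\wt x)}(1)}\,\BB P\!\left(J_m^H = n-|\wt x|,\ L_m^H = \frk c(\wt x)-l\right),
\end{equation*}
where on the right-hand side $J_m^H, L_m^H$ are now unconditional. It remains to establish the joint local limit bound
\begin{equation*}
\BB P\!\left(J_m^H = j,\ L_m^H = l'\right)
\;\preceq\; m^{-3}\exp\!\left(-\frac{a_0\, m^2}{j} - \frac{a_1\, |l'|}{j^{1/2}}\right),
\end{equation*}
uniformly in $(m,j,l')$ in the relevant range. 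Under the diffusive scaling $(m^{-2}J_m^H, m^{-1}L_m^H)\to (\tau_1^U, V(\tau_1^U))$ of \cite[Theorem 2.5]{shef-burger}, the Jacobian $m^{-3}=m^{-2}\cdot m^{-1}$ appears automatically; the exponential in $m^2/j$ comes from the one-sided Brownian hitting-time density $\propto t^{-3/2}e^{-1/(2t)}$ at $t=j/m^2$; and the Laplace-type tail in $|l'|/j^{1/2}$ is a (cruder than Gaussian but fully sufficient) bound on the fluctuations of $L_m^H$ at its natural scale, which can be obtained from an elementary martingale tail estimate. Substituting $j=n-|\wt x|$ and $l'=\frk c(\wt x)-l$ yields the lemma.

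The main obstacle is obtaining uniformity of the joint local limit bound across the full range of $(m,j,l')$: in the bulk regime $j\asymp m^2$ with $|l'|$ of order $j^{1/2}$ it reduces to a standard local CLT, but extracting honest exponential decay in both variables when they are far from their typical scales requires the sharper local CLT with tail control provided by \cite[Proposition 2.2]{gms-burger-local}. Once that estimate is available, the remainder of the argument is a mechanical assembly of the three ingredients above.
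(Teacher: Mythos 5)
Your proof mirrors the paper's argument step for step: reduce to the unconditional law via Lemma~\ref{prop-last-abs-cont} at a cost of $\frk r(\wt x)^{-1+o_{\frk r(\wt x)}(1)}$, observe that under the conditioning $\mcl E_n^l$ forces the pointwise event $(J_m^H, L_m^H)=(n-|\wt x|, \pm(\frk c(\wt x)-l))$ with $m=\frk o(\wt x)$ in a fresh word, and then invoke the joint local-limit estimate from~\cite{gms-burger-local}. The only difference is cosmetic: the paper pulls the Gaussian-Laplace bound on $\BB P(J_m^H=j,\ L_m^H=l')$ directly from the second equation of~\cite[Lemma 2.8]{gms-burger-local}, whereas you propose to reassemble it from~\cite[Proposition 2.2]{gms-burger-local} together with auxiliary hitting-time and martingale tail estimates; that reconstruction is not needed since the bound is already packaged in Lemma~2.8.
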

\begin{proof}
Fix a realization $\wt x$ as in the statement of the lemma. For $m\in\BB N$, let $J_{\wt x,m}^H$ be the smallest $j\geq |\wt x|+1$ such that $X(-j,-|\wt x|-1)$ contains $m$ hamburgers and let $L_{\wt x , m}^H := d^*\left(X(-J_{\wt x,m}^H,-1)\right)$. 

On the event $\mcl E_n^l$, every cheeseburger in $X(-J , -|\wt x|-1)$ is matched to a cheeseburger order in $X(-|\wt x| ,-1)$. It follows that
\eqbn
( J_{\wt x , \frk o(\wt x)}^H  , L_{\wt x , \frk o(\wt x)}^H ) = (n, l - \frk c(\wt x) ). 
\eqen
By Lemma~\ref{prop-last-abs-cont} together with the second equation of \cite[Lemma 2.8]{gms-burger-local} (applied with $ m = \frk o(\wt x)$, $k = n - |\wt x|$, and $R = |l - \frk c(\wt x) |$), we therefore have
\alb
&\BB P\left(  \mcl E_n^l   \,|\, X_{-\wt\pi_n} \dots X_{-1} = \wt x \right) \\
&\qquad = \BB P\left( ( J_{\wt x , \frk o(\wt x)}^H  , L_{\wt x , \frk o(\wt x)}^H ) = (n, l - \frk c(\wt x) )  \,|\, X_{-\wt\pi_n} \dots X_{-1} = \wt x \right)\\
&\qquad \leq \BB P\left( ( J_{\wt x , \frk o(\wt x)}^H  , L_{\wt x , \frk o(\wt x)}^H ) = (n, l - \frk c(\wt x) )  \,|\, X_{-|\wt x|} \dots X_{-1} = \wt x \right) \frk r(\wt x)^{- 1+o_{\frk r(\wt x)}(1)}  \\
&\qquad \preceq   \exp\left(- \frac{a_0 \frk o(\wt x)^2 }{ n-|\wt x| }  - \frac{a_1 |\frk c(\wt x) - l|}{ (n-|\wt x|)^{1/2} }   \right)  \frk o(\wt x)^{-3  } \frk r(\wt x)^{-1+o_{\frk r(\wt x)}(1)} ,
\ale
for constants $a_0 , a_1 > 0$ as in the statement of the lemma.
\end{proof}

Fix a small constant $\eta>0$. In what follows, we let 
\eqb \label{eqn-end-upper-k_n}
  \BB k_n^\eta  := \left\lfloor  \log_2 \left(  \eta n^{(\ol\xi-\xi)/2} \right) \right\rfloor  \quad \op{and}\quad \wt{\BB k}_n := \left\lfloor  \log_2 \left(  \eta n^{\xi/2} \right) \right\rfloor ,
\eqe 
be the smallest $k\in\BB N$ for which $2^k n^{\xi/2} \geq \eta n^{\ol\xi/2}$ (with $\ol\xi$ as in Proposition~\ref{prop-end-upper}) and the smallest $k\in\BB N$ for which $2^{-k} \leq n^{-\xi/2}$, respectively.

\begin{lem} \label{prop-pi-increment-upper}
Let $x$ be a realization of $X_{- N_n}\dots X_{-1}$ as in Proposition~\ref{prop-end-upper} and define $\frk h(x)$, $ \frk c(x)$, and $\frk m(x)$ as in Proposition~\ref{prop-end-upper}. 
For $k, l \in\BB N$, let $E_n^l(k  )$ be the event that $ \pi_n < J$,
\alb 
\mcl N_{\tb H}\left(X(- \pi_n,-1)\right)  \leq 2^{k } \ep_1 n^{\xi/2} , \quad \mcl N_{\tb C}\left(X(- \pi_n,-1)\right) - l   \leq 2^{k } n^{\xi/2}  , \quad \op{and} \quad \mcl N_{\tb F}\left(X(- \pi_n,-1)\right) \leq 2n^\nu .
\ale
There is an $\eta>0$, depending only on $\ep_0$, $\ep_1$, and $\ep_2$, such that for each $l\in \left[\ep_0 n^{\xi/2} , \ep_0^{-1} n^{\xi/2}\right]_{\BB Z}$ and each $k\in [1,\BB k_n^{2\eta}]_{\BB Z}$, we have
\begin{align} \label{eqn-pi-increment-upper}
\BB P\left(    E_n^l(k )    \,|\, X_{- N_n}\dots X_{-1} = x\right)  \preceq     \psi_0\left( \frk m(x)^2 \right)  \psi_2\left(2^{2 k }  n^{\xi } \right) \frk m(x)^{-2 - 2\mu}  2^{2(\mu+1)k   } n^{\xi(\mu+1)} 
\end{align}
with $\psi_0$ and $\psi_2$ as in Section~\ref{sec-reg-var} and the implicit constants depending only on $\ep_0,\ep_1$, and $\ep_2$. 
\end{lem}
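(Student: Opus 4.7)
My plan is to prove Lemma~\ref{prop-pi-increment-upper} by combining Proposition~\ref{prop-wt-pi-exists} with the upper-bound analogue of Proposition~4.4 of \cite{gms-burger-local} (the counterpart of Assertion~2, which was applied in the proof of Proposition~\ref{prop-end-lower}). The first step passes from the non-stopping time $\pi_n$ to the nearly-stopping time $\wt\pi_n = \wt\pi_n(\ep_1,\ep_2)$ of Proposition~\ref{prop-wt-pi-exists}. On $E_n^l(k)$, assertions~\ref{item-wt-pi-mono} and~\ref{item-wt-pi-compare} of that proposition give $\wt\pi_n \ge \pi_n$, $\mcl N_{\tb F}(X(-\wt\pi_n,-\pi_n)) = 0$, and bounds
\[
\mcl N_{\tb H}(X(-\wt\pi_n,-1)) \le 2^k \ep_1 n^{\xi/2}, \qquad |\mcl N_{\tb C}(X(-\wt\pi_n,-1)) - l| \le 2^k n^{\xi/2} + \tfrac{1}{2} n^{\xi/2},
\]
together with $\wt\pi_n \in [n - \ep_1^{-1} n^\xi, J-1]$. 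Thus $E_n^l(k)$ is contained in an event defined purely in terms of $\wt\pi_n$ and the walk state at $\wt\pi_n$, with constants only mildly relaxed.

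The second step decomposes this containing event over triples $(j,h,c)$ describing the value of $\wt\pi_n$ and the symbol counts $(\mcl N_{\tb H},\mcl N_{\tb C})(X(-\wt\pi_n,-1))$; here $j$ ranges over the time window $[n-\ep_1^{-1}n^\xi,n]$ and $(h,c)$ ranges over a box of area $\asymp (2^k n^{\xi/2})^2$. By Proposition~\ref{prop-wt-pi-exists}\,\ref{item-wt-pi-cond}, the conditional law of $X_{-J}\dots X_{-\wt\pi_n-1}$ given a realization of $X_{-\wt\pi_n}\dots X_{-1}$ agrees with a genuine stopping-time law for $J^H_{\frk o(\cdot)}$ subject to a matching constraint, so one may apply the upper-bound assertion of \cite[Proposition~4.4]{gms-burger-local} with starting scale $\frk m(x)$, terminal scale $\asymp 2^k n^{\xi/2}$, total elapsed time $\sim n$, window width $\sim n^\xi$, and the walk constrained to stay in the third quadrant. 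Summing the per-term local estimate over the box area (contributing the factor $2^{2(\mu+1)k} n^{\xi(\mu+1)}$) and over the time window (contributing, through Lemmas~\ref{prop-I-reg-var} and~\ref{prop-interval-reg-var}, the factors $\psi_0(\frk m(x)^2)\,\psi_2(2^{2k} n^\xi)\, \frk m(x)^{-2-2\mu}$) produces the claimed bound.

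The main obstacle is the bookkeeping relating the reduced-word quantities $\mcl N_{\tb H/\tb C/\tb F}(X(-\pi_n,-1))$ that define $E_n^l(k)$ to the walk $D=(d,d^*)$, which is built from the $Y$-word (flexibles already identified with their matched burger type). Since on $\{\pi_n < J\}$ any flexibles in $X(-\pi_n,-1)$ must match burgers at times $\le -J$, the walk position $D(-\pi_n)$ differs from the corresponding reduced counts by at most $\mcl N_{\tb F}(X(-\pi_n,-1)) \le 2n^\nu$, and this discrepancy can be absorbed into the target box since $\nu < \xi/2$. Choosing $\eta$ small enough that $2^k n^{\xi/2} < \frk m(x)$ for every $k \le \BB k_n^{2\eta}$ keeps us in the bulk regime in which the local estimates of \cite[Proposition~4.4]{gms-burger-local} apply with the claimed $\frk m(x)$-scaling; this dictates the choice of $\eta$ in the statement of the lemma.
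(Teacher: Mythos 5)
Your plan starts in the right place---the goal is indeed to land on an application of the upper-bound assertion of \cite[Proposition~4.4]{gms-burger-local}---but the route you take is both more roundabout than necessary and, at one key step, misapplied.

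First, the detour through $\wt\pi_n$ is not needed here. The event $E_n^l(k)$ is a statement only about $\pi_n$, the counts $\mcl N_\theta(X(-\pi_n,-1))$, and $\{\pi_n < J\}$. There is no conditioning on a realization of $X_{-\pi_n}\dots X_{-1}$ and hence no stopping-time obstacle to circumvent; we only want an upper bound on a probability, and for that a pathwise containment suffices. The paper works directly with $\pi_n$: on $E_n^l(k)$, the constraints on the counts at $-\pi_n$ together with $\frk h(x), \frk c(x), \mcl N_{\tb F}(\mcl R(x)) \le n^\nu$ and $n-\ep_1^{-1}n^\xi \le \pi_n \le n$ translate into bounds of order $2^k n^{\xi/2}$ on the symbol counts of the reduced intermediate word $\mcl R(x')$, where $x'$ is the realization of $X_{-\pi_n}\dots X_{-N_n-1}$. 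These bounds exhibit $E_n^l(k)$ pathwise as a subset of the event $E_{n,k',r'}^{\frk h(x),\frk c(x)}$ of \cite[Proposition~4.4]{gms-burger-local} with $k' \asymp 2^{2k} n^\xi$ and $r' \asymp 2^k n^{\xi/2}$, and Assertion~1 of that proposition then delivers~\eqref{eqn-pi-increment-upper} directly. No decomposition over triples $(j,h,c)$, no summation of local estimates.

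Second, and more seriously, your citation of Proposition~\ref{prop-wt-pi-exists}\,\ref{item-wt-pi-cond} is the wrong tool for this lemma. That assertion describes the conditional law of $X_{-J}\dots X_{-\wt\pi_n-1}$, i.e.\ the piece of the backward word \emph{beyond} $\wt\pi_n$ (toward $-J$), whereas the event $E_n^l(k)$ is determined by the piece of the word \emph{between} $N_n$ and $\pi_n \le \wt\pi_n$. Invoking~\ref{item-wt-pi-cond} to ``turn $\wt\pi_n$ into a stopping time'' therefore addresses a segment of the word that is irrelevant to the event you need to bound. (The $\wt\pi_n$ machinery and~\ref{item-wt-pi-cond} are genuinely needed later, e.g.\ in Lemmas~\ref{prop-last-increment-upper} and~\ref{prop-mid-increment-upper}, where the conditional law after $\wt\pi_n$ actually enters.) Your heuristic for the choice of $\eta$---keeping $2^k n^{\xi/2}$ well below $\frk m(x) \ge n^{\ol\xi/2}$ so that the gms-burger-local bound applies in the regime where its constant is uniform---is the right idea and matches the paper's $\eta < (2b)^{-1}$ constraint, but it is the only part of the proposal that survives intact.
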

\begin{proof}
Consider a realization $x'$ of $X_{-\pi_n} \dots X_{- N_n-1}$ for which $E_n^l(k)$ occurs when $X_{-\pi_n}\dots X_{-1} = x' x$. Since $ \pi_n < J$, the word $\mcl R(  x')$ contains at most $\frk c(x) + n^\nu$ cheeseburgers and at most $\frk h(x) + n^\nu$ hamburgers. Since $ \mcl N_{\tb C}\left(\mcl R(x'x)  \right) -l \leq 2^{k} n^{\xi/2}$, we have
\eqbn
d^*\left(\mcl R(  x ')  \right) \geq \frk c(x) - n^\nu - l - 2^{k}n^{\xi/2} . 
\eqen
Hence
\eqb \label{eqn-x'-small-C}
\mcl N_{\tb C}\left( \mcl R(  x ') \right) + \mcl N_{\tb F}\left( \mcl R(  x ') \right) \leq 3n^\nu + l + 2^{k} n^{\xi/2} \preceq 2^{k}n^{\xi/2} .
\eqe 
Furthermore, we have 
\eqb \label{eqn-x'-small-C-diff}
\left| \mcl N_{\tc C}\left( \mcl R(  x ')  \right) - \frk c(x)   \right| \leq n^\nu + l + 2^{k} n^{\xi/2} \preceq 2^{k}n^{\xi/2}   .
\eqe 
By definition of $E_n^l(k)$, we have
\eqb \label{eqn-x'-small-H}
   \mcl N_{\tb H}\left( \mcl R(x'x)  \right) + \mcl N_{\tb F}\left( \mcl R(x'x)  \right)      \preceq 2^{k} n^{\xi/2} ,
\eqe 
whence
\eqb \label{eqn-x'-small-H-diff}
 \mcl N_{\tb H}\left( \mcl R(x' )  \right) + \mcl N_{\tb F}\left( \mcl R(x' )  \right)      \preceq 2^{k} n^{\xi/2}  \quad \op{and} \quad \left| \frk h(x) -  \mcl N_{\tc H}\left( \mcl R(  x ')  \right) \right|  \preceq 2^{k}n^{\xi/2} .
\eqe 
By definition of $\pi_n$, we also have
\eqb \label{eqn-x'-small-time}
0\leq   n - |x'x|   \leq \ep_1^{-1} n^{\xi } \preceq  n^{\xi }  \leq 2^k n^{\xi } .
\eqe 
The implicit constants above depend only on $\ep_0,\ep_1$, and $\ep_2$. 
 
Since $\pi_n \geq n - \ep_1^{-1} n^{\xi/2}$, equations~\eqref{eqn-x'-small-C},~\eqref{eqn-x'-small-C-diff},~\eqref{eqn-x'-small-H}, and~\eqref{eqn-x'-small-H-diff} together imply that
$E_n^l(k)$ is contained in the event $E_{n ,k',r'}^{\frk h(x) , \frk c(x)}$ of \cite[Proposition 4.4]{gms-burger-local}, defined with  $\dots X_{-|x|-2} X_{-|x|-1}$ in place of $\dots X_{-2} X_{-1}$, for $k' \asymp 2^{2k} n^{\xi  }$ and $r' \asymp 2^k n^{\xi/2}$ (implicit constants depending only on $\ep_0$, $\ep_1$, and $\ep_2$).
By \cite[Proposition 4.4, Assertion 1]{gms-burger-local}, if we choose $\eta < (2b)^{-1}$ (say), then whenever $k\in [1,\BB k_n^{2\eta}]_{\BB Z}$ and $\frk m(x) \geq n^{\ol\xi/2}$, we have
\eqbn 
\BB P\left(E_n^l(k) \,|\, X_{- N_n}\dots X_{-1} =x   \right) \preceq  \psi_0\left( \frk m(x)^2 \right)  \psi_2\left(2^{2 k }  n^{\xi } \right) \frk m(x)^{-2 - 2\mu}  2^{2(\mu+1)k   } n^{\xi(\mu+1)} .
\eqen  
The statement of the lemma follows. 
\end{proof}

\begin{lem} \label{prop-mid-increment-upper}
Let $x$ be a realization of $X_{- N_n}\dots X_{-1}$ as in Proposition~\ref{prop-end-upper} and define $\frk h(x)$, $ \frk c(x)$, and $\frk m(x)$ as in Proposition~\ref{prop-end-upper}. 
For $k_1 , k_3, l \in\BB N$, let $\wt E_n^l(k_1,k_3 )$ be the event that $\wt\pi_n < J \wedge n$ and the following is true:
\alb 
&\mcl N_{\tb H}\left(X(-\wt\pi_n,-1)\right) \leq 2^{k_1} (\ep_1-\ep_2) n^{\xi/2} ,\quad \mcl N_{\tb C}\left(X(-\wt\pi_n,-1)\right) - l   \leq 2^{k_1 } n^{\xi/2} ,   \\
& \mcl N_{\tb F}\left(X(-\wt\pi_n,-1)\right) \leq n^\nu ,    \quad \op{and} \quad  \frk c_f\left(X(-\wt\pi_n,-1)\right) \leq 2^{-k_3 } n^{\xi/2} .
\ale
Let $\eta$ be chosen as in Lemma~\ref{prop-mid-increment-upper} and let $\BB k_n^\eta$ and $\wt{\BB k}_n$ be as in~\eqref{eqn-end-upper-k_n} for this choice of $\eta$. For each $l\in \left[\ep_0 n^{\xi/2} , \ep_0^{-1} n^{\xi/2}\right]_{\BB Z}$ and each $(k_1,k_3)    \in [1,\BB k_n^{ \eta}]_{\BB Z} \times [1,\wt{\BB k}_n]_{\BB Z}$, we have
\begin{align} \label{eqn-mid-increment-upper}
 \BB P\left(   \wt E_n^l(k_1,k_3 )    \,|\, X_{- N_n}\dots X_{-1} = x\right)  
  \preceq    \psi_0\left( \frk m(x)^2 \right)  \psi_2\left(2^{2 k_1}  n^{\xi } \right)  \frk m(x)^{-2 - 2\mu}  2^{2(\mu+1)k_1   -100k_3}   n^{\xi(\mu+1)} 
\end{align}
with $\psi_0$ and $\psi_2$ as in Section~\ref{sec-reg-var} and the implicit constants depending only on $\ep_0,\ep_1$, and $\ep_2$. 
\end{lem}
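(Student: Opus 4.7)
The plan is to reduce the statement to Lemma~\ref{prop-pi-increment-upper} by using the comparison between $\wt\pi_n$ and $\pi_n$ provided by Proposition~\ref{prop-wt-pi-exists} together with the decay of $\frk c_f(X(-\wt\pi_n,-1))$ furnished by condition~\ref{item-wt-pi-C} of that proposition.

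First I would observe that on the event $\{\wt\pi_n < J\}$, condition~\ref{item-wt-pi-compare} of Proposition~\ref{prop-wt-pi-exists} translates the bounds defining $\wt E_n^l(k_1,k_3)$ into bounds on the symbol counts in $X(-\pi_n,-1)$: namely,
\[
\mcl N_{\tb H}(X(-\pi_n,-1)) \leq \mcl N_{\tb H}(X(-\wt\pi_n,-1)) + \ep_2 n^{\xi/2} \leq 2^{k_1}\ep_1 n^{\xi/2}
\]
(using $k_1\geq 1$ so that $2^{k_1}\ep_2 \geq \ep_2$) and similarly
\[
\mcl N_{\tb C}(X(-\pi_n,-1)) - l \leq \mcl N_{\tb C}(X(-\wt\pi_n,-1)) + \tfrac12 n^{\xi/2} - l \leq 2^{k_1+1} n^{\xi/2} .
\]
Since condition~\ref{item-wt-pi-mono} of the same proposition gives $\mcl N_{\tb F}(X(-\pi_n,-1)) \leq \mcl N_{\tb F}(X(-\wt\pi_n,-1)) \leq n^\nu$, we obtain the inclusion
\[
\wt E_n^l(k_1,k_3) \;\subset\; E_n^l(k_1+1) \,\cap\, \bigl\{\frk c_f(X(-\wt\pi_n,-1)) \leq 2^{-k_3} n^{\xi/2}\bigr\}.
\]

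Next I would use the tower property, conditioning on $X_{-\pi_n}\dots X_{-1}$. Since $E_n^l(k_1+1)$ is measurable with respect to $X_{-\pi_n}\dots X_{-1}$ (via Lemma~\ref{prop-J^H-determined}) and entails $\pi_n < J$,
\begin{align*}
&\BB P\bigl(\wt E_n^l(k_1,k_3) \,\bigl|\, X_{-N_n}\dots X_{-1} = x\bigr) \\
&\qquad\leq \BB E\Bigl[ \BB 1_{E_n^l(k_1+1)} \BB P\bigl(\frk c_f(X(-\wt\pi_n,-1)) \leq 2^{-k_3} n^{\xi/2} \,\bigl|\, X_{-\pi_n}\dots X_{-1}\bigr) \,\Bigl|\, X_{-N_n}\dots X_{-1} = x\Bigr].
\end{align*}
Since $k_3 \leq \wt{\BB k}_n$ implies $2^{-k_3} \geq n^{-\xi/2}$, condition~\ref{item-wt-pi-C} of Proposition~\ref{prop-wt-pi-exists} bounds the inner conditional probability by a constant multiple of $2^{-100 k_3}$ on the event $\{\pi_n < J\}$, hence on $E_n^l(k_1+1)$. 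Therefore
\[
\BB P\bigl(\wt E_n^l(k_1,k_3) \,\bigl|\, X_{-N_n}\dots X_{-1} = x\bigr) \preceq 2^{-100 k_3}\, \BB P\bigl(E_n^l(k_1+1) \,\bigl|\, X_{-N_n}\dots X_{-1} = x\bigr).
\]

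Finally, since $\BB k_n^{2\eta} = \BB k_n^\eta + 1$ for the choice of $\eta$ in Lemma~\ref{prop-pi-increment-upper}, we have $k_1+1 \in [1,\BB k_n^{2\eta}]_{\BB Z}$ whenever $k_1\in [1,\BB k_n^\eta]_{\BB Z}$, so Lemma~\ref{prop-pi-increment-upper} applies with $k=k_1+1$. Substituting and absorbing the resulting $O(1)$ multiplicative constants (including $4^{\mu+1}$ and the ratio $\psi_2(4\cdot 2^{2k_1} n^\xi)/\psi_2(2^{2k_1} n^\xi)$, which is bounded uniformly since $\psi_2$ is slowly varying and $2^{2k_1} n^\xi \to \infty$) yields the estimate~\eqref{eqn-mid-increment-upper}. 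No step presents a substantive obstacle; the main point is simply that the extra factor $2^{-100 k_3}$ is precisely the gain we arranged for in Proposition~\ref{prop-wt-pi-exists}.
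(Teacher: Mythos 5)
Your proposal follows the same approach as the paper's proof: include $\wt E_n^l(k_1,k_3)$ in $E_n^l(k_1+1)$, use condition~\ref{item-wt-pi-C} of Proposition~\ref{prop-wt-pi-exists} to extract the factor $2^{-100k_3}$, and then apply Lemma~\ref{prop-pi-increment-upper}. The verification of $\BB k_n^{2\eta} = \BB k_n^\eta + 1$ and the careful tower-property argument are both correct.

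However, your justification of the flexible-order bound in $E_n^l(k_1+1)$ is flawed as written. You claim that condition~\ref{item-wt-pi-mono} gives $\mcl N_{\tb F}\left(X(-\pi_n,-1)\right) \leq \mcl N_{\tb F}\left(X(-\wt\pi_n,-1)\right)$, but since $\wt\pi_n \geq \pi_n$ means $X_{-\wt\pi_n}\dots X_{-1}$ is the \emph{longer} word, prepending the burger-only block $X_{-\wt\pi_n}\dots X_{-\pi_n-1}$ can only cancel flexible orders in the reduced word, so the inequality actually runs the other way. The reason the conclusion is nevertheless correct is that on the event $\{\pi_n < J\}$ there are at least $\ep_1 n^{\xi/2}$ hamburger orders to the left of the leftmost flexible order in $X(-\pi_n,-1)$ (definition of $\pi_n$), the at-most-$\ep_2 n^{\xi/2}$ hamburgers in $X(-\wt\pi_n,-\pi_n-1)$ are therefore consumed by hamburger orders rather than flexible orders, and by the third condition in the definition of $\pi_n$ the cheeseburgers in $X(-\wt\pi_n,-\pi_n-1)$ are all matched to cheeseburger orders as well; hence no flexible order is cancelled and $\mcl N_{\tb F}\left(X(-\wt\pi_n,-1)\right) = \mcl N_{\tb F}\left(X(-\pi_n,-1)\right)$. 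With this replacement your inclusion $\wt E_n^l(k_1,k_3)\subset E_n^l(k_1+1)$ is correct and the rest of the argument goes through as stated.
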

\begin{proof}
Let $E_n^l(k_1+1)$ be as in Lemma~\ref{prop-pi-increment-upper} with $k = k_1+1$. By condition~\ref{item-wt-pi-compare} of Proposition~\ref{prop-wt-pi-exists}, if $\wt E_n^l(k_1,k_3)$ occurs, then $E_n^l(k_1+1)$ occurs. By condition~\ref{item-wt-pi-C} of Proposition~\ref{prop-wt-pi-exists}, for $(k_1,k_3)  \in [1,\BB k_n^{ \eta}]_{\BB Z} \times [1,\wt{\BB k}_n]_{\BB Z}$ we have
\eqbn
\BB P\left( \frk c_f\left(X(-\wt\pi_n,-1)\right) \leq 2^{-k_3 } n^{\xi/2} \,|\,    E_n^l(k_1+1) ,\,  X_{- N_n}\dots X_{-1} = x  \right)  \preceq \left(2^{k_3} \wedge n\right)^{-100} .
\eqen
By combining this with the estimate of Lemma~\ref{prop-pi-increment-upper}, we obtain~\eqref{eqn-mid-increment-upper}.
\end{proof}

\begin{proof}[Proof of Proposition~\ref{prop-end-upper}]
For $k_1 , k_2 , k_3 , l \in \BB N$, let $\wh E_n^l(k_1,k_2,k_3)$ be the event that $\wt\pi_n < J\wedge n$ and the following is true:
\alb 
&\mcl N_{\tb H}\left(X(-\wt\pi_n,-1)\right) \in \left[ 2^{k_1-1}  (\ep_1 - \ep_2) n^{\xi/2} , 2^{k_1 }   (\ep_1 - \ep_2)  n^{\xi/2} \right]_{\BB Z} \\
&\mcl N_{\tb C}\left(X(-\wt\pi_n,-1)\right) - l   \in \left[ 2^{k_2-1} n^{\xi/2}   ,  2^{k_2} n^{\xi/2}\right]_{\BB Z} , \\
& \mcl N_{\tb F}\left(X(-\wt\pi_n,-1)\right) \leq 2n^\nu ,                 \quad \op{and} \\
& \frk c_f\left(X(-\wt\pi_n,-1)\right) \in \left[ 2^{-k_3   } n^{\xi/2} , 2^{k_3-1} n^{\xi/2}\right]_{\BB Z} .
\ale
Also let $\wh E_n^l(k_1, 0 ,k_3)$ be defined in the same manner as $\wh E_n^l(k_1,k_2,k_3)$ above but with $\left[-\ep_1^{-1} n^{\xi/2}  ,   n^{\xi/2}\right]_{\BB Z}$ in place of $\left[ 2^{k_2-1} n^{\xi/2}   ,  2^{k_2} n^{\xi/2}\right]_{\BB Z}$; and let $\wh E_n^l(k_1,k_2,0)$ be defined in the same manner as $\wh E_n^l(k_1,k_2,k_3)$ above but with  $\left[   n^{\xi/2} , \infty \right)_{\BB Z}$ in place of $\left[ 2^{-k_3  } n^{\xi/2} , 2^{-k_3} n^{\xi/2}\right]_{\BB Z}$. 

Let $\eta$ be as in Lemma~\ref{prop-pi-increment-upper} and let  
\alb
G_n := \left\{ \sup_{j\in [n-\ep_1^{-1} n^{\xi}  , n]_{\BB Z}} |X(-j , -n+ \ep_1^{-1} n^{\xi})| \leq \frac{\eta}{4} n^{\ol \xi/2} \right\} \cap \left\{ \sup_{j\in [n/2  , n]_{\BB Z}} \mcl N_{\tb F} \left(X(-j,-1)\right)    \leq 2 n^\nu  \right\} .
\ale
Since $\ol \xi > \xi$ and $\nu > 1-\mu$, \cite[Lemma 3.13]{shef-burger} together with \cite[Corollary 5.2]{gms-burger-cone} implies
\eqb \label{eqn-end-sup-bad}
\BB P\left(G_n^c \,|\, X_{-N_n} \dots X_{-1} = x\right) = o_n^\infty(n) .
\eqe 
 
By condition~\ref{item-wt-pi-mono} of Proposition~\ref{prop-wt-pi-exists}, on the event $\{\wt\pi_n < J\wedge n\}$ we always have $n-\wt\pi_n \leq \ep_1^{-1} n^{\xi } \leq |x|$. 
By Lemma~\ref{prop-last-increment-upper}, for $l \in \left[\ep_0 n^{\xi/2} , \ep_0^{-1} n^{\xi/2}\right]_{\BB Z}$ and $(k_1,k_2,k_3) \in [1,\BB k_n^{ \eta}]_{\BB Z}  \times [0,\BB k_n^{ \eta}]_{\BB Z}   \times [1,\wt{\BB k}_n]_{\BB Z}$, we have
\eqb \label{eqn-end-prob1}
\BB P\left( \mcl E_n^l  \,|\,   \wh E_n^l(k_1,k_2,k_3),\,      X_{- N_n} \dots X_{-1} =  x  \right)  
\preceq   \exp\left(-  a_0 2^{2 k_1}  -  a_1 2^{ k_2}  \right)  2^{-3k_1}  2^{(k_1 + k_3)(1+o_{k_1+k_3}(1))}   n^{-3\xi/2}    
\eqe
for positive constants $a_0$ and $a_1$ depending only on $\ep_0$, $\ep_1$, and $\ep_2$. By Lemma~\ref{prop-mid-increment-upper}, we have
\begin{align} \label{eqn-end-prob2}
&\BB P\left(  \wh E_n^l(k_1,k_2,k_3) \,|\,     X_{- N_n} \dots X_{-1} =  x   \right) \notag \\
&\qquad \preceq  \psi_0\left( \frk m(x)^2 \right)  \psi_2\left(2^{ 2 (k_1\vee k_2) } n^{\xi } \right)  \frk m(x)^{-2 - 2\mu}  2^{2(\mu+1)(k_1\vee k_2)  -100k_3 } n^{\xi(\mu+1)} .
\end{align}

If $n$ is chosen sufficiently large (depending only on $\ep_0$ and $\eta$), $\mcl E_n^l$ occurs for some $l\in \left[\ep_0 n^{\xi/2} , \ep_0^{-1} n^{\xi/2}\right]_{\BB Z}$, and $\wh E_n^l(k_1,k_2,k_3)$ occurs for some $(k_1,k_2) \notin [1,\BB k_n^{ \eta}]_{\BB Z}^2$, then $G_n^c$ occurs. 
By~\eqref{eqn-end-prob1} and~\eqref{eqn-end-prob2}, for each small $\alpha >0$ we have
\alb
&\frac{\BB P\left(  \mcl E_n^l \cap G_n \,|\, X_{- N_n} \dots X_{-1} =  x \right) }{  \psi_0\left( \frk m(x)^2 \right)  \psi_2\left( n^{\xi } \right)   \frk m(x)^{-2 - 2\mu}    n^{\xi(\mu-1/2)}  }     \\
&\qquad \preceq \sum_{k_1=1}^{\BB k_n^\eta} \sum_{k_2=0}^{\BB k_n^\eta} \sum_{k_3 =0}^{\wt{\BB k}_n} 
\frac{\psi_2\left(2^{ 2 (k_1\vee k_2) } n^{\xi } \right) }{ \psi_2\left(  n^{\xi } \right) }  2^{2(\mu+1)(k_1\vee k_2) - (2-\alpha) k_1    -(99-\alpha) k_3 }  \exp\left(-  a_0 2^{2 k_1}  -  a_1 2^{ k_2}  \right)  \\
&\qquad \preceq \sum_{k_1=1}^\infty \sum_{k_2=0}^\infty    2^{2(\mu+1 + \alpha)(k_1\vee k_2) - (2-\alpha) k_1    }  \exp\left(-  a_0 2^{2 k_1}  -  a_1 2^{ k_2}  \right)  \\
&\qquad \preceq     \sum_{k_1=1}^\infty \sum_{k_2 = k_1 }^\infty    2^{2(\mu+1 + \alpha)k_2  }  \exp\left(-  a_0 2^{2 k_1}  -  a_1 2^{ k_2}  \right)  
+\sum_{k_2=0}^\infty \sum_{k_1 = k_2 }^\infty    2^{2(\mu+1 + \alpha)k_1}  \exp\left(-  a_0 2^{2 k_1}  -  a_1 2^{ k_2}  \right)  \\
&\qquad \preceq     \sum_{k_1=1}^\infty  \exp\left(-  a_0 2^{2 k_1}    \right)  
+\sum_{k_2=0}^\infty  \exp\left(  -  a_1 2^{ k_2}  \right) 
\preceq 1 ,
\ale
with implicit constants depending only on $\ep_0$, $\ep_1$, $\ep_2$, and $\alpha$. By combining this with~\eqref{eqn-end-sup-bad}, we obtain the statement of the proposition. 
\end{proof}

We end this subsection by recording the following consequence of the proof of Proposition~\ref{prop-end-upper}, which will be used in Section~\ref{sec-end-reg} below.

\begin{cor} \label{prop-good-pi}
For $n  \in\BB N$ and $B> 1$, let 
\alb
&H_{n,1} (B) := \left\{B^{-1} n^{\xi/2} \leq  \mcl N_\theta \left(X(-\wt\pi_n , -1 )  \right) \leq B n^{\xi/2}  \quad \forall \theta\in \left\{\tb H ,\tb C\right\} \right\} \\
&H_{n,2}(B) := \left\{ \frk c_f\left(X(-\wt\pi_n,-1)\right) \geq B^{-1} n^{\xi/2} \right\}\\
&H_{n,3}(B)  :=  \left\{ n -\wt \pi_n \geq B^{-1} n^{\xi/2} \right\} \\
&F_{\wt\pi_n} := \left\{\mcl N_{\tb F}\left(X(-\wt\pi_n,-1)\right) \leq 2n^\nu \right\} \\
&H_n(B) :=\{\wt \pi_n < J\wedge n\} \cap H_{n,1}(B) \cap H_{n,2}(B) \cap F_{\wt\pi_n} .
\ale
Fix $C>1$ and $q \in (0,1)$. There is a $B>1$, depending only on $C$, $q$, $\ep_0$, $\ep_1$, and $\ep_2$ such that the following is true. Let $x$ be a realization of $X_{-m_n^\delta} \dots X_{-1}$ for which $\mcl B_n^\delta(C)$ occurs (notation as in Definition~\ref{def-B_n^delta}). Then for each $l\in \left[\ep_0 n^{\xi/2} , \ep_0^{-1} n^{\xi/2}\right]_{\BB Z}$, we have
\eqbn
\BB P\left(H_n(B) \,|\, X_{-m_n^\delta} \dots X_{-1} = x ,\, \mcl E_n^l \right) \geq 1-q. 
\eqen
\end{cor}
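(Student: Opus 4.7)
The plan is to apply Bayes' rule to write
\[
\BB P(H_n(B)^c \mid X_{-m_n^\delta}\dots X_{-1} = x,\, \mcl E_n^l) = \frac{\BB P(\mcl E_n^l \cap H_n(B)^c \mid x)}{\BB P(\mcl E_n^l \mid x)},
\]
and to bound the denominator from below using Proposition~\ref{prop-end-lower}, which yields
\[
\BB P(\mcl E_n^l \mid x) \succeq \psi_0(\delta n)\, \psi_2(n^\xi)\, \delta^{-1-\mu}\, n^{-1-\mu + \xi(\mu-1/2)},
\]
with implicit constants depending only on $C, \ep_0, \ep_1$. I will then bound the numerator by decomposing $H_n(B)^c$ according to which of its defining conditions fails. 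Note that the event $\{\wt\pi_n = n\}$ on $\mcl E_n^l$ forces $X(-\wt\pi_n,-1) = X(-n,-1) = \tc H \tb C^l$, hence $\mcl N_{\tb H}(X(-\wt\pi_n,-1)) = 0 < B^{-1} n^{\xi/2}$, so this case is absorbed into the failure of $H_{n,1}(B)$.

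For the upper-bound violations $\{\mcl N_\theta(X(-\wt\pi_n,-1)) > B n^{\xi/2}\}$ with $\theta \in \{\tb H, \tb C\}$, and for $H_{n,2}(B)^c = \{\frk c_f(X(-\wt\pi_n,-1)) < B^{-1} n^{\xi/2}\}$, I would revisit the dyadic decomposition used at the end of the proof of Proposition~\ref{prop-end-upper} (based on the events $\wh E_n^l(k_1, k_2, k_3)$ defined there), now restricting the summation to the $(k_1, k_2, k_3)$-indices consistent with the bad event: $k_1 \gtrsim \log_2 B$ for $\mcl N_{\tb H}$, $k_2 \gtrsim \log_2 B$ for $\mcl N_{\tb C}$, and $k_3 \gtrsim \log_2 B$ for $\frk c_f$. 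The exponential factors $\exp(-a_0 2^{2 k_1} - a_1 2^{k_2})$ from Lemma~\ref{prop-last-increment-upper} control the large $k_1, k_2$ tails, and the factor $2^{-100 k_3}$ from Lemma~\ref{prop-mid-increment-upper} (inherited from condition~\ref{item-wt-pi-C} of Proposition~\ref{prop-wt-pi-exists}) controls the large $k_3$ tail. Each such restricted sum is bounded by a positive power $B^{-a}$ times the same $\psi_0 \psi_2$ quantity appearing in the lower bound on $\BB P(\mcl E_n^l \mid x)$, so the corresponding ratio is $O(B^{-a})$.

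The lower-bound violations of $\mcl N_{\tb H}$ and $\mcl N_{\tb C}$ are essentially deterministic consequences of $\mcl E_n^l$ once $B$ is large enough: the defining conditions of $\pi_n$ force $\pi_n < J$ and give at least $\ep_1 n^{\xi/2}$ hamburger orders to the left of the leftmost flexible order in $X(-\pi_n, -1)$, and condition~\ref{item-wt-pi-compare} of Proposition~\ref{prop-wt-pi-exists} then yields $\mcl N_{\tb H}(X(-\wt\pi_n, -1)) \geq (\ep_1 - \ep_2) n^{\xi/2}$. The analogous lower bound on $\mcl N_{\tb C}$ comes from the identity $\frk c(X(-\pi_n, -1)) = l + \mcl N_{\tc C}(X(-n, -\pi_n - 1))$, obtained from $d^*(X(-n, -1)) = -l$ on $\wh{\mcl E}_n^l$ together with the matching condition in $\mcl E_n^l$, combined with $l \geq \ep_0 n^{\xi/2}$ and condition~\ref{item-wt-pi-compare}. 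The condition $F_{\wt\pi_n}^c$ has probability $o_n^\infty(n)$ by \cite[Corollary 5.2]{gms-burger-cone}, negligible against the polynomial lower bound on $\BB P(\mcl E_n^l \mid x)$.

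The main technical obstacle is the bookkeeping of the dyadic sums: one must check that the restricted summations yield a polynomial tail $B^{-a}$ that beats the lower bound of Proposition~\ref{prop-end-lower} by a negative power of $B$, which requires tracking the $\psi_0 \psi_2$ slowly varying factors through the dyadic cutoffs. If $\ep_0$ is small relative to $1/2$, the crude bound $\frk c(X(-\wt\pi_n, -1)) \geq l - \tfrac{1}{2} n^{\xi/2}$ is too weak to give the required $B^{-1} n^{\xi/2}$ lower bound on $\mcl N_{\tb C}$ directly, so an auxiliary dyadic decomposition indexed by the value of $\mcl N_{\tc C}(X(-n, -\pi_n - 1))$ (controllable by the local estimates of \cite{gms-burger-local}) is needed to close this case. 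Choosing $B$ sufficiently large depending only on $C, q, \ep_0, \ep_1, \ep_2$ then makes the sum of all contributions at most $q$, as desired.
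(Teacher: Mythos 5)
Your overall strategy — Bayes' rule, the lower bound of Proposition~\ref{prop-end-lower} for the denominator, the dyadic decomposition via the events $\wh E_n^l(k_1,k_2,k_3)$ from the proof of Proposition~\ref{prop-end-upper} for the numerator, and \cite[Corollary 5.2]{gms-burger-cone} for $F_{\wt\pi_n}^c$ — is exactly the paper's approach, and the bookkeeping you describe for the $k_1,k_2,k_3$ tails is correct in spirit. However, there is a genuine gap at the step where you try to establish the lower bound $\mcl N_{\tb C}(X(-\wt\pi_n,-1)) \geq B^{-1} n^{\xi/2}$ in $H_{n,1}(B)$. You correctly observe that a na\"ive bound via $l$ and condition~\ref{item-wt-pi-compare} of Proposition~\ref{prop-wt-pi-exists} fails when $\ep_0 < 1/2$, but the "auxiliary dyadic decomposition indexed by $\mcl N_{\tc C}(X(-n,-\pi_n-1))$" that you propose as a fix is both unnecessary and not clearly workable: since $\pi_n$ is not a stopping time for $X$ read backward, the local estimates of \cite{gms-burger-local} do not apply directly to $X(-n,-\pi_n-1)$ without passing through the machinery of Proposition~\ref{prop-wt-pi-exists} again, and you give no indication of how to do this. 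The observation that closes the case cleanly, which you miss, is that $\mcl N_{\tb C}(\mcl R(x)) \geq \frk c_f(x)$ holds deterministically for every word $x$ (since $\frk c_f$ counts only the cheeseburger orders to the left of the leftmost flexible order in $\mcl R(x)$). Therefore, once $H_{n,2}(B)$ — i.e., the lower bound on $\frk c_f(X(-\wt\pi_n,-1))$ — is established via the $k_3$ sum, the lower bound on $\mcl N_{\tb C}(X(-\wt\pi_n,-1))$ follows for free. The $k_3$ decomposition you already have in place thus does double duty, and no auxiliary argument is needed.

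Two smaller issues. First, the identity you state, $\frk c(X(-\pi_n,-1)) = l + \mcl N_{\tc C}(X(-n,-\pi_n-1))$, is incorrect: computing $d^*(X(-n,-1)) = -l$ and splitting across the concatenation gives an additional term $-\mcl N_{\tb C}(X(-n,-\pi_n-1))$, which may be nonzero (some of the $l$ cheeseburger orders in $\mcl R(X(-n,-1))$ can originate in $X(-n,-\pi_n-1)$). Second, the case $\wt\pi_n = n$ is in fact impossible on $\mcl E_n^l$: by the definition of $\pi_n$ and condition~\ref{item-wt-pi-compare} of Proposition~\ref{prop-wt-pi-exists}, $\mcl N_{\tb H}(X(-\wt\pi_n,-1)) \geq (\ep_1-\ep_2) n^{\xi/2} > 0$, whereas $X(-n,-1)$ contains no hamburger orders. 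Your "absorption into $H_{n,1}(B)^c$" argument does work, but it is cleaner to note the case is empty, which also shows $\{\wt\pi_n < J\wedge n\}$ holds automatically on $\mcl E_n^l$.
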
 
\begin{proof}
Fix $\alpha >0$ to be chosen later, depending only on $\ep_0$, $\ep_1$, $\ep_2$, $C$, and $q$. 
Since $\mcl N_{\tb F}\left(\mcl R(x)\right) \leq n^\nu$, by \cite[Corollary 5.2]{gms-burger-cone} we have $\BB P(F_{\wt\pi_n}^c \,|\, X_{-m_n^\delta} \dots X_{-1} = x) = o_n^\infty(n)$, at a rate depending only on $\ep_0,\ep_1$, and $\ep_2$. Therefore, for each $\delta>0$ we can find $n_*^0 = n_*^0(\delta,\alpha \ep_0, \ep_1,\ep_2) \in \BB N$ such that for $n\geq n_*^0$, 
\eqb \label{eqn-good-pi-F-prob}
\BB P\left(F_{\wt\pi_n} \,|\, X_{-m_n^\delta} \dots X_{-1} = x ,\, \mcl E_n^l\right) \geq 1-\alpha. 
\eqe 

For $k_1,k_2,k_3\in\BB N$, define the events $\wh E_n^l(k_1,k_2,k_3)$ as in the proof of Proposition~\ref{prop-end-upper}. Also let $\BB k_n^\eta$ be defined as in~\eqref{eqn-end-upper-k_n} with $\eta$ as in Lemma~\ref{prop-pi-increment-upper}. By~\eqref{eqn-end-prob1} and~\eqref{eqn-end-prob2}, for $l \in \left[\ep_0 n^{\xi/2} , \ep_0^{-1} n^{\xi/2}\right]_{\BB Z}$ and $(k_1,k_2,k_3) \in [1,\BB k_n^{ \eta}]_{\BB Z}  \times [0,\BB k_n^{ \eta}]_{\BB Z}   \times [0,\infty)_{\BB Z}$, we have
\begin{align}  
&\frac{\BB P\left( \mcl E_n^l   \cap \wh E_n^l(k_1,k_2,k_3) \,|\,      X_{- m_n^\delta} \dots X_{-1} =  x  \right)}{  \psi_0\left( \delta n \right) \psi_2(n^{\xi})    \delta^{-1-\mu}   n^{-1-\mu + \xi(\mu-1/2)}   }   \notag  \\
&\qquad \preceq  \frac{    \psi_2\left(2^{ 2 (k_1\vee k_2) } n^{\xi } \right) }{\psi_2(n^\xi)}   \exp\left(-  a_0 2^{2 k_1}  -  a_1 2^{ k_2}  \right)  2^{2(\mu+1)(k_1\vee k_2) -2k_1   - 99k_3 +      (k_3 + k_1)( o_{k_1+k_3}(1))}     
\end{align}
for positive constants $a_0$ and $a_1$ depending only on $\ep_0$, $\ep_1$, and $\ep_2$. 

For $(\ol k_1,\ol k_2,\ol k_3) \in \BB N\times [0,\infty)_{\BB Z}^2$, let $\mcl K(\ol k_1,\ol k_2, \ol k_3)$ be the set of $(k_1,k_2,k_3) \in \BB N\times [0,\infty)_{\BB Z}^2$ for which either $k_1\geq \ol k_1$, $k_2 \geq \ol k_2$, or $k_3\geq \ol k_3$. 
By the calculation at the end of the proof of Proposition~\ref{prop-end-upper}, for each $\alpha > 0$ we can find $\ol k_1 , \ol k_2 , \ol k_3 \in \BB N$ and $n_*^1 = n_*^1 (\delta,\alpha,C,\ep_0,\ep_1,\ep_2) \geq n_*^0$ such that for $n\geq n_*^1$, we have
\alb
&\BB P\left( \mcl E_n^l   \cap \bigcup_{(k_1,k_2,k_3) \in \mcl K(\ol k_1,\ol k_2, \ol k_3)  }    \wh E_n^l(k_1,k_2,k_3) \,|\,      X_{- m_n^\delta} \dots X_{-1} =  x  \right) \\
&\qquad \preceq  \alpha \psi_0\left( \delta n \right)  \psi_2\left( n^{\xi } \right)    \delta^{-1-\mu}   n^{-1-\mu + \xi(\mu-1/2)}  
\ale
with the implicit constants depending only on $\ep_0,\ep_1$, $\ep_2$, and $C$. By combining this with Proposition~\ref{prop-end-lower} and possibly increasing $n_*^1$, we find that for $n\geq n_*^1$, 
\eqbn
 \BB P\left(   \bigcup_{(k_1,k_2,k_3) \in \mcl K(\ol k_1,\ol k_2, \ol k_3)  }    \wh E_n^l(k_1,k_2,k_3) \,|\, X_{- m_n^\delta} \dots X_{-1} =  x  ,\, \mcl E_n^l \right) \preceq \alpha .
\eqen
By definition of $\pi_n$ together with assertion~\ref{item-wt-pi-compare} of Proposition~\ref{prop-wt-pi-exists}, we always have $ \mcl N_{\tb H} \left(X(-\wt\pi_n , -1 )  \right) \geq (\ep_1-\ep_2) n^{\xi/2}$ on $\mcl E_n^l$.  
By combining this with~\eqref{eqn-good-pi-F-prob}, we infer that with $B_0 = (\ep_1-\ep_2)^{-1} \vee 2^{\ol k_1} \vee 2^{\ol k_2} \vee 2^{\ol k_3}$, 
\eqb \label{eqn-H1-H2-prob}
 \BB P\left(  \left(H_{n,1}(B_0 ) \cap H_{n,2}(B_0 ) \right)^c  \,|\,      X_{- m_n^\delta} \dots X_{-1} =  x  ,\, \mcl E_n^l \right) \preceq \alpha .
\eqe 
By Lemma~\ref{prop-last-increment-upper}, for each $\zeta > 0$ we have 
\[
\BB P\left(  \mcl E_n^l \,|\,  n-\wt\pi_n \leq \zeta n ,\, H_{n,1}(B_0 ) \cap H_{n,2}(B_0 ) \cap F_{\wt\pi_n} ,\, X_{-m_n^\delta} \dots X_{-1} = x  \right) \preceq \exp\left( -\frac{  a_0  }{ \zeta }\right) n^{-3\xi/2} 
\]
with the the constant $a_0 > 0$ and the implicit constant depending only on $B_0$, $C$, $\ep_0$, $\ep_1$, and $\ep_2$. By Lemma~\ref{prop-mid-increment-upper}, 
\eqbn
\BB P\left(  n-\wt\pi_n \leq \zeta n ,\, H_{n,1}(B_0 ) \cap H_{n,2}(B_0 ) \cap F_{\wt\pi_n} \,|\,  X_{-m_n^\delta} \dots X_{-1} = x  \right) \preceq \psi_0(\delta n) \psi_2(n^{\xi/2}) \delta^{-1-\mu} n^{-(1-\xi)(1+\mu)}
\eqen
with the implicit constant depending only on $B_0$, $C$, $\ep_0$, $\ep_1$, and $\ep_2$. By combining this with Proposition~\ref{prop-end-lower} and Bayes' rule, we find that if we choose $\zeta$ sufficiently small (depending on $B_0$, $C$, $\ep_0$, $\ep_1$, and $\ep_2$) and set $B := \zeta^{-1} \wedge B_0$, then for $n\geq n_*^1$, we have
\eqbn
\BB P\left(  H_n(B)^c \,|\,  X_{-m_n^\delta} \dots X_{-1} = x  \right) \preceq \alpha .
\eqen
We now obtain the statement of the corollary by choosing $n$ sufficiently large (depending only on $\ep_0,\ep_1,\ep_2$, $C$, and $\delta$) and $\alpha$ sufficiently small (depending only on $\ep_0,\ep_1,\ep_2$, $q$, and $C$). 
\end{proof}

\subsection{Stronger upper bounds}
\label{sec-end-upper-full}
 
The estimate of Proposition~\ref{prop-end-upper} is not quite enough to prove Proposition~\ref{prop-end-box}, as it degenerates when our realization $x$ is such that $(\frk h(x) , \frk c(x))$ is close to the boundary of the first quadrant (i.e.\ $\frk m(x)$ is small). In this subsection we will deduce from Proposition~\ref{prop-end-upper} two other upper bound which do not have this drawback, one with depends only on $n - |x|$ and one which depends on $\frk h(x) \vee \frk c(x)$. These two upper bounded will be used in Section~\ref{sec-end-properties} to prove Proposition~\ref{prop-end-box}. 

We start with the upper bound in terms of $n-|x|$. Note the similarity to the lower bound of Proposition~\ref{prop-end-lower}. 

\begin{prop} \label{prop-end-upper-n}
For $n\in\BB N$, let $N_n$ be a stopping time for $X$, read backward. 
Let $\psi_0$ and $\psi_2$ be the slowly varying functions as above. For each $\delta>0$, there exists $n_* = n_*(\delta,\ep_0,\ep_1)\in\BB N$ such that for each $n\geq n_*$; each $l \in \left[\ep_0 n^{\xi/2} , \ep_0^{-1} n^{\xi/2} \right]_{\BB Z}$; and each realization $x$ of $X_{-N_n  } \dots X_{-1}$ such that $|x| \in [(1-2\delta)n , (1-\delta)n ]_{\BB Z}$ and $\mcl R(x)$ contains at most $n^\nu$ flexible orders, we have 
\eqb \label{eqn-end-upper-n}
\BB P\left( \mcl E_n^l   \,|\, X_{  - N_n } \dots X_{-1} =   x   \right) \preceq    \psi_0\left( \delta n \right) \psi_2(n^\xi)\delta^{-1-\mu} n^{-1-\mu+ \xi(\mu-1/2)}  
\eqe 
with the implicit constant depending only on $\ep_0$ and $\ep_1$. 
\end{prop}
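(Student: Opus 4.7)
My plan is to apply Proposition \ref{prop-end-upper} at a slightly later stopping time. Setting $M := \lfloor \delta n/4 \rfloor$, the quantity $N_n' := N_n + M$ is again a stopping time for $X$ read backward, and by the strong Markov property, conditional on $\{X_{-N_n} \dots X_{-1} = x\}$, the additional word $Y := X_{-|x|-M} \dots X_{-|x|-1}$ is iid with distribution \eqref{eqn-theta-probs}, independent of $x$. Fixing $\overline{\xi} := (1+\xi)/2 \in (\xi,1)$, we have $|Yx| = |x|+M \in [(1-7\delta/4)n,(1-3\delta/4)n]$, so for $n$ large, $|Yx| < n - \ep_1^{-1} n^\xi$ and $(\delta n/8)^{1/2} \geq n^{\overline{\xi}/2}$, meaning a typical realization of $Yx$ will meet the hypotheses of Proposition \ref{prop-end-upper} with parameter $\overline{\xi}$.

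Let $G_n := \{\mcl R(Yx) \text{ has no burgers}\} \cap \{\frk m(Yx) \geq n^{\overline{\xi}/2}\} \cap \{\mcl N_{\tb F}(\mcl R(Yx)) \leq n^\nu\}$, where $\frk m(\cdot)$ is as in~\eqref{eqn-hc-for-end-upper}. On $G_n$, the realization $Yx$ (of $X_{-N_n'} \dots X_{-1}$) satisfies the hypotheses of Proposition \ref{prop-end-upper} with $\overline{\xi}$ in place of $\xi$, which yields
\[
\BB P(\mcl E_n^l \mid X_{-N_n'} \dots X_{-1} = Yx) \preceq \psi_0(\frk m(Yx)^2)\,\psi_2(n^\xi)\,\frk m(Yx)^{-2-2\mu}\,n^{\xi(\mu-1/2)}.
\]
On $G_n^c$ there are three cases. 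If $\mcl R(Yx)$ contains a burger, then $J \leq |Yx| < n$, so $\mcl E_n^l \subset \{J=n\}$ is incompatible and the contribution is $0$. If $\mcl N_{\tb F}(\mcl R(Yx)) > n^\nu$, this event has probability $o_n^\infty(n)$ uniformly in $x$ by \cite[Corollary 5.2]{gms-burger-cone}, which is negligible compared to the target bound. The remaining case $\{\frk m(Yx) < n^{\overline{\xi}/2}\}$ is absorbed into the main dyadic estimate below.

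Taking the conditional expectation via the tower property,
\[
\BB P(\mcl E_n^l \mid X_{-N_n}\dots X_{-1}=x) \preceq \psi_2(n^\xi)\,n^{\xi(\mu-1/2)}\, \BB E\bigl[\psi_0(\frk m(Yx)^2)\,\frk m(Yx)^{-2-2\mu}\,\mathbbm{1}_{G_n}\bigm| x\bigr] + o_n^\infty(n),
\]
and I bound the expectation by dyadic decomposition in $\frk m(Yx)$. At the typical scale $\frk m(Yx) \asymp (\delta n)^{1/2}$, the integrand equals $\asymp \psi_0(\delta n)(\delta n)^{-1-\mu}$ (since $\psi_0$ is slowly varying) and the probability of this scale is of order one, giving exactly the target $\psi_0(\delta n)\psi_2(n^\xi)\delta^{-1-\mu}n^{-1-\mu+\xi(\mu-1/2)}$. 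The main obstacle is controlling the contribution from smaller dyadic scales $\frk m(Yx) \in [2^k,2^{k+1}]$ with $2^k \in [n^{\overline{\xi}/2},(\delta n)^{1/2}]$, where the integrand blows up like $2^{-k(2+2\mu)}$. The no-burger constraint in $G_n$ effectively conditions the two-dimensional walk tracking $(\frk h,\frk c)$ to remain in the first quadrant throughout its $M$ steps, so its endpoint density near the boundary is governed by the cone exponent $\mu$ appearing in Lemma \ref{prop-I-reg-var}. Combining the two-dimensional local limit estimates of \cite[Section 2]{gms-burger-local} with the cone probability bound of Lemma \ref{prop-interval-reg-var} shows that $\BB P(\frk m(Yx) \asymp 2^k \mid x)$ decays sufficiently fast in $k$ (uniformly in the starting configuration $(\frk h(x),\frk c(x))$) to compensate for the $2^{-k(2+2\mu)}$ blowup, making the dyadic sum dominated by the typical scale and yielding the desired bound.
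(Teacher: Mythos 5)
Your overall strategy — advance by a stopping time to a point where the bulk estimate of Proposition~\ref{prop-end-upper} applies, then take conditional expectations — is the right idea, and your bookkeeping of which realizations meet the hypotheses is careful. But the final dyadic estimate, which you yourself flag as the main obstacle, does not go through, and this is precisely where the paper's proof does something different.

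Concretely, with a single increment $Y$ of length $M \asymp \delta n$, you need to sum $\psi_0(\frk m(Yx)^2)\, \frk m(Yx)^{-2-2\mu}$ against $\BB P\bigl(\frk m(Yx)\in[2^k,2^{k+1}],\text{ no burgers}\bigm| x\bigr)$. This probability is governed by the linear decay of the conditioned walk's density near a \emph{side} of the quadrant (not the vertex), so $\BB P(\frk m(Yx)\asymp 2^k\mid x,\ \text{no burgers}) \asymp 2^{2k}/(\delta n)$ for $2^k \ll (\delta n)^{1/2}$, uniformly in the starting configuration. Plugging this in, the dyadic sum is $\asymp (\delta n)^{-1}\sum_k \psi_0(2^{2k})\, 2^{-2k\mu}$, which is dominated by the \emph{smallest} scale $2^k \asymp n^{\bar\xi/2}$, giving a bound of order $(\delta n)^{-1} n^{-\bar\xi\mu}$ (up to slowly varying factors), whereas the target needs $(\delta n)^{-1-\mu}$. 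Since $\bar\xi < 1$, your bound overshoots the target by a polynomial factor $\asymp n^{\mu(1-\bar\xi)}\delta^\mu$. The claim that the typical scale dominates is simply false here: the $2^{-2k(2+2\mu)}$ blow-up is only partially compensated by the $2^{2k}$ boundary-density factor, leaving a convergent sum in the wrong direction.

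The paper avoids this by advancing in \emph{geometrically decreasing} increments $m_n^k - m_n^{k-1} \asymp 2^{-k}\delta n$ and stopping at the first $k$ for which the $k$-th increment alone already contributes $\gtrsim \zeta\,2^{-k/2}(\delta n)^{1/2}$ hamburger and cheeseburger orders. The crucial gain is that these increment-events are \emph{independent} (they involve disjoint intervals and make no reference to the no-burger constraint), and each succeeds with probability $\geq 1-s$ where $s$ can be made arbitrarily small by choosing $\zeta$ small. Hence the probability of needing $k$ increments decays like $s^k$, which beats any fixed polynomial rate $2^{ck(1+\mu)}$ arising from the weakened lower bound on $\frk m$. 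This controllable geometric decay is what the single-jump approach lacks, and it is the essential mechanism of the proof. (The truncation at scale $n^{\bar\xi/2}$, which you defer, is also handled cleanly in the paper: the probability of exhausting all $\BB k_n$ geometric increments without success is $\leq s^{\BB k_n}$, superpolynomially small.)
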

\begin{proof}
The proof is similar to that of condition~\ref{item-wt-pi-C} of Proposition~\ref{prop-wt-pi-exists}. The idea is to grow a little more of the word backward from time $-N_n$ and argue that with high probability, we soon reach a time for which $\mcl N_{\tb H}(X(-j,-1)) \wedge \mcl N_{\tb C}(X(-j,-1))$ is of order $(\delta n)^{1/2}$. We then condition on the word up to such a time and apply Proposition~\ref{prop-end-upper}.

Fix $ \ol\xi , \ol\xi' \in (\xi,1)$ with $\ol \xi < \ol \xi'$ and $\delta>0$. For $n\in\BB N$, let 
\[
\BB k_n   := \lfloor \log_2 (\delta   n^{ 1-\ol \xi' } )  \rfloor
\]
be the largest $k\in\BB N$ for which $2^{-k/2} (\delta n)^{1/2} \geq n^{\ol\xi'/2} $. 

Let $m_n^0 = N_n $ and for $k\in [1,\BB k_n]_{\BB Z}$, inductively define $m_n^k :=  \lfloor 2^{-k-1} \delta n \rfloor + m_n^{k-1} $. For $\zeta >0$ and $k\in[1,\BB k_n]_{\BB Z}$, let $E_n^k(\zeta)$ be the event that 
\eqbn
\mcl N_{\tb H}\left(X(-m_n^k ,-m_n^{k-1}-1)\right) \wedge \mcl N_{\tb C}\left(X(-m_n^k,-m_n^{k-1}-1)\right) \geq \zeta 2^{-k/2} (\delta n)^{1/2} .
\eqen
Let $K_n(\zeta)$ be the minimum of $\BB k_n+1$ and the smallest $k\in [1,\BB k_n]_{\BB Z}$ for which $E_n^k(\zeta) $ occurs and let $\wt N_n(\zeta) := m_n^{K_n(\zeta)}$. Note that if $N_n \leq (1-\delta)n$, then
\[
\wt N_n(\zeta) \leq N_n + \sum_{k=1}^{\BB k_n} 2^{-k-1} \delta n \leq (1-\delta/2) n .
\]

By \cite[Theorem 2.5]{shef-burger}, for each $s \in (0,1)$, we can find an $\zeta > 0$ (depending only on $s$) and an $ n_*^0 =  n_*^0(  \zeta , s , \delta ) \in \BB N$ such that for $n\geq n_*^0$ and $k\in[1 ,\BB k_n]_{\BB Z}$, we have 
\eqbn
\BB P\left(  E_n^k(\zeta) \right) \geq 1-s .
\eqen
Since the events $\{E_n^k(\zeta)\}$ are independent from each other and from $X_{-N_n} \dots X_{-1}$, for such a choice of $\zeta$ we have for $n\geq n_*^0$, $k\in [1, \BB k_n]_{\BB Z}$, and each realization $x$ as in the statement of the proposition that
\eqb \label{eqn-end-tube-prob}
\BB P\left(K_n(\zeta) \geq k   \,|\,   X_{  - N_n} \dots X_{-1} =   x       \right) \leq s^k .
\eqe 
In particular,
\eqb \label{eqn-end-tube-prob-long}
\BB P\left( K_n(\zeta) = \BB k_n + 1    \,|\, X_{  - N_n} \dots X_{-1} =   x  \right) \leq s^{\BB k_n} \leq \left(\delta n^{1-\ol \xi'}\right)^{-R}
\eqe  
where the exponent $R > 0$ can be made as large as we like by choosing $s$ (and hence also $\zeta$) sufficiently small, independently of $\delta$, $n$, and $x$.  

Let
\eqb \label{eqn-end-sup-F_n}
F_n^k := \left\{ \mcl N_{\tb F}\left(X(-m_n^k  ,-1)\right) \leq 2 n^\nu \right\}
\eqe 
with $\nu$ defined as in the beginning of this section. By \cite[Corollary 5.2]{gms-burger-cone}, for any realization $x$ as in the statement of the proposition we have
\eqb \label{eqn-end-sup-F_n-prob}
\BB P\left(\bigcup_{k=1}^{\BB k_n} F_n^k   \,|\, X_{  - N_n} \dots X_{-1} =   x   \right) = o_n^\infty(n)  
\eqe 
at a rate depending only on $p$. 
 
On the event $\{K_n(\zeta) = k\}$ for $k\in[1,\BB k_n]_{\BB Z}$, we have 
\eqbn
\mcl N_{\tb H}\left(X(-N_n(\zeta) ,- 1)\right) \wedge \mcl N_{\tb C}\left(X(-N_n(\zeta) ,-1)\right) \geq  \zeta 2^{-k/2}  (\delta n)^{1/2 }  .
\eqen
Let $n_*^1 = n_*^1(\zeta,s,\delta,\ol\xi',\ol\xi) \geq n_*^0$ be chosen so that $\zeta n^{\ol\xi'/2} \geq n^{\ol\xi' /2}$ for $n\geq n_*^1$. 
By Proposition~\ref{prop-end-upper} (applied with $\wt N_n(\zeta)$ in place of $N_n$) and our choice of $\BB k_n$, by possibly increasing $n_*^1$ we can arrange that for each $n\geq n_*^1$, each $k\in [1,\BB k_n]_{\BB Z}$, and each $l$ and $x$ as in the statement of the proposition we have
\alb
&\BB P\left(\mcl E_n^l \,|\, K_n(\zeta) = k   ,\,  F_n^k ,\,   X_{  - N_n} \dots X_{-1} =   x  \right) \\
&\qquad \preceq \psi_0\left( 2^{-k} \delta n  \right) \psi_2(n^{\xi/2}) 2^{2k(1+\mu  ) } \delta^{-1-\mu} n^{-1-\mu + \xi(\mu-1/2)}  
\ale
with the implicit constants depending only on $\zeta$, $\ol\xi, \ol\xi'$, $\ep_0$, and $\ep_1$. By combining this with~\eqref{eqn-end-tube-prob}, we infer
\begin{align} \label{eqn-prob-mclE-end-tube}
&\BB P\left(  \mcl E_n^l ,\,      K_n(\zeta) = k  ,\,  F_n^k  \,|\, X_{  - N_n} \dots X_{-1} =   x \right)\notag \\
&\qquad  \preceq  \psi_0\left( 2^{-k} \delta n  \right) \psi_2(n^{\xi/2}) s^k 2^{2k(1+\mu  ) } \delta^{-1-\mu} n^{-1-\mu + \xi(\mu-1/2)}   .
\end{align}
By~\eqref{eqn-end-tube-prob-long},~\eqref{eqn-end-sup-F_n-prob}, and~\eqref{eqn-prob-mclE-end-tube}, we find that if we choose $s  $ sufficiently small, depending only on $\ol \xi$ (and $\zeta$ sufficiently small depending on $s$), then there exists $q = q(s) \in (0,1)$ and $n_*^2  = n_*^2 ( s , \zeta , \delta , \ep_0 , \ep_1) \geq n_*^1$  such that for $n\geq n_*^2 $ and any $l$ and $x$ as in the statement of the proposition, it holds for each $k \in [1,\BB k_n]_{\BB Z}$ that
\eqbn
\BB P\left( \mcl E_n^l ,\,      K_n(\zeta) = k   \,|\, X_{  - N_n} \dots X_{-1} =   x \right)  \preceq  q^k \psi_0\left( 2^{-k} \delta n  \right) \psi_2(n^{\xi/2})   \delta^{-1-\mu} n^{-1-\mu + \xi(\mu-1/2)}  ,
\eqen
with the implicit constants depending only on $\ep_0$, $\ep_1$, $s$, and $\zeta$; and
\eqbn
\BB P\left( k_*^n(\zeta) = \BB k_n + 1    \,|\, X_{  - N_n} \dots X_{-1} =   x  \right) \leq  \delta^{-1-\mu} n^{-100}  .
\eqen
By summing over all $k\in [1,\BB k_n+1]_{\BB Z}$, we obtain the statement of the proposition for $n\geq n_*^2$. 
\end{proof}

Next we will prove a version of Lemma~\ref{prop-end-upper} with $\frk h(x) \vee \frk c(x)$ in place of $\frk h(x) \wedge \frk c(x)$, which will help us rule out the possibility that $D(X(-N_n,-1))$ is far from the origin. 

\begin{prop} \label{prop-end-upper-max}
Fix $\ol\xi \in (\xi,1)$ and for $n\in\BB N$, let $N_n$ be a stopping time for the word $X$, read backward. Let 
\eqbn
\frk M (x) := \frk h(x) \vee \frk c(x)  .
\eqen
There exists $n_* = n_*(\ep_0,\ep_1)$ such that for each $n\geq n_*$; each $l \in \left[\ep_0 n^{\xi/2} , \ep_0^{-1} n^{\xi/2} \right]_{\BB Z}$; and each realization $x$ of $X_{-N_n  } \dots X_{-1}$ such that $|x|  < n - \ep_1^{-1} n^\xi$, $\frk M(x) \geq n^{\ol\xi/2}$, and $\mcl N_{\tb F}(\mcl R(x)) \leq n^\nu$, we have
\eqb \label{eqn-end-upper-max}
\BB P\left( \mcl E_n^l   \,|\, X_{  - N_n } \dots X_{-1} =   x   \right) \preceq    \psi_0\left(\frk M (x)^2 \right) \psi_2(n^\xi) \frk M (x)^{-2 -2\mu} n^{ \xi(\mu-1/2)}  
\eqe 
with the implicit constant depending only on $\ep_0$ and $\ep_1$. 
\end{prop}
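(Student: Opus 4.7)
\emph{Plan.} The approach is to adapt the proof of Proposition~\ref{prop-end-upper-n}, replacing the time scale $\delta n$ there with the spatial scale $\frk M(x)^2$ here. First note that we may assume $\mcl R(x)$ contains no burgers, since otherwise $\mcl E_n^l$ would force $J \leq |x| < n$, violating the requirement $J = n$, so the left-hand side of~\eqref{eqn-end-upper-max} vanishes. Without loss of generality suppose $\frk c(x) \geq \frk h(x)$, so $\frk M(x) = \frk c(x) \geq n^{\ol\xi/2}$. If $\frk h(x) \geq \frk M(x)/2$, then $\frk m(x) \asymp \frk M(x)$ and Proposition~\ref{prop-end-upper} immediately gives the conclusion. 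So assume $\frk h(x) < \frk M(x)/2$; the heuristic is that, by Brownian scaling, $\frk m$ must first grow to be comparable with $\frk M(x)$ before $\mcl E_n^l$ can occur, and this typically takes a further time window of length of order $\frk M(x)^2$.

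Concretely, fix an auxiliary exponent $\ol\xi' \in (\ol\xi, 1)$ and set $T_n := \lfloor \frk M(x)^2\rfloor \wedge \lfloor (n - N_n)/2 \rfloor$ and $\BB k_n := \lfloor \log_2(T_n/n^{\ol\xi'})\rfloor$. Define $m_n^0 := N_n$ and $m_n^k := m_n^{k-1} + \lfloor 2^{-k-1} T_n \rfloor$ for $k \in [1,\BB k_n]_{\BB Z}$. Let $E_n^k(\zeta)$ be the event that the reduced word $X(-m_n^k, -m_n^{k-1}-1)$ contains at least $\zeta 2^{-k/2} T_n^{1/2}$ hamburger orders and at least that many cheeseburger orders. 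Let $K_n(\zeta)$ be the minimum of $\BB k_n + 1$ and the smallest $k$ for which $E_n^k(\zeta)$ holds, and put $\wt N_n := m_n^{K_n(\zeta)}$. By the scaling limit \cite[Theorem 2.5]{shef-burger}, for any $s \in (0,1)$ we may choose $\zeta > 0$ so that $\BB P(E_n^k(\zeta)) \geq 1-s$ uniformly in $k$; since the $E_n^k(\zeta)$ depend on disjoint iid blocks independent of $X(-N_n,-1)$, this yields $\BB P(K_n(\zeta) \geq k \mid X(-N_n,-1) = x) \leq s^k$.

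On $\{K_n(\zeta) = k\}$ with $k \leq \BB k_n$, any realization $\wt x$ of $X(-\wt N_n,-1)$ satisfies $\frk m(\wt x) \geq \zeta 2^{-k/2} T_n^{1/2} \geq n^{\ol\xi/2}$, since hamburger and cheeseburger orders in $\mcl R(X(-m_n^k,-m_n^{k-1}-1))$ remain unmatched once concatenated with $X(-m_n^{k-1},-1)$ on the right (orders look leftward for matches). The flexible-order constraint $\mcl N_{\tb F}(\mcl R(\wt x)) \leq 2n^\nu$ holds off an event of $o_n^\infty(n)$ probability by \cite[Corollary 5.2]{gms-burger-cone}, and $\mcl R(\wt x)$ contains no burgers whenever $\mcl E_n^l$ holds (since $\wt N_n < n$). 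Applying Proposition~\ref{prop-end-upper} to $\wt x$ yields
\[
\BB P(\mcl E_n^l \mid K_n(\zeta) = k,\, X(-\wt N_n,-1) = \wt x) \preceq \psi_0(2^{-k} T_n) \psi_2(n^\xi) 2^{k(1+\mu)} T_n^{-1-\mu} n^{\xi(\mu-1/2)} .
\]
Multiplying by $s^k$, summing over $k \in [1, \BB k_n]_{\BB Z}$ (using Potter's bound to compare $\psi_0(2^{-k}T_n)$ with $\psi_0(T_n)$ and choosing $s$ small enough that the resulting geometric series converges), and adding the tail contribution from $\{K_n(\zeta) = \BB k_n + 1\}$---which is bounded via Proposition~\ref{prop-end-upper-n} applied at $\wt N_n$ and $s^{\BB k_n} \preceq n^{-R}$ for arbitrarily large $R$---gives a bound of the form $\psi_0(T_n) \psi_2(n^\xi) T_n^{-1-\mu} n^{\xi(\mu-1/2)}$, which matches the target bound when $T_n \asymp \frk M(x)^2$. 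In the remaining regime $T_n = \lfloor (n-N_n)/2 \rfloor$ (i.e.\ $\frk M(x)^2 \succeq n - N_n$), apply Proposition~\ref{prop-end-upper-n} directly to $x$ and use $(n-N_n)^{-1-\mu} \preceq \frk M(x)^{-2-2\mu}$ to recover the target bound.

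The principal technical obstacle is establishing $\BB P(E_n^k(\zeta)) \geq 1-s$ uniformly in $k$: one needs, for $M := \lfloor 2^{-k-1} T_n\rfloor \geq n^{\ol\xi'}/2$ iid symbols, the reduced-word counts $\mcl N_{\tb H}$ and $\mcl N_{\tb C}$ to both exceed $\zeta M^{1/2}$ with probability at least $1-s$ for $\zeta$ small. This follows from the joint scaling limit in \cite[Theorem 2.5]{shef-burger}, since $M^{-1/2}(\mcl N_{\tb H}(\mcl R(\cdot)) , \mcl N_{\tb C}(\mcl R(\cdot)))$ converges in law to a nondegenerate random vector in $[0,\infty)^2$ whose distribution has a positive density near the origin (reflecting the nondegeneracy of the limiting Brownian covariance for $p \in (0,1/2)$); uniformity in $k$ is immediate since $M$ is bounded below by a fixed positive power of $n$.
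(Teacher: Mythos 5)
Your blueprint---introduce iterated dyadic stopping times, stop when a block supplies enough unmatched orders so that $\frk m$ is comparable to the relevant scale, apply Proposition~\ref{prop-end-upper} at that stopping time, pay for the number of attempts via a geometric-decay estimate, and sum the series using Potter's bound---is the same one the paper sketches, and your observation that the orders in $\mcl R(X(-m_n^k,-m_n^{k-1}-1))$ survive concatenation on the right is correct. The main implementation difference is that you decompose by \emph{time} while the paper decomposes by \emph{hamburger count}: it sets $h_x^k := \lfloor 2^{-k}\frk h(x)\rfloor + h_x^{k-1}$ and stops at the hamburger-addition times $J_{x,h_x^k}^H$, asking whether $\asymp 2^{-k}\frk h(x)$ cheeseburger orders have accrued between consecutive stops.

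However, your treatment of the regime $T_n = \lfloor (n-N_n)/2\rfloor$, i.e.\ $\frk M(x)^2 \succeq n - N_n$, is not correct, and this regime is precisely the one that matters in the paper's use of the proposition (in the proof of Proposition~\ref{prop-end-box} one has $\frk M(x)^2 \geq C^2(n-|x|)$ with $C$ arbitrarily large). You assert $(n-N_n)^{-1-\mu} \preceq \frk M(x)^{-2-2\mu}$, but since $n-N_n \preceq \frk M(x)^2$ and $t\mapsto \psi_0(t)\,t^{-1-\mu}$ is eventually decreasing, the inequality runs the other way: $\psi_0(n-N_n)(n-N_n)^{-1-\mu} \succeq \psi_0(\frk M(x)^2)\,\frk M(x)^{-2-2\mu}$, so Proposition~\ref{prop-end-upper-n} returns a bound that is \emph{larger} than the target, not smaller. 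Moreover, with the cap $T_n = \lfloor(n-N_n)/2\rfloor$ the dyadic iteration itself has no force here: the lower bound $\frk m(\wt x) \geq \zeta 2^{-k/2}T_n^{1/2}$ at the stopping time is already $\ll \frk M(x)$ for $k=1$, so Proposition~\ref{prop-end-upper} again gives something larger than the target. Time-interval blocks simply do not adapt to the scale $\frk M(x)^2$ when it exceeds the remaining time; the paper's hamburger-count blocks do, since the stopping times $J_{x,h_x^k}^H$ automatically track the time scale $\asymp 2^{-2k}\frk M(x)^2$, and on $\mcl E_n^l$ (which forces all $\frk h(x)$ hamburger orders of $\mcl R(x)$ to be matched by time $n$) the event $\{J_{x,h_x^k}^H > n\}$ cannot occur. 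To salvage a time-block version you would need a supplementary estimate showing that the conditional probability decays at least like a suitable negative power of $\frk M(x)^2/(n-N_n)$, which your outline does not supply.
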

\begin{proof}
Fix a realization $x$ as in the statement of the proposition. Suppose $\frk h(x) \geq \frk c(x)$, so that $\frk M(x) = \frk h(x) \geq n^{\ol\xi/2}$ (the case where we have the reverse inequality is treated similarly). For $m\in\BB N$, let $J_{x,m}^H$ be the smallest $j \in\BB N$ for which $X(-j,-|x|-1)$ contains $m$ hamburgers and let $L_{x,m}^H := d^*\left(X(-J_{x,m}^H , -|x| -1)\right)$. Let $h_x^0 = 0$ and for $k\in\BB N$, inductively define $h_x^k := \lfloor 2^{-k} \frk h(x) \rfloor + h_x^{k-1}$. For $\zeta >0$, let 
\eqbn
E_x^k := \left\{  \mcl N_{\tb C}\left( X(-J_{x,h_x^k}^H ,-J_{x,h_x^k}^H - 1   )  \right) \geq 2^{-k} \frk h(x)  \right\}  .
\eqen
The proposition is proven by choosing $\zeta>0$ small and stopping at (roughly speaking) the smallest $k$ for which $E_x^k$ occurs. The rest of the argument is very similar to the proofs of Propositions~\ref{prop-wt-pi-exists} and~\ref{prop-end-upper-n}, so we omit the details. 
\end{proof}

\subsection{Regularity for the last segment of the word}
\label{sec-end-reg}

In this subsection we will prove a regularity result for the last segment of the word conditioned on the event $\mcl E_n^l$. This result is needed in the proof of Proposition~\ref{prop-E^l-tight} to show that it is unlikely that the path $Z^n$ of~\eqref{eqn-Z^n-def} moves a large amount during the time interval $[1-\delta ,1]$ (i.e.\ that the purple segment in Figure~\ref{fig-box-event} has huge diameter) when we condition on $\mcl E_n^l$. 
  
\begin{prop} \label{prop-end-reg}
Let $m_n^\delta$ and $\mcl B_n^\delta(C)$ be as in Definition~\ref{def-B_n^delta}. For each $C>1$ and each $q \in (0,1)$, there is an $A > 0$, depending only on $C$, $q$, $\ep_0$, and $\ep_1$ such that for each $\delta>0$, there exists $n_* = n_*(\delta,C,q,\ep_0,\ep_1) \in \BB N$ such that for each $n\geq n_*$, each $l\in \left[\ep_0 n^{\xi/2} , \ep_0^{-1} n^{\xi/2}\right]_{\BB Z}$, and each realization $x$ of $X_{-m_n^\delta} \dots X_{-1}$ for which $\mcl B_n^\delta(C)$ occurs, we have
\eqbn
\BB P\left(\sup_{j\in [m_n^\delta +1 , n]_{\BB Z}} |X(-j,-m_n^\delta-1)| \leq A (\delta n)^{1/2} \,\big|\, X_{-m_n^\delta} \dots X_{-1} = x  ,\, \mcl E_n^l \right) \geq 1-q. 
\eqen
\end{prop}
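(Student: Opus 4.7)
The plan is to apply Bayes' rule, combining Proposition~\ref{prop-end-lower} for a lower bound on $\BB P(\mcl E_n^l \mid X_{-m_n^\delta}\dots X_{-1} = x)$ with Proposition~\ref{prop-end-upper-max} at a first-passage stopping time for the numerator. Let $M := n - \lceil \ep_1^{-1} n^\xi\rceil$ and define $N$ to be the smallest $j \in [m_n^\delta + 1, M]_{\BB Z}$ for which $|X(-j, -m_n^\delta - 1)| \geq (A-1)(\delta n)^{1/2}$, or $N := \infty$ if no such $j$ exists. Then $N$ is a backward stopping time and by the strong Markov property
\[
\BB P\bigl(\mcl E_n^l,\, N \leq M \,\big|\, X_{-m_n^\delta}\dots X_{-1} = x\bigr) = \sum_{x_N} \BB P\bigl(X_{-N}\dots X_{-m_n^\delta - 1} = x_N \,\big|\, x\bigr)\, \BB P\bigl(\mcl E_n^l \,\big|\, X_{-N}\dots X_{-1} = x_N x\bigr),
\]
where the sum runs over realizations $x_N$ with $N < \infty$.

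The key algebraic input is that on $\mcl B_n^\delta(C)$ the word $\mcl R(x)$ contains no burgers, so at most $|\mcl R(x)| \leq 2C(\delta n)^{1/2}$ symbols of $\mcl R(x_N)$ can be cancelled in the product $\mcl R(x_N) \mcl R(x)$; hence $|\mcl R(x_N x)| \geq (A-1)(\delta n)^{1/2} - 2C(\delta n)^{1/2} \geq (A/2)(\delta n)^{1/2}$ provided $A \geq 8C$. On $\mcl E_n^l$ we have $J = n > N$, so $\mcl R(x_N x) = X(-N, -1)$ contains no burgers at all. If additionally $\mcl N_{\tb F}(\mcl R(x_N x)) \leq n^\nu$, then $|\mcl R(x_N x)| = \frk h(x_N x) + \frk c(x_N x) + \mcl N_{\tb F}(\mcl R(x_N x))$, which forces $\frk M(x_N x) \geq (|\mcl R(x_N x)| - n^\nu)/2 \succeq A(\delta n)^{1/2}$. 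For $n$ large (depending on $A$ and $\delta$) this exceeds $n^{\ol\xi/2}$ for some $\ol\xi \in (\xi, 1)$, so Proposition~\ref{prop-end-upper-max} applies at the stopping time $N$ and yields, uniformly over such realizations $x_N$,
\[
\BB P\bigl(\mcl E_n^l \,\big|\, X_{-N}\dots X_{-1} = x_N x\bigr) \preceq \psi_0(A^2 \delta n)\,\psi_2(n^\xi)\, A^{-(2+2\mu)} \delta^{-1-\mu} n^{-1-\mu+\xi(\mu-1/2)} .
\]
Summing over $x_N$ and dividing by the lower bound from Proposition~\ref{prop-end-lower} gives
\[
\BB P\bigl(N \leq M,\, \mcl N_{\tb F}(\mcl R(x_N x)) \leq n^\nu \,\big|\, x,\, \mcl E_n^l\bigr) \preceq \frac{\psi_0(A^2 \delta n)}{\psi_0(\delta n)}\, A^{-(2+2\mu)} \preceq A^{-(2+2\mu)}
\]
by slow variation of $\psi_0$ once $n$ is large relative to $A$; this is $\leq q/2$ for $A$ chosen large depending only on $C, q, \ep_0, \ep_1$.

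Two ancillary points close out the argument. First, the bad event may be achieved at some $j \in (M, n]$ rather than $[m_n^\delta + 1, M]$; by subadditivity of $|\cdot|$ under concatenation, $|X(-j, -m_n^\delta - 1)| \leq |X(-j, -M-1)| + |X(-M, -m_n^\delta - 1)|$ for such $j$, and \cite[Lemma 3.13]{shef-burger} gives $\sup_{j \in (M, n]} |X(-j, -M-1)| \leq n^{\xi/2 + \eta}$ (for small $\eta = \eta(\xi) > 0$) except on a $o_n^\infty(n)$-probability event. Since $n^{\xi/2 + \eta} = o_n((\delta n)^{1/2})$, the sup over $j \in (M, n]$ is controlled once the main argument (applied with $A-1$ in place of $A$) controls the sup over $j \leq M$. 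Second, the condition $\mcl N_{\tb F}(\mcl R(x_N x)) \leq n^\nu$ can be removed: by \cite[Corollary 5.2]{gms-burger-cone}, $\BB P(\mcl N_{\tb F}(X(-j, -m_n^\delta - 1)) > n^\nu \text{ for some } j \in [m_n^\delta, n]_{\BB Z} \mid x) = o_n^\infty(n)$, which after dividing by the polynomially decaying denominator remains $o_n(1)$. The main obstacle is the cancellation-to-$\frk M$ reduction described above, which depends crucially on both the no-burger property of $\mcl R(x_N x)$ (coming from $J = n$) and the subleading contribution of flexible orders; without either fact, the length lower bound on $\mcl R(x_N x)$ could concentrate in symbols not counted by $\frk M$, and the estimate would collapse.
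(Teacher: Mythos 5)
Your argument is correct and is a genuinely different — and more streamlined — route than the one in the paper. The paper proves this proposition by way of Corollary~\ref{prop-good-pi} and Lemma~\ref{prop-good-pi-reg}, which invoke the $\wt\pi_n$/$H_n(B)$ machinery: first showing that the event $H_n(B)$ holds with high conditional probability given $\mcl E_n^l$, then controlling the walk's modulus on $H_n(B)$ via \cite[Proposition 4.4]{gms-burger-local}, and finally handling the tail segment $[\wt\pi_n, n]$. Your approach instead introduces the first-passage stopping time $N$ at radius $(A-1)(\delta n)^{1/2}$ and applies Proposition~\ref{prop-end-upper-max} directly at $N$: the key observation is that $|\mcl R(x_N x)| \geq |X(-N,-m_n^\delta-1)| - |\mcl R(x)|$, and on $\mcl E_n^l$ (which forces $J=n>N$ so that $\mcl R(x_N x)$ has no burgers) with the $\tb F$-count subleading, this gives $\frk M(x_N x) \succeq A(\delta n)^{1/2}$. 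Dividing the resulting upper bound by the lower bound from Proposition~\ref{prop-end-lower} via Bayes' rule yields the decay $A^{-2-2\mu}$, with the slowly varying factors matching since $\frk M(x_N x) \asymp A(\delta n)^{1/2}$ exactly (the increments of $|X(-j,-m_n^\delta-1)|$ are $\pm 1$, so the first-passage value is within $O(1)$ of the threshold). What this buys you is that Proposition~\ref{prop-end-reg} is decoupled from Corollary~\ref{prop-good-pi} and Lemma~\ref{prop-good-pi-reg} as direct ingredients — though note that Proposition~\ref{prop-end-upper-max}, which you do invoke, still traces back to the $\wt\pi_n$ machinery through Lemma~\ref{prop-last-increment-upper}, so the dependency is only shortened, not eliminated. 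Two small points worth making explicit if you were to write this up carefully: the $\tb F$-bound you actually get on $\mcl R(x_N x)$ is $2n^\nu$ rather than $n^\nu$ (one $n^\nu$ from $\mcl B_n^\delta(C)$ and one from \cite[Corollary 5.2]{gms-burger-cone} applied to $X(-\cdot,-m_n^\delta-1)$), so one should either tweak $\nu$ slightly or observe that the proof of Proposition~\ref{prop-end-upper-max} tolerates a constant multiple; and one should note that realizations $x_N$ for which $\mcl R(x_N x)$ contains a burger contribute zero to the sum because $\mcl E_n^l$ then cannot occur, so the no-burger hypothesis is automatic.
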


\begin{lem} \label{prop-good-pi-reg}
Fix $C>1$. Also let $\delta>0$ and let $x$ be a realization of $X_{-m_n^\delta} \dots X_{-1}$ for which $\mcl B_n^\delta(C)$ occurs. For $A > 0$, let
\eqbn
G_n(x; A) :=  \left\{\sup_{j\in [|x|+1, \wt\pi_n]_{\BB Z}} |X(-j,-|x|-1)| \leq A (\delta n)^{1/2} \right\} .
\eqen
For $B>1$, let $H_n(B)$ be the event of Corollary~\ref{prop-good-pi}. There exists $B_0 > 0$ such that for each $q \in (0,1)$ and each $B\geq B_0$, there exists $A = A(q,B,C,\ep_0,\ep_1,\ep_2)$ such that for each $\delta > 0$, 
we can find $n_* = n_*(\delta,A,B,C,\ep_0,\ep_1,\ep_2) \in \BB N$ such that for each $n\geq n_*$ and each realization $x$ as above, we have
\eqbn
\BB P\left( G_n(x;A) \,|\, H_n(B) ,\, X_{-m_n^\delta} \dots X_{-1} = x  \right) \geq 1-q  .
\eqen
\end{lem}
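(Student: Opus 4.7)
The plan is to apply Bayes' rule at a stopping time where the reduced word first becomes too long. Define
\[
\wh N_n^A := \inf\{j > |x| : |X(-j, -|x|-1)| \geq A (\delta n)^{1/2}\},
\]
which is a stopping time for the word $X$ read backward. On $G_n(x;A)^c \cap H_n(B)$ we have $\wh N_n^A \leq \wt\pi_n < J$, so $\mcl R(X(-\wh N_n^A, -1))$ contains no burgers. The triangle-type bound $|\mcl R(X(-\wh N_n^A, -1))| \geq |\mcl R(X(-\wh N_n^A, -|x|-1))| - |\mcl R(x)|$, combined with $|\mcl R(x)| \preceq (\delta n)^{1/2}$ on $\mcl B_n^\delta(C)$ and the high-probability bound $\mcl N_{\tb F}(\mcl R(X(-k,-1))) \leq n^\nu$ for all $k\leq n$ from~\cite[Corollary 5.2]{gms-burger-cone}, implies $\frk M(X(-\wh N_n^A, -1)) \geq (A/4)(\delta n)^{1/2}$ for $A$ large enough depending on $C$, where $\frk M := \frk h \vee \frk c$.

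Next I would apply the strong Markov property at $\wh N_n^A$ to write
\[
\BB P(G_n(x;A)^c \cap H_n(B) \mid x) = \BB E\bigl[\BB 1_{\{\wh N_n^A < \infty\}} \BB P(H_n(B) \cap \{\wh N_n^A \leq \wt\pi_n\} \mid X_{-\wh N_n^A} \dots X_{-1}) \mid x\bigr].
\]
For each realization $y_0$ of $X_{-\wh N_n^A} \dots X_{-|x|-1}$ satisfying the path lower bound, I would invoke the upper bound machinery of Section~\ref{sec-end-upper}, specifically an analogue of Proposition~\ref{prop-end-upper-max} summed over $l \in [\ep_0 n^{\xi/2}, \ep_0^{-1} n^{\xi/2}]_{\BB Z}$, to obtain
\[
\BB P(H_n(B) \mid X_{-\wh N_n^A}\dots X_{-1} = y_0 x) \preceq A^{-2-2\mu}\, \psi_0(\delta n)\,\psi_2(n^\xi)\, \delta^{-1-\mu}\, n^{-1-\mu + \xi\mu},
\]
where the $A^{-2-2\mu}$ gain arises from the $\frk M(y_0 x)^{-2-2\mu}$ factor in Proposition~\ref{prop-end-upper-max} with $\frk M(y_0 x) \asymp A(\delta n)^{1/2}$.

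For the matching lower bound on the denominator, I would use Corollary~\ref{prop-good-pi} (choosing $B \geq B_0(q_0)$ for suitable $q_0$) together with Proposition~\ref{prop-end-lower} and disjointness of the events $\{\mcl E_n^l\}_{l}$:
\[
\BB P(H_n(B) \mid x) \geq (1-q_0)\sum_{l\in [\ep_0 n^{\xi/2},\,\ep_0^{-1} n^{\xi/2}]_{\BB Z}} \BB P(\mcl E_n^l \mid x) \succeq \psi_0(\delta n)\,\psi_2(n^\xi)\, \delta^{-1-\mu}\, n^{-1-\mu+\xi\mu}.
\]
Dividing the upper bound by the lower bound via Bayes' rule yields $\BB P(G_n(x;A)^c \mid H_n(B), x) \preceq A^{-2-2\mu}$, which can be made $\leq q$ by choosing $A$ large enough depending on $q$, $B$, $C$, $\ep_0$, $\ep_1$, $\ep_2$.

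The main obstacle is that the upper bound estimates in Section~\ref{sec-end-upper} are phrased in terms of $\mcl E_n^l$-type events, while $H_n(B)$ is a strictly larger event (for instance, it allows $J \neq n$). To close this gap one must either show that $\BB P(H_n(B)\setminus \bigcup_l \mcl E_n^l \mid y_0 x)$ is absorbed into the same order of magnitude, or adapt the proof of Lemma~\ref{prop-mid-increment-upper} to directly accommodate the $\frk c_f \geq B^{-1} n^{\xi/2}$ condition of $H_n(B)$ (which is incompatible with the $\frk c_f \leq 2^{-k_3} n^{\xi/2}$ constraint in the statement of that lemma). This can be accomplished by conditioning on the realization at $\wt\pi_n$ and combining Lemma~\ref{prop-pi-increment-upper} with condition~\ref{item-wt-pi-compare} of Proposition~\ref{prop-wt-pi-exists}, paying careful attention to the slowly varying factors $\psi_0, \psi_2$ so that the powers of $n$ in the upper and lower bounds match exactly.
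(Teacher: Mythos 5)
Your plan — stop at the first time $\wh N_n^A$ that the reduced word leaves the box of side $A(\delta n)^{1/2}$, then apply Bayes' rule — is reasonable in spirit, but the central reduction you attempt (replacing the event $H_n(B)$ by $\bigcup_l \mcl E_n^l$ on both sides of the ratio) cannot be made to work, and it is precisely the obstacle you flag at the end.

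The issue is quantitative. By Lemma~\ref{prop-last-increment-upper}, $\BB P(\mcl E_n^l \,|\, H_n(B), x) \preceq n^{-3\xi/2}$ for each individual $l$, so summing over the $\asymp n^{\xi/2}$ admissible values of $l$ gives $\BB P\bigl(\bigcup_l \mcl E_n^l \,|\, H_n(B), x\bigr) \preceq n^{-\xi} \to 0$. In other words, almost none of the mass of $H_n(B)$ lies in $\bigcup_l \mcl E_n^l$. Consequently your lower bound on the denominator, $\BB P(H_n(B)\,|\,x)\geq (1-q_0)\sum_l \BB P(\mcl E_n^l\,|\,x) \succeq \psi_0\psi_2\delta^{-1-\mu}n^{-1-\mu+\xi\mu}$, while valid, is weaker than the true order by a factor of $n^\xi$. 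Worse, the upper bound you write for the numerator, $\sum_l \BB P(\mcl E_n^l\,|\,y_0 x)$ controlled via Proposition~\ref{prop-end-upper-max}, is \emph{not} an upper bound for $\BB P(H_n(B)\cap\{\wh N_n^A\leq\wt\pi_n\}\,|\,y_0 x)$, since the latter event is $n^\xi$ times bigger. So the cancellation you want does not come for free; to carry out your route you would need a new estimate of the form $\BB P(H_n(B)\,|\,y_0 x)\preceq \frk M(y_0 x)^{-2-2\mu}\cdot(\text{typical})$, i.e. an $\frk M$-analogue of Proposition~\ref{prop-end-upper-max} for the full event $H_n(B)$ rather than for $\mcl E_n^l$. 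This is a nontrivial piece of machinery that the paper never proves, because it sidesteps the issue.

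The paper's proof avoids bounding $\BB P(H_n(B)\,|\,y_0 x)$ entirely. It first gets the sharp lower bound $\BB P(H_n(B)\,|\,x)\succeq\psi_0\psi_2\delta^{-1-\mu}n^{-(1-\xi)(1+\mu)}$ by a small Bayes computation: divide $\BB P(H_n(B)\cap\mcl E_n^l\,|\,x)$ (Proposition~\ref{prop-end-lower} plus Corollary~\ref{prop-good-pi}) by $\BB P(\mcl E_n^l\,|\,H_n(B),x)\preceq n^{-3\xi/2}$ (Lemma~\ref{prop-last-increment-upper}); this recovers exactly the $n^\xi$ that your sum-over-$l$ loses. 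It then introduces an intermediate event $\wh E_n(x;\wh B)$, a box event at some time $j\in[n+1,n+n^\xi]$ (strictly after $\wt\pi_n$), and shows (i) $\BB P(\wh E_n(x;\wh B)\,|\,X_{-\wt\pi_n}\dots X_{-1}=x'x)\succeq 1$ for any realization on which $H_n(B)$ holds; and (ii) by \cite[Proposition~4.4, Assertions~1 and~3]{gms-burger-local}, the probability that the path both exceeds $A(\delta n)^{1/2}$ and enters the box $\wh E_n(x;\wh B)$ is $\preceq \alpha\cdot\psi_0\psi_2\delta^{-1-\mu}n^{-(1-\xi)(1+\mu)}$. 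Writing $\BB P(G_n^c\,|\,H_n(B),x)\leq \BB P(G_n^c\cap\wh E_n\,|\,x)/\bigl[\BB P(\wh E_n\,|\,G_n^c,H_n(B),x)\,\BB P(H_n(B)\,|\,x)\bigr]$ then gives the conclusion with both the numerator and denominator at the same order. The key insight you are missing is this detour through $\wh E_n(x;\wh B)$: it lets one invoke the ready-made bridge estimates of~\cite{gms-burger-local} without ever needing an $\frk M$-type upper bound on $\BB P(H_n(B)\,|\,y_0 x)$.
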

\begin{proof}
Fix a realization $x$ as in the statement of the lemma. By Proposition~\ref{prop-end-lower} and Corollary~\ref{prop-good-pi}, we can find $B   > 1$ such that for each $\delta>0$, there exists and $n_*^0 = n_*^0( \delta, B,C,\ep_0,\ep_1,\ep_2) \in\BB N$ such that for $n\geq n_*^0$ and $l\in \left[\ep_0 n^{\xi/2} , \ep_0^{-1}n^{\xi/2}\right]_{\BB Z}$, we have
\eqbn
\BB P\left(H_n(B) \cap \mcl E_n^l \,|\, X_{-m_n^\delta} \dots X_{-1} = x\right) \succeq \psi_0(\delta n) \psi_2(n^{\xi}) \delta^{-1-\mu} n^{-1-\mu + \xi(\mu-1/2)}
\eqen
with the implicit constants depending only on $B$, $C$, $\ep_0$, $\ep_1$, and $\ep_2$.
By Lemma~\ref{prop-last-increment-upper}, 
\eqbn
\BB P\left(\mcl E_n^l \,|\,  H_n(B) ,\,   X_{-m_n^\delta} \dots X_{-1} = x \right) \preceq n^{-3\xi/2} 
\eqen
with the implicit constant depending only on $B$. It follows that for $n\geq n_*^0$, we have
\begin{align} \label{eqn-H(B)-succeq}
\BB P\left(H_n(B) \,|\, X_{-m_n^\delta} \dots X_{-1} = x\right) &= \frac{\BB P\left(H_n(B) \cap \mcl E_n^l \,|\, X_{-m_n^\delta} \dots X_{-1} = x\right)}{\BB P\left(\mcl E_n^l \,|\,  H_n(B) ,\,   X_{-m_n^\delta} \dots X_{-1} = x \right)} \notag \\
& \succeq \psi_0(\delta n) \psi_2(n^{\xi}) \delta^{-1-\mu} n^{-(1-\xi)(1+\mu)} .
\end{align}
Henceforth fix $B > 1$ for which~\eqref{eqn-H(B)-succeq} holds. 

For $\wh B > 1$, let $\wh E_n(x;\wh B)$ be the event that there is a $j\in [n+1,n+n^\xi]_{\BB Z}$ such that the following is true:
\begin{align} \label{eqn-end-corner-event}
&\mcl N_{\tc H}\left(X(-j,-|x| -1)\right) \in \left[\frk h(x) - \wh Bn^{\xi/2} , \frk h(x) +\wh Bn^{\xi/2}  \right]_{\BB Z}, \notag \\
& \mcl N_{\tc C}\left(X(-j,-|x| -1)\right) \in \left[\frk c(x) - \wh Bn^{\xi/2} , \frk c(x) + \wh Bn^{\xi/2} \right]_{\BB Z}, \notag \\
&\mcl N_{\tb H}\left( X(-j,-|x| -1) \right) + \mcl N_{\tb F}\left( X(-j,-|x| -1) \right)  \leq \wh Bn^{\xi/2} ,\quad \op{and} \notag \\
&  \mcl N_{\tb C}\left( X(-j,-|x| -1) \right) + \mcl N_{\tb F}\left(X(-j,-|x| -1)\right)   \leq \wh Bn^{\xi/2} .
\end{align}
We claim that there exists $\wh B > 1$, depending only on $B$, $C$, $\ep_0$, $\ep_1$, and $\ep_2$ such that if $n$ is chosen sufficiently large, then for any realization $x'$ of $X_{-\wt\pi_n} \dots X_{-m_n^\delta-1}$ for which $H_n(B)$ occurs when $X_{-\wt\pi_n} \dots X_{-1} = x' x$, we have 
\eqb \label{eqn-wh-E-given-x'}
\BB P\left(\wh E_n(x;\wh B) \,|\, X_{-\wt\pi_n} \dots X_{-1} = x' x \right) \succeq 1,
\eqe 
with the implicit constant depending only on $B$, $C$, $\ep_0$, $\ep_1$, and $\ep_2$. To see this, let $\wh F_n(x'x) = \wh F_n(x'x; B')$ be the event that  
\alb
&\sup_{j\in [|x'x| , n+n^\xi]_{\BB Z}} |X(-j,-|x'x|-1)| \leq 2B n^{\xi/2} ,\quad \mcl N_{\tc H}\left(X(-n-n^\xi , -|x'x|-1)\right) \geq B n^{\xi/2} + 2n^\nu  , \quad \op{and} \\
&\mcl N_{\tc C}\left(X(-n-n^\xi , -|x'x|-1)\right) \leq B^{-1} n^{\xi/2} - 2n^\nu .
\ale
By condition~\ref{item-wt-pi-mono} of Proposition~\ref{prop-wt-pi-exists}, we have $|x'x| \geq n-\ep_1^{-1} n^{\xi }$. By \cite[Theorem 2.5]{shef-burger}, it follows that we can find $  n_*^1 = n_*^1( \delta,B,C,\ep_0,\ep_1,\ep_2) \geq n_*^0$ such that for each $n\geq n_*^1$ and each choice of realizations $x'$ and $x$ as above, we have
\eqbn
\BB P\left(\wh F_n(x'x) \,|\, X_{-|x'x| }\dots X_{-1} = x'x\right) \succeq 1, 
\eqen
with the implicit constant depending only on $B$ and $\ep_1$. By definition of $H_n(B)$, if $\wh F_n(x'x)$ occurs and $X_{-|x'x| }\dots X_{-1} = x'x$, then $J \leq n+n^\xi$ and each cheeseburger in $X(-n-n^\xi , -|x'x|-1)$ is matched to a cheeseburger order in $X(-|x'x| ,-1)$. 
By condition~\ref{item-wt-pi-cond} of Proposition~\ref{prop-wt-pi-exists}, we therefore have 
\eqbn
\BB P\left(\wh F_n(x'x) \,|\, X_{-\wt\pi_n}\dots X_{-1} = x'x\right) \succeq 1, 
\eqen
with the implicit constant depending only on $B$ and $\ep_1$. On the other hand, the definition of $H_n(B)$ implies that $\mcl R(x')$ has at most $B  n^{\xi/2} +n^\nu$ hamburger orders plus flexible orders; and between $\frk h(x) - B n^{\xi/2}$ and $\frk h(x) +  n^\nu$ hamburgers. Similar statements hold for cheeseburger orders and hamburger orders. Consequently, if $n$ is chosen sufficiently large (depending only on $B$) then if $\wh F_n(x'x)$ occurs and $X_{-\wt\pi_n}\dots X_{-1} = x'x$, we have that $\wh E_n(x;\wh B)$ occurs for $\wh B$ slightly larger than $2B + 1$. This yields~\eqref{eqn-wh-E-given-x'}. 

Now fix $\alpha >0$ to be chosen later, depending only on $q$, $B$, and $C$. By \cite[Proposition 4.4, Assertion 3]{gms-burger-local} we can find $A>0$, depending only on $\alpha$, $B$, and $C$, such that for each $\delta>0$ there exists $n_*^2 = n_*^2( \alpha,\delta,B,C,\ep_0,\ep_1,\ep_2) \geq n_*^1$ such that for $n\geq n_*^2$ and each realization $x$ as in the statement of the lemma, we have
\eqbn \label{eqn-reg-given-wh-E}
\BB P\left(  G_n(x;A)^c \,|\, \wh E_n(x;\wh B) ,\, X_{-m_n^\delta} \dots X_{-1} =  x\right) \leq \alpha .
\eqen 
By \cite[Proposition 4.4, Assertion 1]{gms-burger-local},
\eqb \label{eqn-bad-wh-E-prob}
\BB P\left(G_n(x;A)^c \cap \wh E_n(x;\wh B)\,|\,   X_{-m_n^\delta} \dots X_{-1} =   x \right) \preceq \alpha \psi_0(\delta n) \psi_2(n^{\xi}) \delta^{-1-\mu} n^{-(1-\xi)(1+\mu)}
\eqe 
with the implicit constants depending only $\wh B$ and $C$. 

By combining~\eqref{eqn-H(B)-succeq},~\eqref{eqn-wh-E-given-x'} and~\eqref{eqn-bad-wh-E-prob}, we infer that for $n\geq n_*^2$,
\alb
&\BB P\left( G_n(x;A)^c  \,|\, H_n(B) ,\, X_{-m_n^\delta} \dots X_{-1} = x\right) \\
&\quad \leq \frac{\BB P\left(G_n(x;A)^c \cap \wh E_n(x;\wh B)\,|\,   X_{-m_n^\delta} \dots X_{-1} =   x \right) }{ \BB P\left(\wh E_n(x;\wh B) \,|\,  G_n(x;A)^c ,\, H_n(B),\,  X_{-m_n^\delta} \dots X_{-1} =   x \right)  \BB P\left(H_n(B) \,|\, X_{-m_n^\delta} \dots X_{-1} = x\right)    } \ 
 \preceq \alpha ,
\ale
with the implicit constants depending only on $B$, $C$, $\ep_0$, $\ep_1$, and $\ep_2$. We now conclude by choosing $\alpha$ smaller than $q$ divided by this implicit constant. 
\end{proof}

\begin{proof}[Proof of Proposition~\ref{prop-end-reg}]
By Corollary~\ref{prop-good-pi}, we can find $B  = B(C,\alpha,\ep_0,\ep_1) > 0$ and $n_*^0 = n_*^0(\delta,C,\alpha,\ep_0,\ep_1) \in \BB N$ such that for $n\geq n_*^0$ and $x,l$ as in the statement of the proposition, we have
\eqb \label{eqn-H_n-given-E}
\BB P\left(H_n(B) \,\big|\, X_{-m_n^\delta} \dots X_{-1} = x  ,\, \mcl E_n^l \right) \geq 1- q/2 , 
\eqe 
with $H_n(B)$ as in Corollary~\ref{prop-good-pi}. Henceforth fix such a $B$. 
 
Fix $\alpha>0$ to be determined later depending on $C$ and $q$. 
By Lemma~\ref{prop-good-pi-reg}, we can find $A_0 > 0$, depending only on $B$, $C$, $\alpha$, $\ep_0$, and $\ep_1$ and $n_*^1  = n_*^1(\delta ,C,\alpha,\ep_0,\ep_1) \geq n_*^0$ such that for $n\geq n_*^1$, 
\eqbn
\BB P\left( \sup_{j\in [m_n^\delta+1, \wt\pi_n]_{\BB Z}} |X(-j,-|x|-1)| \leq A_0 (\delta n)^{1/2}        \,\big|\, H_n(B),\, X_{-m_n^\delta} \dots X_{-1} = x    \right) \geq 1-\alpha .
\eqen
By definition of $\wt\pi_n$, we have $0\leq n-\wt\pi_n \leq \ep_1^{-1} n^{\xi/2}$ on $\mcl E_n^l$. By \cite[Lemma 3.13]{shef-burger}, for any given $A\geq A_0$ we can find $n_*^2  = n_*^2(\delta ,A,C,\alpha,\ep_0,\ep_1) \geq n_*^2$ such that for $n\geq n_*^2$, 
\eqb \label{eqn-G_n-given-E}
\BB P\left( \wh G_n(x;A)      \,\big|\, H_n(B),\, X_{-m_n^\delta} \dots X_{-1} = x    \right) \geq 1- 2\alpha ,
\eqe 
where
\eqbn
\wh G_n(x;A) := \left\{ \sup_{j\in [|x|+1, n]_{\BB Z}} |X(-j,-|x|-1)| \leq A (\delta n)^{1/2}   \right\} .
\eqen

By~\eqref{eqn-H_n-given-E},  
\begin{align} \label{eqn-end-reg-split}
\BB P\left(\wh G_n(x;A)^c \,|\, X_{-m_n^\delta} \dots X_{-1} = x ,\, \mcl E_n^l \right)
 \leq \BB P\left(\wh G_n(x;A)^c \cap H_n(B)   \,|\,    X_{-m_n^\delta} \dots X_{-1} = x ,\, \mcl E_n^l \right) + \frac{q}{2} . 
\end{align}
By Bayes' rule and~\eqref{eqn-G_n-given-E},
\begin{align} \label{eqn-end-reg-bayes}
&\BB P\left(\wh G_n(x;A)^c    \,|\,  H_n(B) ,\,   X_{-m_n^\delta} \dots X_{-1} = x ,\, \mcl E_n^l \right) \notag \\
&\qquad = \frac{  \BB P\left( \mcl E_n^l \,|\,    \wh G_n(x;A)^c,\, H_n(B) , \,       X_{-m_n^\delta} \dots X_{-1} = x   \right) \BB P\left(     \wh G_n(x;A)^c \,|\, H_n(B) , \,       X_{-m_n^\delta} \dots X_{-1} = x   \right)             }{ \BB P\left(\mcl E_n^l \,|\, H_n(B) ,\, X_{-m_n^\delta} \dots X_{-1} = x\right)   } \notag \\
&\qquad \leq \frac{ 2 \alpha \BB P\left( \mcl E_n^l \,|\,    \wh G_n(x;A)^c,\, H_n(B) , \,       X_{-m_n^\delta} \dots X_{-1} = x   \right)          }{ \BB P\left(\mcl E_n^l \,|\, H_n(B) ,\, X_{-m_n^\delta} \dots X_{-1} = x\right)   }  .
\end{align}
By Lemma~\ref{prop-last-increment-upper},
\eqbn
\BB P\left( \mcl E_n^l \,|\,    \wh G_n(x;A)^c,\, H_n(B) , \,       X_{-m_n^\delta} \dots X_{-1} = x   \right) \preceq n^{-3\xi/2}
\eqen
with the implicit constants depending only on $B$, $C$, $\ep_0$, $\ep_1$, and $\ep_2$. By assertion~\ref{item-wt-pi-cond} of Proposition~\ref{prop-wt-pi-exists} together with \cite[Lemma 2.9]{gms-burger-local}, we can find $n_*^3 (\delta ,A,C,\alpha,\ep_0,\ep_1) \geq n_*^2$  such that for $n\geq n_*^3$ and $x,l$ as in the statement of the proposition, 
\eqbn
\BB P\left(\mcl E_n^l \,|\, H_n(B) ,\, X_{-m_n^\delta} \dots X_{-1} = x\right) \succeq n^{-3\xi/2}  ,
\eqen
with the implicit constants depending only on $B$, $C$, $\ep_0$, $\ep_1$, and $\ep_2$.
By~\eqref{eqn-end-reg-bayes},
\eqbn
\BB P\left(\wh G_n(x;A)^c    \,|\,    H_n(B)  ,\,   X_{-m_n^\delta} \dots X_{-1} = x ,\, \mcl E_n^l \right) \preceq \alpha ,
\eqen
with the implicit constants depending only on $B$, $C$, $\ep_0$, $\ep_1$, and $\ep_2$. By choosing $\alpha$ sufficiently small relative to $q$ (and hence $A$ sufficiently large) and combining this with~\eqref{eqn-end-reg-split}, we obtain the statement of the proposition.
\end{proof}

\subsection{Proofs of Propositions~\ref{prop-E^l-abs-cont},~\ref{prop-end-box}, and~\ref{prop-E^l-tight}}
\label{sec-end-properties}

In this subsection we combine the earlier results of this section to prove the main results stated at the beginning of the section. First we deduce Proposition~\ref{prop-E^l-abs-cont} from Propositions~\ref{prop-end-lower} and~\ref{prop-end-upper}. 

\begin{proof}[Proof of Proposition~\ref{prop-E^l-abs-cont}]
By Propositions~\ref{prop-end-lower} and~\ref{prop-end-upper}, we can find $n_*^0 = n_*^0(\delta,C,\ep_0,\ep_1) \in \BB N$ such that for each $n\geq n_*^0$, each $l \in \left[\ep_0 n^{\xi/2} , \ep_0^{-1} n^{\xi/2}\right]_{\BB Z}$, and any two realizations $x$ and $x'$ of $X_{-m_n^\delta} \dots X_{-1}$ for which $\mcl B_n^\delta (C)  $ occurs,
\eqbn
\BB P\left(\mcl E_n^l \,|\, X_{-m_n^\delta} \dots X_{-1} = x \right) \asymp \BB P\left(\mcl E_n^l \,|\, X_{-m_n^\delta} \dots X_{-1} = x' \right)
\eqen
with the implicit constants depending only on $C$, $\ep_0$, and $\ep_1$. The statement of the proposition now follows from Bayes' rule. 
\end{proof}

Next we deduce Proposition~\ref{prop-end-box} from Propositions~\ref{prop-end-lower},~\ref{prop-end-upper-n}, and~\ref{prop-end-upper-max}.

\begin{proof}[Proof of Proposition~\ref{prop-end-box}]
Throughout the proof, we require implicit constants to depend only on $\ep_0$ and $\ep_1$. Fix $q\in (0,1)$. For $n\in\BB N$ and $\delta>0$, let $m_n^\delta := \lfloor (1-\delta) n \rfloor$, as in Definition~\ref{def-B_n^delta}. 
For $n\in\BB N$, let  
\eqbn
F_n := \left\{ \sup_{j\in [n/2,n]_{\BB Z}} \mcl N_{\tb F}\left(X(-j,-1)\right) \leq n^\nu \right\} .
\eqen
By \cite[Corollary 5.2]{gms-burger-cone}, we have $\BB P(F_n) =o_n^\infty(n)$, so for sufficiently large $n\in\BB N$ (depending only on $\ep_0$ and $\ep_1$) it holds for each $l\in \left[\ep_0 n^{\xi/2} , \ep_0^{-1} n^{\xi/2}\right]_{\BB Z}$ that
\eqb \label{eqn-F_n-box-prob}
\BB P\left( F_n \,|\, \mcl E_n^l  \right) \geq 1-\frac{q}{3} . 
\eqe 
 
Now fix $\alpha>0$ to be chosen later, depending only on $q$, $\ep_0$, and $\ep_1$. 
For $\zeta > 0$ and $n\in\BB N$, let 
\eqbn
\ul G_n^\delta(\zeta) := F_n\cap \{J > m_n^\delta\} \cap \left\{  \mcl N_{\tb H} \left(X(-m_n^\delta,-1)\right) \wedge \mcl N_{\tb C}\left(X(-m_n^\delta,-1)\right) \leq \zeta (\delta n)^{1/2} \right\}  .
\eqen
By \cite[Theorem 4.1]{gms-burger-cone}, we can find $\zeta  > 0$, depending only on $\alpha$, such that for each $\delta \in (0,1/2)$ there exists $n_*^0 = n_*^0(\delta, \alpha , \zeta) \in\BB N$ such that for $n\geq n_*^0$, we have
\eqbn
\BB P\left( \ul G_n^\delta(\zeta) \,|\, \ul G_n^\delta(1)  \right) \leq  \alpha ,\qquad \BB P\left( \ul G_n^\delta(1) \setminus \ul G_n^\delta(1/2)  \,|\, \ul G_n^\delta(1)  \right) \succeq 1 .
\eqen 
By Proposition~\ref{prop-end-upper-n}, we can find $n_*^1(\delta,\ep_0,\ep_1) \geq n_*^0$ such that for $n\geq n_*^1$, 
\eqbn
\BB P\left( \mcl E_n^l   \,|\,  \ul G_n^\delta(\zeta)  \right) \preceq   \psi_0\left( \delta n \right) \psi_2(n^\xi)\delta^{-1-\mu} n^{-1-\mu+ \xi(\mu-1/2)}  .
\eqen 
By combining this with Proposition~\ref{prop-end-lower}, possibly increasing $n_*^1$, and applying Bayes' rule, we obtain
\begin{align} \label{eqn-end-box-bayes}
\BB P\left( \ul G_n^\delta(\zeta)  \,|\, \mcl E_n^l \right) &\leq \BB P\left( \ul G_n^\delta(\zeta) \,|\, \mcl E_n^l \cap \ul G_n^\delta(1) \right) \notag\\
&\leq \frac{  \BB P\left( \mcl E_n^l   \,|\,  \ul G_n^\delta(\zeta)   \right) \BB P\left( \ul G_n^\delta(\zeta)  \,|\, \ul G_n^\delta(1) \right)  }{ \BB P\left( \mcl E_n^l   \,|\,  \ul G_n^\delta(1) \setminus \ul G_n^\delta(1/2) \right)  \BB P\left( \ul G_n^\delta(1) \setminus \ul G_n^\delta(1/2)  \,|\, \ul G_n^\delta(1)  \right)    }   
 \preceq \alpha .
\end{align}
By choosing $\alpha$ sufficiently small, depending only on $q$, $\ep_0$, and $\ep_1$ (and hence $\zeta$ sufficiently small), we can arrange that 
\eqb \label{eqn-far-from-bdy-prob}
\BB P\left( \ul G_n^\delta(\zeta)  \,|\, \mcl E_n^l \right) \leq \frac{q}{3}  .
\eqe 

To ensure that the word $X(-m_n^\delta,-1)$ does not contain too many orders when we condition on $\mcl E_n^l$, for $C>1$ define
\eqbn
\ol G_n^\delta(C) := F_n\cap \left\{J > m_n^\delta ,\,  \mcl N_{\tb H} \left(X(-m_n^\delta,-1)\right) \vee \mcl N_{\tb C}\left(X(-m_n^\delta,-1)\right) \geq C (\delta n)^{1/2} \right\}  .
\eqen
By Proposition~\ref{prop-end-upper-max}, for sufficiently large $n$ (depending only on $\delta$, $\ep_0$, and $\ep_1$), we have
\eqbn
\BB P\left(\mcl E_n^l \,|\, \ol G_n^\delta(C) \right) \preceq  \psi_0\left(C^2 \delta  n  \right) \psi_2(n^\xi) C^{-2 -2\mu} \delta^{-1-\mu} n^{ -1-\mu + \xi(\mu-1/2)}   .
\eqen
By the same argument used in~\eqref{eqn-end-box-bayes}, we can find $n_*^2  = n_*^2(\delta,\ep_0,\ep_1) \in\BB N$ such that for $n\geq n_*^2$, we have
\eqbn
\BB P\left(  \ol G_n^\delta(C) \,|\,  \mcl E_n^l  \right) \preceq C^{-2-2\mu + o_C(1)} .
\eqen 
By combining this with~\eqref{eqn-F_n-box-prob} and~\eqref{eqn-far-from-bdy-prob} and choosing $C$ sufficiently large (depending only on $q$, $\ep_0$, and $\ep_1$), we obtain the statement of the proposition with $C \vee \zeta^{-1}$ in place of $C$ and an appropriate choice of $n_* \geq n_*^1 \vee n_*^2$. 
\end{proof}

Finally, we prove Proposition~\ref{prop-E^l-tight} using Propositions~\ref{prop-E^l-abs-cont},~\ref{prop-end-box}, and~\ref{prop-end-reg}.

\begin{proof}[Proof of Proposition~\ref{prop-E^l-tight}]
Let $m_n^\delta $ and $\mcl B_n^\delta(C)$ be as in Definition~\ref{def-B_n^delta}. 
By Lemma~\ref{prop-end-box} together with~\cite[Lemma 2.8]{gms-burger-cone}, we can find $C = C(q,\ep_0,\ep_1)$ such that for each $\delta \in (0,1/2)$, there exists $n_*^0 = n_*^0( C,q,\delta,\ep_0,\ep_1)$ such that for $n\geq n_*^0$ and each $l\in \left[\ep_0 n^{\xi/2} , \ep_0^{-1} n^{\xi/2}\right]_{\BB Z}$, we have 
\eqb \label{eqn-tight-box}
\BB P\left( \mcl B_n^\delta(C)  \,\big|\, \mcl E_n^l    \right) \geq 1-q. 
\eqe 

By Proposition~\ref{prop-end-reg}, there is an $A > 0$, depending only on $C$, $q$, $\ep_0$, and $\ep_1$ such that for each $\delta>0$, there exists $n_*^1 = n_*^1(\delta,C,q,\ep_0,\ep_1) \geq n_*^0$ such that for each $n\geq n_*^1$, each $l\in \left[\ep_0 n^{\xi/2} , \ep_0^{-1} n^{\xi/2}\right]_{\BB Z}$, and each realization $x$ of $X_{-m_n^\delta} \dots X_{-1}$ for which $\mcl B_n^\delta(C)$ occurs, we have
\eqb \label{eqn-tight-sup}
\BB P\left(\sup_{j\in [m_n^\delta +1 , n]_{\BB Z}} |X(-j,-m_n^\delta -1)| \leq A (\delta n)^{1/2} \,\big|\, X_{-m_n^\delta} \dots X_{-1} = x  ,\, \mcl E_n^l \right) \geq 1-q. 
\eqe 
Henceforth assume we have chosen $\delta$ in such a way that $A\delta^{1/2} \leq \alpha/2$. 

By Proposition~\ref{prop-E^l-abs-cont}, by possibly increasing $n_*^1$ we can arrange that for each $l$ as in the statement of the lemma, the conditional law of $X_{-m_n^\delta} \dots X_{-1}$ given $\mcl E_n^l \cap \mcl B_n^\delta(C)$ is mutually absolutely continuous with respect to its conditional law given only $\mcl B_n^\delta(C) $, with Radon-Nikodym derivative bounded above and below by positive constants depending only on $C$, $\ep_0$, and $\ep_1$.  

By \cite[Theorem 4.1]{gms-burger-cone}, the conditional laws of $Z^n(-\cdot)|_{[ 0,  1-\delta  ]}$ given $\mcl B_n^\delta(C)$ converge (in the uniform topology) as $n\rta\infty$ to the law of a correlated Brownian motion $\wh Z^\delta$ with variances and covariances as in~\eqref{eqn-bm-cov} conditioned to stay in the first quadrant until time $1-\delta$ and satisfy $\wh Z^\delta(1-\delta) \in \left[C^{-1} \delta^{1/2} , C \delta^{1/2} \right]^2$ (see Section~\ref{sec-bm-def} for a precise definition of this conditioned Brownian motion). Since $\wh Z^\delta$ is a.s.\ continuous, it follows that we can find $\zeta>0$, depending only on $\delta,C,q,\alpha,\ep_0$, and $\ep_1$ and
$n_*^2 = n_*^2(\delta,C,q,\alpha,\ep_0,\ep_1) \geq n_*^1$ such that for $n\geq n_*^2$, we have
\eqb \label{eqn-tight-main}
\BB P\left(\wt G_n^\delta(\alpha,\zeta) \,|\, \mcl B_n^\delta(C) \cap \mcl E_n^l \right) \geq 1-q
\eqe 
where $\wt G_n^\delta(\alpha,\zeta)$ is the event that for each $s,t\in [0,1-\delta]$ with $|s-t| \leq \zeta$, we have $|Z^n(-s) - Z^n(-t)| \leq \alpha/2$.  

If $\wt G_n^\delta(\alpha,\zeta)$ occurs and the event of~\eqref{eqn-tight-sup} occurs, then $\wt G_n(\alpha,\zeta)$ occurs. It now follows from~\eqref{eqn-tight-box},~\eqref{eqn-tight-sup}, and~\eqref{eqn-tight-main} that for $n\geq n_*^2$, 
\eqbn
\BB P\left(\wt G_n(\alpha,\zeta) \,|\, \mcl E_n^l\right) \geq (1-q)(1-2q) .
\eqen
Since $q$ is arbitrary this proves the first statement of the lemma.

To obtain the second statement of the proposition, fix $t,q \in (0,1)$ and an open set $A\subset [0,\infty)^2$ with zero Lebesgue measure. Suppose we have chosen $C$ as above for our given choice of $q$, $1-\delta > t$, and $n_*^1$ as above for this choice of $\delta$ and $C$. Since the law of $\wh Z^\delta(t)$ (as defined just above) has a bounded density with respect to Lebesgue measure (see \cite[Section 3.2]{shimura-cone} for an explicit formula for this density), for each $q' \in (0,q]$, we can find $\beta = \beta (t,q',C,\delta)$ such that with $\wt{\mcl B}^\delta(C)$ as in~\eqref{eqn-bm-wt-B-def}, we have 
\eqbn
\BB P\left(\wh Z^\delta(t) \in B_{2\beta }(A) \,|\, \wt{\mcl B}^\delta(C)\right) \leq q' .
\eqen
By \cite[Theorem 4.1]{gms-burger-cone}, the conditional law of $Z^n(t)$ given $\mcl B_n^\delta(C)$ converges as $n\rta\infty$ to the conditional law of $\wh Z^\delta(t)$ given $\wt{\mcl B}^\delta(C)$. Therefore, if we choose $q'$ smaller than $q/2$ times the reciprocal of the implicit constant in Proposition~\ref{prop-E^l-abs-cont}, then we can find $n_*^3 = n_*^3( \delta,C,t,q,A,  \ep_0, \ep_1) \geq n_*^1$ such that~\eqref{eqn-E^l-no-hit} holds for each $n\geq n_*^3$ and $l \in \left[\ep_0  n^{\xi/2} , \ep_0^{-1} n^{\xi/2}\right]_{\BB Z}$.  
\end{proof}

\section{Conclusion of the proofs}
\label{sec-conclusion}

In this section we will combine the results of the previous subsections to prove our main results, Theorem~\ref{thm-main} and Theorem~\ref{thm-cone-limit-finite}. 

We will use the following notation. Let $J$ be as in~\eqref{eqn-J-def} and let $\mu$ be as in~\eqref{eqn-cone-exponent}. Fix $\nu \in (1-\mu,1/2)$ and $\xi \in (2\nu , 1)$.

We recall the definitions of $Z^n= (U^n, V^n)$ and of $Z = (U,V)$ from Section~\ref{sec-burger-prelim}. We also recall the definitions of the event $\mcl E_n^l(\ep_1)$ for $\ep_1 >0$ and $n , l \in\BB N$ from Definition~\ref{def-end-event}; and the event $\mcl G_n' = \mcl G_n'(\ep_0,\ep_1)$ from Lemma~\ref{prop-G'-event}. 
For $\ep_0 > 0$, let $\iota_0^n(\ep_0)$ be defined as in Section~\ref{sec-big-loop-setup}.  

This section is structured as follows. In Section~\ref{sec-tightness}, we will prove tightness of the conditional laws of $Z^n|_{[0,2]}$ given $\{X(1,2n) = \emptyset\}$. In Section~\ref{sec-disk-conv}, we will prove a scaling limit result conditioned on the event $\wh{\mcl E}_n^l$ of Definition~\ref{def-end-event}, plus several regularity conditions, for $l$ at distance of order $n^{1/2}$ from 0 (rather than of order $n^{\xi/2}$, as was considered in Section~\ref{sec-end-local}). In Section~\ref{sec-macroscopic-cone}, we will prove that if we condition on $\mcl E_n^l$ for $l \asymp n^{\xi/2}$, then with high conditional probability there is a $j\in [1,n]_{\BB Z}$ with $X_{-j}= \tb F$, $-\phi(-j) - j$ a little bit less than $n$, and $|X(\phi(-j) , -j)| \asymp n^{1/2}$ with the property that the event of Section~\ref{sec-disk-conv} occurs with $X_{\phi(-j)} \dots X_{-j}$ in place of $X_{-n} \dots X_{-1}$. In Section~\ref{sec-main-proof}, we will deduce Theorem~\ref{thm-main} from the results of the preceding subsections together with the results of Section~\ref{sec-big-loop} and Lemma~\ref{prop-bm-excursion} from Appendix~\ref{sec-bm-cond}. In Section~\ref{sec-cone-proof}, we will complete the proof of Theorem~\ref{thm-cone-limit-finite}.

\subsection{Tightness}
\label{sec-tightness}

In this subsection we establish tightness of the conditional law of $Z^n|_{[0,2]}$ given $\{X(1,2n) = \emptyset\}$.

\begin{prop} \label{prop-tight}
For $n\in\BB N$, the family of conditional laws of $Z^n|_{[0,2]}$ given $\{X(1,2n) = \emptyset\}$ is tight in the topology of uniform convergence. Furthermore, if $\wt Z$ is a path whose law is a weak subsequential limit of the conditional laws of $Z^n|_{[0,2]}$ given $\{X(1,2n) = \emptyset\}$, then for each $t\in (0,2)$, the law of $\wt Z(t)$ is absolutely continuous with respect to Lebesgue measure on $[0,\infty)^2$ (so in particular $\BB P(\wt Z(t) \in (0,\infty)^2) = 1$).
\end{prop}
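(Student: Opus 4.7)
The plan is to combine the big-loop decomposition of Section~\ref{sec-big-loop} with the scaling-limit estimates of Section~\ref{sec-end-local}, reducing tightness and marginal absolute continuity under $\BB P(\,\cdot\, \mid X(1,2n) = \emptyset)$ to the statements already proven in Proposition~\ref{prop-E^l-tight} under $\BB P(\,\cdot\, \mid \mcl E_m^l(\ep_1))$.

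Fix $q \in (0,1)$. By Proposition~\ref{prop-big-loop} and Lemma~\ref{prop-G'-event}, one can choose $\ep_0, \ep_1 > 0$ with $\BB P(\mcl G_n'(\ep_0,\ep_1) \mid X(1,2n)=\emptyset) \geq 1-q$ for all sufficiently large $n$. On $\mcl G_n'$ we have $\phi(\iota_0^n),\,2n-\iota_0^n \in [0, n^\xi]$ and $l := |X(\phi(\iota_0^n), \iota_0^n)| \in [\ep_0 n^{\xi/2}, \ep_0^{-1} n^{\xi/2}]$, so each of the two ``outer'' time intervals $[0, \phi(\iota_0^n)/n]$ and $[\iota_0^n/n, 2]$ has length $O(n^{\xi-1}) \to 0$. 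I would then condition on the outer data $\mcl D_n := (X_1\cdots X_{\phi(\iota_0^n)-1},\, X_{\iota_0^n}\cdots X_{2n},\, l)$: by Lemma~\ref{prop-E^l-equiv}, the conditional law of $X_{\phi(\iota_0^n)}\cdots X_{\iota_0^n-1}$ is the conditional law of $X_{-m}\cdots X_{-1}$ given $\mcl E_m^l(\ep_1)$, with $m := \iota_0^n - \phi(\iota_0^n) - 1 \asymp 2n$ and $l \asymp m^{\xi/2}$, so Proposition~\ref{prop-E^l-tight} is applicable to the middle segment.

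To establish tightness, I would bound the modulus of continuity of $Z^n|_{[0,2]}$ by splitting into middle and outer contributions. The middle contribution, after a time shift, has the law of a rescaled version of $Z^m$ under $\BB P(\,\cdot\, \mid \mcl E_m^l)$, and the first assertion of Proposition~\ref{prop-E^l-tight} controls its modulus of continuity uniformly in $l$ in the relevant range. The two outer contributions each involve a word of length at most $n^\xi$; standard concentration for the sums of iid increments underlying $D(\cdot)$ (e.g., using \cite[Lemma 3.13]{shef-burger}) gives $L^\infty$ fluctuations $O(n^{(\xi-1)/2 + o_n(1)}) \to 0$ with high probability. Arzel\`a--Ascoli then yields tightness after sending $q \to 0$.

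For the absolute continuity statement, fix $t \in (0,2)$ and a Lebesgue-null $A \subset [0,\infty)^2$. On $\mcl G_n'$ for large $n$, the time $t$ lies strictly in the middle interval and the rescaled time $s := (t - \phi(\iota_0^n)/n)\cdot(n/m)$ lies in a compact subset of $(0,1)$ depending only on $t$. Writing $Z^n(t) = Z^n(\phi(\iota_0^n)/n) + \Delta^n$, where $\Delta^n$ is a middle-path quantity whose conditional law given $\mcl D_n$ is that of a suitable transform of $Z^m$ under $\mcl E_m^l$, I would apply the second assertion of Proposition~\ref{prop-E^l-tight} to the translated null set $A - Z^n(\phi(\iota_0^n)/n)$ at time $1-s$ to obtain $\BB P(\op{dist}(Z^n(t), A) < \beta \mid \mcl D_n, \mcl E_m^l) \leq q$ for some $\beta > 0$. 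Averaging over $\mcl D_n$ and applying the portmanteau theorem to the open $\beta$-neighborhood of $A$ then yields $\BB P(\wt Z(t) \in A) \leq 2q$; sending $q \to 0$ completes the proof. The main technical obstacle is the uniformity of $\beta$ in the random translate $Z^n(\phi(\iota_0^n)/n)$, which I would resolve by noting that $|Z^n(\phi(\iota_0^n)/n)| = O(n^{(\xi-1)/2}) \to 0$ with high probability, so all translated null sets are contained in a common Lebesgue-null enlargement of $A$ to which Proposition~\ref{prop-E^l-tight} can be applied once.
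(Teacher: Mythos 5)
Your overall decomposition is the same as the paper's: apply Lemma~\ref{prop-G'-event} to work on $\mcl G_n'$, use Lemma~\ref{prop-E^l-equiv} to reduce the middle segment to the conditional law given $\mcl E_m^l$, control that segment with Proposition~\ref{prop-E^l-tight}, control the two $O(n^\xi)$-length outer segments by maximal bounds for the walk, and conclude via Arzel\`a--Ascoli. Two points to tighten in the tightness part: (a) the outer-segment bound must hold \emph{conditionally} on $\{X(1,2n)=\emptyset\}$, not just unconditionally; the standard route is to observe that the unconditional failure probability is $o_n^\infty(n)$ while $\BB P(X(1,2n)=\emptyset)$ decays only polynomially (by the lower bound in~\cite[Theorem~1.10]{gms-burger-local}), and (b) the modulus of continuity needs a small patch at the two interfaces $\phi(\iota_0^n)/n$ and $\iota_0^n/n$, which is routine once (a) is in place. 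These are omissions rather than errors.

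The final step of the absolute-continuity argument is genuinely wrong as stated. You claim that since $|Z^n(\phi(\iota_0^n)/n)|\to 0$ with high probability, ``all translated null sets are contained in a common Lebesgue-null enlargement of $A$.'' This fails: for nonempty $A$ and any $\ep>0$, the set $\bigcup_{|v|\le\ep}(A-v)=\{x:\operatorname{dist}(x,A)\le\ep\}$ has \emph{positive} Lebesgue measure (e.g.\ if $A=\BB Q^2\cap[0,\infty)^2$ this union has full measure), so no Lebesgue-null set can contain all the translates. Moreover, Proposition~\ref{prop-E^l-tight} produces a $\beta=\beta(t,q,A,\ep_0,\ep_1)$ \emph{depending on $A$}, so applying it to each random translate $A-Z^n(\phi(\iota_0^n)/n)$ separately would give a random, uncontrolled $\beta$. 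The correct fix (which is what the paper does) is to never translate $A$: apply~\eqref{eqn-E^l-no-hit} to $A$ itself with $2\beta$ in place of $\beta$ and a time near $t/2$, and separately show (via a maximal estimate conditioned on $\{X(1,2n)=\emptyset\}$) that with high probability $|Z^n(t)-\Delta^n|\le\beta$; then $\{\operatorname{dist}(Z^n(t),A)<\beta\}$ forces $\{\operatorname{dist}(\Delta^n,A)<2\beta\}$, and the two estimates combine to give the desired bound. Your proposal identifies the right obstacle but resolves it with a false measure-theoretic claim.
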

\begin{proof}
For $\zeta > 0$ and $k\in\BB N$, let $G_n^k(\zeta)$ be the event that the following is true. For each $s,t\in[0,2]$ with $|s-t| \leq \zeta$, we have $|Z^n(s) - Z^n(t)| \leq 2^{-k}$. By the Arz\'ela-Ascoli theorem, to prove tightness it suffices to find for each $q\in (0,1)$ a sequence $\zeta_k \rta 0$, independent of $n$, such that for each $n\in\BB N$, 
\eqb \label{eqn-tight-union}
\BB P\left(\bigcup_{k=1}^\infty G_n^k(\zeta_k) \right) \geq 1-q .
\eqe 

To this end, suppose given $k\in\BB N$ and $q\in (0,1)$. Also fix $q_k' > 0$ to be determined later, depending only on $q$ and $k$. By Lemma~\ref{prop-G'-event}, we can find $\ep_0 , \ep_1 \in (0,1)$ depending only on $q_k'$ and $n_*^0 = n_*^0( q_k') \in \BB N$ such that for $n\geq n_*^0$, 
\eqb \label{eqn-tight-G_n'-prob}
\BB P\left(\mcl G_n' \,|\, X(1,2n) =\emptyset \right) \geq 1-q_k' .
\eqe 
where $\mcl G_n' = \mcl G_n'(\ep_0,\ep_1)$ is defined as in Lemma~\ref{prop-G'-event}. 

By Lemma~\ref{prop-E^l-equiv} and Proposition~\ref{prop-E^l-tight}, we can find $  \zeta_k \in \BB N$, depending on $\ep_0$, $\ep_1$, $k$, and $q_k'$; and $n_*^1 = n_*^1( \ep_0,\ep_2,k,q_k')  \geq n_*^0$ such that for each $n\geq n_*^1$ and each $(m,l)$ as above, we have
\eqb \label{eqn-tight-mid}
\BB P\left(\wt G_n^k( \zeta_k)\,|\, \mcl G_n' \right) \geq 1-q_k'
\eqe 
where $\wt G_n^k( \zeta_k)$ is the event that $|Z^n(-s) - Z^n(-t)| \leq 2^{-k-1}$ whenever $s,t\in [\phi(\iota_0^n)/n , \iota_0^n/n]_{\BB Z}$ with $|s-t| \leq  \zeta_k$. 

On the event $\mcl G_n' $, we have $|\phi(\iota_0^n)| \leq   n^{\xi/2}$ and $n-\iota_0^n \leq n^{\xi/2}$. By \cite[Lemma 3.13]{shef-burger} together with \cite[Proposition 2.13]{gms-burger-cone}, we can find $n_*^2  = n_*^2( \ep_0,\ep_2,k,q_k')$ such that the conditional probability given $\{X(1,2n) = \emptyset\}$ that $\mcl G_n'(\ep_0,\ep_1)$ occurs and 
\eqb \label{eqn-bad-ends}
\left( \sup_{j\in [1,\phi(\iota_0^n)]_{\BB Z}} |X(1,j)|\right)  \vee \left( \sup_{j\in [ \iota_0^n , 2n]_{\BB Z}} |X(1,j)|\right) \geq 2^{-k-1} n^{1/2} 
\eqe 
is at most $ q_k'$. If~\eqref{eqn-bad-ends} does not hold and $\wt G_n^k( \zeta_k)$ occurs, then $G_n^k(\zeta_k)$ occurs. Hence~\eqref{eqn-tight-G_n'-prob} and~\eqref{eqn-tight-mid} imply that for $n\geq n_*^2$, 
\eqbn
\BB P\left(G_n^k(\zeta_k) \,|\, X(1,2n) = \emptyset\right) \geq 1 - (1-q_k')^2 - q_k' .
\eqen
Choose $q_k'$ sufficiently small that this quantity is at least $1-q 2^{-k}$. 
Since there are only finitely many $n\leq n_*^2$, by shrinking $\zeta_k$ we can arrange that
\[
\BB P\left(G_n^k( \zeta_k) \,|\, X(1,2n) = \emptyset\right) \geq  1- q 2^{-k} ,\qquad \forall n\in\BB N.
\]
Thus~\eqref{eqn-tight-union} holds.

To obtain the last statement, fix a weak subsequential limit of the conditional laws of $Z^n$ given $\{X(1,2n) = \emptyset\}$, a time $t  \in (0,2)$, and an open set $A\subset [0,\infty)^2$ with zero Lebesgue measure. Let $\wt Z$ be distributed according to our given limiting law. Choose $\ep_0 ,\ep_1 > 0$ and $n_*^0 \in\BB N$ such that~\eqref{eqn-tight-mid} holds with $ q$ in place of $q_k'$. Then choose $\beta =\beta(q,t,A,\ep_0,\ep_1) > 0$ and $n_*^1 = n_*^1(q,t,A,\ep_0,\ep_1)\geq n_*^0$ such that~\eqref{eqn-E^l-no-hit} of Proposition~\ref{prop-E^l-tight} holds with $2\beta$ in place of $\beta$ and $t/2$ in place of $t$. Finally, use \cite[Lemma 3.13]{shef-burger} to choose $n_*^2 = n_*^2(q,t,\beta,\ep_0,\ep_1) \geq n_*^1$ such that for $n \geq n_*^2$, 
\eqbn
\BB P\left( \sup_{s \in [t  -  n^{\xi-1} , t   + n^{\xi-1}]_{\BB Z}} |Z^n( s) - Z^n(t)| > \beta  \,|\,  X(1,2n) = \emptyset \right) \leq q .
\eqen
Then for $n\geq n_*^2$, it holds that
\begin{align}  
&\BB P\left( \op{dist}(Z^n(t) , A) < \beta  \,|\,  X(1,2n) = \emptyset \right) \leq 3 q .
\end{align}
Since $q$ is arbitrary, $\BB P(\wt Z(t) \in A) = 0$. 
\end{proof}

\subsection{Convergence conditioned on an exit position at macroscopic distance from 0}
\label{sec-disk-conv}

In this section we will define a regularized version of the event $\wh{\mcl E}_n^l$ of Definition~\ref{def-end-event} for $l\asymp n^{1/2}$, which we call $\ul{\mcl E}_n^l(\alpha)$. Roughly speaking, we will then prove the following. Suppose we fix $z_1,z_2 \in (0,\infty)^2$ and $0<s_1<s_2<1$. Then for sufficiently large $n$ and sufficiently small $\alpha$, the conditional law of $Z^n|_{[s_1,s_2]}$ given $\ul{\mcl E}_n^l(\alpha)$ and the event that $Z^n(s_1) \approx z_1$ and $Z^n(s_2) \approx z_2$ is close to the law of a Brownian bridge from $z_1$ to $z_2$ conditioned to stay in the first quadrant (see Proposition~\ref{prop-2pt-cond-conv} below for a precise statement). This statement together with Lemma~\ref{prop-bm-excursion} will eventually be used to identify the law of a subsequential limit of the conditional laws of $Z^n$ given $\{X(1,2n) = \emptyset\}$. 

We now proceed to define our events. We recommend that the reader consult Figure~\ref{fig-macro-cone-event} for an illustration before reading the formal definition of the events.

\begin{figure}[ht!]
 \begin{center}
\includegraphics{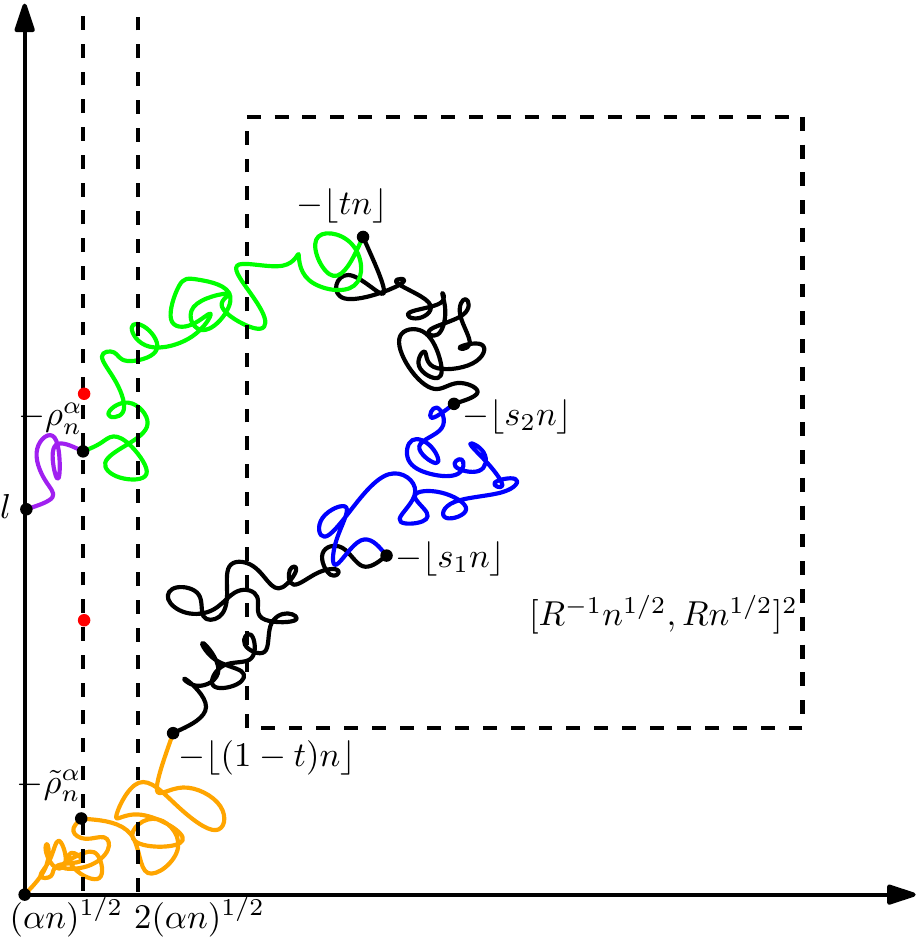} 
\caption{An illustration of the path $D$ of~\eqref{eqn-discrete-path} when the event $\ul{\mcl E}_n^l$ of Section~\ref{sec-disk-conv} occurs. The event $G_n(\alpha)$ concerns the part of the path before the green segment. In particular, the path is required to stay in the first quadrant until time $-\lfloor t n \rfloor$, to stay to the right of the vertical line at distance $2(\alpha n)^{1/2}$ from the origin between times $-\lfloor (1-t) n\rfloor$ and $-\lfloor t n \rfloor$, and to be in the rectangle $\left[R^{-1} n^{1/2} , R n^{1/2} \right]_{\BB Z}^2$ at time $-\lfloor t n \rfloor$; and the path at time $-\wt\rho_n^\alpha$ is required to be within distance at most $R^{-1} n^{1/2}$ of the horizontal axis. The event $\wt H_n^{ l}(\alpha)$ specifies additional regularity conditions for the green part of the path and the time $-\rho_n^\alpha$. The event $H_n^l(\alpha)$ is the intersection of $\wt H_n^l(\alpha)$ and the event that the diameter of the purple segment of the path is at most a small constant times $n^{1/2}$. On the event $ H_n^l(\alpha)$, each cheeseburger in the reduced word corresponding to the purple part of the path is matched to a cheeseburger order. This fact allows us to apply the results of \cite[Section 2]{gms-burger-local} to estimate the conditional probability of the event $\ul{\mcl E}_n^l(\alpha)$ that $  H_n^l(\alpha)$ occurs and the path exits the first quadrant at the position $(0,l)$ (which lies at distance proportional to $n^{1/2}$ from the origin). Proposition~\ref{prop-2pt-cond-conv} is proven by showing that if we condition on $\ul{\mcl E}_n^l(\alpha)$ and the approximate locations of the path at times $-\lfloor t n\rfloor$, $-\lfloor s_1 n \rfloor$, and $-\lfloor s_2 n \rfloor$, then the law of the blue part of the path is approximately that of a correlated Brownian bridge conditioned to stay in the first quadrant. 
}\label{fig-macro-cone-event}
\end{center}
\end{figure}

For $n\in\BB N$, $t \in (1/2,1)$, and $\alpha>0$ let 
\eqb \label{eqn-rho-def}
\rho_n^\alpha = \rho_n^\alpha(t) := J\wedge n \wedge \inf\left\{ j \geq t n \,:\,   \mcl N_{\tb H}\left(X(-j,-1)\right) \leq (\alpha n)^{1/2}  \right\} . 
\eqe
Let
\eqbn
O_n^\alpha = O_n^\alpha(t)   := \mcl N_{\tb H}\left(X(-\rho_n^\alpha(t) ,-1)\right) + \mcl N_{\tb F}\left(X(-\rho_n^\alpha(t) ,-1)\right) + 1 ,\quad Q_n^\alpha = Q_n^\alpha(t)   := \mcl N_{\tb C}\left(X(-\rho_n^\alpha(t) ,-1)\right) .
\eqen 
Also let 
\eqb \label{eqn-wt-rho-def}
 \wt\rho_n^\alpha = \wt\rho_n^\alpha(t) := \lfloor (1-t) n \rfloor \wedge  \sup\left\{ j  \geq 1 \,:\, \mcl N_{\tb H}\left(X(-j,-1)\right) \geq (\alpha n)^{1/2} \right\}  .
\eqe

For $t\in (1/2,1)$, $\alpha>0$, and $R>1$, let $G_n(\alpha)  = G_n(\alpha; t, R)$ be the event that the following is true.
\begin{enumerate}
\item $J > t n $. \label{item-mid-event-size}
\item $  R^{-1} n^{1/2} \leq \mcl N_\theta\left(X(-t n , -1)\right) \leq R n^{1/2}   $ for each $\theta\in \{\tb H,\tb C\}$.  \label{item-mid-event-end}
\item $\inf_{j \in [(1-t) n , t n ]_{\BB Z}} \mcl N_{\tb H}\left(X(-j,-1)\right) \geq 2(\alpha n)^{1/2}$. \label{item-mid-event-inf}
\item $\mcl N_{\tb C}\left( X(-\wt\rho_n^\alpha(t) ,-1) \right) \leq   R^{-1} n^{1/2}$. \label{item-mid-event-start}
\item $\sup_{j \in [1,t n ]_{\BB Z}} \mcl N_{\tb F}\left(X(-j,-1)\right) \leq n^\nu$. \label{item-mid-event-F}
\end{enumerate}
For $t\in (1/2,1)$, $\alpha>0$, $R>1$, and $l \in\BB N$, let $\wt H_n^{ l}(\alpha) = \wt H_n^{ l}(\alpha;t, R)$ be the event that the following is true.
\begin{enumerate}
\item $G_n(\alpha;t ,R)$ occurs. \label{item-rho-event-G}
\item $n - \rho_n^\alpha(t) \in \left[R^{-1} \alpha n , R \alpha n \right]_{\BB Z}$. \label{item-rho-event-n} 
\item $Q_n^\alpha(t) \in \left[l - R (\alpha n)^{1/2} , l + R (\alpha n)^{1/2} \right]$. \label{item-rho-event-C}
\end{enumerate}
Let 
\eqb \label{eqn-rho-event-sup}
 H_n^l(\alpha) =  H_n^{ l}(\alpha;t ,R) := \wt H_n^{ l}(\alpha;t ,R) \cap \left\{ \mcl N_{\tc C}\left(X(-J , -\rho_n^\alpha(t)\right) \leq R^{-1} n^{1/2} \right\}  .
\eqe 
Let $\wh{\mcl E}_n^l$ be the event that $J = n$, $X(-J) = \tc H$, and $|X(-J,-1)| =l$, as in Definition~\ref{def-end-event}; and let
\eqb \label{eqn-ul-E^l-def}
\ul{\mcl E}_n^{l}(\alpha)  = \ul{\mcl E}_n^{l}(\alpha;t, R) := \wh{\mcl E}_n^l \cap  H_n^{ l}(\alpha;t, R).
\eqe

Our reason for including several of the conditions in the definitions of the events of this subsection is to make the following lemma true. 

\begin{lem} \label{prop-rho-match}
Define the time $\rho_n^\alpha = \rho_n^\alpha(t)$ and the events as in the beginning of this subsection. Fix $t \in (1/2,1)$ and $R>1$. There exists $\alpha_* = \alpha_*( R)$  such that for each $\alpha \in (0,\alpha_*]$, there exists $n_* = n_*(\alpha ,  R)\in\BB N$ such that for $\alpha \in (0,\alpha_*]$, $n\geq n_*$, and $l \geq 3 R^{-1} n^{1/2}$, the following holds. On the event $H_n^{ l}(\alpha) = H_n^{ l}(\alpha;t,R)$, each cheeseburger in $X(-J,-\rho_n^\alpha -1)$ is matched to a cheeseburger order in $X(-\rho_n^\alpha ,-1)$. 
\end{lem}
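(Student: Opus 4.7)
The plan is to argue that the conditions in $H_n^{l}(\alpha; t, R)$ pin down enough of the structure of $\mcl R(X(-\rho_n^\alpha, -1))$ and $X(-J, -\rho_n^\alpha - 1)$ to force the claimed matching. Let $h$, $c$, $f$ denote the numbers of unmatched $\tb H$, $\tb C$, $\tb F$ orders in $\mcl R(X(-\rho_n^\alpha, -1))$. The definition~\eqref{eqn-rho-def} of $\rho_n^\alpha$ gives $h \leq (\alpha n)^{1/2}$, and condition~\ref{item-rho-event-C} combined with condition~\ref{item-mid-event-start} of $G_n(\alpha;t,R)$ and \eqref{eqn-rho-event-sup} (which together bound the number of $\tb C$-$\tc C$ matches occurring inside $X(-\rho_n^\alpha, -1)$ by $O(n^{1/2}/R)$) yields $c \geq l - O(n^{1/2}/R) - O((\alpha n)^{1/2})$. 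In particular, for $l \geq 3R^{-1} n^{1/2}$ and $\alpha \leq \alpha_*(R)$ chosen small, $c$ is strictly larger than $h + f$ provided we can control $f$.

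The first task is therefore to establish the bound $f = O(n^\nu)$, which is the main technical obstacle. At the earlier time $j = \wt\rho_n^\alpha$ one has $f \leq \mcl N_{\tb F}(X(-\wt\rho_n^\alpha, -1)) \leq n^\nu$ by condition~\ref{item-mid-event-F} and the monotonicity of the reduced count. As $j$ grows from $\wt\rho_n^\alpha$ to $\rho_n^\alpha$, every $\tb F$ prepended to the left either survives (adding to $f$) or is consumed by a $\tc H$ or $\tc C$ added still further left. The infimum condition~\ref{item-mid-event-inf} combined with the definition of $\rho_n^\alpha$ forces $\mcl N_{\tb H}$ to drop by at least $(\alpha n)^{1/2}$ across this range, which in turn forces many $\tc H$'s to be prepended in this range; these $\tc H$'s absorb the $\tb F$'s before they can survive into $\mcl R(X(-\rho_n^\alpha, -1))$. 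Quantifying this absorption requires tracking the left-to-right arrangement of $\tb H$'s, $\tb F$'s, and $\tb C$'s in the order section, since a prepended $\tc H$ consumes the leftmost element of $\{\tb H, \tb F\}$ while a prepended $\tc C$ consumes the leftmost element of $\{\tb C, \tb F\}$.

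Once $f = O(n^\nu)$ is in hand, the second task is to read off the matchings. In the combined word $X(-J, -1)$, one continues the inductive prepending from $\rho_n^\alpha$ down to $J$ using the symbols of $X(-J, -\rho_n^\alpha - 1)$. Condition \eqref{eqn-rho-event-sup} bounds the number of $\tc C$'s to be prepended by $R^{-1} n^{1/2}$, which is much less than $c$, while the hamburger count in the same range is large (dominated by the $\tc H$'s forced by the $\mcl N_{\tb H}$ drop above). These prepended $\tc H$'s exhaust the $O(n^\nu)$ surviving $\tb F$'s in the order section before any $\tc C$ from $X(-J, -\rho_n^\alpha - 1)$ can be paired with one, so every such $\tc C$ commutes past the residual $\tb H$'s (via $\tc C \tb H = \tb H \tc C$) and finds its match at a $\tb C$ in the order section, which by construction lies in $X(-\rho_n^\alpha, -1)$.

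The most delicate part of the argument is the positional tracking used to control $f$: it requires understanding not just the counts of the three order types but their relative ordering in the reduced word, since a prepended $\tc H$ prefers to cancel a $\tb H$ over a $\tb F$ only when the former is to the left of the latter. Once this positional information is nailed down, the rest of the argument reduces to bookkeeping with the semigroup relations, and the conclusion follows by choosing $\alpha_*$ small enough and $n_*$ large enough that the error terms $O((\alpha n)^{1/2})$ and $O(n^\nu)$ are negligible against $l \geq 3R^{-1} n^{1/2}$.
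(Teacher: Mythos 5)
Your proposal takes a genuinely different route from the paper's, and along that route there is a real gap. The paper's proof never controls the number of flexible orders in the reduced word $X(-\rho_n^\alpha,-1)$; instead it uses the matching structure directly. Since the hamburger-order count $j\mapsto\mcl N_{\tb H}(X(-j,-1))$ drops through the level $(\alpha n)^{1/2}$ at $j=\rho_n^\alpha$ (forced by condition~\ref{item-mid-event-inf} of $G_n(\alpha)$), the symbol $X_{-\rho_n^\alpha}$ is a $\tc H$ whose match is a $\tb H$, not a $\tb F$. Consequently the segment $X(-\rho_n^\alpha,\phi(-\rho_n^\alpha))$ contains no flexible orders at all: any surviving $\tb F$ to the left of the leftmost surviving $\tb H$ would be cancelled first, so the $\tb H$-count could not drop. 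Combined with condition~\ref{item-mid-event-start} of $G_n(\alpha)$ and condition~\ref{item-rho-event-C} of $\wt H_n^l(\alpha)$, that segment contains at least $R^{-1}n^{1/2}$ cheeseburger orders, and these sit to the left of \emph{every} $\tb F$ in the order stack. The extra condition built into $H_n^l(\alpha)$ in~\eqref{eqn-rho-event-sup} caps the number of unmatched $\tc C$'s in $X(-J,-\rho_n^\alpha-1)$ by $R^{-1}n^{1/2}$; since a $\tc C$ commutes only past $\tb H$ and cancels the leftmost surviving element of $\{\tb C,\tb F\}$, each such $\tc C$ must land on one of those $\tb C$'s. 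No bound on the total number of $\tb F$'s is ever needed.

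Your proof instead tries to bound $f=\mcl N_{\tb F}(X(-\rho_n^\alpha,-1))$ directly, and this is where it breaks down. The assertion that $f\leq\mcl N_{\tb F}(X(-\wt\rho_n^\alpha,-1))\leq n^\nu$ ``by condition~\ref{item-mid-event-F} and the monotonicity of the reduced count'' is not correct: the map $j\mapsto\mcl N_{\tb F}(X(-j,-1))$ is not monotone (it increases whenever $X_{-j}=\tb F$), and condition~\ref{item-mid-event-F} only covers $j\in[1,tn]$ whereas $\rho_n^\alpha$ lies well beyond $tn$. You acknowledge that repairing this requires tracking the left-to-right arrangement of $\tb H$, $\tb C$, $\tb F$ in the order stack, but that tracking is never carried out. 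Moreover, even granting a bound on $f$, the final claim that the $\tc H$'s from $X(-J,-\rho_n^\alpha-1)$ ``exhaust the\dots surviving $\tb F$'s\dots before any $\tc C$\dots can be paired with one'' does not follow from counts alone: the burgers from the left word are consumed in the order they appear, and a $\tc C$ matches the leftmost unmatched $\tb C$ or $\tb F$ regardless of whether neighboring $\tc H$'s have acted yet. What actually rescues the conclusion is the positional fact that all the $\tb C$'s coming from $X(-\rho_n^\alpha,\phi(-\rho_n^\alpha))$ lie to the left of every $\tb F$ in the stack, and that fact is exactly what the paper extracts by identifying the match $\phi(-\rho_n^\alpha)$ --- the step your argument never takes.
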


Lemma~\ref{prop-rho-match} will allow us to apply the estimates of \cite[Section 2]{gms-burger-local} to the word $X_{-J} \dots X_{-\rho_n^\alpha-1}$, without having to worry about the matches of $\tb F$'s. 

\begin{proof}[Proof of Lemma~\ref{prop-rho-match}]
Assume $H_n^{ l}(\alpha)$ occurs. 
By definition of $\rho_n^\alpha$, the word $X(\phi(-\rho_n^\alpha ) , -1)$ contains at most $\alpha n^{1/2} $ hamburger orders. By condition~\ref{item-mid-event-inf} in the definition of $G_n(\alpha)$ together with the definitions of $\rho_n^\alpha$ and $\wt\rho_n^\alpha$, it follows that $\rho_n^\alpha$ is a hamburger and $-\phi(-\rho_n^\alpha) = \wt\rho_n^\alpha$. By condition~\ref{item-mid-event-start} in the definition of $G_n(\alpha)$, we have $\mcl N_{\tb C}\left( X(-\wt\rho_n^\alpha , -1) \right) \leq R^{-1} n^{1/2}$. If we choose $\alpha $ small enough that $R\alpha^{1/2} \leq R^{-1}$ and $n$ sufficiently large depending on $\alpha$, then for $l\geq 3 R^{-1} n^{1/2}$ we have by condition~\ref{item-rho-event-C} in the definition of $\wt H_n^{ l}(\alpha)$ that
\eqbn
\mcl N_{\tb C}\left(X(-\rho_n^\alpha , -\wt\rho_n^\alpha )\right) \geq Q_n^\alpha  - \mcl N_{\tb C}\left( X(-\wt\rho_n^\alpha , -1) \right) \geq R^{-1} n^{1 /2} .
\eqen
The word $X(-\rho_n^\alpha , -\wt\rho_n^\alpha )$ contains no flexible orders. 
By the definition~\eqref{eqn-rho-event-sup} of $H_n^{ l}(\alpha)$, we infer that each cheeseburger in $X(-J,-\rho_n^\alpha-1)$ is matched to a cheeseburger order in $X(-\rho_n^\alpha,\phi(-\rho_n^\alpha)  )$, as required.
\end{proof}

The main result of this subsection is the following proposition. 
  
\begin{prop} \label{prop-2pt-cond-conv}
Fix $t\in (1/2,1)$, $R>1$, $b \in (3R^{-1} ,1)$, $q\in (0,1)$, $s_1 < s_2 \in (1-t,t)$, and $z_1,z_2 \in (0,\infty)^2$. For $\zeta>0$ and $n\in\BB N$, let $\mcl P_n^{s_1,s_2}(z_1,z_2; \zeta)$ be the event that the following is true:
\eqb \label{eqn-mclP-event}
 |Z^n(s_1) - z_1| \leq \zeta n^{1/2} ,\quad |Z^n(s_2) - z_2| \leq \zeta n^{1/2} ,\quad \op{and} \quad Z^n([s_1,s_2]) \subset (0,\infty)^2  .
\eqe  
Also define the objects and events as in the beginning of this subsection. 
There exists $\zeta_*   > 0$ (depending on the above parameters) such that for each $\zeta \in (0,\zeta_*]$ there exists $\alpha_* > 0$ (depending on $\zeta$ and the above parameters) such that for each $\alpha \in (0,\alpha_*]$, there exists $n_*\in\BB N $ (depending on $\alpha$, $\zeta$, and the above parameters) with the following property. For each $n\geq n_*$ and each $l\in [b n^{1/2} , b^{-1} n^{1/2}]_{\BB Z}$, the Prokhorov distance between the conditional law of $Z^n(-\cdot)|_{[s_1,s_2]}$ given $ \mcl P_n^{s_1,s_2}(z_1,z_2; \zeta) \cap \ul{\mcl E}_n^l(\alpha)$ and the law of a two-dimensional Brownian bridge from $z_1$ to $z_2$ in time $s_2-s_1$ with variances and covariances as in~\eqref{eqn-bm-cov} conditioned to stay in the first quadrant is at most $q$. 
\end{prop}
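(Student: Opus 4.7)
The plan is to use a Bayes' rule argument analogous to the one in Section~\ref{sec-end-local}, but with the roles of the ``bulk'' and the ``tip'' of the path played by the intervals $[s_1,s_2]$ and $[t,1]$ respectively. The key observation is that the event $\ul{\mcl E}_n^l(\alpha) = \wh{\mcl E}_n^l \cap H_n^l(\alpha;t,R)$ depends only on the word $X_{-\lfloor t n\rfloor -1}\dots X_{-1}$ together with the reduced word $X(-\lfloor t n \rfloor, -1)$; and, by Lemma~\ref{prop-rho-match} (for $\alpha$ small) together with condition~\ref{item-mid-event-F} in the definition of $G_n(\alpha)$, the contribution of flexible orders to this reduced word is $o(n^{1/2})$ with overwhelming probability. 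Consequently, the conditional probability of $\ul{\mcl E}_n^l(\alpha)$ given $X_1\dots X_{-\lfloor t n\rfloor -1}$ is, up to an error $o(1)$ that is uniform on the event that the path is bounded away from $\partial[0,\infty)^2$ at time $-\lfloor t n \rfloor$, a function only of $Z^n(t)$.

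Having established that, I would write the conditional law of $Z^n(-\cdot)|_{[s_1,s_2]}$ given $\mcl P_n^{s_1,s_2}(z_1,z_2;\zeta) \cap \ul{\mcl E}_n^l(\alpha)$ using Bayes' rule. Conditioning first on $X_{-\lfloor t n\rfloor-1}\dots X_{-1}$, the conditional probability of $\ul{\mcl E}_n^l(\alpha)$ is (by the above) approximately a function $\Psi_n(Z^n(t))$ which, by Proposition~\ref{prop-end-lower} and a localized version of Proposition~\ref{prop-end-upper-n} analogous to \cite[Proposition 4.4]{gms-burger-local}, is uniformly bounded above and below on the set $\{w : |w|\asymp n^{1/2}, w \in [R^{-1} n^{1/2}, R n^{1/2}]^2\}$ by constant multiples of a single quantity depending only on $n,l,\alpha,R$. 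Since the events $\mcl P_n^{s_1,s_2}(z_1,z_2;\zeta)$ and the approximate values of $Z^n(t)$ constrain the path at times $s_1 < s_2 < t$, by averaging over $Z^n(t)$ in a small ball near (say) a typical target point $z_0$ (and using that $Z^n(t)$ has an approximately smooth law on the scale $n^{1/2}$), the Bayes' factor $\Psi_n$ essentially cancels and the conditional law of $Z^n|_{[s_1,s_2]}$ becomes (up to error going to 0) the unconditional law conditioned only on $\mcl P_n^{s_1,s_2}(z_1,z_2;\zeta)$ and on $Z^n([s_2,t]) \subset (0,\infty)^2$, $Z^n(t) \approx z_0$.

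For the latter law, standard scaling limit results suffice. Indeed, by \cite[Theorem 2.5]{shef-burger} together with the invariance principle for random walks conditioned on their endpoint, the conditional law of $Z^n(-\cdot)|_{[s_1,s_2]}$ given $\mcl P_n^{s_1,s_2}(z_1,z_2;\zeta)$ converges, as $n\to\infty$ and then $\zeta\to 0$, to the law of a correlated Brownian bridge from $z_1$ to $z_2$ in time $s_2-s_1$ conditioned on staying in the first quadrant; the extra conditioning on the behavior of the path on $[s_2,t]$ (which specifies $Z^n(t)$ up to $o(n^{1/2})$ error and forces the path to stay in the first quadrant after $s_2$) is asymptotically irrelevant for the law on $[s_1,s_2]$ since $z_2$ lies strictly inside the first quadrant.

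The main obstacle will be the uniformity in $l$ and the absolute continuity step in the second paragraph: we need to prove that $\Psi_n(w)/\Psi_n(w')$ is close to $1$ uniformly over $w,w'$ ranging over a small ball near $z_0$ (and uniformly in $l \in [bn^{1/2}, b^{-1} n^{1/2}]$). This requires sharpening the estimates of Section~\ref{sec-end-local} in the regime $l\asymp n^{1/2}$, which parallels the argument for Proposition~\ref{prop-E^l-abs-cont} but now requires a ``continuity in $(h,c)$'' statement; this can be obtained from the explicit form of the upper and lower bounds in Propositions~\ref{prop-end-lower} and~\ref{prop-end-upper-n} together with the local limit behavior of $(J^H_m, L^H_m)$ established in \cite[Section~2]{gms-burger-local}. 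Once this uniform comparison is in hand, the remainder of the argument is routine.
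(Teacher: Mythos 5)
Your overall strategy --- Bayes' rule, showing that the conditional probability of $\ul{\mcl E}_n^l(\alpha)$ given the word up to time $-\lfloor t n \rfloor$ is (approximately) a continuous function of $Z^n(t)$ alone, then transferring to the Brownian bridge via the invariance principle --- is the same as the paper's. But the sketch skips the one structural device that makes the crucial continuity step achievable, and it misattributes the role of Lemma~\ref{prop-rho-match}.

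The gap you flag at the end is real, but the argument you sketch for filling it does not close. The fact that $\mcl R\left(X(-\lfloor t n\rfloor,-1)\right)$ has only $O(n^\nu)=o(n^{1/2})$ flexible orders (condition~\ref{item-mid-event-F} of $G_n(\alpha)$) is not enough to conclude that the conditional probability of $\ul{\mcl E}_n^l(\alpha)$ is a function of $Z^n(t)$ up to an $o(1)$ error: the problem, as spelled out in Section~\ref{sec-outline}, is that the occurrence of an exact exit event like $\wh{\mcl E}_n^l$ depends on how those flexible orders match against the remaining word, not just on their count. Lemma~\ref{prop-rho-match} is not a bound on the number of flexible orders; it asserts that \emph{on} $H_n^l(\alpha)$, every cheeseburger in $X(-J,-\rho_n^\alpha-1)$ is matched to a cheeseburger order in $X(-\rho_n^\alpha,-1)$, and this is exactly the statement that lets the paper re-express the event purely in terms of the first-passage pair $(J^H_{\frk o(x)}, L^H_{\frk o(x)})$ with no flexible-order ambiguity.

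Concretely, the paper first conditions at the \emph{stopping time} $\rho_n^\alpha$ (Lemma~\ref{prop-rho-abs-cont}), where the reduced word has $O((\alpha n)^{1/2})$ hamburger orders and, thanks to Lemma~\ref{prop-rho-match}, the event $\ul{\mcl E}_n^l(\alpha)$ becomes a clean $(J^H_m,L^H_m)$ hitting event. Then \cite[Proposition~2.2, Lemma~2.9]{gms-burger-local} yield an explicit $(1+o(1))$-sharp formula $\frk o(x)^{-3}g\left(\frac{n-|x|}{\frk o(x)^2},\frac{l-\frk c(x)}{\frk o(x)}\right)$ which is jointly continuous in its arguments; this is what produces a ratio bound of the form $1-q\leq\Psi_n(w)/\Psi_n(w')\leq(1-q)^{-1}$, not merely two-sided bounds by a constant. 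The passage from $\rho_n^\alpha$ back to $\lfloor tn\rfloor$ (Lemma~\ref{prop-mid-abs-cont}) is done by discretizing the possible values of $(\rho_n^\alpha,O_n^\alpha,Q_n^\alpha)$ and invoking \cite[Theorem~2.5]{shef-burger}. Your plan to ``sharpen the estimates of Section~\ref{sec-end-local} in the regime $l\asymp n^{1/2}$'' would at best deliver constant-order bounds on $\Psi_n$, since those estimates (in the regime $l\asymp n^{\xi/2}$) are designed to be uniform over the boundary-approach position, not to be $(1+o(1))$-continuous in it. Introducing $\rho_n^\alpha$ (or an equivalent stopping time near one coordinate axis where the matching structure is pinned down) is the missing ingredient, and without it the Bayes' cancellation you need does not follow from the cited results.
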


The idea of the proof of Proposition~\ref{prop-2pt-cond-conv} is to show that if we are given two realizations $x$ and $x'$ of $X_{-\lfloor t n \rfloor} \dots X_{-1}$ for which $G_n(\alpha)$ occurs and the corresponding realizations of $Z^n(t)$ are close together, then the conditional probabilities of $\ul{\mcl E}_n^l(\alpha)$ given $\{X_{-\lfloor t n \rfloor} \dots X_{-1} = x\}$ and $\{X_{-\lfloor t n \rfloor} \dots X_{-1} = x'\}$ are close together. We then apply Bayes' rule to invert the conditioning, and finally condition on $\mcl P_n^{s_1,s_2}(z_1,z_2; \zeta)$ for $s_1 < s_2 \in (1-t,t)$.

\begin{lem} \label{prop-rho-abs-cont}
Define the times $\rho_n^\alpha = \rho_n^\alpha(t)$ and $\wt\rho_n^\alpha = \wt\rho_n^\alpha(t)$ and the events as at the beginning of this subsection. Fix $t \in (1/2,1)$ and $R>1$. For each $q\in (0,1)$, there exists $\alpha_* = \alpha_*(q, R)>0$ and $\zeta_0 = \zeta_0(q, R) > 0$ such that for each $\alpha \in (0,\alpha_*]$, there exists $n_* = n_*(\alpha,q, R)$ with the following property. Let $\alpha \in (0,\alpha_*]$, $n\geq n_*$, and $l  \geq  3 R^{-1} n^{1/2}$. Suppose given two such realizations $x$ and $x'$ for which (in the notion of~\eqref{eqn-theta-count-reduced}),
\eqb \label{eqn-rho-abs-cont-close}
\left| |x | - |x '| \right| \leq 2\zeta_0  \alpha n ,\quad \left| \frk o(x) - \frk o(x') \right| \leq 2 \zeta_0 (\alpha n)^{1/2} , \quad \left| \frk c(x) - \frk c(x') \right| \leq 2\zeta_0 (\alpha n)^{1/2}  .
\eqe 
Then
\eqb \label{eqn-rho-abs-cont}
1-q \leq \frac{ \BB P\left(  \ul{\mcl E}_n^l(\alpha) \,|\, X_{-\rho_n^\alpha} \dots X_{-1} = x \right) }{ \BB P\left(  \ul{\mcl E}_n^l(\alpha) \,|\, X_{-\rho_n^\alpha} \dots X_{-1} = x' \right) } \leq (1-q)^{-1}  .
\eqe 
\end{lem}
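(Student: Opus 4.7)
The plan is to condition on $X_{-\rho_n^\alpha}\dots X_{-1}=x$ and reduce the conditional probability of $\ul{\mcl E}_n^l(\alpha)$ to a function of the rescaled triple $\bigl((n-|x|)/n,\,\frk o(x)/(\alpha n)^{1/2},\,(\frk c(x)-l)/(\alpha n)^{1/2}\bigr)$ via a sharp local limit theorem, and then use continuity in these variables to control the ratio~\eqref{eqn-rho-abs-cont}. Since $\rho_n^\alpha$ is a stopping time for $X$ read backward from $-1$, the conditional law of $X_{-\rho_n^\alpha-1},X_{-\rho_n^\alpha-2},\dots$ given $X_{-\rho_n^\alpha}\dots X_{-1}=x$ is i.i.d.\ with the law~\eqref{eqn-theta-probs}. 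For $\alpha\leq\alpha_*(R)$ and $n$ large, Lemma~\ref{prop-rho-match} forces every cheeseburger in $X(-J,-\rho_n^\alpha-1)$ to be matched to a cheeseburger order in $x$ on $H_n^l(\alpha)$; combined with \cite[Corollary~5.2]{gms-burger-cone} to discard the $o_n^\infty(n)$-probability event that the new segment contains more than $n^\nu$ flexible orders, this will make $\ul{\mcl E}_n^l(\alpha)$ into a walk-hitting-plus-regularity event for the i.i.d.\ new segment which depends on $x$ only through the triple above.

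Next I would show that this conditional probability satisfies a sharp asymptotic of the form $\Psi_n\cdot f(s,u,v)\cdot(1+o_n(1))$, where $\Psi_n$ is an $x$-independent normalizer, the $o_n(1)$ is uniform in $x,l$ over the relevant range, and $f$ is a continuous positive function on a compact subset of $(0,\infty)\times\BB R^2$ determined by $t$, $R$, and $b$. This is a local CLT for the backward hamburger-cheeseburger walk that is required to exit the first quadrant at the prescribed point $(l-\frk c(x),-\frk o(x))$ in exactly $n-|x|$ steps while satisfying the residual regularity $\{\mcl N_{\tc C}(X(-J,-\rho_n^\alpha))\leq R^{-1}n^{1/2}\}$. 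I would derive it by combining the one-sided estimates \cite[Proposition~2.2]{gms-burger-local}, \cite[Lemma~2.10]{gms-burger-local}, and Propositions~\ref{prop-end-lower}--\ref{prop-end-upper-n} with the Brownian scaling limit \cite[Theorem~4.1]{gms-burger-cone} applied to the walk under the conditioning $G_n(\alpha;t,R)$, which identifies the limiting density $f$ (up to constants) as a correlated-Brownian transition density restricted to the quadrant, multiplied by the positive limiting conditional probability of the residual regularity condition.

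With this in hand, the hypothesis~\eqref{eqn-rho-abs-cont-close} translates into $|s-s'|\leq 2\zeta_0\alpha$ and $|u-u'|,|v-v'|\leq 2\zeta_0$. Over the compact parameter range fixed by the conditions in $G_n(\alpha;t,R)\cap\wt H_n^l(\alpha;t,R)$ and $l\in[bn^{1/2},b^{-1}n^{1/2}]$, uniform continuity of $f$ (with modulus depending only on $R$) lets me pick $\zeta_0=\zeta_0(q,R)$ so that $|f(s,u,v)/f(s',u',v')-1|\leq q/2$; choosing $\alpha_*$ small (to apply Lemma~\ref{prop-rho-match} uniformly) and $n_*$ large (to absorb the $o_n(1)$ and the flexible-order error) will yield~\eqref{eqn-rho-abs-cont}. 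The main obstacle is the local CLT step: the cited estimates in \cite{gms-burger-local} are matching one-sided bounds rather than sharp asymptotics with equal constants, so producing the required continuous limit density $f$ will most likely proceed by the soft route of combining these bounds with the Brownian scaling limit \cite[Theorem~4.1]{gms-burger-cone} (to identify $f$ without a closed-form expression) rather than by an explicit local CLT computation.
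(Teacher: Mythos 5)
Your overall strategy matches the paper's proof step for step: use that $\rho_n^\alpha$ is a stopping time for the backward word, so the segment beyond $-\rho_n^\alpha$ is i.i.d.; invoke Lemma~\ref{prop-rho-match} to turn $\ul{\mcl E}_n^l(\alpha)$ into a walk-hitting-plus-regularity event that depends on $x$ only through $(n-|x|,\,\frk o(x),\,l-\frk c(x))$; express the conditional probability asymptotically as a continuous function of rescaled versions of these quantities; and use uniform continuity over the compact range forced by $\wt H_n^l(\alpha;t,R)$ to absorb the perturbation~\eqref{eqn-rho-abs-cont-close}.

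The problem is with the step you flag as the ``main obstacle.'' You assume the estimates in \cite{gms-burger-local} are only matching one-sided bounds rather than sharp asymptotics, and you propose a soft workaround. This is a misdiagnosis: \cite[Proposition~2.2]{gms-burger-local} is a genuine local limit theorem with an explicit limiting density, and the paper uses it together with \cite[Lemma~2.9]{gms-burger-local} (which supplies the residual regularity condition of~\eqref{eqn-rho-event-sup}) to obtain
\eqbn
\BB P\left(  \ul{\mcl E}_n^l(\alpha) \,\big|\, X_{-\rho_n^\alpha} \dots X_{-1} = x \right) = (1+o_\alpha(1))\, \frk o(x)^{-3} \left(g\left(\frac{n-|x|  }{\frk o(x)^2} , \frac{l - \frk c(x)}{\frk o(x)}\right) + o_{\alpha n}(1)\right) ,
\eqen
where $g(t,v) = a_0 t^{-2} \exp\left(-\frac{a_1 + a_2(v+a_3)^2}{t}\right)$, with the $o_\alpha(1)$ and $o_{\alpha n}(1)$ uniform over the relevant range of $x$ and $l$. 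Uniform continuity and positivity of $(s,u,v)\mapsto \alpha^{3/2}u^{-3}g(s^2/u^2,v/u)$ on the compact box $\left[R^{-1}\alpha,R\alpha\right]\times\left[\tfrac12 \alpha^{1/2},2\alpha^{1/2}\right]\times\left[-R\alpha^{1/2},R\alpha^{1/2}\right]$ then yields~\eqref{eqn-rho-abs-cont} exactly as you outline, with $\zeta_0$ chosen from the modulus of continuity.

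Your proposed fallback, on the other hand, would not close the argument. Weak convergence from \cite[Theorem~4.1]{gms-burger-cone} gives no control on the probability of hitting a fixed lattice point at a fixed time, and two one-sided bounds of the form $c_1\phi(\cdot) \leq \BB P(\cdot) \leq c_2\phi(\cdot)$ with unequal implicit constants only control the ratio in~\eqref{eqn-rho-abs-cont} to within $c_2/c_1$ — a constant bounded away from~$1$ that you cannot shrink by taking $\alpha$ small or $n$ large. Squeezing the ratio to $(1-q)^{\pm 1}$ genuinely requires a local CLT, and the one you worried might not exist is precisely \cite[Proposition~2.2]{gms-burger-local}.
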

\begin{proof}
Let $x$ be a realization of $X_{-\rho_n^\alpha} \dots X_{-1}$ for which $\wt H_n^{ l}(\alpha)$ occurs. By definition of $\wt H_n^l(\alpha)$, if $n$ is chosen large enough that $\alpha n^{1/2} \geq n^\nu$, then
\eqbn
\left(n-|x| , \frk o(x) ,  l -\frk c(x) \right) \in \left[ R^{-1} \alpha n ,  R \alpha n \right]_{\BB Z} \times \left[\frac12 (\alpha n)^{1/2} ,  2(\alpha n)^{1/2} \right]_{\BB Z} \times \left[-R(\alpha n)^{1/2} , R(\alpha n)^{1/2} \right]_{\BB Z} .
\eqen
For $m\in\BB N$, let $J_{x,m}^H$ be the smallest $j\geq |x|+1$ for which $X(-j,-|x|-1)$ contains $m$ hamburgers and let $L_{x,m}^H = d^*(X(-J_{x,m}^H ,-|x|-1))$. 
Lemma~\ref{prop-rho-match} implies that we can find $\wt\alpha_* = \wt\alpha_*( R)$ such that for each $\alpha \in (0,\alpha_*]$ there exists $\wt n_* = \wt n_*(\alpha ,  R)$ such that if $n\geq n_*$ and $l\geq 3R^{-1} n^{1/2}$, then on the event $ \ul{\mcl E}_n^l(\alpha)$ each cheeseburger in $X(-J,-\rho_n^\alpha-1)$ is matched to a cheeseburger order in $X(-\rho_n^\alpha,-1)$. Hence if $X_{-\rho_n^\alpha} \dots X_{-1} = x$, then $\ul{\mcl E}_n^l(\alpha)$ occurs if and only if the following is true:
\alb
(J_{x,\frk o(x)}^H , L_{x,\frk o(x)}^H) = (n-|x| , l - \frk c(x) ) \quad \op{and} \quad 
\sup_{i \in [|x|+1,J_{x,\frk o(x)}^H ]_{\BB Z}} |X(-i,-|x|-1)| \leq R^{-1} n^{1/2} .
\ale
By \cite[Proposition 2.2 and Lemma~2.9]{gms-burger-local},
\eqbn
\BB P\left(  \ul{\mcl E}_n^l(\alpha) \,|\, X_{-\rho_n^\alpha} \dots X_{-1} = x \right) = (1+o_\alpha(1)) \frk o(x)^{-3} \left(g\left(\frac{n-|x|  }{\frk o(x)^2} , \frac{l - \frk c(x)}{\frk o(x)}\right) + o_{\alpha n}(1)\right) ,
\eqen
where
\eqbn
g(t,v) =   a_0 t^{-2} \exp\left(-\frac{a_1 + a_2  (v+a_3)^2  }{ t} \right) 
\eqen
for constants $a_0,a_1,a_2 ,a_3 > 0$ depending only on $p$; 
the $o_\alpha(1)$ is uniform in the choice of $n$ and $l$ as in the statement of the lemma;
and the $o_{\alpha n}(1)$ is depends on $\alpha$ and $n$ only through $\alpha n$. 
The function $(s ,u,v) \mapsto \alpha^{3/2} u^{-3} g\left(\frac{ s^2 }{u^2  }, \frac{v}{u}\right)$ is continuous on $\left[R^{-1} \alpha  , R\alpha  \right] \times \left[(1/2) \alpha^{1/2} , 2\alpha^{1/2}  \right] \times \left[-R \alpha^{1/2} ,R\alpha^{1/2}  \right]$, with modulus of continuity depending only on $R$; and is bounded above and below on this set by finite positive constants depending only on $R$ and $b$. 

It follows that we can find $\alpha_* = \alpha_*(q,  R) \geq \wt\alpha$ such that for $\alpha \in (0,\alpha_*]$, there exists $n_* = n_*(\alpha, b,R) \geq \wt n_*$ such that for each $\alpha \in (0, \alpha_*]$, each $n\geq n_*$, and each $l$ and $x$ as above,
\eqb \label{eqn-P-to-g}
1-q \leq \frac{\BB P\left(  \ul{\mcl E}_n^l(\alpha) \,|\, X_{-\rho_n^\alpha} \dots X_{-1} = x \right)}{\frk o(x)^{-3}g\left(\frac{n-|x|  }{\frk o(x)^2} , \frac{l - \frk c(x)}{\frk o(x)}\right)    } \leq (1-q)^{-1} .
\eqe 
Furthermore, we can find $\zeta_0 > 0$, depending only on $R$ and $b$, such that for any $\alpha \in (0,\alpha_*]$, $n\geq n_*$, $l\geq 3 R^{-1} n^{1/2}$, and any two realizations $x$ and $x'$ of $X_{-\rho_n^\alpha} \dots X_{-1}$ for which $\wt H_n^l(\alpha)$ occurs and~\eqref{eqn-rho-abs-cont-close} holds,
we have
\eqb \label{eqn-g-to-g'}
1-q \leq \frac{   \frk o(x)^{-3}g\left(\frac{n-|x|  }{\frk o(x)^2} , \frac{l - \frk c(x)}{\frk o(x)}\right)   }{   \frk o(x')^{-3}g\left(\frac{n-|x'|  }{\frk o(x')^2} , \frac{l - \frk c(x')}{\frk o(x')}\right)   } \leq (1-q)^{-1} .
\eqe
By combining~\eqref{eqn-P-to-g} and~\eqref{eqn-g-to-g'}, we obtain~\eqref{eqn-rho-abs-cont} with $(1-q)^2$ in place of $1-q$. Since $q$ is arbitrary we conclude.
\end{proof}

For our next lemma, we will consider the following event. 
For $(h,c) \in \BB N^2$, let
\eqb \label{eqn-mid-box-event}
B_n^{ h,c}(\zeta) := \left\{\left|\mcl N_{\tb H}\left(X(-t n,-1)\right) - h \right| \leq \zeta n^{1/2} , \:   \left|\mcl N_{\tb C}\left(X(-t n,-1)\right) - c \right| \leq \zeta n^{1/2}     \right\}  .
\eqe

\begin{lem} \label{prop-mid-abs-cont}
Define the time $\rho_n^\alpha = \rho_n^\alpha(t)$ and the events as at the beginning of this subsection and the event $B_n^{h,c}(\zeta)$ as in~\eqref{eqn-mid-box-event}. Fix $t \in (1/2,1)$ and $R>1$. For each $q\in (0,1)$, there exists $\alpha_* = \alpha_*(q,t, R)  >0$ such that for each $\alpha \in (0,\alpha_*]$, there exists $\zeta_*  = \zeta_*(\alpha,q,t, R) > 0$ such that for each $\zeta \in (0,\zeta_*]$, there exists $n_* = n_*(\zeta,\alpha,q,t, R)$ with the following property. 
For each $n \geq n_*$, $l\in \left[3R^{-1} n^{1/2} , 3R n^{1/2}\right]_{\BB Z}$, and $(h,c) \in  \left[R^{-1}   n^{1/2} , R n^{1/2}\right]_{\BB Z}^2$, the conditional law of $X_{-\lfloor t n \rfloor- 1} \dots X_{-1}$ given $ B_n^{ h,c}( \zeta) \cap  \ul{\mcl E}_n^{ l}(\alpha) $ is absolutely continuous with respect to its conditional law given only $B_n^{ h,c}( \zeta) \cap G_n(\alpha)$, with Radon-Nikodym derivative bounded below by $1-q$ and above by $(1-q)^{-1}$.  
\end{lem}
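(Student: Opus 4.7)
The proof uses Bayes' rule together with Lemma~\ref{prop-rho-abs-cont} and local central limit theorem type estimates for the walk $j\mapsto(\mcl N_{\tb H}(X(-j,-1)),\mcl N_{\tb C}(X(-j,-1)))$. For any realization $x$ of $X_{-\lfloor tn\rfloor-1}\dots X_{-1}$ satisfying $B_n^{h,c}(\zeta)\cap G_n(\alpha)$, the Radon-Nikodym derivative at $x$ equals
\[
\BB P(\ul{\mcl E}_n^l(\alpha)\mid X_{-\lfloor tn\rfloor-1}\dots X_{-1}=x)\,\big/\,\BB P(\ul{\mcl E}_n^l(\alpha)\mid B_n^{h,c}(\zeta)\cap G_n(\alpha)).
\]
Since the denominator is an $x$-weighted average of the numerator, it suffices to show that the numerator lies within a factor $1\pm q/3$ of a single value (depending only on $h,c,l,n,\alpha$) for every such $x$.

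Decomposing via the backward stopping time $\rho_n^\alpha$ (which satisfies $\rho_n^\alpha\ge tn$ on $G_n(\alpha)$),
\[
\BB P(\ul{\mcl E}_n^l(\alpha)\mid X_{-\lfloor tn\rfloor-1}\dots X_{-1}=x)=\sum_{(s,u,v)} p_x(s,u,v)\,\Phi(s,u,v),
\]
where the sum is over triples $(s,u,v)$ compatible with $\wt H_n^l(\alpha)$, $p_x(s,u,v)$ is the conditional probability that $(\rho_n^\alpha,O_n^\alpha,Q_n^\alpha)=(s,u,v)$ and $\wt H_n^l(\alpha)$ holds given $x$, and (by Lemma~\ref{prop-rho-abs-cont} applied with the stricter threshold $q/4$) $\Phi(s,u,v)$ is, up to a multiplicative factor $1\pm q/4$, the common value $\BB P(\ul{\mcl E}_n^l(\alpha)\mid X_{-\rho_n^\alpha}\dots X_{-1}=x_\rho)$ for any realization $x_\rho$ of $X_{-\rho_n^\alpha}\dots X_{-1}$ with $(|x_\rho|,\frk o(x_\rho),\frk c(x_\rho))$ close to $(s,u,v)$ on the scale $(2\zeta_0\alpha n,2\zeta_0(\alpha n)^{1/2},2\zeta_0(\alpha n)^{1/2})$ provided by that lemma.

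It remains to compare $\sum p_x\Phi$ across different $x$. Conditional on $X_{-\lfloor tn\rfloor-1}\dots X_{-1}=x$, the chain $j\mapsto(\mcl N_{\tb H}(X(-j,-1)),\mcl N_{\tb C}(X(-j,-1)))$ for $j\in[\lfloor tn\rfloor,n]$ evolves from the starting point $(\mcl N_{\tb H}(\mcl R(x)),\mcl N_{\tb C}(\mcl R(x)))\in B_{2\zeta n^{1/2}}((h,c))$ (on $B_n^{h,c}(\zeta)$), up to an $o(n^{1/2})$ contribution from flexible orders (bounded via $G_n(\alpha)$ and \cite[Corollary 5.2]{gms-burger-cone} extended past time $\lfloor tn\rfloor$). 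By the strong Markov property and local CLT estimates as in \cite[Section 2]{gms-burger-local} and the proof of Lemma~\ref{prop-rho-abs-cont}, $p_x(s,u,v)$ factors (up to negligible errors) as the 1D hitting-time density of $\mcl N_{\tb H}$ reaching level $(\alpha n)^{1/2}$ at time $s$, times the Gaussian density of $\mcl N_{\tb C}$ at the stopping time; both factors are smooth functions of the starting point and of $(s,u,v)$. Perturbing the starting point by $2\zeta n^{1/2}$ changes each factor by $1+O(\zeta)$ uniformly in $(s,u,v)$ in the support of $\wt H_n^l(\alpha)$, since the relevant exponents and prefactors are differentiable of order $O(1)$ in the rescaled variables. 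Choosing $\zeta_*$ small enough that this perturbation is within $1\pm q/6$, and $n_*$ large enough that the local CLT errors are negligible, yields $\sum p_x\Phi/\sum p_{x'}\Phi=1\pm q/6$, and combining with the factor $1\pm q/4$ from Step 2 gives the required $1\pm q/3$ bound on the numerator.

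The main obstacle is establishing the required quantitative local CLT for the joint hitting distribution $p_x$ with errors uniform over the $\zeta$-neighborhood of starting positions in $[R^{-1},R]^2$ and over the full range of $(s,u,v)$ defining $\wt H_n^l(\alpha)$. This forces $\alpha_*$ to be small enough that the hitting level $(\alpha n)^{1/2}$ is well inside the bulk of the first quadrant (so that the 1D hitting density is given by the classical formula without boundary corrections from $\mcl N_{\tb C}$), and forces $\zeta_*$ to be small compared to the modulus of continuity of these density factors in the starting point, explaining the quantifier order $\alpha_*\to\zeta_*(\alpha)\to n_*(\zeta,\alpha)$ in the statement.
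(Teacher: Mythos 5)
Your overall architecture is correct and matches the paper's: decompose via the backward stopping time $\rho_n^\alpha$, apply Lemma~\ref{prop-rho-abs-cont} to the segment after $\rho_n^\alpha$, compare the distribution of $(\rho_n^\alpha,O_n^\alpha,Q_n^\alpha)$ across different realizations $x$ with nearby $(\frk h(x),\frk c(x))$, and conclude by Bayes' rule. The difference — and the gap — is entirely in how you compare the law of $(\rho_n^\alpha,O_n^\alpha,Q_n^\alpha)$ across starting points.

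You assert a joint local CLT: that $p_x(s,u,v)$ factors as a 1D hitting-time density for $\mcl N_{\tb H}$ times a Gaussian density for $\mcl N_{\tb C}$, both smooth in the starting point. You yourself flag this at the end as "the main obstacle," and you are right to be worried: no such pointwise density estimate for a correlated two-dimensional walk stopped at the hitting time of a horizontal line is available from \cite[Section 2]{gms-burger-local} (those local estimates concern the word with no orders and hitting times $J_m^H$, a different process) nor from Lemma~\ref{prop-rho-abs-cont} (which concerns the segment \emph{after} $\rho_n^\alpha$). Moreover, the proposed factorization into independent densities is only even heuristically valid when $p=0$; for $p\in(0,1/2)$ the coordinates are positively correlated, so the joint density of $(\rho_n^\alpha,Q_n^\alpha)$ does not split this way. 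Finally, a local CLT for a lattice walk stopped at a line-hitting time requires controlling boundary effects near the hitting level, which is why you were forced to invoke $\alpha_*$ being "small enough that the hitting level is in the bulk" — another unproven step.

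The paper sidesteps all of this by coarse-graining at scale $\zeta$: it introduces a partition $\mcl A_n^l$ of the relevant region of $\BB Z^3$ into boxes of side lengths $\asymp\zeta\alpha n\times\zeta(\alpha n)^{1/2}\times\zeta(\alpha n)^{1/2}$, and compares \emph{box probabilities} rather than pointwise densities. The comparison then needs only (a) the functional CLT from \cite[Theorem 2.5]{shef-burger} (which gives Prokhorov-distance convergence of the rescaled triple to the Brownian analogue $(\tau^\alpha(r_1,r_2),\dots)$, uniformly over starting points in $[R^{-1},R]^2$), together with (b) the fact that each box has probability bounded below by a constant (depending on $\zeta,\alpha,R$, not $n$), and (c) continuity of the law of the Brownian stopping time $\tau^\alpha(r_1,r_2)$ in $(r_1,r_2)$. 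The lower bound (b) is the device that converts Prokhorov closeness into a multiplicative ratio bound; your argument has no analogue of it. If you replace your local CLT claim with this weak-convergence-plus-lower-bound-on-boxes argument, the rest of your proof goes through as the paper's does.
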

\begin{proof}
Let $\alpha_* = \alpha_*(q,R)$ and $\zeta_0 = \zeta_0(q, R)$ be chosen so that the conclusion of Lemma~\ref{prop-rho-abs-cont} holds with our given $q$, $t$, and $R$. Henceforth fix $\alpha\in (0,\alpha_*]$ and let $n_*^0 = n_*^0(\alpha,q, R)$ be as in Lemma~\ref{prop-rho-abs-cont} for this choice of $\alpha$. 

Given $n\in\BB N$ and $l \in \left[3R^{-1} n^{1/2} , 3R n^{1/2}\right]_{\BB Z}$, choose a finite collection $\mcl A_n^l  $ of disjoint subsets of $\BB Z^3$ such that 
\eqbn
\bigsqcup_{A\in\mcl A_n^l} A = \left[   n - R \alpha n , n - R^{-1} \alpha n  \right]_{\BB Z} \times \left[R^{-1} (\alpha n)^{1/2} ,  2(\alpha n)^{1/2} \right]_{\BB Z} \times \left[l-R(\alpha n)^{1/2} , l+R(\alpha n)^{1/2} \right]_{\BB Z}   
\eqen
and each $A\in \mcl A_n^l$ is a box in the integer lattice (i.e.\ a product of three discrete invervals) with dimensions at least (resp. at most) $ \frac18 \zeta \alpha n \times \frac18 \zeta (\alpha n)^{1/2} \times \frac18 \zeta(\alpha n)^{1/2}$ (resp. $ \frac14 \zeta \alpha n \times \frac14 \zeta (\alpha n)^{1/2} \times \frac14 \zeta(\alpha n)^{1/2}$).

Fix $q' \in (0,1)$, to be chosen later. 
Let $Z = (U,V)$ be a correlated two-dimensional Brownian motion as in~\eqref{eqn-bm-cov} and for $r_1,r_2 > 0$, let $\tau^\alpha(r_1,r_2)$ be the smallest $s \in [0,1-t]$ for which either $U(s) \leq -r_1 + \alpha$ or $V(s) \leq -r_2$ (or $\tau^\alpha(r_1,r_2) = 1-t$ if no such $s$ exists).   

It follows from \cite[Theorem 2.5]{shef-burger} and conditions~\ref{item-mid-event-end} and~\ref{item-mid-event-F} in the definition of $G_n(\alpha)$ that we can find $n_*^1 = n_*^1(q',\alpha, q, R) \geq n_*^0$ such that the following is true. Let $n\geq n_*^1$, $l \in \left[3R^{-1} n^{1/2} , 3R n^{1/2}\right]_{\BB Z}$, and let $x$ be any realization of $X_{-\lfloor t n \rfloor} \dots X_{-1}$ for which $G_n(\alpha)$ occurs. 
Then the Prokhorov distance between the conditional law given $\{X_{-\lfloor t n \rfloor} \dots X_{-1} = x\}$ of the pair 
\eqbn
\left( \left(Z^n(-\cdot - t) - Z^n(t) \right)|_{[0,1-t]} , n^{-1} \rho_n^\alpha - t\right) 
\eqen
and the law of the pair
\eqbn
\left(\wh Z|_{[0,1-t]} , \tau^\alpha\left( \frac{\frk h(x)}{n^{1/2}} , \frac{ \frk c(x) }{n^{1/2}}  \right) \right)
\eqen
is at most $q' $. Furthermore, by possibly increasing $n_*^1$ we can arrange that for each $n\geq n_*^1$, each $l,x$ as above, and each $A\in \mcl A_n^l$,
\eqbn
\BB P\left( (\rho_n^\alpha   , O_n^\alpha , Q_n^\alpha) \in A  \,|\,  X_{-\lfloor t n \rfloor} \dots X_{-1} = x   \right) \succeq 1
\eqen
with the implicit constant depending only on $\alpha$, $\zeta_0$, $t$, and $R$. 

It is clear that the law of $\tau^\alpha(r_1,r_2)$ depends continuously on $r_1$ and $r_2$. Hence we can find $\zeta_* = \zeta_*(\alpha,\zeta_0,q, R)$ with the following property. If $\zeta \in (0,\zeta_*]$ and we choose $q'$ sufficiently small, depending on $\alpha, q ,\zeta $, and $R$, then for each $n\geq n_*^1$; each $l \in \left[3R^{-1} n^{1/2} , 3R n^{1/2}\right]_{\BB Z}$; and any two realizations $x$ and $x'$ of $X_{-\lfloor t n \rfloor} \dots X_{-1}$ for which $G_n(\alpha)$ occurs, $|\frk h(x) - \frk h(x')| \leq 2\zeta n^{1/2}$, and $|\frk c(x) - \frk c(x') |\leq 2\zeta n^{1/2}$, it holds for each $A \in \mcl A_n^l$ that
\eqb \label{eqn-mid-to-rho-cont}
1-q \leq \frac{\BB P\left( (\rho_n^\alpha   , O_n^\alpha , Q_n^\alpha) \in A  \,|\,  X_{-\lfloor t n \rfloor} \dots X_{-1} = x   \right) }{\BB P\left( (\rho_n^\alpha   , O_n^\alpha , Q_n^\alpha) \in A  \,|\,  X_{-\lfloor t n \rfloor} \dots X_{-1} = x '  \right) } \leq (1-q)^{-1} .
\eqe 
Since we chose $\alpha_*$ and $\zeta_0$ so that the conclusion of Lemma~\ref{prop-rho-abs-cont} holds, we also have
\eqb \label{eqn-rho-to-E^l-cont}
1-q \leq \frac{\BB P\left( \ul{\mcl E}_n^l(\alpha)  \,|\,  (\rho_n^\alpha   , O_n^\alpha , Q_n^\alpha) \in A ,\,  X_{-\lfloor t n \rfloor} \dots X_{-1} = x   \right) }{\BB P\left(  \ul{\mcl E}_n^l(\alpha)     \,|\,  (\rho_n^\alpha   , O_n^\alpha , Q_n^\alpha) \in A ,\,  X_{-\lfloor t n \rfloor} \dots X_{-1} = x '  \right) } \leq (1-q)^{-1} .
\eqe 
By multiplying~\eqref{eqn-mid-to-rho-cont} and~\eqref{eqn-rho-to-E^l-cont}, then summing over all $A\in\mcl A_n^l$, we obtain
\eqb \label{eqn-mid-cont-compare}
(1-q)^2 \leq \frac{\BB P\left(\ul{\mcl E}_n^l(\alpha)  \,|\,   X_{-\lfloor t n \rfloor} \dots X_{-1} = x\right)}{   \BB P\left(\ul{\mcl E}_n^l(\alpha)  \,|\,   X_{-\lfloor t n \rfloor} \dots X_{-1} = x ' \right)     } \leq (1-q)^{-2} .
\eqe 

Now suppose we are given $(h,c) \in \left[R^{-1} n^{1/2} , R  n^{1/2}\right]_{\BB Z}^2$ and a realization $x$ as above for which $B_n^{h,c}(\zeta) \cap G_n(\alpha)$ occurs. By averaging~\eqref{eqn-mid-cont-compare} over all realizations $x'$ for which $B_n^{h,c}(\zeta) \cap G_n(\alpha)$ occurs, we obtain
\eqb \label{eqn-mid-cont-compare'}
(1-q)^2 \leq \frac{\BB P\left(\ul{\mcl E}_n^l(\alpha)  \,|\,   X_{-\lfloor t n \rfloor} \dots X_{-1} = x\right)}{   \BB P\left(\ul{\mcl E}_n^l(\alpha)  \,|\,        B_n^{h,c}(\zeta) \cap G_n(\alpha) \right)     } \leq (1-q)^{-2} .
\eqe 
By Bayes' rule,
\alb
& \BB P\left(     X_{-\lfloor t n \rfloor} \dots X_{-1} = x \,|\,     B_n^{h,c}(\zeta) \cap \ul{\mcl E}_n^l(\alpha)   \right) \\
 &\qquad = \frac{  \BB P\left(\ul{\mcl E}_n^l(\alpha)  \,|\,   X_{-\lfloor t n \rfloor} \dots X_{-1} = x\right) \BB P\left(   X_{-\lfloor t n \rfloor} \dots X_{-1} = x \,|\,  B_n^{h,c}(\zeta) \cap G_n(\alpha) \right)  }{   \BB P\left(\ul{\mcl E}_n^l(\alpha)  \,|\,        B_n^{h,c}(\zeta) \cap G_n(\alpha) \right)   } .
\ale
By combining this with~\eqref{eqn-mid-cont-compare'} we obtain the statement of the lemma with $(1-q)^2$ in place of $1-q$. Since $q$ is arbitrary we conclude. 
\end{proof}

For our next lemma we will need analogues of some of the above events for a correlated two-dimensional Brownian motion. Namely, for $t \in (1/2,1)$ let $\wh Z^t = (\wh U^t , \wh V)$ be a two-dimensional Brownian motion with variances and covariances as in~\eqref{eqn-bm-cov} conditioned to stay in the first quadrant until time $t$. 
For $t \in (1/2, 1)$ and $R>1$ let $G(\alpha) = G(\alpha;t,R)$ be the event that the following is true.
\begin{enumerate}  
\item $\inf_{s \in [(1-t)  , t ] } \wh U(t) \geq 2\alpha^{1/2}$. \label{item-bm-mid-event-inf}
\item Let $\wt\sigma^{t,\alpha}$ be the largest $s\in [0,1-t]$ for which $\wh U(s) \geq\alpha^{1/2}$, or $\wt\sigma^{t,\alpha}=0$ if no such $s$ exists. Then $\wh V(s) \leq   R^{-1} $. \label{item-bm-mid-event-start} 
\end{enumerate}
For $(u,v)\in (0,\infty)^2$ and $\zeta>0$, let 
\eqb \label{eqn-bm-mid-box-event}
\wh B^{u,v}(\zeta) = \wh B^{u,v}(\zeta;t) := \left\{|\wh Z(t) - u| \leq \zeta ,\, |\wh Z(t) - v|\leq \zeta     \right\} .
\eqe 
 
\begin{lem} \label{prop-box-cond-conv}
Define the event $\ul{\mcl E}_n^l(\alpha)$ as in~\eqref{eqn-ul-E^l-def}, the event $B_n^{h,c}(\zeta)$ as in~\eqref{eqn-mid-box-event}, and the Brownian motion events as above.
Suppose given $t\in (1/2,1)$, $R>1$, $b \in (3R^{-1} ,1)$, and $q\in (0,1)$. There exists $\alpha_* = \alpha_*(q,t,b,R)  >0$ such that for each $\alpha \in (0,\alpha_*]$, there exists $\zeta_*  = \zeta_*(\alpha,q,t,b, R) > 0$ such that for each $\zeta \in (0,\zeta_*]$ there exists $n_* = n_*(\zeta,\alpha,q,t,b, R)$ with the following property. For each $n\geq n_*$, each $l\in [b n^{1/2} , b^{-1} n^{1/2}]_{\BB Z}$, and each $(h,c) \in [R^{-1} n^{1/2} , R n^{1/2}]_{\BB Z}$, the Prokhorov distance between the conditional law of $Z^n(-\cdot)|_{[0,t]}$ given $  B_n^{h,c}(\zeta) \cap \ul{\mcl E}_n^l(\alpha) $ and the conditional law of $\wh Z|_{[0,t]}$ given $\wh B^{h/n^{1/2} , c/n^{1/2} }(\zeta) \cap G(\alpha)$ is at most $q$. 
\end{lem}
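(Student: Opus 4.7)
The plan is to prove the lemma in two steps: first, use the absolute continuity statement of Lemma~\ref{prop-mid-abs-cont} to replace the conditioning event $B_n^{h,c}(\zeta)\cap\ul{\mcl E}_n^l(\alpha)$ by the purely "macroscopic" event $B_n^{h,c}(\zeta)\cap G_n(\alpha)$; second, invoke the unconditional scaling limit result \cite[Theorem 4.1]{gms-burger-cone} together with a portmanteau argument to identify the limit with the conditional law of $\wh Z|_{[0,t]}$ given $\wh B^{h/n^{1/2},c/n^{1/2}}(\zeta)\cap G(\alpha)$.

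For the first step, I would fix $q'\in(0,q/3)$ and apply Lemma~\ref{prop-mid-abs-cont} with $q'$ in place of $q$. This produces $\alpha_*^{(1)}=\alpha_*^{(1)}(q',t,R)$ and, for each $\alpha\in(0,\alpha_*^{(1)}]$, parameters $\zeta_*^{(1)}$ and $n_*^{(1)}$ so that for $n\geq n_*^{(1)}$, any $l$ and $(h,c)$ in the stated ranges, the conditional law of $X_{-\lfloor tn\rfloor}\dots X_{-1}$ given $B_n^{h,c}(\zeta)\cap\ul{\mcl E}_n^l(\alpha)$ has Radon-Nikodym derivative in $[1-q',(1-q')^{-1}]$ with respect to its conditional law given $B_n^{h,c}(\zeta)\cap G_n(\alpha)$. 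Pushing forward under the deterministic map to $Z^n(-\cdot)|_{[0,t]}$ preserves this R-N bound, which in turn controls the total variation (hence Prokhorov) distance between the two pushforwards by a quantity that can be made smaller than $q/2$ by shrinking $q'$.

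For the second step, I would apply \cite[Theorem 4.1]{gms-burger-cone}, which gives convergence of the conditional law of $Z^n(-\cdot)|_{[0,t]}$ given $\{X(-\lfloor tn\rfloor,-1)\text{ contains no orders}\}$ to the law of $\wh Z|_{[0,t]}$. The event $G_n(\alpha)$ is a finite intersection of events each of which is (or, modulo a negligible discrete correction, is) the indicator of a continuity set of this limiting law: the positivity conditions~\ref{item-mid-event-size} and~\ref{item-mid-event-end} are built into $\wh Z$ together with a terminal-position constraint; the infimum condition~\ref{item-mid-event-inf} matches the first bullet defining $G(\alpha)$; the condition~\ref{item-mid-event-start} on $\mcl N_{\tb C}(X(-\wt\rho_n^\alpha,-1))$ matches the second; and condition~\ref{item-mid-event-F} on flexible orders has probability tending to $1$ by \cite[Corollary 5.2]{gms-burger-cone} and hence drops out in the limit. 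Likewise, $B_n^{h,c}(\zeta)$ converges to $\wh B^{h/n^{1/2},c/n^{1/2}}(\zeta)$ in the appropriate sense. An atomlessness check (the law of $\wh Z(t)$, the infimum of $\wh U$ over $[1-t,t]$, and $\wh V(\wt\sigma^{t,\alpha})$ all have continuous distributions, the latter by \cite{shimura-cone}) ensures that the joint event is a continuity set of the limit, so conditioning commutes with taking the limit and produces the desired convergence in Prokhorov distance for each fixed $(h,c,l)$.

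The main obstacle will be arranging the convergence in step two to be uniform over $l\in[bn^{1/2},b^{-1}n^{1/2}]_{\BB Z}$ and $(h,c)\in[R^{-1}n^{1/2},Rn^{1/2}]_{\BB Z}^2$, since the lemma demands a single choice of $n_*$ working for all such triples. After rescaling by $n^{-1/2}$ these parameters range over the compact sets $[b,b^{-1}]$ and $[R^{-1},R]^2$, and the limiting conditional law depends continuously on $(h/n^{1/2},c/n^{1/2})$ with a modulus controllable in terms of $R,b,t,\alpha$ (via explicit density formulas for $\wh Z(t)$ analogous to those in Section~\ref{sec-bm-construction}); hence a standard equicontinuity/compactness argument upgrades pointwise convergence to uniform convergence. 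Taking $\alpha_*$, $\zeta_*$ to be the minimum of the parameters produced in the two steps and $n_*$ large enough that the step-two Prokhorov contribution is at most $q/2$, the triangle inequality then yields total Prokhorov distance at most $q$.
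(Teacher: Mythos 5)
Your approach is the same as the paper's: use Lemma~\ref{prop-mid-abs-cont} to replace the conditioning event $B_n^{h,c}(\zeta)\cap\ul{\mcl E}_n^l(\alpha)$ by $B_n^{h,c}(\zeta)\cap G_n(\alpha)$, then invoke \cite[Theorem 4.1]{gms-burger-cone} (together with \cite[Lemma 2.8]{gms-burger-cone}) for the scaling limit of the latter conditional law. One small inaccuracy is worth flagging: you claim that ``pushing forward under the deterministic map to $Z^n(-\cdot)|_{[0,t]}$ preserves this R-N bound,'' but the map from $X_{-\lfloor tn\rfloor}\dots X_{-1}$ to $Z^n(-\cdot)|_{[0,t]}$ is \emph{not} deterministic, since the path $D$ is built from the word $Y$ and $Y_i$ for $X_i=\tb F$ depends on the match $\phi(i)$, which may lie outside $[-\lfloor tn\rfloor,-1]$. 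The paper explicitly notes this and handles it by observing that $n^{-1/2}$ times the number of unmatched $\tb F$'s in $X(-tn,-1)$ tends to zero (via \cite[Lemma 2.8]{gms-burger-cone}), which is further guaranteed on $G_n(\alpha)$ by condition~\ref{item-mid-event-F}; the R-N control on the word then transfers to a Prokhorov bound on the path with an extra $o_n(1)$ error. You partially touch on this when discussing condition~\ref{item-mid-event-F} in step two, but the ``deterministic'' phrasing in step one should be corrected along these lines.
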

\begin{proof}
For each $  \alpha , \zeta > 0$, we have
\eqbn
\inf_{(u,v) \in [R^{-1} , R]^2} \BB P\left(\wh B^{u,v}(\zeta) \cap G(\alpha) \right) > 0 .
\eqen
By \cite[Lemma 2.8 and Theorem 4.1]{gms-burger-cone}, for each $\alpha,\zeta>0$ and $q\in (0,1)$ there exists a positive integer $n_* = n_*(\alpha,\zeta,q,t,R)  $ such that for each $n\geq n_*$ and each $(h,c) \in [R^{-1} n^{1/2} , R n^{1/2}]_{\BB Z}^2$, the Prokhorov distance between the conditional law of $Z^n(-\cdot)|_{[0,t]}$ given $ B_n^{h,c}(\zeta) \cap G_n(\alpha)$ and the conditional law of $\wh Z|_{[0,t]}$ given $\wh B^{h/n^{1/2} , c/n^{1/2} }(\zeta) \cap G (\alpha)$ is at most $q$. Note that $Z^n(-\cdot)|_{[0,t]}$ is not determined by $X_{-\lfloor t n \rfloor} \dots X_{-1}$ due to the presence of flexible orders in $X(-tn,-1)$; however, $n^{-1/2}$ times the number of such flexible orders tends to zero in law as $n\rta\infty$ by \cite[Lemma 2.8]{gms-burger-cone}. 
We conclude by combining this with Lemma~\ref{prop-mid-abs-cont}. 
\end{proof}

\begin{proof}[Proof of Proposition~\ref{prop-2pt-cond-conv}]
For $z_1,z_2\in (0,\infty)^2$, let $\ul{\BB P}^{z_1,z_2}_{s_2-s_1}$ be the law of a two-dimensional Brownian bridge from $z_1$ to $z_2$ in time $s_2-s_1$ with variances and covariances as in~\eqref{eqn-bm-cov} conditioned to stay in the first quadrant. 

Since $1-t  < s_1$, it follows from the Markov property and the definition of $G(\alpha)$ that for each $z_1,z_2\in (0,\infty)^2$, each $(u,v) \in \left[R^{-1} , R\right]$, and each $\wt\zeta > 0$, the regular conditional law of $\wh Z|_{[s_1,s_2]}$ given $\{\wh Z(s_1) =z_1 ,\, \wh Z(s_2)=z_2,\, \wh Z(t)  = (u,v)\}$ and the event $  G(\alpha)$ is given by
\eqb \label{eqn-bridge-cond}
\ul{\BB P}^{z_1,z_2}_{s_2-s_1}\left(\cdot \,|\, \inf_{s \in [0,s_2-s_1]} U(s) \geq 2\alpha^{1/2} \right).
\eqe  
As $\zeta \rta 0$, the regular conditional law of $\wh Z|_{[s_1,s_2]}$ given $\{\wh Z(t)  = (u,v)\}$ and the event $\{|\wh Z(s_1) - z_1 | \vee |\wh Z(s_2)- z_2| \leq \zeta \} \cap G(\alpha)$ converges to its regular conditional law given $\{\wh Z(t)  = (u,v) , \, \wh Z(s_1) = z_1,\, \wh Z(s_2)  = z_2\}$ and the event $ G(\alpha)$, uniformly over all $(u,v) \in [R^{-1} , R]^2$. 
Hence we can find $\zeta_* = \zeta_*( q ,s_1,s_2,z_1,z_2 ,t,b, R ) > 0$ such that for each $\zeta\in (0,\zeta_*]$, and $\wt \zeta >0$, the Prokhorov distance between the conditional law of $\wh Z|_{[s_1,s_2]}$ given $\{|\wh Z(s_1) - z_1 | \vee |\wh Z(s_2)- z_2| \leq \zeta \}\cap \wh B^{u,v}(\wt\zeta ) \cap G(\alpha)$ and the law~\eqref{eqn-bridge-cond} is at most $q/3$, where here $\wh B^{u,v}(\wt\zeta)$ is as defined in~\eqref{eqn-bm-mid-box-event}.

Furthermore, we have  
\eqbn
\lim_{\alpha\rta 0}  \ul{\BB P}^{z_1,z_2}_{s_2-s_1}\left(\inf_{s \in [0,s_2-s_1]} U(s) \geq 2\alpha^{1/2}\right) = 1 ,
\eqen
where $U$ denotes the first coordinate of the Brownian bridge. Therefore, we can find $\wt\alpha_*  =\wt\alpha_*(q ,s_1,s_2,z_1,z_2 ) > 0$ such that 
\[
\wt\alpha_* \leq \frac12 \min_{i\in\{1,2\}} \op{dist}(z_i , \partial (0,\infty)^2)
\]
 and for each $\alpha \in (0,\wt\alpha_*]$, the Prokhorov distance between the law $\ul{\BB P}^{z_1,z_2}_{s_2-s_1}$ and the law~\eqref{eqn-bridge-cond} is at most $q/3$.

Henceforth fix $\zeta \in (0,\zeta_*]$. We have 
\eqbn
  \inf_{  (u,v) \in \left[R^{-1} , R\right]} \inf_{\alpha \in (0, \wt\alpha_*]} \inf_{\wt\zeta > 0} \BB P\left( |\wh Z(s_1) - z_1 | \vee |\wh Z(s_2)- z_2| \leq \zeta  \,|\, \wh B^{u,v}(\wt\zeta  ) \cap G(\alpha)\right) > 0 .
\eqen
It follows that there exists $\wt q_\zeta = \wt q_\zeta( q , s_1,s_2,z_1,z_2,t,b,R)$ such that whenever $(u,v) \in \left[R^{-1} , R\right]$, $\alpha \in (0,\wt\alpha_*]$, $\wt\zeta >0$, and $n , l\in\BB N$ are such that the Prokhorov distance between the conditional law of $Z^n(-\cdot)|_{[0,t]}$ given $  B_n^{h,c}(\zeta) \cap \ul{\mcl E}_n^l(\alpha) $ and the conditional law of $\wh Z|_{[0,t]}$ given $\wh B^{u,v}(\wt\zeta  ) \cap G(\alpha)$ is at most $\wt q_\zeta$, then the Prokhorov distance between the conditional law of $Z^n(-\cdot)|_{[s_1,s_2]}$ given $ \mcl P_n^{s_1,s_2}(z_1,z_2; \zeta) \cap \ul{\mcl E}_n^l(\alpha)$ and the conditional law of $\wh Z|_{[s_1,s_2]}$ given $\{|\wh Z(s_1) - z_1 | \vee |\wh Z(s_2)- z_2| \leq \zeta \}\cap \wh B^{u,v}(\wt\zeta ) \cap G(\alpha)$ is at most $q/3$.  

By Lemma~\ref{prop-box-cond-conv}, we can find $\alpha_* = \alpha_*(\zeta ,q , s_1,s_2,z_1,z_2,t,b,R) \in (0,\wt\alpha_*]$ such that for each $\alpha \in (0,\alpha_*]$, there exists $\wt \zeta  = \wt\zeta(\alpha,\zeta ,q , s_1,s_2,z_1,z_2,t,b,R) >0$ and $n_* = n_*(\zeta ,q , s_1,s_2,z_1,z_2,t,b,R) \geq \wt n_*$ such that for each $n\geq n_*$, the Prokhorov distance between the conditional law of $Z^n(-\cdot)|_{[0,t]}$ given $  B_n^{h,c}(\zeta) \cap \ul{\mcl E}_n^l(\alpha) $ and the conditional law of $\wh Z|_{[0,t]}$ given $\wh B^{u,v}(\wt\zeta  ) \cap G(\alpha)$ is at most $\wt q_\zeta$. By the triangle inequality and our choices of $\zeta_*$ and $\alpha_*$, we obtain the statement of the lemma.
\end{proof}

\subsection{Existence of a macroscopic cone interval}
\label{sec-macroscopic-cone}

In order to apply the results of Section~\ref{sec-disk-conv} to study the conditional law of $X_1\dots X_{2n}$ given $\{X(1,2n) =\emptyset\}$, we need to show that with high conditional probability given $\{X(1,2n) =\emptyset\}$, the event $\ul{\mcl E}_n^l(\alpha;t,R)$ occurs with some large sub-word of $X_1\dots X_{2n}$ in place of $X_{-n}\dots X_{-1}$. By Lemmas~\ref{prop-G'-event} and~\ref{prop-E^l-equiv}, it suffices to prove this when we instead condition on the event $\mcl E_n^l = \mcl E_n^l(  \ep_1)$ of Definition~\ref{def-end-event} for $l \in \left[\ep_0 n^{\xi/2} , \ep_0^{-1}\right]$ and $\ep_0,\ep_1 > 0$ small but fixed. This is the main purpose of the present subsection. 
 
For $n\in\BB N$, $\beta \in (0,1/2)$, and $   b\in (0,1)$ let $\iota_n^\beta = \iota_n^\beta(b)$ be the the smallest $j \in  [1, J\wedge \beta n]_{\BB Z}$ such that the following is true:
\begin{enumerate}
\item $X_{-j} = \tb F$, $X_{\phi(-j)} = \tc H$, and $-\phi(-j) \in [(1-\beta)n  , (1-b\beta) n]_{\BB Z}$;
\item $  |X(-j,\phi(-j))| \in \left[ b n^{1/2} , b^{-1} n^{1/2} \right]_{\BB Z}$;
\end{enumerate}
or $\iota_n^\beta(b) = J \wedge \lfloor \beta n \rfloor$ if no such $j$ exists.
Let
\eqbn
M_n^\beta  = M_n^\beta(b) :=     -\phi(-\iota_n^\beta) - \iota_n^\beta  ,\quad L_n^\beta = L_n^\beta(b) := |X(  \phi(-\iota_n^\beta) , -\iota_n^\beta,)| .
\eqen
For $n , l\in\BB N$, $\alpha,\beta >0$, $b>0$, $t\in (1/2,1-\beta)$, and $R>1$, let $\mcl H_n (\alpha,\beta) = \mcl H_n(\alpha,\beta;t,b,R)$ be the event that $\iota_n^\beta <  J \wedge \lfloor \beta n \rfloor$ and the event $\ul{\mcl E}^{L_n^\beta}_{M_n^\beta}(\alpha ; t' ,R)$ of Section~\ref{sec-disk-conv} occurs with $M_n^\beta$ in place of $n$; $L_n^\beta$ in place of $l$; the word $X_{\phi(-\iota_n^\beta)} \dots X_{ -\iota_n^\beta }$ in place of the word $X_{-M_n^\beta } \dots X_{-1}$; and $t'$ chosen so that $t' M_n^\beta +\iota_n^\beta = t n$.  
See Figure~\ref{fig-cone-prob-event} for an illustration. 

\begin{figure}[ht!]
 \begin{center}
\includegraphics{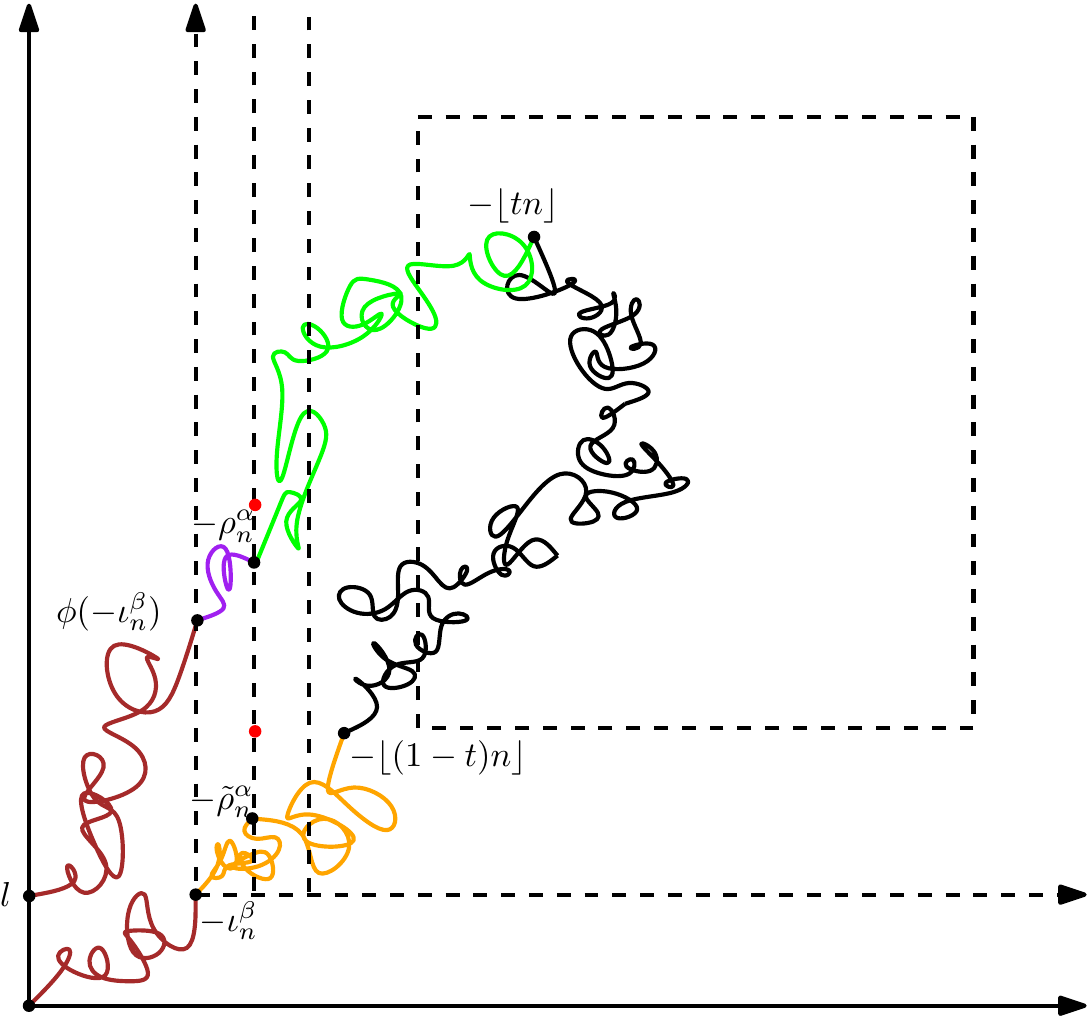} 
\caption{An illustration of the path $D$ of~\eqref{eqn-discrete-path} when the event $\mcl H_n (\alpha,\beta) \cap \mcl E_n^l$ occurs for $l \in \left[\ep_0 n^{\xi/2} , \ep_0^{-1} n^{\xi/2} \right]_{\BB Z}$. On this event the path exits the first quadrant at distance proportional to $n^{\xi/2}$ from the origin, but there is a large $\pi/2$-cone interval (corresponding to the word $X_{\phi(-\iota_n^\beta)} \dots X_{-\iota_n^\beta}$, and to the part of the path not in brown in the figure) for which the event $\ul{\mcl E}_{M_n^\beta}^{L_n^\beta}(\alpha)$ of Section~\ref{sec-disk-conv} (depicted in Figure~\ref{fig-macro-cone-event}) occurs.
}\label{fig-cone-prob-event}
\end{center}
\end{figure}

The reason for our interest in the event $\mcl H_n(\alpha,\beta)$ is the following lemma.

\begin{lem} \label{prop-H-equiv}
Let $x$ and $x'$ be realizations of $X_{-\iota_n^\beta } \dots X_{-1}$ and $X_{-n}\dots X_{\phi(-\iota_n^\beta)-1}$, respectively, and let $l' \in\BB N$. Suppose
\eqbn
\BB P\left( X_{-\iota_n^\beta} \dots X_{-1} =x ,\, X_{-n}\dots X_{\phi(-\iota_n^\beta)-1} = x',\, L_n^\beta  = l',\, \mcl H_n(\alpha,\beta)    \right)  >0 .
\eqen
Let $m := n-|x|-|x'|$. The conditional law of $ X_{\phi(-\iota_n^\beta)} \dots X_{-\iota_n^\beta}$ given 
\[
\left\{ X_{-\iota_n^\beta} \dots X_{-1} =x ,\, X_{-n}\dots X_{\phi(-\iota_n^\beta)-1} = x',\, L_n^\beta= l',\, \mcl H_n(\alpha,\beta) \right\}
\]
is the same as the conditional law of $X_{-m} \dots X_{-1}$ given the event $\ul{\mcl E}_m^{l'}(\alpha)$ of Section~\ref{sec-disk-conv}. 
\end{lem}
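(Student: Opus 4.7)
The plan is to mirror the strategy of Lemma~\ref{prop-E^l-equiv}: decompose the conditioning event into an internal event depending only on the middle word, an external event specifying $x$ and $x'$, and consistency conditions that turn out to be automatic. Write $j_0 := |x|$ and $a_0 := n - |x'|$, so that the hypothesis forces $\iota_n^\beta = j_0$ and $\phi(-\iota_n^\beta) = -a_0$; the ``middle'' word $X_{-a_0}\dots X_{-j_0 - 1}$ (of length $m = a_0 - j_0$) is independent, by the iid structure of $X$, of the pair $(X_{-j_0}\dots X_{-1}, X_{-n}\dots X_{-a_0-1})$.

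I would write the conditioning event $\mcl F$ as $E_1 \cap E_2 \cap E_3$, where $E_1$ is the ``internal'' event on $X_{-a_0}\dots X_{-j_0 - 1}$ that $X_{-a_0} = \tc H$, the reduced word $\mcl R(X_{-a_0 + 1}\dots X_{-j_0 - 1})$ equals $\tb C^{l'}$, and the regularity event $H_m^{l'}(\alpha)$ (suitably translated) holds; $E_2 = \{X_{-j_0}\dots X_{-1} = x\} \cap \{X_{-n}\dots X_{-a_0-1} = x'\}$ is the ``external'' event, which in particular specifies $X_{-j_0} = \tb F$ through $x$; and $E_3$ collects the remaining requirements of $\mcl H_n(\alpha,\beta) \cap \{L_n^\beta = l'\}$, namely the quantitative bounds on $\iota_n^\beta$, $-\phi(-\iota_n^\beta)$, and $|X(\phi(-\iota_n^\beta), -\iota_n^\beta)|$, together with the minimality of $\iota_n^\beta$ (no $j' \in [1, j_0-1]_{\BB Z}$ satisfies its defining properties).

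The crux of the argument will be to show that $E_3$ is automatically satisfied whenever $E_1 \cap E_2$ holds and the positive-probability hypothesis is in force. The quantitative bounds depend only on $|x|, |x'|, l'$ and so hold by hypothesis. For minimality: for any candidate $j' \in [1, j_0 - 1]_{\BB Z}$ with $X_{-j'} = \tb F$, the symbol $X_{-j'}$ lies in $x$, but the match $\phi(-j')$ could a priori depend on the interior of the middle word. However, $E_1$ constrains the reduced word of $X_{-a_0+1}\dots X_{-j_0-1}$ to the rigid form $\tb C^{l'}$, so every symbol in the interior of the middle word except these $l'$ unmatched cheeseburger orders is matched internally; consequently each order in $x$ either matches a burger in $x$ itself, matches one of the $l'$ escaping cheeseburger orders (located within the middle word), matches the single $\tc H$ at $-a_0$, or matches a burger in $x'$. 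In every case $\phi(-j')$ is a deterministic function of $(x, x', l')$, and hence $E_3$ is too.

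Given that $E_3$ is a function of $(x, x', l')$, the decomposition $\mcl F = E_1 \cap E_2$ factors as an internal event in the middle word and an external event in the outer words; independence plus translation invariance of the iid sequence $X$ then yields that the conditional law of the middle word given $\mcl F$ coincides with the conditional law of $X_{-m}\dots X_{-1}$ given $\ul{\mcl E}_m^{l'}(\alpha)$, which is the desired conclusion. The main obstacle is the careful verification that matches $\phi(-j')$ of $\tb F$'s in $x$ are determined by $(x, x', l')$; this reduces to tracking how the rigid reduced form of the middle word's interior interacts with orders in $x$ under Sheffield's reduction rules~\eqref{eqn-theta-relations}, with extra care for $\tb F$ symbols whose type of match depends on which burger is ``freshest available'' at the moment of reduction.
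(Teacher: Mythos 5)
Your proof follows exactly the strategy the paper uses here (the paper's proof is literally the one-line reference ``proven in exactly the same manner as Lemma~\ref{prop-E^l-equiv}'', and your internal/external/automatic decomposition is precisely the argument of that lemma). The only slip is in your inventory of possible matches of an order $X_{-j'}$ with $j' < j_0$ in $x$: an order can never match another order, so ``matches one of the $l'$ escaping cheeseburger orders'' is not a case, and the $\tc{H}$ at $-a_0$ is already consumed by $X_{-j_0}$ so cannot be $\phi(-j')$ either; but since the genuine possibilities (a burger within $x$, or a burger in $x'$ visible through the reduced middle word $\tb{C}^{l'}\tc{H}$) are among those you list, the conclusion that $\phi(-j')$ is a function of $(x,x',l')$ and hence that the minimality condition is automatic given $E_1\cap E_2$ stands.
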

\begin{proof}
This is proven in exactly the same manner as Lemma~\ref{prop-E^l-equiv}.
\end{proof}

The main goal of this subsection is to show that for any given $t$ and $\beta$, the event $\mcl H_n(\alpha,\beta; t,b,R)$ occurs with high conditional probability given the event $\mcl E_n^l$ of Definition~\ref{def-end-event} when $R$ is large and $\alpha$ is small.

\begin{prop} \label{prop-H-prob}
Define the events $\mcl H_n(\alpha,\beta) = \mcl H_n(\alpha,\beta;t,b,R)$ as in the discussion just above. For $l\in\BB N$ and $\ep_1 > 0$, let $\mcl E_n^l = \mcl E_n^l(\ep_1)$ be the event of Definition~\ref{def-end-event}. For $n\in\BB N$, $\delta>0$, and $C>1$, also let $\mcl B_n^\delta(C)$ be the event of Definition~\ref{def-B_n^delta}. 
Suppose given $\ep_0 ,\ep_1>0$, $C>1$, $q\in (0,1)$, $\beta > 0$, and $t \in (1/2,1-\beta)$. There exists $R = R(q,\beta,t,C,\ep_0,\ep_1)>1$, $b = b(q,\beta,t,C,\ep_0,\ep_1)\geq (3R)^{-1}$, and $\alpha_*  = \alpha_*(q,\beta,t,C,\ep_0,\ep_1)$ such that for each $\alpha \in (0,\alpha_*]$, there exists $\delta_* = \delta_*(\alpha,q,\beta,t,C,\ep_0,\ep_1)$ such that for $\delta \in (0,\delta_*]$, there exists $n_* = n_*(\delta,\alpha,q,\beta,t,C,\ep_0,\ep_1) \in\BB N$ such that for each $n\geq n_*$ and each $l\in \left[\ep_0 n^{\xi/2} ,\ep_0^{-1} n^{\xi/2}\right]_{\BB Z}$, 
\eqbn
\BB P\left(\mcl H_n(\alpha,\beta) \,|\, \mcl B_n^\delta(C) \cap \mcl E_n^l \right) \geq 1-q. 
\eqen
\end{prop}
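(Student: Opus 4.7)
The plan is to reduce the statement to a continuum statement about conditioned Brownian motion and verify it there. First observe that, provided $\delta < b\beta$, the event $\mcl H_n(\alpha,\beta;t,b,R)$ depends only on $X_{-m_n^\delta}\dots X_{-1}$: the defining cone interval $[\phi(-\iota_n^\beta),-\iota_n^\beta]$ lies entirely inside $[-m_n^\delta,-1]$ since $-\phi(-\iota_n^\beta)\le (1-b\beta)n<m_n^\delta$. By Proposition~\ref{prop-E^l-abs-cont}, for $l\in[\ep_0 n^{\xi/2},\ep_0^{-1}n^{\xi/2}]$ the conditional law of $X_{-m_n^\delta}\dots X_{-1}$ given $\mcl B_n^\delta(C)\cap\mcl E_n^l$ is mutually absolutely continuous with respect to its conditional law given $\mcl B_n^\delta(C)$ alone, with Radon--Nikodym derivative bounded above and below by positive constants $K=K(C,\ep_0,\ep_1)$. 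Hence it suffices to show that, for any $q'>0$, we can choose the parameters so that $\BB P(\mcl H_n(\alpha,\beta;t,b,R)\mid \mcl B_n^\delta(C))\ge 1-q'$ for $n$ large; this latter bound no longer involves $l$.

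By \cite[Theorem~4.1]{gms-burger-cone}, the conditional law of $Z^n(-\cdot)|_{[0,1-\delta]}$ given $\mcl B_n^\delta(C)$ converges weakly in the uniform topology to the law of the correlated Brownian motion $\wh Z^\delta$ of~\eqref{eqn-bm-cov} conditioned to stay in the first quadrant until time $1-\delta$ with $\wh Z^\delta(1-\delta)\in[C^{-1}\delta^{1/2},C\delta^{1/2}]^2$. Moreover, by \cite[Theorem~1.9]{gms-burger-cone} this convergence extends jointly to the $\pi/2$-cone time structure of the path, where in the discrete setting the weak $\pi/2$-cone times of $Z^n(-\cdot)$ correspond precisely to positions $-i$ with $X_{-i}=\tb F$. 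We therefore transfer $\mcl H_n(\alpha,\beta;t,b,R)$ to a continuum event $\mcl H^\delta(\alpha,\beta;t,b,R)$ on $\wh Z^\delta$: the existence of a $\pi/2$-cone time $\tau\in[1-\beta,1-\delta]$ with cone-entry time $v(\tau)\in[1-\beta,1-b\beta]$, exit displacement $|\wh Z^\delta(\tau)-\wh Z^\delta(v(\tau))|\in[b,b^{-1}]$, and continuum analogues of the conditions in $G_n(\alpha)$, $\wt H_n$, and~\eqref{eqn-rho-event-sup}. All of these are continuous functionals on an open set of path configurations, so $\lim_{n\to\infty}\BB P(\mcl H_n(\alpha,\beta;t,b,R)\mid\mcl B_n^\delta(C))=\BB P(\mcl H^\delta(\alpha,\beta;t,b,R))$.

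The remaining task is to show $\BB P(\mcl H^\delta(\alpha,\beta;t,b,R))\ge 1-q'$ by choosing, in order, $R$ large, $b$ small, $\alpha$ small, and finally $\delta$ small. By Remark~\ref{remark-bm-limit-Z^delta}, as $\delta\to 0$ the law of $\wh Z^\delta$ converges to that of the correlated Brownian excursion $\dot Z$ of Lemma~\ref{prop-bm-excursion}. For $\dot Z$, the weak $\pi/2$-cone times are a.s.\ dense in $(0,1)$, and the joint distribution of $(\tau,v(\tau),|\dot Z(\tau)-\dot Z(v(\tau))|)$ is non-degenerate in each coordinate; hence one can find a cone time satisfying the geometric bounds with probability arbitrarily close to $1$. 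Inside the cone interval, the Markov property from Lemma~\ref{prop-bm-excursion} identifies the conditional law of $\dot Z$ as a Brownian bridge conditioned to stay in the cone, and Brownian scaling plus standard estimates (e.g., \cite[Theorem~2]{shimura-cone}) show that the remaining regularity conditions defining $\ul{\mcl E}$---the bulk estimates in $G_n(\alpha)$, the small-hamburger/small-cheeseburger condition at time $\wt\rho_n^\alpha$, and the supremum condition~\eqref{eqn-rho-event-sup} near the cone entry---all hold with probability $\to 1$ as $\alpha\to 0$.

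The main obstacle is the bookkeeping in this last step: the events in $G_n(\alpha),\wt H_n^l(\alpha)$, and~\eqref{eqn-rho-event-sup} are six interlocking constraints, and the parameters $R,b,\alpha,\delta,n$ must be tuned in the right order so that each one has probability arbitrarily close to $1$ given the previously fixed constraints. This amounts to an iterated use of the Markov property of the conditioned Brownian motion together with standard estimates on Brownian motion in a cone, none of which is conceptually difficult, but all of which require care because the cone conditioning introduces non-trivial dependencies between the geometric cone parameters and the local regularity inside and near the cone interval.
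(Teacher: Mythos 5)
Your overall strategy matches the paper's: reduce from $\mcl E_n^l\cap\mcl B_n^\delta(C)$ to $\mcl B_n^\delta(C)$ alone via Proposition~\ref{prop-E^l-abs-cont}, take a scaling limit to a continuum event for $\wh Z^\delta$, and then control that continuum event by sending $\delta\to 0$ and working with $\dot Z$. This is exactly the chain Lemma~\ref{prop-dot-bm-prob} $\to$ Lemma~\ref{prop-hat-bm-prob} $\to$ Lemma~\ref{prop-H-given-B} $\to$ Proposition~\ref{prop-H-prob} in the paper, run in a slightly different order.

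However, there is a genuine gap at the step where you argue that $\BB P(\dot{\mcl H}(\alpha,\beta))$ (equivalently $\BB P(\mcl H^\delta(\alpha,\beta))$ for small $\delta$) can be made close to $1$. You assert that the weak $\pi/2$-cone times of $\dot Z$ are a.s.\ dense and that ``the joint distribution of $(\tau,v(\tau),|\dot Z(\tau)-\dot Z(v(\tau))|)$ is non-degenerate in each coordinate; hence one can find a cone time satisfying the geometric bounds with probability arbitrarily close to $1$.'' Neither density of cone times nor non-degeneracy of marginal laws gives what you need, which is that for every $\beta>0$, with probability $\to 1$ (as $b\to 0$) there \emph{exists} a left forward $\pi/2$-cone time $s\in(0,\beta)$ with exit time $\ol v_{\dot Z}(s)\in(1-\beta,1-b\beta)$ and exit displacement $\dot V(\ol v_{\dot Z}(s))-\dot V(s)\ge b$. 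This is a statement about the existence of a single \emph{macroscopic} cone interval covering essentially all of $[0,1]$, not about local density of cone times; density gives you cone intervals of small scale everywhere but says nothing about one big interval. Moreover ``$\tau$'' and ``$v(\tau)$'' are undefined without first establishing such a cone interval exists, so the non-degeneracy claim is not even well-posed. The paper handles exactly this issue in Lemma~\ref{prop-dot-bm-prob} by invoking~\cite[Theorem~1.1]{sphere-constructions}: it passes to the CLE$_\kappa$ on a $\gamma$-quantum sphere encoded by $\dot Z$ and observes that the space-filling SLE a.s.\ traces a counterclockwise bubble disconnecting $0$ from $\infty$ with quantum mass at least $1-\beta$ and positive boundary length. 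This CLE/LQG input (or a substitute for it) is a substantive ingredient, and without it your argument has a hole precisely where the paper does the most work.

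A secondary issue: the convergence $\BB P(\mcl H_n(\alpha,\beta)\mid\mcl B_n^\delta(C))\to\BB P(\mcl H^\delta(\alpha,\beta))$ is not a consequence of ``continuous functionals on an open set of path configurations.'' The event involves the \emph{minimal} cone time $\iota_n^\beta$ subject to several inequality constraints, and establishing joint convergence of $(Z^n,\iota_n^\beta,\phi(-\iota_n^\beta))$ requires the cone-time coupling machinery of~\cite[Corollary~5.9]{gms-burger-cone} (the conditioned analogue of~\cite[Theorem~1.9]{gms-burger-cone}, which is the result you should cite), plus an argument, as in the proof of Lemma~\ref{prop-H-given-B}, that minimality of $\iota_n^\beta$ is preserved in the limit and that $\iota_n^\beta=\lfloor(1-\beta)n\rfloor$ eventually whenever $\tau^\beta=\beta$. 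This is several paragraphs of work in the paper and should not be waved away.
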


The proof of Proposition~\ref{prop-H-prob} proceeds as follows. We first prove in Lemma~\ref{prop-dot-bm-prob} a lower bound for the probability of an analogue of the event $\mcl H_n(\alpha,\beta) $ for a correlated two-dimensional Brownian motion $\dot Z$ started from 0 conditioned to stay in the first quadrant until time 1 and satisfy $\dot Z(1)  = 0$. In Lemma~\ref{prop-hat-bm-prob}, we use Proposition~\ref{prop-bm-limit} to transfer this to an estimate for the probability of an analogous event for a Brownian motion $\wh Z^\delta$ conditioned to stay in the first quadrant until time $1-\delta$, and conditioned on the event $\mcl B^\delta(C)$ of~\eqref{eqn-bm-B-def} for $C$ fixed and $\delta$ small. In Lemma~\ref{prop-H-given-B}, we take a scaling limit (using~\cite[Corollary 5.9]{gms-burger-cone}) to obtain an estimate for the probability of the event $\mcl H_n(\alpha,\beta)$ conditioned on the event $\mcl B_n^\delta(C)$ of Proposition~\ref{prop-E^l-abs-cont}. We then deduce Proposition~\ref{prop-H-prob} from Proposition~\ref{prop-E^l-abs-cont}. 

We first need to define the analogue of the event $\mcl H_n(\alpha,\beta)$ for a Brownian motion. For this purpose it will be convenient to introduce the following terminology, which we take from \cite[Appendix A]{gms-burger-cone}.

\begin{defn}\label{def-cone-time-forward}
A time $t$ is called a \emph{(weak) forward $\pi/2$-cone time} for a function $Z = (U,V) : \BB R \rta \BB R^2$ if there exists $t' > t$ such that $U_s \geq U_{t }$ and $V_s \geq V_{t }$ for $s\in ([t    , t']$. Equivalently, $Z([t,t'])$ is contained in the ``cone" $Z_{t } + \{z\in \BB C : \op{arg} z \in [0,\pi/2]\}$. We write $ \ol v_Z(t)$ for the infimum of the times $t'$ for which this condition is satisfied, i.e. $\ol v_Z(t)$ is the exit time from the cone. 
\end{defn} 

Let $\dot Z = (\dot U , \dot V)$ be a two-dimensional Brownian motion started from 0 with variances and covariances as in~\eqref{eqn-bm-cov} conditioned to stay in the first quadrant until time 1 and satisfy $\dot Z(1) = 0$, as in Lemma~\ref{prop-bm-excursion}. For $\beta \in (0,1/2)$ and $b\in (0,1)$, let $\tau^\beta = \tau^\beta(b)$ be the the smallest left forward $\pi/2$-cone time $s$ for $\dot Z$ such that 
\eqb \label{eqn-tau^beta-def}
s \in [0,\beta] ,\quad  \ol v_{\dot Z}(s) \in [1-\beta,1-b\beta] ,\quad \op{and}\quad \dot V(\ol v_{\dot Z}(s)) - \dot V(s)  \geq b ;
\eqe
or $\tau^\beta = \beta$ if no such $s$ exists. Let
\eqbn
\BB t_\beta := \ol v_{\dot Z}(\tau^\beta) - \tau^\beta .
\eqen

For $\alpha>0$ and $t\in (1/2, 1-\beta)$, let 
\alb
&\sigma^\alpha = \sigma^{\alpha,\beta}(t,b) := \ol v_{\dot Z}(\tau^\beta) \wedge \inf\left\{s \geq t \,:\,  \dot U(s) \leq   (\alpha \BB t_\beta)^{1/2} +    \dot U(\tau^\beta) \right\} \\
&\wt\sigma^\alpha = \wt\sigma^{\alpha,\beta}(t,b) :=    (1-t)    \wedge  \sup \left\{s\in [\tau^\beta , 1-t] \,:\, \dot U(s) \geq  (\alpha \BB t_\beta)^{1/2} +    \dot U(\tau^\beta) \right\} .
\ale 

For $\alpha >0$, $\beta \in (0,1/2)$, $t\in (1/2,1-\beta)$, $b > 0$, and $R>1$, let $\dot{\mcl H}(\alpha,\beta) = \dot{\mcl H}(\alpha,\beta;t,b,R)$ be the event that the following is true.
\begin{enumerate}
\item $\tau^\beta  < \beta$. \label{item-bm-bubble-exist}
\item $\dot Z(t) - \dot Z(\tau^\beta) \in \left[R^{-1}\BB t_\beta^{1/2} , R \BB t_\beta^{1/2}\right]^2 $, $\inf_{s\in [1-t,t]} (\dot U(s) - \dot U(\tau^\beta)  ) \geq 2(\alpha \BB t_\beta)^{1/2}$, and $\dot V(\wt\sigma^\alpha) - \dot V(\tau^\beta) \leq R^{-1} \BB t_\beta^{1/2}$. \label{item-bm-bubble-mid}
\item $  \sigma^\alpha  \in \left[\ol v_{\dot Z}(\tau^\beta) - R  \alpha \BB t_\beta , \ol v_{\dot Z}(\tau^\beta) -R^{-1}\alpha \BB t_\beta\right]$ and $\dot V(\sigma^\alpha)   \in  \left[\dot V(\ol v_{\dot Z}(\tau^\beta)) - R(\alpha \BB t_\beta)^{1/2} , \dot V(\ol v_{\dot Z}(\tau^\beta)) + R(\alpha \BB t_\beta)^{1/2} \right]$. \label{item-bm-bubble-end}
\item $ \inf_{s\in [\sigma^\alpha , \ol v_{\dot Z}(\tau^\alpha)]} \left( \dot V(s)  -\dot V(\ol v_{\dot Z}(\tau^\beta)) \right)   \geq  - R^{-1} \BB t_\beta$. \label{item-bm-bubble-sup} 
\end{enumerate}

\begin{remark}
Condition~\ref{item-bm-bubble-mid} (resp.~\ref{item-bm-bubble-end},~\ref{item-bm-bubble-sup}) in the definition of $\dot{\mcl H}(\alpha,\beta)$ is a continuum analogue of the conditions in the definition of the event $G_n(\alpha)$ (resp. $\wt H_n^l(\alpha)$; $H_n^l(\alpha)$) of Section~\ref{sec-disk-conv} but with $\tau^\beta - \ol v_{\dot Z}(\tau^\beta)$ playing the role of $n$ and $\dot V(\ol v_{\dot Z}(\tau^\beta)) - \dot V(\tau^\beta)$ playing the role of $l$. 
The reason why the intervals in the definition of $\dot{\mcl H}(\alpha,\beta)$ are scaled by $\BB t_\beta$ or $\BB t_\beta^{1/2}$ is as follows. We want the event $\dot{\mcl H}(\alpha,\beta)$ to be in some sense the ``scaling limit" of the events $\mcl H_n(\alpha,\beta)$ defined above. The event $\mcl H_n(\alpha,\beta)$ is defined so that the event $\ul{\mcl E}^{L_n^\beta}_{M_n^\beta}(\alpha ; t' ,R)$ of Section~\ref{sec-disk-conv} occurs. In particular, this latter event is defined using scales of size $M_n^\beta$ or $(M_n^\beta)^{1/2}$ instead of size $n$ or $n^{1/2}$. The quantity $\BB t_\beta$ is the continuum analogue of the quantity $M_n^\beta$.  
\end{remark}

\begin{lem} \label{prop-dot-bm-prob}
Let $\dot Z  $ be as above. Suppose given $q\in (0,1)$, $\beta\in (0,1/2)$, and $t \in (1/2,1 -\beta)$. There exists $R = R(q,\beta,t)$, $b = b(q,\beta,t) \geq 3R^{-1}$, and $\alpha_*  = \alpha_*(q,\beta,t)$ such that for each $\alpha \in (0,\alpha_*]$, we have
\eqbn
 \BB P  \left(\dot{\mcl H}(\alpha,\beta)  \right) \geq 1-q .
\eqen 
\end{lem}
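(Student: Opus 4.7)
The plan is to first produce the left forward $\pi/2$-cone time $\tau^\beta$ with high probability, then identify the conditional law of $\dot Z$ on the cone interval $[\tau^\beta, \ol v_{\dot Z}(\tau^\beta)]$ as a scaled correlated Brownian motion conditioned to stay in the first quadrant and exit at a prescribed point on the $V$-axis, and finally verify conditions~\ref{item-bm-bubble-mid}--\ref{item-bm-bubble-sup} by standard estimates for this conditioned process.

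For condition~\ref{item-bm-bubble-exist}, I would show that $\dot Z$ a.s.\ has a dense set of left forward $\pi/2$-cone times in $(0,1)$; this can be transferred from the corresponding statement for the meander $\wh Z$ of Section~\ref{sec-bm-def} by applying Proposition~\ref{prop-bm-limit}\ref{item-bm-limit-density} on each subinterval $[0, 1-\eta]$ for $\eta > 0$. A continuity argument combined with the nondegeneracy of the one-dimensional marginals of $\dot Z$ on $(0,1)$ then shows that for any fixed $\beta$ one can choose $b = b(q,\beta)$ small enough that, with probability at least $1 - q/2$, there is a left forward cone time $s \in [0,\beta]$ with $\ol v_{\dot Z}(s) \in [1-\beta, 1 - b\beta]$ and $V$-drop at least $b$, so that $\tau^\beta < \beta$.

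Next, conditional on $\tau^\beta$, $\ol v_{\dot Z}(\tau^\beta)$, and the path outside $[\tau^\beta, \ol v_{\dot Z}(\tau^\beta)]$, the shifted restriction $(\dot Z(\tau^\beta + r) - \dot Z(\tau^\beta))_{r \in [0,\BB t_\beta]}$ should have the law of a correlated Brownian motion with covariances as in~\eqref{eqn-bm-cov} conditioned to stay in $[0,\infty)^2$ on $[0,\BB t_\beta]$ and to end at $(0, \Delta V)$, where $\Delta V := \dot V(\ol v_{\dot Z}(\tau^\beta)) - \dot V(\tau^\beta) \geq b$. Since $\BB t_\beta \in [1-2\beta, 1]$ and $\Delta V \geq b$, Brownian scaling reduces this to a unit-time correlated Brownian excursion in the quadrant whose exit height on the $V$-axis lies in a compact subset of $(0,\infty)$ depending only on $b$ and $\beta$. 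For such a process, conditions~\ref{item-bm-bubble-mid}--\ref{item-bm-bubble-sup} each hold with probability at least $1 - q/6$ for $R$ large and $\alpha_*$ small: condition~\ref{item-bm-bubble-mid} follows from a.s.\ positivity at interior times (cf.\ Lemma~\ref{prop-bm-excursion}\ref{item-bm-excursion-pos}) and standard continuity, uniform over the compact family of exit heights; conditions~\ref{item-bm-bubble-end} and~\ref{item-bm-bubble-sup} follow from Brownian scaling together with standard estimates on Brownian motion conditioned on its terminal value, since $\sigma^\alpha$ is essentially a hitting time for a tube of width of order $\alpha^{1/2}$ around the left boundary of the cone near the endpoint of the excursion. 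The main obstacle is justifying the Markov-type decomposition in this paragraph: because $\dot Z$ is itself conditioned on the zero-probability event $\dot Z(1) = 0$, one cannot directly invoke the strong Markov property, and the cleanest route is to approximate $\dot Z$ via the positive-probability conditioning of Proposition~\ref{prop-bm-limit} (or Remark~\ref{remark-bm-limit-Z^delta}), carry out the decomposition at the cone-time level for the approximating process using the strong Markov property of an unconditional Brownian motion, and pass to the limit.
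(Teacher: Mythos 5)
Your proposal contains a genuine gap in the argument for condition~\ref{item-bm-bubble-exist}, and a second, softer issue with the cone-interval decomposition.

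On condition~\ref{item-bm-bubble-exist}: density of left forward $\pi/2$-cone times in $(0,1)$ plus continuity does \emph{not} give the existence of a cone time $s\in[0,\beta]$ with $\ol v_{\dot Z}(s)\in[1-\beta,1-b\beta]$ and a positive $V$-drop. What is needed is that for every $\beta$ there is, with high probability, a single cone interval that occupies almost all of $[0,1]$ with positive $V$-drop at its exit. Density only gives cone times arbitrarily close to any point, with no control on $\ol v(s)-s$; the cone intervals nested around a point in $[0,\beta]$ could all be short. The degenerate cone time $s=0$ has $\ol v(0)=1$ but zero $V$-drop, so it does not help. The paper closes this gap by invoking \cite[Theorem 1.1]{sphere-constructions}: it identifies $\dot Z$ (up to a constant multiple) with the boundary-length process of a space-filling $\op{SLE}_\kappa$ on a quantum sphere, and observes that the time-reversed exploration a.s.\ draws a counterclockwise bubble separating the root from the target with quantum mass at least $1-\beta$ and positive quantum boundary length; this bubble's filling interval is exactly a left forward $\pi/2$-cone interval of the required shape. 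You need some replacement for this fact; I do not see how to extract it from density of cone times plus nondegeneracy of one-dimensional marginals, and you do not supply the ``continuity argument'' you appeal to.

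On the decomposition step: your plan is to condition on the cone endpoints $(\tau^\beta,\ol v_{\dot Z}(\tau^\beta))$ and the path outside the cone interval, and use a strong-Markov argument on the approximating process of Remark~\ref{remark-bm-limit-Z^delta}. The problem is that $\tau^\beta$ is not a stopping time in either direction (it requires looking forward past time $1-\beta$ to verify $\ol v_{\dot Z}(\tau^\beta)$, and it also requires minimality over $s\in[0,\beta]$), so the strong Markov property does not apply at $\tau^\beta$ even for unconditioned Brownian motion. A cone-interval decomposition for two-dimensional Brownian motion is indeed available, but it rests on the structure theory of cone times rather than on strong Markov, and you would need to track carefully which extra conditioning (minimality, exit direction) enters. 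The paper avoids this entirely: it conditions on $\dot Z|_{[0,t]}$ at the \emph{deterministic} time $t$, uses the explicit density $g_t$ of Proposition~\ref{prop-bm-limit}\ref{item-bm-limit-density} to show the conditional law of $\dot Z|_{[t,1-b\beta]}$ is absolutely continuous with respect to unconditioned Brownian motion, and then observes that on the high-probability event $E_0$ the quantities $\sigma^\alpha$ and $\ol v_{\dot Z}(\tau^\beta)$ become genuine stopping times for the path after time $t$. The further conditioning on the value of $\tau^\beta$ (which does depend on the future) is shown to introduce only a bounded tilt because the event $G(\frk s,\frk z)$ has conditional probability bounded below. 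This is both cleaner and more robust than a cone-time decomposition. I would recommend restructuring along these lines rather than trying to make the cone-time Markov decomposition rigorous.
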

\begin{proof}
Since $\dot Z$ is a.s.\ continuous and  a.s.\ does not hit $\partial (0,\infty)^2$ except at its endpoints, $\dot Z([1-t,t])$ is a.s.\ bounded and a.s.\ lies at positive distance from $(0,\infty)^2$. Hence we can find some $R_0 >1$, depending only on $t$, such that
\eqb \label{eqn-dotZ-R0}
\BB P\left( \dot Z([1-t,t]) \subset [R_0^{-1} , R_0] \right) \geq 1-q. 
\eqe
 
We will next argue that $\dot{\BB P}(\tau^\beta(b)  < \beta)$ is close to 1 if $b$ is chosen sufficiently small. We do this using \cite[Theorem 1.1]{sphere-constructions}.
Let $\kappa$ and $p$ be related as in~\eqref{eqn-p-kappa}. Let $(\BB C , h)$ be a $4/\sqrt{\kappa}$-quantum sphere and let $\eta$ is a whole-plane space-filling $\op{SLE}_{\kappa}$ from $-\infty$ to $\infty$ independent from $h$ and parametrized by quantum mass with respect to $h$. 
By~\cite[Theorem 1.1]{sphere-constructions}, the left and right quantum boundary lengths $(L,R)$ of $\eta$ with respect to $h$ evolve as a constant multiple of $\dot Z$. It is a.s.\ the case that the time reversal of $\eta$ (which has the same law as $\eta$) forms a counterclockwise ``bubble" which disconnects 0 from $\infty$ and has quantum mass at least $1-\beta$ and positive quantum boundary length. The set of times during which $\eta$ is filling in such a bubble is equal to $[s , \ol v_{(L,R)}(s)]$ for some left forward $\pi/2$-cone time $s$ for $(L,R)$ with $\ol v_{(L,R)}(s) - s = 1-\beta$, and the quantum boundary length of this bubble is equal to $\ol v_{(L,R)}(s) - s$ for this $\pi/2$-cone time $s$ (c.f.~\cite[Section 9]{wedges}). Hence, for each $\beta>0$ there a.s.\ exists some $b$ for which $  \tau^\beta(b) < \beta$, so we can find some $b$ for which this holds with probability as close to 1 as we like. By shrinking $b$ and using continuity, we can arrange that in fact 
\eqb \label{eqn-dotZ-b}
\BB P\left(  \tau^\beta  < \beta ,\,  |\dot Z(\tau^\beta )|  \leq (2 R_0)^{-1} \right) \geq 1-q 
\eqe 
with $R_0$ as in~\eqref{eqn-dotZ-R0}

If $\beta < 1-t$, then $\dot U(s) - \dot U(\tau^\beta )$ is a.s.\ positive for $s\in [1-t,t]$ on the event $\{\tau^\beta < \beta\}$. It follows that if $\beta < 1-t$, then there exists $\alpha_0 > 0$, depending only on $t$ and $\beta$, such that
\eqb \label{eqn-dotZ-alpha0}
\BB P\left( \inf_{s\in [1-t,t]} (\dot U(s) - \dot U(\tau^\beta )  ) \geq 2(\alpha_0 \BB t_\beta)^{1/2}   \right) \geq 1- q. 
\eqe

Equations~\eqref{eqn-dotZ-b} and~\eqref{eqn-dotZ-alpha0} give us a lower bound on the probability of conditions~\ref{item-bm-bubble-exist} and~\ref{item-bm-bubble-mid} in the definition of $\dot{\mcl H}(\alpha,\beta)$, except for the condition that $\dot V(\wt\sigma^\alpha) - \dot V(\tau^\beta ) \leq R^{-1} \BB t_\beta^{1/2}$. We will next find an $R$ for which conditions~\ref{item-bm-bubble-end} and~\ref{item-bm-bubble-sup} hold with high probability, and finally argue that $\dot V(\wt\sigma^\alpha) - \dot V(\tau^\beta ) \leq R^{-1} \BB t_\beta^{1/2}$ with high probability when $\alpha$ is small. 
 
Let $E_0$ be the union of the events of~\ref{eqn-dotZ-R0},~\eqref{eqn-dotZ-b}, and~\eqref{eqn-dotZ-alpha0}, so that $\BB P(E_0) \geq 1-3q$. On the event $E_0$, $\ol v_{\dot Z}(\tau^\beta )  $ (resp. $\sigma^\alpha$) is the smallest $s \geq t$ for which $\dot U(s) \leq \dot U(\tau^\beta )$ (resp. $\dot U(s) \leq (\alpha \BB t_\beta)^{1/2} + \dot U(\tau^\beta )$).

By Assertion~\ref{item-bm-limit-density} of Proposition~\ref{prop-bm-limit}, the regular conditional law of $\dot Z|_{[t,1-b\beta]}$ given almost any realization $\frk z = (\frk u , \frk v)$ of $\dot Z|_{[0,t]}$ for which $\frk z(t) \in \left[R_0^{-1} , R_0\right]^2$ is absolutely continuous with respect to the law of a Brownian motion as in~\eqref{eqn-bm-cov} started from $\frk z(t)$ and run for $1-b\beta-t$ units of time. Now suppose given realizations $\frk s$ of $\tau^\beta$ and $\frk z = (\frk u , \frk v)$ of $\dot Z|_{[0,t]}$ for which $E_0$ can possibly occur. Then the regular conditional law of $\dot Z|_{[0,t]}$ given $\{(\tau^\beta , \dot Z|_{[0,t]}) = (\frk s , \frk z)\}$ and the event $E_0$ is the same as its conditional law given the event $G(\frk s , \frk z)$ that $\ol v_{\dot Z}(\tau^\beta) \in [1-\beta,1-b\beta]$ and $\dot V(\ol v_{\dot Z}(\tau^\beta)) - \frk v(\frk s) \geq b$. 

Since $\frk z(t) \in \left[R_0^{-1} , R_0\right]^2$ and $|\frk z(\frk s)| \leq (2R_0)^{-1}$, we infer from the last assertion of Proposition~\ref{prop-bm-limit} that the conditional probability of $G(\frk s , \frk z)$ given $E_0$ and $\{(\tau^\beta , \dot Z|_{[0,t]}) = (\frk s , \frk z)\}$ is a.s.\ bounded below by a constant depending only on $R_0$, $\alpha_0$, and $b$. Furthermore, by our above description of $\sigma^\alpha$ and $\ol v(\tau^\beta)$ on the event $E_0$, as $R\rta \infty$, the regular conditional probability given $E_0$ and $\{(\tau^\beta , \dot Z|_{[0,t]}) = (\frk s , \frk z)\}$ that either 
\eqbn
\sup_{s\in [\sigma^\alpha , \ol v_{\dot Z}(\tau^\beta)]} |\dot Z(s) - \dot Z(\sigma^\alpha)| > R(\alpha \BB t_\beta)^{1/2} \quad \op{or} \quad \ol v_{\dot Z}(\tau^\beta) - \sigma^\alpha \notin [R^{-1} \alpha \BB t_\beta , R\alpha \BB t_\beta ] 
\eqen
 tends to zero, uniformly over all $\alpha \in (0,\alpha_0]$. 
 It follows that we can find $R = R(q,t,\alpha_0,b,R_0) \geq 3b^{-1}$ and $\alpha_*^1 = \alpha_*^1(q,t,R , b , R_0) \in (0,\alpha_0]$ such that for $\alpha\in (0,\alpha_*^1]$, the conditional probability given $E_0$ that conditions~\ref{item-bm-bubble-end} and~\ref{item-bm-bubble-sup} in the definition of $\dot{\mcl H}(\alpha,\beta)$ occur is at least $1-q$. 

Finally, by continuity, we can find $\alpha_*^2 = \alpha_*(q,t,\alpha_0,b,R) \in (0,\alpha_*^1]$ such that for $\alpha \in (0,\alpha_*^2]$, we have
\eqbn
\BB P\left( \dot V(\wt\sigma^\alpha) - \dot V(\tau^\beta) \leq  R^{-1} \BB t_b\eta \right) \geq 1-q. 
\eqen
By combining our above observations, we obtain that for $b$ and $R$ as above,
\eqbn
 \BB P  \left( \dot{\mcl H}(\alpha,\beta)  \right) \geq (1-3q)(1-q) -q .
\eqen
Since $q$ is arbitrary this implies the statement of the lemma.
\end{proof}

\begin{lem} \label{prop-hat-bm-prob}
For $\delta>0$ let $\wh Z^\delta = (\wh U^\delta , \wh V^\delta)$ be a Brownian motion with variances and covariances as in~\eqref{eqn-bm-cov} conditioned to stay in the first quadrant until time $1-\delta$. Also let $\wt{\mcl B}^\delta(C)$ be defined as in~\eqref{eqn-bm-wt-B-def} and define the events $\mcl H^\delta(\alpha,\beta) = \mcl H^\delta(\alpha,\beta;t,b,R)$ in the same manner as the events $\dot{\mcl H}(\alpha,\beta)$ of Lemma~\ref{prop-dot-bm-prob} but with $\wh Z^\delta$ in place of $\dot Z$. Suppose given $C>1$, $q\in (0,1)$, $\beta > 0$, and $t \in (1/2,1-\beta)$. There exists $R = R(q,\beta,t,C)>1$, $b = b(q,\beta,t,C)\geq (3R)^{-1}$, and $\alpha_*  = \alpha_*(q,\beta,t,C)$ such that for each $\alpha \in (0,\alpha_*]$, there exists $\delta_* = \delta_*(\alpha,q,\beta,t,C)$ such that for $\delta \in (0,\delta_*]$, 
\eqbn
\BB P \left( \mcl H^\delta(\alpha,\beta)  \,|\, \wt{\mcl B}^\delta(C) \right) \geq 1-q .
\eqen
\end{lem}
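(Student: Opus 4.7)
The plan is to reduce to Lemma~\ref{prop-dot-bm-prob} via the scaling limit result of Proposition~\ref{prop-bm-limit} (combined with Remark~\ref{remark-bm-limit-Z^delta}), which tells us that as $\delta \to 0$, the conditional law of $\wh Z^\delta$ given $\wt{\mcl B}^\delta(C)$ converges to the law of $\dot Z$ in a suitably strong sense on any interval $[0,t']$ with $t' < 1$.

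First, I would choose $R = R(q,\beta,t)$, $b = b(q,\beta,t) \geq 3R^{-1}$, and $\alpha_* = \alpha_*(q,\beta,t)$ so that Lemma~\ref{prop-dot-bm-prob} gives $\BB P(\dot{\mcl H}(\alpha,\beta; t, b, R)) \geq 1 - q/2$ for all $\alpha \in (0, \alpha_*]$. The crucial observation is that, by its definition, the event $\dot{\mcl H}(\alpha,\beta;t,b,R)$ is measurable with respect to $\dot Z|_{[0, 1-b\beta]}$: indeed, $\tau^\beta \in [0,\beta]$, $\ol v_{\dot Z}(\tau^\beta) \in [1-\beta, 1-b\beta]$, and all the conditions in~\ref{item-bm-bubble-exist}--\ref{item-bm-bubble-sup} involve the path only at times in $[0, \ol v_{\dot Z}(\tau^\beta)] \subset [0, 1-b\beta]$ (recalling $t < 1 - \beta < 1 - b\beta$).

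Next, for any fixed $\delta_0 \in (0, b\beta)$, the events $\mcl H^\delta(\alpha,\beta)$ for $\delta \in (0,\delta_0]$ and the event $\dot{\mcl H}(\alpha,\beta)$ are all determined by the same measurable functional $\mcl U$ on the space of continuous paths $[0, 1-b\beta] \to \BB R^2$. Hence, by Assertion~\ref{item-bm-limit-strong} of Proposition~\ref{prop-bm-limit} applied with $t = 1-b\beta$ (and Remark~\ref{remark-bm-limit-Z^delta} to use $\wh Z^\delta$ and $\wt{\mcl B}^\delta(C)$ rather than $\wh Z$ and $\mcl B^\delta(C)$),
\[
\BB P\left(\wh Z^\delta|_{[0,1-b\beta]} \in \mcl U \,\big|\, \wt{\mcl B}^\delta(C) \right) \xrightarrow[\delta \to 0]{} \BB P\left(\dot Z|_{[0,1-b\beta]} \in \mcl U\right) \geq 1 - q/2 .
\]
Choosing $\delta_* = \delta_*(\alpha,q,\beta,t,C) \in (0, b\beta)$ small enough that the left side exceeds $1-q$ for all $\delta \in (0,\delta_*]$ yields the claim.

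The only subtlety to check is that there is no issue at the boundary: in principle the pointwise convergence of conditional probabilities from Assertion~\ref{item-bm-limit-strong} could fail if the defining inequalities of $\dot{\mcl H}(\alpha,\beta)$ were attained with positive $\dot Z$-probability as equalities. However, since $\dot Z$ is a two-dimensional Brownian motion away from fixed times, each of the strict/non-strict inequality distinctions in~\ref{item-bm-bubble-exist}--\ref{item-bm-bubble-sup} occurs on a $\dot Z$-null set, so the indicator of $\dot{\mcl H}(\alpha,\beta)$ is a.s.\ a continuous functional of $\dot Z|_{[0,1-b\beta]}$ at the relevant paths, and the strong convergence of Assertion~\ref{item-bm-limit-strong} suffices. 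This is the only nontrivial point, and it is routine to verify using continuity of $\dot Z$ together with the nondegeneracy of its two-dimensional Gaussian densities.
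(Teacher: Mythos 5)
Your proof takes essentially the same route as the paper: both observe that $\mcl H^\delta(\alpha,\beta)$ is measurable with respect to $\wh Z^\delta|_{[0,1-b\beta]}$ and then invoke Remark~\ref{remark-bm-limit-Z^delta} together with Assertion~\ref{item-bm-limit-strong} of Proposition~\ref{prop-bm-limit} to transfer the estimate of Lemma~\ref{prop-dot-bm-prob} to the conditional law given $\wt{\mcl B}^\delta(C)$. Your closing paragraph about boundary effects is unnecessary---Assertion~\ref{item-bm-limit-strong} gives set-wise convergence for arbitrary Borel sets, not merely continuity sets, so no null-set argument is needed---but it does no harm.
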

\begin{proof} 
The event $\mcl H^\delta(\alpha,\beta)$ is measurable with respect to $\sigma(\wh Z^\delta|_{[0,1-b\beta]})$. By Remark~\ref{remark-bm-limit-Z^delta} and assertion~\ref{item-bm-limit-strong} of Proposition~\ref{prop-bm-limit}, the conditional probability of $\mcl H(\alpha,\beta)$ given $\wt{\mcl B}^\delta(C)$ converges to $\BB P\left(\dot{\mcl H}(\alpha,\beta) \right)$ as $\delta\rta 0$. The statement of the lemma now follows from Lemma~\ref{prop-dot-bm-prob}.
\end{proof}
 
\begin{lem} \label{prop-H-given-B}
Define the events $\mcl H_n(\alpha,\beta) = \mcl H_n(\alpha,\beta;t,b,R)$ as in the beginning of this subsection. For $n\in\BB N$, $\delta>0$, and $C>1$, also let $\mcl B_n^\delta(C)$ be the event of Definition~\ref{def-B_n^delta}. 
Suppose given $C>1$, $q\in (0,1)$, $\beta > 0$, and $t \in (1/2,1-\beta)$. There exists $R = R(q,\beta,t,C)>1$, $b = b(q,\beta,t,C)\geq (3R)^{-1}$, and $\alpha_*  = \alpha_*(q,\beta,t,C)$ such that for each $\alpha \in (0,\alpha_*]$, there exists $\delta_* = \delta_*(\alpha,q,\beta,t,C)$ such that for $\delta \in (0,\delta_*]$, there exists $n_* = n_*(\delta,\alpha,q,\beta,t,C) \in\BB N$ such that for $n\geq n_*$, 
\eqbn
\BB P\left(\mcl H_n(\alpha,\beta) \,|\, \mcl B_n^\delta(C) \right) \geq 1-q. 
\eqen
\end{lem}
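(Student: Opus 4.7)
The plan is to deduce this lemma from its Brownian analogue, Lemma~\ref{prop-hat-bm-prob}, via the scaling limit of \cite[Theorem~4.1]{gms-burger-cone}, which states that the conditional law of $Z^n(-\cdot)|_{[0,1-\delta]}$ given $\mcl B_n^\delta(C)$ converges weakly in the uniform topology to the conditional law of $\wh Z^\delta|_{[0,1-\delta]}$ given $\wt{\mcl B}^\delta(C)$. First I would apply Lemma~\ref{prop-hat-bm-prob} with $q/2$ in place of $q$ and with slightly strengthened parameters $R' , b', \alpha_*'$ (chosen so that $R' > R$, $b' < b$, $\alpha_*' < \alpha_*$ by fixed multiplicative factors), producing a ``buffered'' continuum event $\mcl H^\delta(\alpha',\beta;t,b',R')$ which occurs with conditional probability at least $1-q/2$ given $\wt{\mcl B}^\delta(C)$ for $\alpha' \in (0,\alpha_*']$ and $\delta$ sufficiently small. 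I would then use that $\wh Z^\delta|_{[1-\delta,1]}$ is an unconditioned Brownian motion to extend the scaling-limit convergence to the interval $[0,1-b\beta]$, provided $\delta < b\beta$.

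The next step is to transfer the buffered Brownian event to the random walk. Under Sheffield's bijection, indices $i$ with $X_{-i} = \tb F$ and $X_{\phi(-i)} = \tc H$ correspond to left $\pi/2$-cone times for $Z^n(-\cdot)$ (c.f.\ the remark following Definition~\ref{def-cone-time}), and the smallest such index $\iota_n^\beta$ satisfying the geometric constraints in the definition of $\mcl H_n(\alpha,\beta)$ should converge, after rescaling, to the continuum left forward $\pi/2$-cone time $\tau^\beta$ appearing in the definition of $\mcl H^\delta(\alpha,\beta)$. This convergence of cone times is of the same nature as \cite[Theorem~1.9]{gms-burger-cone}; although that theorem is stated in the infinite-volume setting, the conditioning on $\mcl B_n^\delta(C)$ only constrains $Z^n(-\cdot)$ near the single time $1-\delta$, so on any interval bounded away from $1-\delta$ (in particular on $[0,1-b\beta]$, since $\delta < b\beta$) the conditioned law is locally absolutely continuous with respect to the unconditioned law, and the cone-time convergence argument carries through. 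Given the occurrence of $\mcl H^\delta(\alpha',\beta;t,b',R')$ on the scaling-limit path, each of the continuum conditions corresponding to $G_n$, $\wt H_n^l$, and $H_n^l$ in Section~\ref{sec-disk-conv} translates into its discrete counterpart in the definition of $\mcl H_n(\alpha,\beta)$ for $n$ sufficiently large.

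The main technical obstacle is the mismatch between the path values $(d,d^*)$, defined via the modified word $Y$, and the reduced-word symbol counts appearing in the quantities $L_n^\beta$, $M_n^\beta$, $O_n^\alpha$, $Q_n^\alpha$ that enter the definition of $\mcl H_n(\alpha,\beta)$. This mismatch is bounded by the number of unmatched $\tb F$-symbols in the relevant sub-word, which by \cite[Corollary~5.2]{gms-burger-cone} is at most $n^{1-\mu+o_n(1)} = o(n^{1/2})$ except on an event of unconditional probability $o_n^\infty(n)$, and hence (after absorbing the polynomial prefactor from conditioning on $\mcl B_n^\delta(C)$) also with conditional probability tending to $1$. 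Since the buffer parameters leave room for errors of size $o(n^{1/2})$, combining the scaling-limit convergence, the cone-time convergence, and the flexible-order bound yields $\BB P(\mcl H_n(\alpha,\beta) \mid \mcl B_n^\delta(C)) \geq 1-q$ for $n$ large.
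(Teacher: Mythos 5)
Your high-level plan matches the paper's: prove the Brownian version (Lemma~\ref{prop-hat-bm-prob}) and transfer it to the walk via a scaling limit for the conditional law given $\mcl B_n^\delta(C)$. However, there is a genuine gap in how you propose to transfer the convergence of cone times.

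You invoke \cite[Theorem~1.9]{gms-burger-cone} (the infinite-volume cone-time convergence) and argue it applies because ``the conditioning on $\mcl B_n^\delta(C)$ only constrains $Z^n(-\cdot)$ near the single time $1-\delta$, so on any interval bounded away from $1-\delta$ the conditioned law is locally absolutely continuous with respect to the unconditioned law.'' This premise is false. The event $\mcl B_n^\delta(C)$ contains the condition $J > m_n^\delta$, i.e.\ that $X(-j,-1)$ contains no burgers for all $j \leq (1-\delta)n$, which is a \emph{global} positivity constraint on the entire time interval $[0,1-\delta]$, not a local constraint near $1-\delta$. The conditional law given $\mcl B_n^\delta(C)$ is singular in the limit with respect to the unconditional law (the limiting path is a Brownian meander-type object, not Brownian motion), and the Radon--Nikodym derivative of the conditioned walk restricted to $[0,1-b\beta]$ with respect to the unconditioned walk is not uniformly bounded, so one cannot push through the almost-sure coupling statement from \cite[Theorem~1.9]{gms-burger-cone}. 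Moreover, convergence in law of the paths (via \cite[Theorem~4.1]{gms-burger-cone}) does not by itself yield convergence of cone times, since the relevant functionals of the path are not continuous.

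The paper resolves this by invoking \cite[Corollary~5.9]{gms-burger-cone}, which is precisely a version of the cone-time convergence result proved under the conditioning $\{J > n\}$ (the analogue of Theorem~\ref{thm-cone-limit-finite} for walks conditioned to have no burgers). This gives a Skorokhod coupling of the \emph{conditioned} walks $\wh X^n$ with $\wh Z^\delta$ under which both the paths and the relevant $\tb F$-times converge a.s.; the proof then carefully checks (via a limsup/liminf argument exploiting the minimality in the definitions of $\iota_n^\beta$ and $\tau^\beta$) that the discrete time $n^{-1}\iota_n^\beta$ actually converges to $\tau^\beta$ under this coupling. Your buffered-parameter device for passing from convergence in law to convergence of event probabilities is a reasonable alternative to the paper's implicit continuity-set argument, but it does not repair the missing cone-time input. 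To fix your proof, replace the absolute-continuity argument with an appeal to \cite[Corollary~5.9]{gms-burger-cone}, and add the minimality argument showing $n^{-1}\iota_n^\beta \to \tau^\beta$ (and $-n^{-1}\phi(-\iota_n^\beta) \to \ol v_{\wh Z^\delta}(\tau^\beta)$) a.s.\ in the resulting coupling.
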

\begin{proof}
Fix $\delta \in (0,1/2)$ to be chosen later. Let $\wh Z^\delta$, $\wt{\mcl B}^\delta(C)$, and $\mcl H^\delta(\alpha,\beta)$ be as in Lemma~\ref{prop-H-given-B}.
Define the times $\tau^\beta$, $\BB t_\beta$, and $\sigma^\alpha$ as in the discussion above Lemma~\ref{prop-dot-bm-prob} with $\wh Z^\delta$ in place of $\dot Z$. We first claim that the conditional joint law given $\{J > (1-\delta) n\}$ of the triple 
\eqb \label{eqn-n-triple}
\left(Z^n|_{[-(1-\delta),0]} ,\, n^{-1} \iota_n^\beta , \, -n^{-1} \phi(-\iota_n^\beta) \BB 1_{\left(\iota_n^\beta <   \lfloor (1-\beta) n \rfloor \right)} \right)
\eqe
converges as $n\rta \infty$ to the joint law of the triple 
\eqb \label{eqn-bm-triple}
\left(\wh Z^\delta(-\cdot)|_{[-(1-\delta),0]} , \, \tau^\beta ,\, \ol v_{\wh Z^\delta}(\tau^\beta) \BB 1_{(\tau^\beta < \beta  )} \right) . 
\eqe

Since $\tau^\beta$ is the \emph{smallest} forward $\pi/2$ for $\wh Z^\delta$ satisfying~\eqref{eqn-tau^beta-def} (with $\dot Z$ in place of $\wh Z^\delta$), continuity of $\wh Z^\delta$ implies that on the event $\{\tau^\beta < \beta\}$, there a.s.\ exists a (random) interval $I  \subset [0,1]$ with rational endpoints and a time $a  \in I \cap \BB Q$ such that $\tau^\beta$ is the smallest forward $\pi/2$-cone time $s$ for $\wh Z^\delta$ in $I $ with $a  \in [s , \ol v_{\wh Z^\delta}(s)]$. 

For $n\in\BB N$ let $\wh X^n$ be a word sampled from the conditional law of $X$ given $\{J > (1-\delta) n\}$ and let $\wh Z^n$ be the path constructed from as in~\eqref{eqn-Z^n-def} with $\wh X^n$ in place of $X$. By condition 4 of \cite[Corollary 5.9]{gms-burger-cone} (which is the analogue of Theorem~\ref{thm-cone-limit-finite} for words conditioned to have no burgers), we can find a coupling of the sequence $(\wh X^n)$ with the path $\wh Z^\delta$ such that on the event $\{\tau^\beta <\beta\}$, the following holds a.s. 
Let $I$ and $a$ be as above and let $\wt\iota_n^\beta$ be the smallest $i \in \BB N$ such that $n^{-1} i \in I$, $\wh X_{-i}^n = \tb F$, and $a n \in [i , -\phi(-i)]$. Then we have
\eqb \label{eqn-wh-Z-coupling}
\text{$\wh Z^n\rta \wh Z^\delta(-\cdot)$ uniformly in $[-1,0]$},\quad   n^{-1} \wt \iota_n^\beta \rta \tau^\beta \quad\op{and}\quad  - n^{-1} \phi(-\wt \iota_n^\beta) \rta \ol v_{\wh Z^\delta}(\tau^\beta) .
\eqe 
Henceforth fix a coupling satisfying~\eqref{eqn-wh-Z-coupling}. We claim that if we define $\iota_n^\beta$ with the word $\wh X^n$ in place of the word $X$, then a.s.\ 
\eqb \label{eqn-iota-conv}
n^{-1} \iota_n^\beta \rta \tau^\beta \quad\op{and}\quad  - n^{-1} \phi(- \iota_n^\beta) \BB 1_{(\iota_n^\beta < \beta n )} \rta  \ol v_{\wh Z^\delta}(\tau^\beta) \BB 1_{(\tau^\beta < \beta  )}  .
\eqe  
This will prove our claim at the beginning of the proof. 

First suppose $\tau^\beta  <\beta$. It is a.s.\ the case that $\tau^\beta \in (0,\beta)$, $\ol v_{\wh Z^\delta}(\tau^\beta) \in (1-\beta,1-b\beta)$, and $\wh V(\ol v_{\wh Z^\delta}(\tau^\beta)) - \wh V(\tau^\beta) > b$. Therefore, in a coupling satisfying~\eqref{eqn-wh-Z-coupling}, it is a.s.\ the case that for sufficiently large $n\in\BB N$, we have $\wt\iota_n^\beta \in [1,\beta n]_{\BB Z}$, $-  \phi(-\wt\iota_n^\beta) \in [(1-\beta) n , (1-b\beta) n]_{\BB Z}$, and $|\wh X^n(\phi(-\wt\iota_n^\beta) , -\wt\iota_n^\beta)| \geq b n^{1/2}$, where $\wh X^n(\cdot,\cdot)$ is defined as in~\eqref{eqn-X(a,b)} with $\wh X^n$ in place of $X$. By minimality of $\iota_n^\beta$ for sufficiently large $n\in\BB N$ we have $\wt\iota_n^\beta \geq \iota_n^\beta$ and $\iota_n^\beta < \lfloor (1-\beta) n\rfloor$. By~\eqref{eqn-wh-Z-coupling},
$\limsup_{n\rta\infty} n^{-1} \iota_n^\beta \leq \tau^\beta$.
Similarly, $\liminf_{n\rta\infty} (- n^{-1} \phi(-\wt\iota_n^\beta)) \geq \ol v_{\wh Z^\delta}(\tau^\beta) $.

On the other hand, suppose $s$ is a subsequential limit of the times $-n^{-1} \iota_n^\beta$ (defined with $\wh X^n$ in place of $X$). By~\cite[Lemma 5.7]{gms-burger-cone}, we have that $s$ is a forward $\pi/2$-cone time for $\wh Z^\delta$ and $-n^{-1} \phi(-\iota_n^\beta) \rta \ol v_{\wh Z^\delta}(s)$. By definition of $\iota_n^\beta$ we have that $s$ satisfies~\eqref{eqn-tau^beta-def}, so by minimality of $\tau^\beta$ we have $s\leq \tau^\beta$. By combining this with our above observation, $s =  \tau^\beta$. We thus obtain~\eqref{eqn-iota-conv} in the case $\tau^\beta < \beta$. 

We will now complete the proof of~\eqref{eqn-iota-conv} by showing that $  \iota_n^\beta = \lfloor (1-\beta)n \rfloor$ for sufficiently large $n$ whenever $\tau^\beta = \beta$. If not, then by the same argument as above, a subsequential limit $s$ of the times $-n^{-1} \iota_n^\beta$ satisfies~\eqref{eqn-tau^beta-def}, whence $\tau^\beta < \beta$, which yields a contradiction. This completes the proof of our claim regarding convergence in law. 

Our above claim implies that for each fixed $\delta \in (0,1/2)$, the conditional joint law of the triple~\eqref{eqn-n-triple} given $\mcl B_n^\delta(C)$ converges as $n\rta \infty$ to the conditional joint law of the triple~\eqref{eqn-bm-triple} given $\wt{\mcl B}^\delta(C)$. This convergence together with Lemma~\ref{prop-hat-bm-prob} yields the statement of the lemma.
\end{proof}

\begin{proof}[Proof of Proposition~\ref{prop-H-prob}]
Combine Proposition~\ref{prop-E^l-abs-cont} and Lemma~\ref{prop-H-given-B}.
\end{proof}

\subsection{Proof of Theorem~\ref{thm-main}}
\label{sec-main-proof}

The main input in the proof of Theorem~\ref{thm-main} is the following proposition, whose proof requires most of the preceding results in the paper. See Figure~\ref{fig-whole-path-detail} for an illustration.

\begin{figure}[ht!]
 \begin{center}
\includegraphics{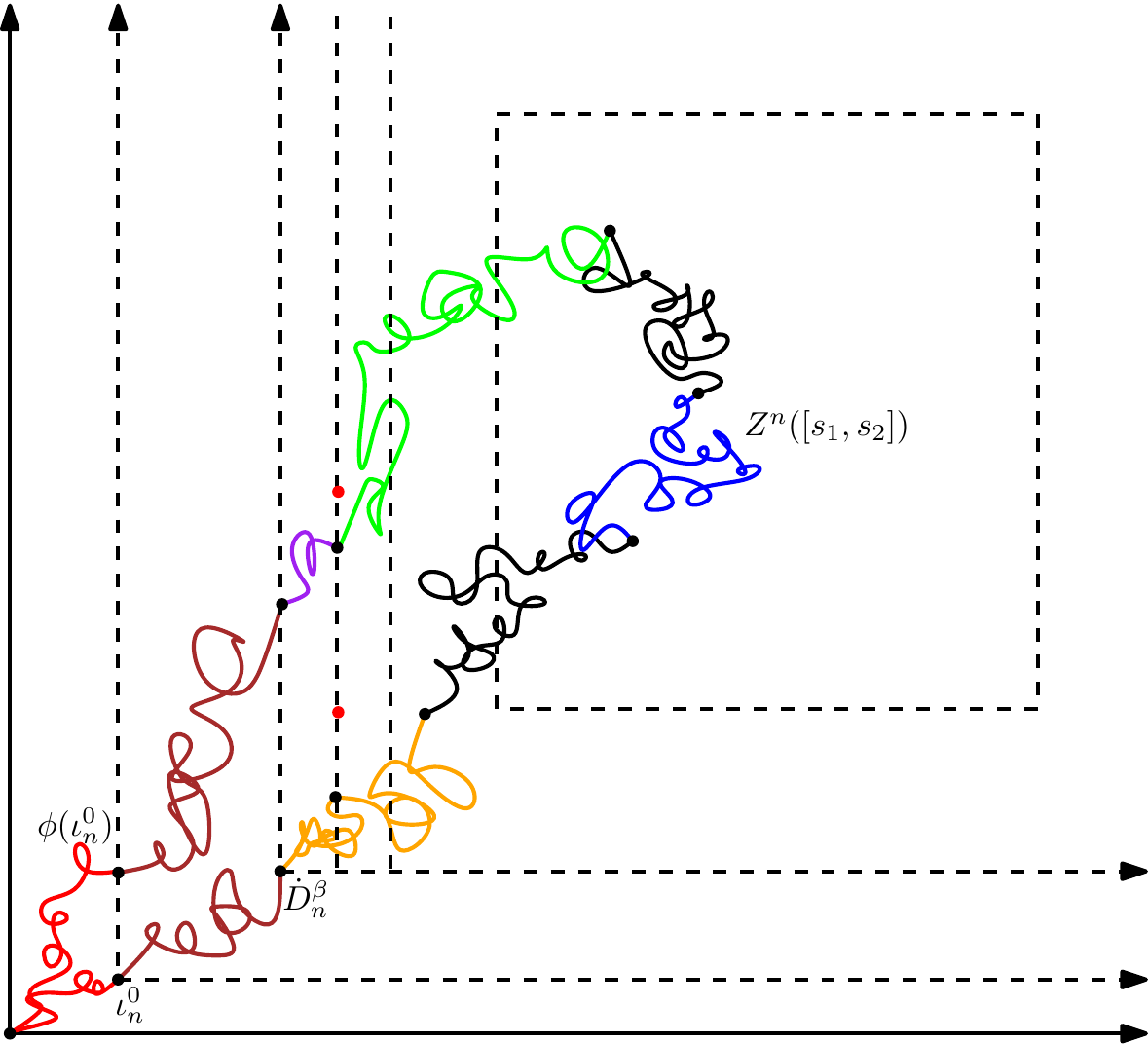} 
\caption{An illustration of the path $D$ of~\eqref{eqn-discrete-path} when the event $ \mcl G_n' \cap \dot{\mcl H}_{M_n}(\alpha,\beta)\cap \dot{\mcl B}_{M_n}^\delta(C)$ in the proof of Proposition~\ref{prop-ll-bridge} occurs; equivalently, the event event $\mcl G_n'$ of Lemma~\ref{prop-G'-event} occurs and the event depicted in Figure~\ref{fig-cone-prob-event} occurs for the part of the curve not in red. By Lemma~\ref{prop-G'-event}, the conditional probability given $\{X(1,2n)=\emptyset\}$ of the event $\mcl G_n'$ is close to 1 when $\ep_0$ and $\ep_1$ are small and $n$ is large. By Lemmas~\ref{prop-E^l-equiv} and~\ref{prop-end-box}, the conditional probability given $\mcl G_n'$ of the event $\dot{\mcl B}_{M_n}^\delta(C)$ is close to 1 when $C$ is large, $\delta$ is small, and $n$ is large. By Proposition~\ref{prop-H-prob}, the conditional probability given $\mcl G_n' \cap \dot{\mcl B}_{M_n}^\delta(C)$ of the event $\dot{\mcl H}_{M_n}(\alpha,\beta)$ is close to 1 when $n$ is large. By Proposition~\ref{prop-2pt-cond-conv}, the conditional law of the blue segment of the curve given $ \mcl G_n' \cap \dot{\mcl H}_{M_n}(\alpha,\beta)\cap \dot{\mcl B}_{M_n}^\delta(C)$ and approximate realizations of the endpoints of this segment is close to the law of a Brownian bridge conditioned to stay in the first quadrant. Combining these statements proves Proposition~\ref{prop-ll-bridge}. 
}\label{fig-whole-path-detail}
\end{center}
\end{figure}

\begin{prop} \label{prop-ll-bridge}
Fix $s_1,s_2 \in (0,1)$, $q \in (0,2)$, and $z_1,z_2 \in (0,\infty)^2$. There is a $\zeta_* =\zeta_*(q,s_1,s_2,z_1,z_2) \in \BB N$ such that for each $\zeta \in (0,\zeta_*]$, there exists $n_* = n_*(\zeta,q,s_1,s_2,z_1,z_2)$ such that the following is true. For each $n\geq n_*$, the Prokhorov distance between the conditional law of $Z^n|_{[s_1,s_2]}$ given the event $\mcl P_n^{s_1,s_2}(z_1,z_2;\zeta)$ of~\eqref{eqn-mclP-event}; and the law of a two-dimensional Brownian bridge from $z_1$ to $z_2$ in time $s_2-s_1$ with variances and covariances as in~\eqref{eqn-bm-cov} conditioned to stay in the first quadrant is at most $q$. 
\end{prop}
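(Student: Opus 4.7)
The plan is to pass to Sheffield's scaling limit and identify the conditional law of the continuum path via the Markov property. Let $Z=(U,V)$ denote a correlated two-dimensional Brownian motion with variances and covariances as in~\eqref{eqn-bm-cov}, and write
\[
\mcl P^{s_1,s_2}(z_1,z_2;\zeta) := \bigl\{|Z(s_1)-z_1|\leq\zeta,\; |Z(s_2)-z_2|\leq\zeta,\; Z([s_1,s_2])\subset(0,\infty)^2\bigr\}
\]
for the continuum analogue of $\mcl P_n^{s_1,s_2}(z_1,z_2;\zeta)$. First I would apply \cite[Theorem 2.5]{shef-burger} to obtain $Z^n\Rightarrow Z$ uniformly on compacts, noting that for $\zeta$ small enough that $\ol{B_\zeta(z_i)}\subset(0,\infty)^2$, the event $\mcl P^{s_1,s_2}(z_1,z_2;\zeta)$ has positive probability and its boundary in the uniform topology is $\BB P$-null (the spheres $|Z(s_i)-z_i|=\zeta$ are null since $Z(s_i)$ is absolutely continuous, and strictly interior paths that tangentially touch $\partial[0,\infty)^2$ form a null set, reducing via the linear change of coordinates~\eqref{eqn-bm-matrix} to the standard regular-point property of $0$ for $1$D Brownian motion).

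By Portmanteau, the conditional law of $Z^n|_{[s_1,s_2]}$ given $\mcl P_n^{s_1,s_2}(z_1,z_2;\zeta)$ then converges to the conditional law of $Z|_{[s_1,s_2]}$ given $\mcl P^{s_1,s_2}(z_1,z_2;\zeta)$. By the Markov property of $Z$ at $s_1$ and $s_2$, this limit law is, conditional on $(Z(s_1),Z(s_2))$, a correlated Brownian bridge from $Z(s_1)$ to $Z(s_2)$ of duration $s_2-s_1$ conditioned to stay in the first quadrant. Sending $\zeta\to 0$, the joint density of $(Z(s_1),Z(s_2))$ concentrates at $(z_1,z_2)$ and the conditioned-bridge distribution is continuous in its endpoints (by continuity of the bridge transition density and dominated convergence against the indicator of staying in the quadrant, cf.~\cite{shimura-cone}), so the mixture collapses to the bridge from $z_1$ to $z_2$ conditioned to stay in the first quadrant. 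Choosing $\zeta_*$ so that this collapse is within $q/2$ in Prokhorov distance and then $n_*$ so that the Portmanteau error is within $q/2$ finishes the argument.

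This matches the intuition behind Figure~\ref{fig-whole-path-detail}, where $\mcl P_n^{s_1,s_2}(z_1,z_2;\zeta)$ plays the role at the scale of $Z^n|_{[s_1,s_2]}$ that the macroscopic cone event $\mcl H_m(\alpha,\beta)$ plays at the scale of the full word. An alternative route more aligned with the notation of Sections~\ref{sec-big-loop}--\ref{sec-macroscopic-cone} would combine Lemma~\ref{prop-G'-event}, Proposition~\ref{prop-end-box}, Proposition~\ref{prop-H-prob}, Lemma~\ref{prop-H-equiv}, and Proposition~\ref{prop-2pt-cond-conv}: embed $[s_1 n, s_2 n]$ into a cone interval produced by $\mcl H_m(\alpha,\beta)$, translate via Lemma~\ref{prop-H-equiv} into a conditional law given $\ul{\mcl E}_{M_m^\beta}^{L_m^\beta}(\alpha)$, and invoke Proposition~\ref{prop-2pt-cond-conv} after rescaling.

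The hard part will be the Portmanteau step -- verifying that the ``stay in the first quadrant'' part of $\mcl P^{s_1,s_2}(z_1,z_2;\zeta)$ has a $\BB P$-null boundary, so that the Portmanteau convergence is in fact an equality rather than the usual pair of inequalities. This reduces to the standard but nontrivial fact that, conditional on endpoints in the open first quadrant at times $s_1$ and $s_2$, $Z$ almost surely does not touch $\partial[0,\infty)^2$ tangentially during $[s_1,s_2]$, which follows (after the diagonalization~\eqref{eqn-bm-matrix}) from the regular-point property of $0$ for $1$D Brownian motion. Keeping the Portmanteau convergence uniform in $\zeta$ over a small window is handled by the explicit continuity of the conditioned-bridge transition density computed in~\cite{shimura-cone}.
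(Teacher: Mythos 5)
Your primary approach has a fundamental gap. You are treating the proposition as if the conditioning were only on $\mcl P_n^{s_1,s_2}(z_1,z_2;\zeta)$, in which case the unconditional scaling limit of \cite[Theorem 2.5]{shef-burger} plus a Portmanteau argument would indeed suffice. But the proposition is used in the proof of Theorem~\ref{thm-main} to characterize the subsequential limit $\wt Z$ of the conditional laws of $Z^n$ given $\{X(1,2n) = \emptyset\}$, and so the conditioning on $\{X(1,2n) = \emptyset\}$ is implicit (the paper's proof conditions on $\mcl G_n' \subset \{X(1,2n) = \emptyset\}$ throughout, and the final step averages over events of high conditional probability given $\{X(1,2n) = \emptyset\}$). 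Under that additional conditioning the increments of the word are no longer i.i.d.\ and $Z^n$ no longer converges to $Z$, so Sheffield's unconditional functional CLT cannot be invoked. Worse, the statement that the $\{X(1,2n)=\emptyset\}$-conditioned $Z^n$ converges to $\dot Z$ (and hence that its restriction to $[s_1,s_2]$, conditioned on its endpoints, is a conditioned bridge via Lemma~\ref{prop-bm-excursion}) is precisely what Theorem~\ref{thm-main} asserts; using it to prove Proposition~\ref{prop-ll-bridge} would be circular.

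The ``alternative route'' you sketch in your penultimate paragraph is in fact the paper's proof, not an alternative. Concretely: one uses Lemma~\ref{prop-G'-event} to find a macroscopic cone interval $[\phi(\iota_0^n),\iota_0^n]_{\BB Z}$ of near-full length with high conditional probability given $\{X(1,2n)=\emptyset\}$; the conditional law of the word inside that interval, given $\mcl G_n'$, translates via Lemma~\ref{prop-E^l-equiv} to the law given $\mcl E_m^l$; Propositions~\ref{prop-end-box} and~\ref{prop-H-prob} show the regularity events $\dot{\mcl B}^\delta_{M_n}(C)$ and $\dot{\mcl H}_{M_n}(\alpha,\beta)$ are likely; Lemma~\ref{prop-H-equiv} converts to the conditional law given $\ul{\mcl E}^l_m(\alpha)$; and Proposition~\ref{prop-2pt-cond-conv} then delivers the bridge approximation. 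Two further ingredients that your sketch omits but that the paper needs are \eqref{eqn-beta-cone-tip} (to control the translation $\dot W_n^\beta$ between the cone-interval coordinate frame and the original frame, so that the translated bridge is Prokhorov-close to the bridge from $z_1$ to $z_2$) and the choice of $\beta$ and the regularity event $\dot F_n^\beta$ to control flexible orders near the endpoints. The Portmanteau-null-boundary considerations you raise in your last paragraph are technically correct for the unconditioned problem but are moot once the conditioning on $\{X(1,2n)=\emptyset\}$ is restored, since that is precisely the conditioning that the whole machinery of Sections~\ref{sec-big-loop}--\ref{sec-macroscopic-cone} is built to handle.
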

\begin{proof}
For $z_1,z_2\in (0,\infty)^2$, let $\ul{\BB P}^{z_1,z_2}_{s_2-s_1}$ be the law of a two-dimensional Brownian bridge from $z_1$ to $z_2$ in time $s_2-s_1$ with variances and covariances as in~\eqref{eqn-bm-cov} conditioned to stay in the first quadrant, as in the proof of Lemma~\ref{prop-2pt-cond-conv}. By considering translations of a path with the law $\BB P_{s_2-s_1}^{z_1,z_2}$, we can find $\eta   = \eta(q,s_1,s_2,z_1,z_2) > 0$ with 
\[
\eta \leq \frac12 \min_{i\in \{1,2\}} \op{dist}(z_i ,\partial (0,\infty)^2)
\]
 such that for each $w  \in (0,\eta]^2$, the Prokhorov distance between the law $\ul{\BB P}_{s_2-s_1}^{z_1,z_2}$ and the law
\eqb \label{eqn-bridge-cond-eta}
\ul{\BB P}^{z_1+w,z_2+w}_{s_2-s_1}\left(\,\cdot \,|\, Z([0,s_2-s_1]) \subset w + (0,\infty)^2  \right)
\eqe  
is at most $q$. 

By Lemma~\ref{prop-G'-event}, we can find $\ep_0 , \ep_1 \in (0,1)$ depending only on $q $ and $n_*^0 = n_*^0( q ) \in \BB N$ such that for $n\geq n_*^0$, 
\eqb \label{eqn-ll-G'-prob}
\BB P\left(\mcl G_n' \,|\, X(1,2n) =\emptyset \right) \geq 1-q ,
\eqe
where $\mcl G_n' = \mcl G_n'(\ep_0,\ep_1)$ is as in Lemma~\ref{prop-G'-event}. Let $\iota_0^n = \iota_0^n(\ep_0)$ be as in Section~\ref{sec-big-loop-setup}. Also let 
\eqb  \label{eqn-end-data}
M_n := \iota_0^n -\phi(\iota_0^n) ,\quad L_n := |X(\phi(\iota_0^n) , \iota_0^n)| .
\eqe 

For $\delta>0$ and $C>1$, let $\dot{\mcl B}_{M_n}^\delta(C)$ be defined in the same manner as the event $\mcl B_{M_n}^\delta(C)$ of Definition~\ref{def-B_n^delta} but with $M_n$ in place of $n$ and the word $X_{ \iota_0^n(\ep_0)} \dots X_{\phi(\iota_0^n(\ep_0))}$ in place of the word $X_{-M_n} \dots X_{-1}$. 
By Lemma~\ref{prop-E^l-equiv} and Proposition~\ref{prop-end-box}, there exists $C = C(q,\ep_0,\ep_1) > 0$ such that for each $\delta \in (0,1/2)$, there exists $n_*^1 = n_*^1(\delta,q,\ep_0,\ep_1) \geq n_*^0$ such that for $n\geq n_*^1$, 
\eqb \label{eqn-ll-B-prob}
\BB P\left(\dot{\mcl B}_{M_n}^\delta(C) \,|\, \mcl G_n' \right) \geq 1-q .
\eqe
 
For $\beta \in (0,1/2)$, $t\in (1/2,1-\beta)$, $R > 1$, $b \in (0,1)$, and $\alpha>0$ let $\dot{\mcl H}_{M_n}(\alpha,\beta) = \dot{\mcl H}_{M_n}(\alpha , \beta ; t, b,R)$ be defined in the same manner as the event $\mcl H_{M_n}(\alpha,\beta)$ of Section~\ref{sec-macroscopic-cone} but with $M_n$ in place of $n$ and the word $X_{ \iota_0^n(\ep_0)} \dots X_{\phi(\iota_0^n(\ep_0))}$ in place of the word $X_{-M_n} \dots X_{-1}$. Also let $\dot\iota_{M_n}^\beta = \dot\iota_{M_n}^\beta(b)$ be defined in the same manner as the time $\iota_{M_n}^\beta$ of Section~\ref{sec-macroscopic-cone} but with $M_n$ in place of $n$ and the word $X_{ \iota_0^n(\ep_0)} \dots X_{\phi(\iota_0^n(\ep_0))}$ in place of the word $X_{-M_n} \dots X_{-1}$. 

By Proposition~\ref{prop-tight}, \cite[Corollary 5.2]{gms-burger-cone}, and the lower bound in~\cite[Theorem~ 1.10]{gms-burger-local}, we can find $\beta = \beta(\eta, s_1,s_2,\ep_0) \in (0,s_1)$ such that for each $\delta \in (0,1/2)$, there exists $ n_*^2 = n_*^2(\delta,q,\eta,\beta,\ep_0,\ep_1) \in\BB N$ such that for each $n\geq n_*^2$, 
\begin{align} \label{eqn-beta-cone-tip}
&\BB P\left(\sup_{j \in [1, 2\beta n + \ep_0^{-1} n^{\xi/2} ]_{\BB Z}} |X(2n-j,2n)| \leq \frac{\eta}{2} n^{1/2} \right) \geq 1-q/2 \notag\\
&\BB P\left(  \sup_{j \in [1, 2\beta n + \ep_0^{-1} n^{\xi/2} ]_{\BB Z}} \mcl N_{\tb F}\left( X(2n-j,2n) \right) \leq n^\nu  \,|\, X(1,2n) =\emptyset\right) \geq 1-q/2 .
\end{align}
Henceforth fix such a $\beta$.  Also choose $t\in (1/2,1-\beta)$ such that $1-t  <  s_1 < s_2 < t$. By Lemmas~\ref{prop-E^l-equiv} and~\ref{prop-H-prob}, there exists $R = R(q,\beta,t_1,C,\ep_0,\ep_1)>1$, $b = b(q,\beta,t,C,\ep_0,\ep_1)\geq (3R)^{-1}$, and $\wt\alpha_*  = \wt\alpha_*(q,\beta,t,C,\ep_0,\ep_1)$ such that for each $\alpha \in (0,\wt\alpha_*]$, there exists $\delta_* = \delta_*(\alpha,q,\beta,t,C,\ep_0,\ep_1)$ such that for $\delta \in (0,\delta_*]$, there exists $n_*^3 = n_*^3(\delta,\alpha,q,\beta,t,C,\ep_0,\ep_1) \geq n_*^2$ such that for each $n\geq n_*^3$ 
\eqbn
\BB P\left(\dot{\mcl H}_{M_n}(\alpha,\beta) \,|\, \mcl B_n^\delta(C) \cap \mcl G_n' \right) \geq 1-q .
\eqen
By~\eqref{eqn-ll-B-prob}, in fact
\eqb \label{eqn-ll-H-prob}
\BB P\left(\dot{\mcl H}_{M_n}(\alpha,\beta) \,|\,   \mcl G_n' \right) \geq (1-q)^2 .
\eqe 
Henceforth fix such an $r$ and such a $b$. 
Define
\begin{align} \label{eqn-dot-data}
&\dot D_n^\beta := - D\left(X(2n - \iota_0^n - \dot\iota_n^\beta , 2n)\right) ,\quad \dot W_n^\beta := n^{-1/2} \dot D_n^\beta, \notag \\
& \dot F_n^\beta := \left\{\mcl N_{\tb F}\left(X(2n - \iota_0^n - \dot\iota_n^\beta , 2n) \right) \leq n^\nu \right\} .
\end{align}  
By~\eqref{eqn-beta-cone-tip}, the conditional probability given $\{X(1,2n)=\emptyset\}$ that $\mcl G_n' \cap \dot{\mcl H}_{M_n}(\alpha,\beta)$ occurs and either coordinate of $\dot D_n^\beta$ is larger than $\frac12 \eta$ is at most $q$.  

By Lemma~\ref{prop-H-equiv} and Proposition~\ref{prop-2pt-cond-conv}, 
there exists $\zeta_*  = \zeta_*( q,s_1,s_2,z_1,z_2,t,b,R) > 0$ such that for each $\zeta \in (0,\zeta_*]$ there exists $\alpha_* = \alpha_*(\zeta, q,s_1,s_2,z_1,z_2,t,b,R  ) \in (0,\wt\alpha_*]$ such that for each $\alpha \in (0,\alpha_*]$, there exists $\delta = \delta(\alpha,\zeta,q,s_1,s_2,z_1,z_2,t,b,R  )$ and $n_*^4 = n_*^4( \delta,\alpha,\zeta, q,s_1,s_2,z_1,z_2,t,b,R,\ep_0,\ep_1)$ with the following property. For each $n\geq n_*^4$, the Prokhorov distance between the conditional law of $Z^n |_{[s_1,s_2]}$ given $\dot D_n^\beta$ and the event 
\[
 \mcl P_n^{s_1,s_2}(z_1,z_2; \zeta) \cap  \dot{\mcl H}_{M_n}(\alpha,\beta) \cap \mcl G_n' \cap \dot F_n^\beta
 \]
and the conditional law of a two-dimensional Brownian bridge from $z_1 + \dot W_n^\beta$ to $z_2 + \dot W_n^\beta$ in time $s_2-s_1$ with variances and covariances as in~\eqref{eqn-bm-cov} conditioned to stay in the translated first quadrant $\dot W_n^\beta + (0,\infty)^2$ is at most $q$. 

By our choice of $\eta$, on the event that each coordinate of $\dot D_n^\beta$ is at most $\frac12 \eta$ and $\dot F_n^\beta$ occurs, the Prokhorov distance between this conditional law and the law $\ul{\BB P}_{s_2-s_1}^{z_1,z_2}$ is at most $q$ (here the event $\dot F_n^\beta$ is used to ensure that $Z^n(2- n^{-1} \iota_0^n - n^{-1} \dot\iota_n^\beta)$ is close to $n^{-1/2} \dot D_n^\beta$). By~\eqref{eqn-ll-G'-prob},~\eqref{eqn-beta-cone-tip}, and~\eqref{eqn-ll-H-prob}, it holds with conditional probability at least $(1-q)^2 - 2q$ given $\{X(1,2n)=\emptyset\}$ that $\dot{\mcl H}_{M_n}(\alpha,\beta) \cap \mcl G_n'$ occurs, each coordinate of $\dot D_n^\beta$ is at most $\frac12 \eta$, and $\dot F_n^\beta$ occurs. Since $q$ is arbitrary the statement of the proposition follows. 
\end{proof}

\begin{proof}[Proof of Theorem~\ref{thm-main}]
By Proposition~\ref{prop-tight}, from any sequence of integers tending to $\infty$, we can extract a subsequence along which the conditional laws of $Z^n|_{[0,2]}$ given $\{X(1,2n)=\emptyset\}$ converge to the law of a random continuous path $\wt Z  : [0,2]\rta [0,\infty)^2$ such that the law of $\wt Z(t)$ is absolutely continuous with respect to Lebesgue measure on $[0,\infty)^2$ for each $t\in (0,2)$. Henceforth fix such a subsequential limit $\wt Z$. 

By Proposition~\ref{prop-ll-bridge}, for each $q\in (0,1)$, $s_1<s_2 \in (0,2)$, and $z_1,z_2 \in (0,\infty)^2$, there exists $\zeta_* = \zeta_*(q)$ such that for each $\zeta \in (0,\zeta_*]$, the Prokhorov distance between the conditional law of $\wt Z|_{[s_1,s_2]}$ given $\{|\wt Z(s_1) - z_1| \vee |\wt Z(s_2) - z_2| \leq \zeta\}$ and the law of a two-dimensional Brownian bridge from $z_1$ to $z_2$ in time $s_2-s_1$ with variances and covariances as in~\eqref{eqn-bm-cov} conditioned to stay in the first quadrant is at most $q$. Denote this latter law by $\ul{\BB P}_{s_2-s_1}^{z_1,z_2}$. Also let $\wt{\BB P}^{z_1,z_2}_{s_1,s_2}$ be the regular conditional law of $\wt Z|_{[s_1,s_2]}$ given $\{\wt Z(s_1) = z_1 ,\, \wt Z(s_2) = z_2\}$. 
We claim that for Lebesgue almost every $z_1,z_2 \in (0,\infty)^2$, we have 
\eqb \label{eqn-cond-law-agree}
 \wt{\BB P}^{z_1,z_2}_{s_1,s_2} = \ul{\BB P}_{s_2-s_1}^{z_1,z_2} .
\eqe
Once~\eqref{eqn-cond-law-agree} is established, the theorem follows from the uniqueness statement of Lemma~\ref{prop-bm-excursion} together with the second statement of Proposition~\ref{prop-tight}. 

To obtain~\eqref{eqn-cond-law-agree}, fix $N\in\BB N$ and times $t_1,\dots , t_N \in [s_1,s_2]$. Also let $\mcl C$ be the space of continuous functions $\BB R^N\rta \BB R$ which vanish at $\infty$, with the supremum norm. The space $\mcl C$ is separable, so we can find a countable collection $\{f_k\}_{k\in\BB N}\subset \mcl C$ which is dense in $\mcl C$. 
Let $\wt{\BB E}_{s_1,s_2}^{z_1,z_2}$ and $\ul{\BB E}^{z_1,z_2}_{s_2-s_1}$ denote the expectations corresponding to $\wt{\BB P}_{s_1,s_2}^{z_1,z_2}$ and $\ul{\BB P}^{z_1,z_2}_{s_2-s_1}$, respectively. For $k\in\BB N$, let $\Psi_k , \Phi_k : [0,\infty)^2 \times [0,\infty)^2 \rta \BB R^2$ be defined by 
\eqbn
\Psi_k(z_1,z_2) := \wt{\BB E}_{s_1,s_2}^{z_1,z_2}\left(f_k\left(\wt Z(t_1),\dots , \wt Z(t_N)   \right) \, \right),\quad \Phi_k(z_1,z_2) := \ul{\BB E}^{z_1,z_2}_{s_2-s_1}\left(f_k\left( Z(t_1),\dots ,   Z(t_N)\right) \right) .
\eqen
By the above statement regarding Prokhorov distances, it holds for each $z_1,z_2 \in (0,\infty)^2$ that
\eqbn
\lim_{\zeta\rta 0} \int_{B_\zeta(z_1)} \int_{B_\zeta(z_2)} \Psi_k(w_1,w_2) \, dw_1 \, dw_2 = \Phi_k(z_1,z_2) ,\quad \forall k\in\BB N .
\eqen
By the Lebesgue differentiation theorem, for almost every $z_1,z_2 \in (0,\infty)^2$ we have
\eqbn
 \Psi_k(z_1,z_2)   = \Phi_k(z_1,z_2) ,\quad \forall k\in\BB N .
\eqen
For every such $z_1,z_2$, the law of $( \wt Z(t_1),\dots , \wt Z(t_N))$ under $ \wt{\BB P}^{z_1,z_2}_{s_1,s_2}$ agrees with the law of $(   Z(t_1),\dots ,  Z(t_N))$ under $  \ul{\BB P}_{s_2-s_1}^{z_1,z_2}$. For almost every $z_1,z_2\in (0,\infty)^2$, these two laws agree for every finite vector of times $(t_1 , \dots , t_N) \subset \BB Q \cap [s_1,s_2]$. It follows that~\eqref{eqn-cond-law-agree} holds. 
\end{proof}

\subsection{Proof of Theorem~\ref{thm-cone-limit-finite}}
\label{sec-cone-proof}

In this subsection we will deduce Theorem~\ref{thm-cone-limit-finite} from Theorem~\ref{thm-main}, the earlier results of this paper, and the results of \cite{gms-burger-cone}. 

\begin{lem} \label{prop-cone-detect-path}
Fix $a \in (-\infty,0)$ and $r>0$. 
For $n\in\BB N$, let $\wh \iota_n^{a,r}$ be the minimum of 0 and the smallest $i\in\BB N$ such that $X_i = \tb F$, $i \geq a n$, and $i - \phi(i) \geq r n - 1$. For $t \in \BB R$, let
\eqb \label{eqn-cone-detect-path}
\ol U_r^n(t) := U^n( t) - \inf_{s\in [t-r,t]} U^n(s) ,\quad \ol V_r^n(t) := V^n( t) - \inf_{s\in [t-r,t]} V^n(s) ,\quad \ol Z_r^n(t) := (\ol U_r^n(t) , \ol V_r^n(t)) .
\eqe 
For each $\alpha >0$ and $q\in (0,1)$, there exists $\zeta =\zeta(\alpha,q,a,r) > 0$ and $n_* =n_*(\alpha,q,a,r) \in\BB N$ such that for each $n\geq n_*$, we have
\eqbn
\BB P\left( \inf_{t \in [a , n^{-1} \wh\iota_n^{a,r} - \alpha ]_{\BB Z}} |\ol Z_r^n(t)| \geq \zeta  \,|\, J > n  \right) \geq 1- q .
\eqen
\end{lem}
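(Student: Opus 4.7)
The plan is to deduce the lemma from the infinite-volume analogue of Theorem~\ref{thm-cone-limit-finite} (specifically \cite[Theorem 1.9]{gms-burger-cone} and its conditional extension \cite[Corollary 5.9]{gms-burger-cone}) via a compactness and contradiction argument. The geometric observation I would use throughout is that, for any continuous path $Z=(U,V)$, the condition $|\ol Z_r(t)|=0$ is equivalent to $\min_{s\in[t-r,t]} U(s) \geq U(t)$ and $\min_{s\in[t-r,t]} V(s) \geq V(t)$, which by Definition~\ref{def-cone-time} is exactly the condition that $t$ is a $\pi/2$-cone time of $Z$ with $v_Z(t)\leq t-r$, i.e.\ cone extent at least $r$. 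Hence $|\ol Z_r^n(t)|<\zeta$ is a quantitative ``near $\pi/2$-cone time of extent $r$'' condition, and the running-minimum operation is continuous in the sup norm on compacts.

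The main step is to set up the coupling. I would use Skorokhod's representation theorem together with the infinite-volume scaling limit results in~\cite{gms-burger-cone} to couple the word $X$ sampled under its conditional law given $\{J>n\}$ with a two-sided correlated Brownian motion $Z$ as in~\eqref{eqn-bm-cov} (conditioned so that $Z([-1,0])\subset[0,\infty)^2$), such that a.s.\ $Z^n\rta Z$ uniformly on compacts of $\BB R$ and $n^{-1}\wh\iota_n^{a,r}\rta \tau^{a,r}_Z$, where $\tau^{a,r}_Z$ is the smallest $\pi/2$-cone time of $Z$ in $[a,\infty)$ with extent at least $r$. By continuity of the running-minimum map, this also gives $\ol Z_r^n\rta \ol Z_r$ uniformly on compacts. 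I would then argue by contradiction: if the claim failed, there would exist $\alpha_0,q_0>0$, integers $n_k\rta\infty$, and $\zeta_k\rta 0$ so that with conditional probability at least $q_0$ one can find $t_{n_k}\in[a,n_k^{-1}\wh\iota_{n_k}^{a,r}-\alpha_0]$ with $|\ol Z_r^{n_k}(t_{n_k})|<\zeta_k$. Passing to a subsequence along which $t_{n_k}\to t^*\in[a,\tau^{a,r}_Z-\alpha_0]$, uniform convergence forces $|\ol Z_r(t^*)|=0$, so $t^*$ is a $\pi/2$-cone time of $Z$ in $[a,\infty)$ with extent $\geq r$. Minimality of $\tau^{a,r}_Z$ then gives $t^*\geq\tau^{a,r}_Z$, contradicting $t^*\leq\tau^{a,r}_Z-\alpha_0$.

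The main obstacle will be the degenerate case where the limiting cone extent equals $r$ exactly: at the discrete level, an $\tb F$-time $i$ with $(i-\phi(i))/n\to r$ from below does not satisfy the threshold $i-\phi(i)\geq rn-1$ at any finite level, so such a time could fail to be picked up by $\wh\iota_n^{a,r}$, potentially ruining the a.s.\ convergence $n^{-1}\wh\iota_n^{a,r}\rta\tau^{a,r}_Z$. However, for fixed $r>0$ the $\pi/2$-cone times of $Z$ of extent exactly $r$ form a null set (the cone intervals of $Z$ are a.s.\ pairwise nested or disjoint and their lengths form a countable random set), so $\tau^{a,r}_Z$ a.s.\ has extent strictly greater than $r$; this rules out the degeneracy and makes both the convergence $n^{-1}\wh\iota_n^{a,r}\rta\tau^{a,r}_Z$ and the concluding contradiction valid.
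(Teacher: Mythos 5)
Your proposal is correct and takes essentially the same approach as the paper: both use the observation that zeros of $\ol Z_r$ are precisely the $\pi/2$-cone times of extent at least $r$, establish the continuum estimate for a Brownian motion $\wh Z$ conditioned to stay in the first quadrant, and then transfer to the discrete side via the joint convergence $(Z^n, n^{-1}\wh\iota_n^{a,r})\Rta(\wh Z, \wh\tau^{a,r})$ from \cite[Corollary 5.9]{gms-burger-cone}. The only cosmetic difference is that you package the transfer step as a Skorokhod-coupling plus contradiction argument, whereas the paper does a direct weak-convergence transfer with a $2\zeta$-versus-$\zeta$ margin; both are standard and correct.
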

\begin{proof}
Let $\wh Z =(\wh U ,\wh V)$ have the law of a correlated Brownian as in~\eqref{eqn-bm-cov} conditioned to stay in the first quadrant until time $-1$ when run backward. Let $\wh \tau^{a,r}$ be the minimum of 0 and the smallest $\pi/2$-cone time $t$ for $\wh Z$ such that $t\geq a$ and $t - v_Z(t) \geq r$. Also let $\ol U$, $\ol V$, and $\ol Z_r$ be as in~\eqref{eqn-cone-detect-path} with $\wh Z$ in place of $Z^n$. Observe that the zeros of $\ol Z_r$ are precisely the $\pi/2$-cone times $t$ for $\wh Z$ with $t -  v_Z(t) \geq r$. Therefore, $\wh\tau^{a,r}$ can equivalently be defined as the minimum of 0 and the smallest $t\geq a$ for which $\ol Z_r(t) = 0$. Hence, for each $\alpha>0$ and $q\in (0,1)$ there exists $\zeta =\zeta(\alpha,q,a,r) > 0$ such that
\eqb \label{eqn-pre-cone-inf}
\BB P\left( \inf_{t \in [a , \wh\tau^{a,r} - \alpha ]_{\BB Z}} |\ol Z_r (t)| \geq 2\zeta  \,|\, J > n  \right) \geq 1- \frac{q}{2}  .
\eqe 

By condition 5 of \cite[Corollary 5.9]{gms-burger-cone}, the joint conditional law of the pair $\left(Z^n , n^{-1} \wh\iota_n^{a,r} \right)$ given $\{J > n\}$ converges as $n\rta \infty$ to the joint law of the pair $\left(\wh Z ,  \wh\tau^{a,r}\right)$. The statement of the lemma now follows from~\eqref{eqn-pre-cone-inf}.
\end{proof}

\begin{lem} \label{prop-E^l-cone-detect}
Suppose we are in the setting of Lemma~\ref{prop-cone-detect-path} with $a \in (-1,0)$ and $r \in (0,1)$. For each $\alpha >0$ and $q\in (0,1)$, there exists $\zeta =\zeta(\alpha,q,a,r,\ep_0,\ep_1) > 0$, $\beta =\beta(\alpha,q,a,r,\ep_0,\ep_1) \in (0,1/2)$, and $n_* =n_*(\alpha,q,a,r,\ep_0,\ep_1) \in\BB N$ such that for each $n\geq n_*$ and each $l\in \left[\ep_0 n^{\xi/2} , \ep_0^{-1} n^{\xi/2}\right]_{\BB Z}$, we have
\eqbn
\BB P\left(    -(1-\beta) n  \leq  \phi(\wh\iota_n^{a,r})  \leq   \wh\iota_n^{a,r} \leq  - \beta n   , \, \inf_{t \in [a , n^{-1} \wh\iota_n^{a,r} - \alpha ]_{\BB Z}} |\ol Z_r^n(t)| \geq \zeta  \,|\, \mcl E_n^l(\ep_1)  \right) \geq 1- q,
\eqen
with $\mcl E_n^l(\ep_1)$ as in Definition~\ref{def-end-event}.
\end{lem}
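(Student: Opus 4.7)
The plan is to transfer the claim from conditioning on $\mcl E_n^l(\ep_1)$ to conditioning on the event $\mcl B_n^\delta(C)$ of Definition~\ref{def-B_n^delta}, for suitable $C$ and $\delta$, where the scaling-limit results of~\cite{gms-burger-cone} give direct access to the limiting Brownian cone structure. Fix $q_0 \in (0, q/4)$ to be chosen later in terms of $q$. By Proposition~\ref{prop-end-box}, pick $C = C(q_0, \ep_0, \ep_1) > 1$ so that $\BB P(\mcl B_n^\delta(C) \mid \mcl E_n^l(\ep_1)) \geq 1 - q_0$ for every $\delta \in (0, 1/2)$ and all sufficiently large $n$. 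Proposition~\ref{prop-E^l-abs-cont} then ensures that the conditional law of $X_{-m_n^\delta} \dots X_{-1}$ given $\mcl E_n^l(\ep_1) \cap \mcl B_n^\delta(C)$ is mutually absolutely continuous with respect to its conditional law given only $\mcl B_n^\delta(C)$, with Radon--Nikodym derivative bounded above by some constant $K = K(C, \ep_0, \ep_1)$.

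Pick $\beta \in (0, 1/2)$ small relative to $|a|$, $1 - |a|$, $1 - r$, and $\alpha$, and then $\delta \in (0, \beta]$ small enough that the time window $[a-r, 0]$ needed to evaluate $\ol Z_r^n$ is contained in $[-(1-\delta), 0]$ (if $|a| + r \geq 1$, one simply enlarges the ``prefix'' $X_{-m_n^\delta} \dots X_{-1}$ to include enough additional backward letters, which poses no difficulty since Proposition~\ref{prop-E^l-abs-cont} extends verbatim to longer prefixes). Let $G_n$ denote the event in the conclusion of the lemma. On the endpoint-containment event $\{-(1-\beta) n \leq \phi(\wh\iota_n^{a,r}) \leq \wh\iota_n^{a,r} \leq -\beta n\}$, the cone interval $[\phi(\wh\iota_n^{a,r}), \wh\iota_n^{a,r}]$ lies in $[-m_n^\delta, -1]$ and, by the choice of $\delta$, the values of $\ol Z_r^n$ needed for the infimum are determined by $X_{-m_n^\delta} \dots X_{-1}$. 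Hence $G_n \cap \mcl B_n^\delta(C)$ is measurable with respect to this prefix, and the absolute continuity above reduces the task to proving $\BB P(G_n^c \mid \mcl B_n^\delta(C)) \leq q_0/K$ for a suitable $\zeta > 0$ and all large $n$.

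To establish this last bound, I would use~\cite[Theorem~4.1]{gms-burger-cone} together with Remark~\ref{remark-bm-limit-Z^delta}: the conditional law of $Z^n(-\cdot)|_{[0, 1-\delta]}$ given $\mcl B_n^\delta(C)$ converges to that of the Brownian motion $\wh Z^\delta$ given $\wt{\mcl B}^\delta(C)$ of~\eqref{eqn-bm-wt-B-def}. The argument behind condition~5 of~\cite[Corollary~5.9]{gms-burger-cone} adapts to this conditioning (the conditioning event has positive limiting probability and is a continuity point in $\sigma(\wh Z^\delta)$) and yields joint convergence of $(Z^n, n^{-1}\wh\iota_n^{a,r}, -n^{-1}\phi(\wh\iota_n^{a,r}))$ to the Brownian analogues $(\wh Z^\delta, \wh\tau^{a,r}, v_{\wh Z^\delta}(\wh\tau^{a,r}))$. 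Since $\beta \geq \delta$, the compact set $[\beta, 1-\beta]$ sits inside the conditioned segment $[0, 1-\delta]$ on which $\wh Z^\delta$ is a.s.\ bounded away from $\partial [0,\infty)^2$; combined with the a.s.\ strict containment of the Brownian cone interval in $(\beta, 1-\beta)$ (for $\beta$ small, using $r + \beta < |a|$ and $|a| + \beta < 1$) and the a.s.\ positivity of the Brownian cone-detection infimum, this yields both the endpoint-containment statement and the cone-detection bound under the conditioning $\mcl B_n^\delta(C)$. Choosing $q_0$ small in terms of $q$ and $K$ then gives the lemma. The main technical obstacle is the measurability constraint in Proposition~\ref{prop-E^l-abs-cont}: the target event must depend only on the prefix $X_{-m_n^\delta} \dots X_{-1}$, which is why the endpoint containment and the cone-detection bound have to be established simultaneously inside a single large-probability good event, rather than sequentially.
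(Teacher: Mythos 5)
Your overall strategy — transfer from conditioning on $\mcl E_n^l(\ep_1)$ to conditioning on $\mcl B_n^\delta(C)$ via Propositions~\ref{prop-end-box} and~\ref{prop-E^l-abs-cont}, then apply the scaling limit conditioned on $\mcl B_n^\delta(C)$ and the joint cone-time convergence from~\cite[Corollary~5.9]{gms-burger-cone} — is the same as the paper's. However, there is a genuine gap in the way you dispatch the Brownian-level statement.

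You assert ``the a.s.\ strict containment of the Brownian cone interval in $(\beta, 1-\beta)$ (for $\beta$ small, using $r+\beta < |a|$ and $|a| + \beta < 1$)''. Neither of these arithmetic conditions establishes what is needed, and the statement is not an almost-sure fact. Two things must be shown about $\wh\tau^{a,r}_\delta$ under the conditioning $\wt{\mcl B}^\delta(C)$: (i) that a $\pi/2$-cone time $t\in [a,0)$ with $t - v_{\wh Z^\delta}(t) \ge r$ exists at all (equivalently, that $\wh\tau^{a,r}_\delta$ is not equal to its default value $0$), and (ii) that the entrance time $v_{\wh Z^\delta}(\wh\tau^{a,r}_\delta)$ of the corresponding cone interval satisfies $v_{\wh Z^\delta}(\wh\tau^{a,r}_\delta) \geq -(1-\beta/2)$, i.e.\ the cone interval does not extend too far back. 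Point (i) is a positive-probability event that depends on the law, not on arithmetic, and point (ii) fails arithmetically as well: the definition of $\wh\tau^{a,r}_\delta$ forces $v_{\wh Z^\delta}(\wh\tau^{a,r}_\delta) \leq \wh\tau^{a,r}_\delta - r$, but gives no lower bound whatsoever on $v_{\wh Z^\delta}(\wh\tau^{a,r}_\delta)$. Neither $r+\beta < |a|$ nor $|a| + \beta < 1$ produces such a lower bound, and no inequality between $a$, $r$, $\beta$ can, since the cone interval can be strictly longer than $r$.

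This is precisely the content of the display~\eqref{eqn-cone-exists} in the paper's proof, and establishing it is the genuinely nontrivial step. The paper does so by invoking the CLE/LQG argument of the proof of Proposition~\ref{prop-dot-bm-prob}: using~\cite[Theorem~1.1]{sphere-constructions}, the Brownian excursion $\dot Z$ encodes a space-filling $\mathrm{SLE}_\kappa$ on a quantum sphere, and the time-reversed path a.s.\ forms a counterclockwise bubble disconnecting $0$ from $\infty$ of quantum mass close to $1$; this translates to a left forward $\pi/2$-cone interval for $\dot Z$ that covers $[\beta/2, 1-\beta/2]$ with probability as close to $1$ as desired when $\beta$ is small. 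One then uses Proposition~\ref{prop-bm-limit} and the argument of Lemma~\ref{prop-hat-bm-prob} to transfer this from $\dot Z$ to $\wh Z^\delta$ conditioned on $\wt{\mcl B}^\delta(C)$ for small $\delta$. Your proposal skips this entire chain and replaces it with an arithmetic claim that is false; without it, you cannot conclude that $\phi(\wh\iota_n^{a,r}) \geq -(1-\beta)n$ holds with high probability.
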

\begin{proof}
For $\delta>0$, let $\wh Z_\delta$ have the law of a correlated Brownian as in~\eqref{eqn-bm-cov} conditioned to stay in the first quadrant until time $-(1-\delta)$ when run backward. Let $\wh \tau^{a,r}_\delta$ be defined as in the proof of Lemma~\ref{prop-cone-detect-path} with $\wh Z_\delta$ in place of $\wh Z$. Also fix $q' \in (0,1)$ to be chosen later. For $\delta > 0$ and $C>1$, let
\eqbn
\wh{\mcl B}^\delta(C) := \left\{\wh Z_\delta(-1+\delta ) \in \left[C^{-1 }\delta^{1/2} , C\delta^{1/2}\right]_{\BB Z}\right\} .
\eqen
We claim that for each $C>1$ we can find $\beta = \beta(q',C,a,r,\ep_0,\ep_1) \in (0, 1/4)$ and $\delta = \delta(q',C,a,r,\ep_0,\ep_1) \in (0, \beta/2)$ such that 
\eqb  \label{eqn-cone-exists}
\BB P\left( -(1- \beta/2) \leq      \wh\tau^{a,r}_\delta  \leq   v_{\wh Z^\delta}( \wh\tau^{a,r}_\delta ) \leq -\beta/2      \,|\, \wh{\mcl B}^\delta(C)  \right) \geq 1-q' .
\eqe 
This follows from the same argument used in the proof of Proposition~\ref{prop-dot-bm-prob}. To be more precise, let $\dot Z$ be as in Lemma~\ref{prop-dot-bm-prob}. The argument used in that lemma (involving CLE on a quantum sphere) shows that with high probability there is a $\pi/2$-cone interval for $\dot Z$ of time length slightly less than 2, so the analogue of~\eqref{eqn-cone-exists} for $\dot Z$ is true. By Proposition~\ref{prop-bm-limit} the analogue of~\eqref{eqn-cone-exists} for the path $\wh Z$ conditioned on $\mcl B^\delta(C)$ is true for small $\delta$. By the argument of Lemma~\ref{prop-hat-bm-prob} we then obtain~\eqref{eqn-cone-exists}. 

Since $\BB P\left(\wh{\mcl B}^\delta(C)\right) > 0$, it follows from Lemma~\ref{prop-cone-detect-path} and \cite[Corollary 5.9]{gms-burger-cone} that for each $\alpha >0$ and $C>1$, there exists $\zeta =\zeta(\alpha,q',C,\delta ,\beta, a,r,C) > 0$ and $n_* =n_*(\alpha,q',C,\delta ,\beta, a,r,C)  \in\BB N$ such that for each $n\geq n_*$, we have
\eqbn
\BB P\left( -(1-\beta) n  \leq  \phi(\wh\iota_n^{a,r})  \leq   \wh\iota_n^{a,r} \leq  - \beta n   , \, \inf_{t \in [a , n^{-1} \wh\iota_n^{a,r} - \alpha ]_{\BB Z}} |\ol Z_r^n(t)| \geq \zeta \,|\, \mcl B_n^\delta(C) \right) \geq 1- 2q'.
\eqen
We conclude by combining this with Propositions~\ref{prop-E^l-abs-cont} and~\ref{prop-end-box}.
\end{proof}

\begin{lem} \label{prop-cone-conv-finite}
For $(a,r) \in (0,2)^2$, let $\tau^{a,r}$, $\iota_n^{a,r}$, and $\tau_n^{a,r}$ be defined as in Condition~\ref{item-cone-limit-stopping} of Theorem~\ref{thm-cone-limit-finite}. 
Let $\mcl A$ be a finite collection of pairs $(a,r) \in (0,2)^2$. As $n\rta\infty$, the joint conditional law given $\{X(1,2n)=\emptyset\}$ of
\eqbn
\left(Z^n|_{[0,2]} ,\, \{\tau_n^{a,r}\}_{(a,r) \in\mcl A}\right)  
\eqen
converges to the joint law of 
\eqbn
\left(\dot Z ,\, \{\tau^{a,r}\}_{(a,r) \in\mcl A}\right) .
\eqen 
\end{lem}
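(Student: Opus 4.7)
The plan is to combine Theorem~\ref{thm-main} with an adaptation of the argument underlying condition~5 of \cite[Corollary 5.9]{gms-burger-cone}, which gives the analogous statement in the infinite-volume setting. By Theorem~\ref{thm-main} and the Skorokhod representation theorem, I pass to a coupling in which $\dot Z^n \to \dot Z$ uniformly a.s.\ on $[0,2]$. Since $\mcl A$ is finite and each $\tau_n^{a,r} \in [0,2]$, after extracting a further a.s.\ convergent subsequence it suffices to show that any a.s.\ subsequential limit $\tau^{a,r}_*$ of $\tau_n^{a,r}$ coincides with $\tau^{a,r}$ a.s., for each fixed $(a,r) \in \mcl A$.

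For the lower bound $\tau^{a,r}_* \geq \tau^{a,r}$, note that the flexible order at time $\iota_n^{a,r}$ forces, via Sheffield's bijection, both coordinates of the walk $D$ over the integer interval $[\phi(\iota_n^{a,r}),\iota_n^{a,r}]_{\BB Z}$ (which has length at least $rn-1$) to attain their infima at $\iota_n^{a,r}$; rescaling then yields the same infimum property for $\dot Z^n$ on $[\tau_n^{a,r} - r + O(n^{-1}), \tau_n^{a,r}]$ up to error $O(n^{-1/2})$. Uniform convergence $\dot Z^n \to \dot Z$ gives that $\dot U$ and $\dot V$ simultaneously attain their infima over $[\tau^{a,r}_* - r, \tau^{a,r}_*]$ at $\tau^{a,r}_*$, so $\tau^{a,r}_*$ is a $\pi/2$-cone time of $\dot Z$ with $\tau^{a,r}_* - v_{\dot Z}(\tau^{a,r}_*) \geq r$, while $\tau^{a,r}_* \geq a$ follows from $\tau_n^{a,r} \geq a$. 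By minimality in the definition of $\tau^{a,r}$, I conclude $\tau^{a,r}_* \geq \tau^{a,r}$.

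For the upper bound $\tau^{a,r}_* \leq \tau^{a,r}$, I mimic the proof of \cite[Corollary 5.9]{gms-burger-cone}. For any $\epsilon > 0$, continuity of $\dot Z$, the definition of $\pi/2$-cone time, and a standard Brownian fluctuation argument produce a (random) time $s_\epsilon \in (\tau^{a,r}, \tau^{a,r} + \epsilon)$ at which $\dot U(s_\epsilon) < \dot U(\tau^{a,r})$ and $\dot V(s_\epsilon) < \dot V(\tau^{a,r})$ both hold strictly, and at which the infima of $\dot U$ and $\dot V$ over $[v_{\dot Z}(\tau^{a,r}) - \epsilon, s_\epsilon]$ are attained in the interior. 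Uniform convergence $\dot Z^n \to \dot Z$ together with the explicit description of $\pi/2$-cone times via flexible orders then forces, for all sufficiently large $n$, the existence of a flexible-order time $i^n$ with $i^n/n \to s_\epsilon$, $\phi(i^n)/n \to v_{\dot Z}(\tau^{a,r})$, and $i^n - \phi(i^n) \geq rn - 1$; hence $\tau_n^{a,r} \leq i^n/n$ and $\tau^{a,r}_* \leq s_\epsilon < \tau^{a,r} + \epsilon$. Letting $\epsilon \to 0$ gives $\tau^{a,r}_* \leq \tau^{a,r}$.

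The main obstacle will be the endpoint: in the finite-volume setting $\dot Z(2) = 0$, so cone intervals of $\dot Z$ can in principle accumulate near time $2$, and either of the arguments above could conceivably fail if $\tau^{a,r}_*$ landed near time $2$ or if a long cone interval in $X^n$ fell entirely inside the small end segment created by the conditioning on $\{X(1,2n) = \emptyset\}$. This scenario is precisely the one ruled out by Lemma~\ref{prop-E^l-cone-detect}: using the reduction through $\mcl G_n'$ (Lemma~\ref{prop-G'-event}) and the identification in Lemma~\ref{prop-E^l-equiv} of the long segment $X_{\phi(\iota_0^n)} \dots X_{\iota_0^n-1}$ (which accounts for all but a $o_n(1)$ fraction of $[1, 2n]_{\BB Z}$) with a sample from the conditional law on $\mcl E_n^l(\ep_1)$, Lemma~\ref{prop-E^l-cone-detect} shows that, with high conditional probability given $\{X(1,2n) = \emptyset\}$, the stopping time $\tau_n^{a,r}$ remains uniformly bounded away from $2$ and the cone function $|\ol Z_r^n(t)|$ stays bounded below by a positive constant throughout $[a, \tau_n^{a,r} - \alpha]$ for suitable $\zeta, \alpha > 0$. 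This reduces the problem to the "bulk" situation where the upper- and lower-bound arguments apply directly.
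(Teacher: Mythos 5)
Your overall structure is sound and you correctly identify the role of Lemma~\ref{prop-G'-event}, Lemma~\ref{prop-E^l-equiv}, and Lemma~\ref{prop-E^l-cone-detect}, but the upper bound argument has a genuine gap. The assertion that ``uniform convergence $\dot Z^n \to \dot Z$ together with the explicit description of $\pi/2$-cone times via flexible orders then forces, for all sufficiently large $n$, the existence of a flexible-order time $i^n$ with $i^n/n \to s_\epsilon$, $\phi(i^n)/n \to v_{\dot Z}(\tau^{a,r})$'' is not a valid deduction: uniform convergence of rescaled walks does not by itself guarantee the presence of any particular combinatorial feature (a $\tb F$-symbol with a distant match) in the discrete word. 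Converting a continuum cone time into a nearby discrete flexible order is precisely the hard content of \cite[Theorem 1.9]{gms-burger-cone} and \cite[Corollary 5.9]{gms-burger-cone}, and re-deriving it in the finite-volume setting is not something one can do by waving at path convergence. Your plan to ``mimic the proof of Corollary 5.9'' thus amounts to re-proving a nontrivial theorem rather than invoking it.

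The paper avoids this issue entirely by running the upper bound as a proof by contradiction, which requires \emph{no} production of flexible-order times. Concretely: suppose $\BB P(\wt\tau^{a,r} \ge \tau^{a,r} + 2\alpha_0) \ge 2q_0$ for some subsequential limit $\wt\tau^{a,r}$. Since $\ol Z_r(\tau^{a,r}) = 0$ (the continuum cone time makes the continuum analogue of the cone function vanish) and $\dot Z^{n_k} \to \dot Z$ uniformly, it follows that for each $\zeta > 0$, eventually $\inf_{t \in [a,\, \tau_{n_k}^{a,r} - \alpha_0]} |\ol Z_r^{n_k}(t)| < \zeta$ with probability at least $q_0$, because $\tau^{a,r}$ lies in $[a, \tau_{n_k}^{a,r} - \alpha_0]$ for $k$ large. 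This contradicts the lower bound $\inf_{t \in [a, \tau_n^{a,r}-\alpha]}|\ol Z_r^n(t)| \ge \zeta$ that Lemma~\ref{prop-E^l-cone-detect} provides (via Lemma~\ref{prop-E^l-equiv} and the reduction through $\mcl G_n'$). In other words, the role of Lemma~\ref{prop-E^l-cone-detect} in the paper is not merely to rule out the endpoint pathology you flag in your last paragraph; it is the engine of the entire upper bound. You should replace your step~3 (``produce a flexible-order time $i^n$ near $\tau^{a,r}$'') with this contradiction argument.

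Your lower bound argument is fine and essentially coincides with the paper's (which cites \cite[Lemma 5.7]{gms-burger-cone} for the fact that subsequential limits of $\tau_n^{a,r}$ are $\pi/2$-cone times with cone length at least $r$). The setup via Theorem~\ref{thm-main}, Prokhorov, and Skorokhod also matches the paper.
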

\begin{proof}
By Lemma~\ref{prop-G'-event}, we can find $\ep_0 , \ep_1 \in (0,1)$ depending only on $q $ and $n_*^0 = n_*^0( q ) \in \BB N$ such that for $n\geq n_*^0$, 
\eqbn
\BB P\left(\mcl G_n' \,|\, X(1,2n) =\emptyset \right) \geq 1-q ,
\eqen
where $\mcl G_n' = \mcl G_n'(\ep_0,\ep_1)$ is as in Lemma~\ref{prop-G'-event}. Let $\iota_0^n = \iota_0^n(\ep_0)$ be as in Section~\ref{sec-big-loop-setup}. If we define $\wh \iota_n^{a,r}$ as in Lemma~\ref{prop-cone-detect-path} with the word $\dots X_{ \iota_0^n-2} X_{ \iota_0^n-1}$ in place of the word $\dots X_{-2} X_{-1}$, then on the event $\{\wh\iota_n^{a,r} < 0\} \cap \mcl G_n'$, it holds that $\iota_n^{a,r} = \iota_0^n + \wh \iota_n^{a,r}$.  

By Lemmas~\ref{prop-E^l-equiv} and~\ref{prop-E^l-cone-detect}, for each $\alpha>0$ we can find $\zeta =\zeta(\alpha,q,\mcl A ,\ep_0,\ep_1) > 0$, $\beta =\beta(\alpha,q,\mcl A,\ep_0,\ep_1) \in (0,1/2)$, and $n_*^1 =n_*^1(\alpha,q,\mcl A,\ep_0,\ep_1) \geq n_*^0$ such that for each $n\geq n_*^1$ and each $(a,r) \in \mcl A$, 
\eqb \label{eqn-cone-detect-G'}
\BB P\left(  \beta n \leq \phi( \iota_n^{a,r}) \leq    \iota_n^{a,r} \leq 2n- \beta n     , \, \inf_{t \in [a , \tau_n^{a,r} - \alpha  ]_{\BB Z}} |\ol Z_r^n(t)| \geq \zeta  \,|\, \mcl G_n'  \right) \geq 1- q ,
\eqe 
where here $\ol Z_r^n$ is as in~\eqref{eqn-cone-detect-path}. 

For $n\in\BB N$, let $\dot X^n$ be a random word with the conditional law of $Z^n|_{[0,2]}$ given $\{X(1,2n) =\emptyset\}$ and let $\dot Z^n$ be the corresponding path, as in~\eqref{eqn-Z^n-def}. Assume that we have defined the times $\tau_{n_k}^{a,r}$ with respect to the word $\dot X^n$. 
By Theorem~\ref{thm-main}, the Prokhorov theorem, and the Skorokhod theorem, for any sequence of integers tending to $\infty$, we can find a subsequence $n_k\rta\infty$, a family of random times $\{\wt\tau^{a,r}\}_{(a,r) \in \mcl A}$ coupled with $\dot Z$, and a coupling of $\left(\dot X^{n_k} ,\, \{\tau_{n_k}^{a,r}\}_{(a,r) \in\mcl A}\right)$ with $\left(\dot Z  ,\, \{\wt\tau^{a,r}\}_{(a,r) \in\mcl A}\right)$ such that a.s.\ $\dot Z^{n_k} \rta \dot Z$ uniformly on $[0,2]$ and $\tau_{n_k}^{a,r} \rta \wt\tau^{a,r}$ for each $(a,r) \in \mcl A$. 

We will complete the proof by showing that in fact $\wt\tau^{a,r} =\tau^{a,r}$ for each $(a,r) \in \mcl A$. By \cite[Lemma 5.7]{gms-burger-cone}, each $\wt\tau^{a,r}$ is a $\pi/2$-cone time for $\dot Z$ with $\wt\tau^{a,r} - v_{\dot Z}(\wt\tau^{a,r}) \geq r$ and $\wt\tau^{a,r} \geq a$. It follows that a.s.\ $\wt\tau^{a,r} \geq \tau^{a,r}$. 

Now suppose by way of contradiction that $\BB P(\wt\tau^{a,r} > \tau^{a,r}) > 0$ for some $(a,r) \in \mcl A$. Then we can find deterministic $\alpha_0 >0 $ and $q_0\in (0,1)$ such that
\eqb \label{eqn-wt-tau>tau}
\BB P\left( \wt\tau^{a,r} \geq \tau^{a,r} + 2\alpha_0 \right) \geq 2q_0
\eqe 
Let $\ol Z_r$ be defined as in~\eqref{eqn-cone-detect-path} with $\dot Z$ in place of $Z^n$. Since $\ol Z_r(\tau^{a,r}) = 0$, it follows from~\eqref{eqn-wt-tau>tau} and our choice of coupling that for each $\zeta > 0$, there exists $k_* \in\BB N$ such that for $k\geq k_*$, we have
\eqbn
\BB P\left(  \inf_{t\in [a ,  \tau_n^{a,r} - \alpha_0  ]} |\dot Z^n(t)| < \zeta     \right) \geq  q_0 .
\eqen
This contradicts~\eqref{eqn-cone-detect-G'}. We conclude that $\wt\tau^{a,r} = \tau^{a,r}$ a.s., which concludes the proof.
\end{proof}

\begin{proof}[Proof of Theorem~\ref{thm-cone-limit-finite}]
By Lemma~\ref{prop-cone-conv-finite}, the finite-dimensional marginals of the joint conditional law given $\{X(1,2n)=\emptyset\}$ of
\eqbn
\left(Z^n|_{[0,2]} ,\, \{\tau_n^{a,r}\}_{(a,r) \in\mcl Q \times (\mcl Q \cap (0,\infty)) }\right)  
\eqen
converge as $n\rta\infty$ to the corresponding finite-dimensional marginals of the joint law of 
\eqbn
\left(\dot Z ,\, \{\tau^{a,r}\}_{(a,r) \in\mcl Q \times (\mcl Q \cap (0,\infty)) }\right) .
\eqen 
The statement of the theorem follows from exactly the same argument used in the proof of \cite[Theorem 1.9]{gms-burger-cone}.
\end{proof}

\bibliography{cibiblong,cibib} 
\bibliographystyle{hmralphaabbrv}

\end{document}